\documentclass{article}
\usepackage{amsmath,amsthm,amssymb}
\usepackage{enumerate}
\usepackage{graphicx}
\usepackage{cite}
\usepackage{comment}
\usepackage{oands}
\usepackage{tikz} 
\usepackage{changepage}
\usepackage{tabularx}
\usepackage{multirow}
\usepackage{bbold}
\usepackage[margin=1in]{geometry}
\usepackage{hyperref}
\usepackage[pagewise,mathlines]{lineno}  
\usepackage[normalem]{ulem} 
\usepackage{subfigure}  
\usepackage[all]{xy}

\usepackage{array}

\newcommand{\specialcell}[1]{\begin{tabular}{c}#1\end{tabular}}

\usepackage[protrusion=true,expansion=true]{microtype}

\numberwithin{equation}{section}

\setcounter{tocdepth}{1}

\theoremstyle{plain}
\newtheorem{thm}{Theorem}[section]
\newtheorem{cor}[thm]{Corollary}
\newtheorem{lem}[thm]{Lemma}
\newtheorem{prop}[thm]{Proposition}

\newtheorem{notation}[thm]{Notation}
\newtheorem{example}[thm]{Example}
 
\theoremstyle{definition}
\newtheorem{defn}[thm]{Definition}
\newtheorem{remark}[thm]{Remark}

\def\@rst #1 #2other{#1}
\newcommand\MR[1]{\relax\ifhmode\unskip\spacefactor3000 \space\fi
  \MRhref{\expandafter\@rst #1 other}{#1}}
\newcommand{\MRhref}[2]{\href{http://www.ams.org/mathscinet-getitem?mr=#1}{MR#2}}

\hypersetup{ 
    colorlinks=false,
    linktocpage, 
    }

\newcommand{\dsb}{\begin{adjustwidth}{2.5em}{0pt}
\begin{footnotesize}}
\newcommand{\dse}{\end{footnotesize}
\end{adjustwidth}}

\newcommand{\ssb}{\begin{adjustwidth}{2.5em}{0pt}}
\newcommand{\sse}{\end{adjustwidth}}

\newcommand{\aryb}{\begin{eqnarray*}}
\newcommand{\arye}{\end{eqnarray*}}
\def\alb#1\ale{\begin{align*}#1\end{align*}}
\newcommand{\eqb}{\begin{equation}}
\newcommand{\eqe}{\end{equation}}
\newcommand{\eqbn}{\begin{equation*}}
\newcommand{\eqen}{\end{equation*}}

\newcommand{\BB}{\mathbb}
\newcommand{\ol}{\overline}
\newcommand{\ul}{\underline}
\newcommand{\op}{\operatorname}

\newcommand{\frk}{\mathfrak}
\newcommand{\eqD}{\overset{d}{=}}
\newcommand{\ep}{\epsilon}
\newcommand{\rta}{\rightarrow}

\newcommand{\wt}{\widetilde}
\newcommand{\wh}{\widehat} 
\newcommand{\mcl}{\mathcal}

\newcommand{\rng}{\mathring}
\newcommand{\bdy}{\partial}

\newcommand*\patchAmsMathEnvironmentForLineno[1]{  \expandafter\let\csname old#1\expandafter\endcsname\csname #1\endcsname
  \expandafter\let\csname oldend#1\expandafter\endcsname\csname end#1\endcsname
  \renewenvironment{#1}     {\linenomath\csname old#1\endcsname}     {\csname oldend#1\endcsname\endlinenomath}}\newcommand*\patchBothAmsMathEnvironmentsForLineno[1]{  \patchAmsMathEnvironmentForLineno{#1}  \patchAmsMathEnvironmentForLineno{#1*}}\AtBeginDocument{\patchBothAmsMathEnvironmentsForLineno{equation}\patchBothAmsMathEnvironmentsForLineno{align}\patchBothAmsMathEnvironmentsForLineno{flalign}\patchBothAmsMathEnvironmentsForLineno{alignat}\patchBothAmsMathEnvironmentsForLineno{gather}\patchBothAmsMathEnvironmentsForLineno{multline}}

\newcommand{\SLE}{{\rm SLE}}
\newcommand{\CLE}{{\rm CLE}}
\newcommand{\Z}{{\mathbb Z}}
\newcommand{\D}{{\mathbb D}}
\newcommand{\C}{{\mathbb C}}
\newcommand{\R}{{\mathbb R}}
\newcommand{\N}{{\mathbb N}}

\begin{document}

\author{
\begin{tabular}{c}Ewain Gwynne\\[-5pt]\small Cambridge\end{tabular}\;
\begin{tabular}{c}Nina Holden\\[-5pt]\small ETH Z\"urich\end{tabular}\;
\begin{tabular}{c}Jason Miller\\[-5pt]\small Cambridge\end{tabular}}

\title{An almost sure KPZ relation for SLE and Brownian motion}
\date{   }

\maketitle 

\begin{abstract} 
The peanosphere construction of Duplantier, Miller, and Sheffield provides a means of representing a $\gamma$-Liouville quantum gravity (LQG) surface, $\gamma \in (0,2)$, decorated with a space-filling form of Schramm's $\SLE_\kappa$, $\kappa = 16/\gamma^2 \in (4,\infty)$, $\eta$ as a gluing of a pair of trees which are encoded by a correlated two-dimensional Brownian motion $Z$.  We prove a KPZ-type formula which relates the Hausdorff dimension of any Borel subset $A$ of the range of $\eta$ which can be defined as a function of $\eta$ (modulo time parameterization) to the Hausdorff dimension of the corresponding time set $\eta^{-1}(A)$.  This result serves to reduce the problem of computing the Hausdorff dimension of any set associated with an $\SLE$, $\CLE$, or related processes in the interior of a domain to the problem of computing the Hausdorff dimension of a certain set associated with a Brownian motion.  For many natural examples, the associated Brownian motion set is well-known.  As corollaries, we obtain new proofs of the Hausdorff dimensions of the SLE$_\kappa$ curve for $\kappa \not=4$; the double points and cut points of SLE$_\kappa$ for $\kappa  >4$; and the intersection of two flow lines of a Gaussian free field. We obtain the Hausdorff dimension of the set of $m$-tuple points of space-filling SLE$_\kappa$ for $\kappa>4$ and $m \geq 3$ by computing the Hausdorff dimension of the so-called $(m-2)$-tuple $\pi/2$-cone times of a correlated planar Brownian motion. 
\end{abstract}
 
\tableofcontents

\parindent 0 pt
\setlength{\parskip}{0.15cm plus1mm minus1mm}

\section{Introduction}
\label{sec-intro}

\subsection{Overview}

The Schramm-Loewner evolution ($\SLE_\kappa$) \cite{schramm0} and related processes such as $\SLE_\kappa(\ul\rho)$ \cite{lsw-restriction,sw-coord,ig1} and the conformal loop ensembles ($\CLE_\kappa$) \cite{shef-cle,shef-werner-cle} have been an active area of research for the past two decades.  One line of research in this area has been the confirmation of exponents computed non-rigorously by physicists in the context of discrete models from statistical physics.  Many of these exponents were derived using the so-called \emph{KPZ relation} \cite{kpz-scaling}, which is a non-rigorous formula which relates exponents for statistical physics models on random planar maps to the corresponding exponents for the model on a Euclidean lattice, such as $\Z^2$.  Exponents derived in this way are said to be obtained from ``quantum gravity methods.''  This method of deriving exponents has been very successful because the computation of an exponent in many cases boils down to a counting problem which turns out to be much easier when the underlying lattice is random (i.e., one considers a random planar map).  Perhaps the most famous example of this type are the so-called \emph{Brownian intersection exponents}, which give the exponent of the probability that $k$ Brownian motions started on $\partial B_\ep(0)$ at distance proportional to $\epsilon$ from each other make it to $\partial \D$ without any of their traces intersecting.  These exponents were derived using quantum gravity methods by Duplantier\footnote{The Brownian intersection exponents were also derived earlier using a different method by Duplantier and Kwon in \cite{duplantier-kwon-brownian}.} in~\cite{duplantier-brownian}.  The values of the Brownian intersection exponents were then verified mathematically in one of the early successes of $\SLE$ by Lawler, Schramm, and Werner in \cite{lsw-brownian1,lsw-brownian2,lsw-brownian3}.  Following these works, a number of other exponents (hence also Hausdorff dimensions) have been calculated using $\SLE$ techniques, many of which were previously predicted in the physics literature \cite{abv-bdy-spec,alberts-shef-bdy-dim,beffara-dim,gms-mf-spec,lawler-viklund-tip,msw-gasket,miller-wu-dim,mww-extremes,nacu-werner11,schramm-sle,ssw09,schoug-boundary-spec,wang-wu-sle-bdy,wu-zhan-arm-exponent,wu-poly-arm-exponent,wu-alt-arm-exponent}.

Our main result is a rigorous version of the KPZ formula that relates the a.s.\ Hausdorff dimension of a set associated with space-filling\footnote{In order to be consistent with the notation of \cite{ig1,ig2,ig3,ig4}, unless explicitly stated otherwise we will assume that $\kappa \in (0,4)$ and $\kappa' = 16/\kappa \in (4,\infty)$.} $\SLE_{\kappa'}$~\cite{ig4}, $\kappa' \in (4,\infty)$, to the a.s.\ Hausdorff dimension of a certain Brownian motion set in the context of the so-called peanosphere construction of \cite{wedges}, which we review below.  This serves to reduce the problem of calculating the Hausdorff dimension of any set associated with $\SLE_\kappa$, $\SLE_\kappa(\ul\rho)$, or $\CLE_\kappa$ for $\kappa\not=4$ in the interior of a domain to the problem of calculating the Hausdorff dimension of a certain (explicitly described) set associated with a correlated two-dimensional Brownian motion. There are numerous formulations of the KPZ formula in the literature, see e.g.\ the original physics paper~\cite{kpz-scaling}, in addition to more recent and rigorous formulations in e.g.\ \cite{aru-kpz,grv-kpz,bjrv-gmt-duality,benjamini-schramm-cascades,wedges,shef-renormalization,shef-kpz,rhodes-vargas-log-kpz,gp-kpz}.  As explained just above, the KPZ formula is typically applied to compute the Euclidean dimension of fractal sets, after deriving the quantum dimension heuristically or rigorously by quantum gravity techniques. In our formulation the quantum dimension is explicitly given by the dimension of some Brownian motion set, hence our formula is directly useful for computations.

To illustrate the application of our main theorem, we will obtain new proofs of the a.s.\ Hausdorff dimensions of several sets, including the SLE curve for $\kappa\neq 4$, the double points of SLE, the cut points of SLE, and the intersection of two flow lines of a Gaussian free field \cite{shef-zipper,ig1,ig2,ig3,ig4}.
We will also use our theorem to calculate the a.s.\ Hausdorff dimension of the $m$-tuple points of space-filling $\SLE_{\kappa'}$, $\kappa'\in(4,8)$, by calculating the Hausdorff dimension of the so-called $(m-2)$-tuple cone times for correlated two-dimensional Brownian motion. The statement and the proof of the dimension result for $(m-2)$-tuple cone times do not rely on quantum gravity techniques or results in the remainder of the paper.

The main motivation of our theorem is to convert SLE dimension questions into Brownian motion dimension questions, since these are often much easier to solve. However, the result also works in the reverse direction. One example of a Brownian motion set whose dimension has not yet been directly computed to our knowledge is the set of times not contained in any left cone interval.  Under the peanosphere correspondence, this set corresponds to the $\CLE_{\kappa'}$ gasket for $\kappa'\in(4,8)$, the Hausdorff dimension of which is computed in \cite{msw-gasket,ssw09}.

In the subsequent work~\cite{ghm-conformal-dim}, the main theorem of the present paper (along with a theorem of Rhodes and Vargas~\cite{rhodes-vargas-log-kpz}) will be used to prove the following additional dimension formula for SLE$_\kappa$. If $\eta$ is an SLE$_\kappa$ curve for $\kappa \in (0,4)$, and $Y$ is a deterministic subset of $\BB R$ with Hausdorff dimension $d \in [0,1]$, then it is a.s.\ the case that 
\[
\dim_{\mcl H}  f(Y)  =  \frac{1}{32\kappa} \left(4 + \kappa - \sqrt{(4 + \kappa)^2 - 16 \kappa d} \right) \left(12 + 3 \kappa + \sqrt{(4 + \kappa)^2 - 16 \kappa d } \right)
\]
for almost every choice of conformal map $f$ from $\BB H$ to a complementary connected component of $\eta$ which satisfies $f(Y)\subset\eta$.

\subsection{Review of Liouville quantum gravity and the peanosphere}
\label{sec-review}

We will now provide a brief review of Liouville quantum gravity and the peanosphere construction which will be necessary to understand our main result below.  Suppose that $h$ is an instance of the Gaussian free field (GFF) on a planar domain $D$ and $\gamma \in (0,2)$.  The $\gamma$-Liouville quantum gravity (LQG) surface associated with $h$ formally corresponds to the surface with Riemannian metric
\begin{equation}
\label{eqn::lqg_metric}
e^{\gamma h(z)} (dx^2 + dy^2),
\end{equation}
where $z = x + iy  = (x,y)$ and $dx^2 + dy^2$ denotes the Euclidean metric on $D$.  This expression does not make literal sense because $h$ takes values in the space of distributions and does not take values at points.  The area measure $\mu_h$ associated with~\eqref{eqn::lqg_metric} has been made sense of using a regularization procedure (see e.g., \cite{shef-kpz}), namely by taking $e^{\gamma h(z)} dz$ to be the weak limit as $\epsilon \to 0$ of $\epsilon^{\gamma^2/2} e^{\gamma h_\epsilon(z)} dz$, where $h_\epsilon(z)$ is the average of $h$ on the circle $\partial B_\ep(z)$.  One can similarly define  a length measure $\nu_h$ by taking it to be the weak limit as $\epsilon \to 0$ of $\epsilon^{\gamma^2/4} e^{\gamma h_\epsilon(z)/2} dz$.  We refer to $\mu_h$ (resp.\ $\nu_h$) as the quantum area (resp.\ boundary length) measure associated with $h$.  Quantum boundary lengths are well-defined for piecewise linear segments \cite{shef-kpz}, their conformal images, and $\SLE$ type curves for $\kappa = \gamma^2$ \cite{shef-zipper}.  The metric space structure associated with~\eqref{eqn::lqg_metric} was constructed in \cite{tbm-characterization,lqg-tbm1,lqg-tbm2,lqg-tbm3,sphere-constructions} in the special case that $\gamma = \sqrt{8/3}$, in which case it is isometric to the Brownian map \cite{miermont-brownian-map,legall-uniqueness}.  
The metric space structure for general $\gamma \in (0,2)$ was recently constructed in~\cite{gm-uniqueness} (several years after this paper first appeared on the arXiv).

One of the main sources of significance of LQG is that it has been conjectured that certain forms of LQG decorated with $\SLE$ or $\CLE$ describe the scaling limit of random planar maps decorated with a statistical physics model after performing a conformal embedding, where different $\gamma$ values arise by considering different discrete models \cite{shef-kpz}. So far, this conjecture has been proven only in the case of the Tutte embedding of the $\gamma$-mated-CRT map (a discretized version of the peanosphere) for $\gamma \in (0,2)$~\cite{gms-tutte}, the case of the Cardy embedding of uniform triangulations~\cite{hs-cardy-embedding}, and the case of the Poisson-Voronoi approximation of the Brownian map~\cite{gms-poisson-voronoi}.  However, the convergence of other random planar maps decorated with a statistical physics model to LQG decorated with $\SLE$/$\CLE$ has been proved with respect to the peanosphere topology, which we will describe below \cite{shef-burger,wedges,gms-burger-cone,gms-burger-local,gms-burger-finite,sphere-constructions,gkmw-burger,kmsw-bipolar,ghs-bipolar,lsw-schnyder-wood}.  See also~\cite{gwynne-miller-saw,gwynne-miller-perc} for scaling limit results for self-avoiding walk and percolation, respectively, on random planar maps toward SLE-decorated $\sqrt{8/3}$-LQG with respect to the Gromov-Hausdorff-Prokhorov-uniform topology, a variant of the Gromov-Hausdorff topology for curve-decorated metric measure spaces~\cite{gwynne-miller-uihpq}.

If $D,\wt{D}$ are planar domains, $\varphi \colon D \to \wt D$ is a conformal map, and
\begin{equation}
\label{eqn::change_of_coordinates}
\wt{h} = h \circ \varphi^{-1} + Q \log| (\varphi^{-1})'| \quad\text{where}\quad Q = \frac{2}{\gamma} + \frac{\gamma}{2} ,
\end{equation}
then $\mu_{h}(A) = \mu_{\wt h}(\varphi(A))$ for all Borel sets $A \subseteq D$. The boundary length measure is similarly preserved under such a change of coordinates. A \emph{quantum surface} is an equivalence class of pairs $(D,h)$ where two such pairs are said to be equivalent if they are related as in~\eqref{eqn::change_of_coordinates}.  We refer to a representative $(D,h)$ of a quantum surface as an \emph{embedding} of the quantum surface.

One particular type of quantum surface which will be important in this article is the so-called $\gamma$-quantum cone.  This is an infinite volume surface which is naturally parameterized by $\C$ and is marked by two points, called $0$ and $\infty$, neighborhoods of which respectively have finite and infinite $\mu_h$-mass.  We will keep track of the extra marked points by indicating a $\gamma$-quantum cone with the notation $(\C,h,0,\infty)$.  In Section~\ref{sec-cone-prelim} below, we will describe a precise method for sampling from the law of $h$ for a particular embedding of a $\gamma$-quantum cone into $\C$.  This surface naturally arises, however, in the context of any $\gamma$-LQG surface $(D,h)$ with finite volume as follows.  Suppose that $z \in D$ is sampled from $\mu_h$.  Then the surface one obtains by adding $C$ to $h$, translating $z$ to $0$, and then rescaling so that $\mu_h$ assigns unit mass to $\D$ converges as $C \to \infty$ to a $\gamma$-quantum cone.  That is, a $\gamma$-quantum cone describes the local behavior of a $\gamma$-LQG surface near a typical point chosen from~$\mu_h$.

As explained in \cite{shef-zipper,wedges}, it is very natural to decorate a $\gamma$-LQG surface with either an $\SLE_\kappa$, $\kappa = \gamma^2$, or an $\SLE_{\kappa'}$, $\kappa' = 16/\gamma^2$.  In the case of a $\gamma$-quantum cone $(\C,h,0,\infty)$, it is particularly natural to decorate it with the space-filling $\SLE_{\kappa'}$ process $\eta'$ \cite{ig4} where $\eta'$ is first sampled independently of $h$ (as a curve modulo time parameterization), then reparameterized by quantum area so that $\mu_h(\eta'([s,t])) = t-s$ for all $s < t$, and then normalized so that $\eta'(0) = 0$.  In this setting, it is shown in \cite{wedges} that the pair $Z = (L,R)$ which, for a given time $t$, is equal to the quantum length of the left and right boundaries of $\eta'$, evolves as a correlated two-dimensional Brownian motion. Since these quantum boundary lengths are in fact always infinite, it is natural to normalize $Z$ so that $L_0 = R_0 = 0$. By \cite[Theorem~1.9]{wedges} (in the case $\kappa' \in (4,8]$) and \cite[Theorem~1.1]{kappa8-cov} (in the case $\kappa' > 8$), the variances and covariances of $L$ and $R$ are given by
\eqb
\op{Var}(L_t ) =  a |t|, \quad \op{Var}(R_t) =  a |t|, \quad \op{Cov}(L_t, R_t ) = -a \cos\theta |t|,\quad \theta=\frac{4\pi}{\kappa'} 
\label{eq-multiplept-10} 
\eqe
with $a$ a constant depending only on $\kappa'$.

One of the main results of \cite{wedges} is that the pair $(L,R)$ almost surely determines the pair consisting of the $\gamma$-quantum cone $(\C,h,0,\infty)$ and the space-filling $\SLE_{\kappa'}$ process $\eta'$.  That is, the latter is a measurable function of the former (and it is immediate from the construction that the former is a measurable function of the latter).  This is natural in the context of discrete models \cite{shef-burger} which can also be encoded in terms of an analogous such pair and, in fact, the main result of \cite{shef-burger} combined with \cite{wedges} gives the convergence of FK-decorated random planar maps to $\CLE$ decorated LQG with respect to the topology in which two surfaces are close if the aforementioned encoding functions are close.  This is the so-called \emph{peanosphere} topology.

As explained in Figure~\ref{fig-intro1}, $L$ and $R$ have the interpretation of being the contour functions associated with a pair of infinite trees, and $(\C,h,0,\infty)$ and $\eta'$ have the interpretation of being the embedding of a certain path-decorated surface into $\C$ which is generated by gluing together the pair of trees encoded by $L$, $R$ \cite{wedges}.

We remark that the construction in \cite{wedges} deals with the setting of infinite volume surfaces.  The setting of finite volume surfaces is the focus of \cite{sphere-constructions} and the corresponding convergence result in the finite volume setting is established in \cite{gms-burger-cone,gms-burger-local,gms-burger-finite}.

\subsection{Main result}
\label{sec-intro-main}

Our main result is a KPZ formula which allows one to use the representation $(L,R)$ of an $\SLE$ decorated quantum cone to compute Hausdorff dimensions for $\SLE$ and related processes.  

\begin{thm}
\label{thm-dim-relation}
Let $\kappa' > 4$ and $\gamma = 4/\sqrt{\kappa'}$. Let $(\BB C ,h, 0,\infty)$ be a $\gamma$-quantum cone and let $\eta' $ be an independent space-filling $\SLE_{\kappa'}$, parameterized by $\gamma$-quantum mass with respect to $h$ and satisfying $\eta' (0) = 0$.  Assume that $h$ has the circle average embedding (see Definition~\ref{def-circle-embedding} below). Let $X$ be a random Borel subset of $\BB C$ such that $X$ is independent from $h$ (e.g.\ $X$ could be a set which is determined by the curve $\eta' $ viewed modulo monotone reparameterization).
Almost surely, for each Borel set $\wh X \subset \BB R$ such that $\eta'(\wh X) = X$, we have  
\eqb \label{eqn-dim-relation}
\dim_{\mcl H}(X) = \left(2+\frac{\gamma^2}{2} \right) \dim_{\mcl H}(\wh X)   - \frac{\gamma^2}{2}  \dim_{\mcl H}(\wh X)^2  .
\eqe    
\end{thm}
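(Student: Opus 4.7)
The plan is to prove matching inequalities $\dim_{\mcl H}(X) \leq U(\dim_{\mcl H}(\wh X))$ and $\dim_{\mcl H}(X) \geq U(\dim_{\mcl H}(\wh X))$ for $U(s) := (2 + \gamma^2/2) s - (\gamma^2/2) s^2$, by comparing coverings of $\wh X \subset \R$ by time intervals with coverings of $X \subset \C$ by Euclidean balls. The central input is a tight relation between quantum mass and Euclidean diameter: for a small time interval $I$ of length $\epsilon$, the Euclidean diameter $D_I$ of $\eta'(I)$ is comparable to the smallest $r$ for which $\mu_h(B_r(z_I)) \asymp \epsilon$ for some center $z_I \in \eta'(I)$. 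Using the circle-average relation $\mu_h(B_r(z)) \approx r^{\gamma Q} e^{\gamma h_r(z)}$, together with the Gaussian law of $h_r(z)$ (variance $\log(1/r)$), one obtains moment estimates of the form $\mathbb{E}[D_I^{\,s}] \approx \epsilon^{\xi(s)}$, where $\xi$ is concave with $\xi^{-1} = U$. This last identity is the quadratic KPZ correction, arising from the Gaussian integral against the circle-average field; I would establish it using the moment bounds for the quantum area measure from \cite{wedges}.

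For the \emph{upper bound}: fix $\hat s > \dim_{\mcl H}(\wh X)$ and, at each dyadic scale $\epsilon_n = 2^{-n}$, choose a near-optimal cover of $\wh X$ by length-$\epsilon_n$ time intervals so that $\sum \epsilon_n^{\hat s}$ is small. The sets $\eta'(I)$ cover $X$, and the moment estimate bounds $\mathbb{E}\bigl[\sum_I D_I^{\,U(\hat s) + o(1)}\bigr]$ by a constant times $\sum \epsilon_n^{\hat s}$. A Borel--Cantelli argument across dyadic scales transfers this expectation control into an almost-sure bound on the $(U(\hat s)+o(1))$-Hausdorff content of $X$, and letting $\hat s \downarrow \dim_{\mcl H}(\wh X)$ yields $\dim_{\mcl H}(X) \leq U(\dim_{\mcl H}(\wh X))$.

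For the \emph{lower bound}: apply Frostman's lemma. Given $\hat s < \dim_{\mcl H}(\wh X)$, take a probability measure $\hat\mu$ on $\wh X$ with finite $\hat s$-energy and push it forward to $\mu := \hat\mu \circ (\eta')^{-1}$ on $X$. Then $\mu(B_\delta(z)) \leq \hat\mu(\{t : \eta'(t) \in B_\delta(z)\})$, and the right-hand side is controlled by the quantum mass $\mu_h(B_\delta(z))$ together with the $\hat s$-energy of $\hat\mu$. Inserting this bound into the $U(\hat s)$-energy of $\mu$ and invoking the scaling estimate in the reverse direction gives that the $U(\hat s)$-energy of $\mu$ is a.s.\ finite, hence $\dim_{\mcl H}(X) \geq U(\hat s)$; sending $\hat s \uparrow \dim_{\mcl H}(\wh X)$ concludes.

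The main obstacle will be obtaining the moment estimates with enough \emph{uniformity} to apply simultaneously to a random $X$ which is only assumed independent of $h$ (rather than deterministic, or localized at a typical point sampled from $\mu_h$). This demands multiscale chaining control on the circle-average field $h_r$ together with high-moment bounds on $e^{\gamma h_r}$, in the spirit of the thick-point analysis for the GFF. A closely related technical point in the lower bound is ruling out that $\eta'$ compresses $\hat\mu$-mass into atypically small Euclidean neighborhoods, i.e.\ securing lower-tail control on $\mu_h(B_\delta(z))$ at $\mu$-typical points $z$. Handling these uniform estimates --- so that the same exponent $U(\hat s)$ is sharp almost surely for \emph{every} admissible $X$ --- is what distinguishes the present ``almost sure'' KPZ statement from expectation-level KPZ relations in the literature.
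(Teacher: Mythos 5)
Your outline identifies the right ingredients (circle-average estimates, moment bounds for the quantum measure, an SLE geometric estimate) and the lower bound sketch is a legitimate alternative to the paper's. But the upper bound is where the real difficulty lies, and there you have a genuine gap: the direct moment estimate $\BB E[D_I^{\,s}]\approx \ep^{\xi(s)}$ with $\xi^{-1}=U$ that your plan hinges on is not established, and in the form you need it — uniformly over an adaptively chosen random cover of a random set $\wh X$ that is correlated with $h$ through the quantum parameterization — it is exactly the hard part of the problem, not a technical afterthought. The diameter $D_I = \op{diam}\eta'(I)$ is determined by \emph{negative} fluctuations of the field near $\eta'(I)$ (you need a lower tail on $\mu_h$ of a Euclidean ball), and neither a positive-moment bound for $\mu_h$ nor a chaining argument on $h_r$ by itself controls this for segments landing at atypical points of $\eta'$. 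Indeed, the remark after the theorem (the $\gamma$-thick point counterexample) shows that without the independence of $X$ from $h$ the formula is simply false, which tells you that the proof must use this independence somewhere essential, beyond a ``uniform moment estimate.''

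The paper's upper bound is structured entirely around a different idea, which your plan is missing: decompose $X$ according to the thickness level of the field. Concretely, one proves a new theorem (Theorem~\ref{thm-thick-intersect}) that for a Borel set $A$ independent of $h$, $\dim_{\mcl H}(T_h^\alpha\cap A) = \dim_{\mcl H}(A) - \alpha^2/2$ almost surely; this is where independence of $X$ from $h$ enters. Then, on the high-probability events of Propositions~\ref{prop-sle-bubble} and~\ref{prop-thick-local} (the SLE ball estimate and the uniform lower bound $\mu_h(B_\delta(w))\geq \delta^{2+\gamma^2/2+\psi(r)}e^{\gamma h_{\delta^{1-r}}(z)}$), one shows deterministically that any covering interval $I$ whose image touches an $\alpha$-thick point of $X$ satisfies $\op{diam}\eta'(I)\leq \op{diam}(I)^{(2+\gamma^2/2-\gamma\alpha)^{-1}+o_r(1)}$; pushing covers of $\wh X$ forward then gives $\dim_{\mcl H}(X\cap T_h^\alpha)\leq (2+\gamma^2/2-\gamma\alpha)\dim_{\mcl H}(\wh X)$, and combining with the thick-point theorem and optimizing $\alpha=\gamma\dim_{\mcl H}(\wh X)$ yields the quadratic formula. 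The thick-point decomposition is what turns a non-uniform moment estimate into a statement that holds simultaneously for all admissible covers; without it I do not see how your Borel--Cantelli step can be made to close.

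Two more localized observations. For your lower bound, $(\eta')^{-1}(B_\delta(z))$ is generically a disjoint union of \emph{many} time intervals, and controlling $\hat\mu$-mass through the quantum mass of $B_\delta(z)$ requires a bound on that count; this is Proposition~\ref{prop-sle-bubble} together with Lemma~\ref{low_res2}, which you should make explicit (the paper's own lower bound goes the other way — cover $X$ by balls and pull back rather than push a Frostman measure forward — but both require this counting). Finally, your dyadic covers are single-scale, which is adapted to box dimension; to respect Hausdorff dimension you must allow multi-scale covers, and your chaining estimate must be robust enough to handle intervals at all scales simultaneously (again, the paper's high-probability events are stated to hold for all $\delta\leq\ep$ precisely for this reason).
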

 
A space-filling $\SLE_{\kappa'}$ encodes an entire imaginary geometry of flow lines, which in turn encodes both $\SLE_\kappa$ and $\op{SLE}_{\kappa'}$-type paths and a $\CLE_{\kappa'}$ (see~\cite{ig4} and Section~\ref{sec-sle-prelim} below). The work~\cite{cle-percolations} shows that the imaginary geometry framework also encodes a $\op{CLE}_\kappa$. Therefore, Theorem~\ref{thm-dim-relation} reduces the computation of the dimension of any set in the interior of a domain associated with $\SLE_\kappa$ or $\CLE_\kappa$ for $\kappa\not=4$ to computing the dimension of the corresponding set associated with the correlated Brownian motion $Z$. As we will discuss in Section~\ref{sec-intro-background} it is often possible to characterize special sets associated with $\SLE_{\kappa}$ and $\CLE_{\kappa}$ as time sets of $Z$ with particular properties. Examples of such sets are
\begin{enumerate}
\item $\SLE_\kappa$ curves for $\kappa\not= 4$ \cite{beffara-dim,schramm-sle}, 
\item double points of $\SLE_{\kappa'}$ for $\kappa'>4$ \cite{miller-wu-dim}, 
\item cut points of $\SLE_{\kappa'}$ for $\kappa'>4$ \cite{miller-wu-dim}, 
\item intersection of two GFF flow lines \cite{miller-wu-dim},
\item $m$-tuple points of space-filling $\SLE_{\kappa'}$ for $m\geq 3$ and $\kappa' \in (4,8)$, and
\item $\CLE_{\kappa'}$ gasket for $\kappa'\in(4,8)$ \cite{msw-gasket,ssw09}.
\end{enumerate} 
See Table~\ref{table-dimensions} for a summary of these sets and their dimensions.
We will use Theorem~\ref{thm-dim-relation} to calculate the Hausdorff dimension of the first five of these sets in Sections~\ref{sec-dim-calc} and~\ref{sec-multiple-pt}. The original proofs relied on rather technical two-point estimates for correlations, while our formula provides alternative proofs. In cases where the Hausdorff dimension of the SLE set is known, Theorem~\ref{thm-dim-relation} also gives the dimension of the corresponding Brownian motion set. We will use this direction of Theorem~\ref{thm-dim-relation} to calculate the dimension of the Brownian motion time set corresponding to the $\CLE_{\kappa'}$ gasket in Section~\ref{sec-dim-calc}.

\begin{table}[ht!]
\tabcolsep=0.15cm
\setlength\extrarowheight{4pt}
\begin{center}
\begin{tabular}{|c|c|c|c|} 
\hline
\textbf{SLE set}& \textbf{SLE $\op{dim}_{\mcl H}$} & \textbf{BM set} & \textbf{BM $\op{dim}_{\mcl H}$}\\ \hline
\specialcell{SLE$_{\kappa}$ trace,\\[-1ex] $\kappa\in (0,4)$} & $\displaystyle{1+\frac{\kappa}{8}}$ & Running infima of $L$ or $R$ & $\displaystyle{\frac12}$ \\    \hline
\specialcell{SLE$_{\kappa'}$ trace,\\[-1ex] $\kappa'\in (4,8)$} & $ \displaystyle{1+\frac{\kappa'}{8}} $ & Ancestor free times & $\displaystyle{\frac{\kappa'}{8}}$ \\     \hline
\specialcell{Cut points of\\[-1ex] SLE$_{\kappa'}$, $\kappa'\in (4,8)$} & $\displaystyle{3-\frac{3}{8} \kappa'}$ & \specialcell{Simultaneous running\\[-1ex] infima of $L$ and $R$} & $\displaystyle{1-\frac{\kappa'}{8}}$ \\    \hline
\specialcell{Double points of\\[-1ex] SLE$_{\kappa'}$, $\kappa'\in (4,8)$} & $\displaystyle{2 - \frac{(12-\kappa')(4+\kappa')}{8\kappa'}}$ & \specialcell{Composition of\\[-1ex] subordinators} & $\displaystyle{\frac{\kappa'}{8} - \frac12}$ \\ \hline  
\specialcell{Double points of\\[-1ex] SLE$_{\kappa'}$, $\kappa'>8$} & $\displaystyle{1 + \frac{2}{\kappa'}}$ &  Running infima of $L$ or $R$ & $\displaystyle{\frac12}$ \\  \hline  
\specialcell{GFF flow line\\[-1ex] intersection\\[-1ex] with angle gap $\theta$\\[1.2ex] } & \specialcell{$\displaystyle{2-\frac{1}{2\kappa}\left(\rho+\frac{\kappa}{2}+2\right)\left(\rho-\frac{\kappa}{2}+6\right)}$,\\[1.2ex] $\displaystyle{\rho  = \frac{\theta}{\pi} \left(2-\frac{\kappa}{2}\right) -2}$} & \specialcell{Composition of\\[-1ex] subordinators} & $\displaystyle{\frac12 - \frac{\rho+2}{\kappa}}$ \\ \hline  
\specialcell{CLE$_{\kappa'}$ gasket,\\[-1ex] $\kappa'\in (4,8)$} & $\displaystyle{2-\frac{(8-\kappa')(3\kappa'-8)}{32\kappa'}}$ &  \specialcell{Times not contained in any\\[-1ex] left $\pi/2$-cone interval} & $\displaystyle{\frac 12+\frac{\kappa'}{16}}$  \\ \hline  
\specialcell{$m$-tuple points of\\[-1ex] space-filling SLE$_{\kappa'}$} & $\displaystyle{\frac{(4m - 4 - \kappa'(m-2))(12 +(\kappa'-4)m)}{8 \kappa'}}$ & $(m-2)$-tuple $\tfrac{\pi}{2}$-cone times &  $d(\kappa',m)$ \\ \hline  
\end{tabular} 
\end{center}
\caption{The sets whose dimension we compute in this paper. Each row shows a set $X$ associated with SLE and a corresponding Brownian motion set $\wh X$ which is contained in $(\eta')^{-1}(X)$. The dimensions of these sets are related as in Theorem~\ref{thm-dim-relation}. See Sections~\ref{sec-dim-calc} and~\ref{sec-multiple-pt} for proofs that the dimensions in the table are as claimed. The quantity in the bottom right cell is $d(\kappa',m) = 1/2-(m-2)(\kappa'/8-1/2)$. }\label{table-dimensions}
\end{table}

The random set $X$ does not have to be measurable with respect to the space-filling $\SLE_{\kappa'}$. However, Theorem~\ref{thm-dim-relation} does not hold without the hypothesis that $X$ is independent from $h$. For example, suppose we take $X$ to be the $\gamma$-thick points of $h$ (see \cite{hmp-thick-pts} as well as Section~\ref{sec-thick-pt-dim}). The $\gamma$-quantum measure $\mu_h$ is supported on $X$~\cite[Proposition~3.4]{shef-kpz}, so since $\eta'$ is parameterized by quantum mass, we have $\dim_{\mcl H} (\eta')^{-1}(X) = 1$. On the other hand, it is shown in \cite{hmp-thick-pts} that $\dim_{\mcl H} X = 2-\gamma^2/2$, so~\eqref{eqn-dim-relation} does not hold for this choice of $X$.

\begin{figure}[ht!]
\begin{center}
\includegraphics[scale=0.92]{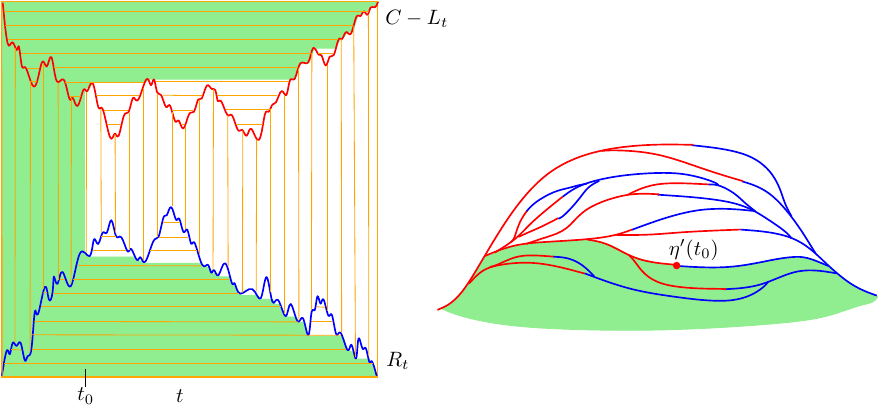}
\end{center}
\caption{\label{fig-intro1} 
The peanosphere construction of \cite{wedges} shows how to obtain a topological sphere by gluing together two correlated Brownian excursions $L,R \colon [0,1] \to [0,\infty)$ (A similar construction works when $L,R$ are two-sided Brownian motions, see \cite[Section~8.2]{wedges}). We choose $C>0$ so large that the graphs of $C-L$ and $R$ do not intersect. We then define an equivalence relation on the square $[0,1]\times [0,C]$ by identifying points which lie on the same horizontal line segment above the graph of $C-L$ or below the graph of $R$; or the same vertical line segment between the two graphs.  As explained in \cite{wedges}, it is possible to check using Moore's theorem \cite{moo25} that the resulting object is a topological sphere decorated with a space-filling path $\eta'$ where $\eta'(t)$ for $t \in [0,1]$ is the equivalence class of $(t , R_t) $.  The pushforward of Lebesgue measure on $[0,1]$ induces a so-called good measure $\mu$ on the sphere (i.e., a non-atomic measure which assigns positive mass to each open set) and $\eta'$ is parameterized according $\mu$-area, i.e., $\mu(\eta'([s,t])) = t-s$ for all $0 \leq s < t \leq 1$.  In \cite{wedges}, the resulting structure is referred to as a \emph{peanosphere} because the space-filling path $\eta'$ is the peano curve between the continuum trees encoded by $L$ and $R$.  It is shown in \cite{wedges} (infinite volume case) and in \cite{sphere-constructions} (finite volume case) that there is a measurable map which associates a pair $(L,R)$ (or equivalently the aforementioned good-measure endowed sphere together with a space-filling path) with an LQG surface decorated with an independent space-filling $\SLE$.  That is, the peanosphere comes equipped with a canonical embedding into the Euclidean sphere where the pushforward of $\mu$ encodes an LQG surface and $\eta'$ is a space-filling $\SLE$.  The embedding of the trees coded by $L$ and $R$ correspond to trees of flow lines with a common angle in an imaginary geometry \cite{ig4}.  The right side of the illustration shows a subset of the SLE-decorated LQG surface, where the green region corresponds to points that are visited by $\eta'$ before some time $t_0$, and where the two trees are embeddings of the trees with contour functions $L$ and $R$. The branches of these continuum random trees correspond to the frontier of the space-filling curve at different times, and the Brownian motions $L$ and $R$ encode the lengths of the left and right, respectively, frontier of the $\SLE$ curve. The right figure illustrates the embedding of the peanosphere for the regime when $\kappa'\in [8,\infty)$; for $\kappa'\in(4,8)$ the green region on the right figure is not homeomorphic to $\BB H$ since for this range of $\kappa'$ values, space-filling $\SLE_{\kappa'}$ is loop-forming.}
\end{figure} 

Theorem~\ref{thm-dim-relation} is in agreement with the KPZ formula~\cite{kpz-scaling} if the ``quantum dimension" of $X$ is defined to be twice the Hausdorff dimension of the Brownian motion set $\wh X$.

\begin{remark} \label{remark-bdy-dim}
Theorem~\ref{thm-dim-relation} only applies to sets associated with SLE and CLE in the interior of a domain. In order to calculate the Hausdorff dimension of sets associated with chordal or radial SLE which intersect the boundary one may apply \cite[Theorem~4.1]{rhodes-vargas-log-kpz}, which implies the following one-dimensional KPZ formula. Let $h$ be a free boundary GFF in the upper half-plane and let $\nu_h$ be the associated boundary measure. Define the quantum dimension of a set $X\subset [0,\infty)$ to be 
\eqbn
\dim_{\mcl H}(\wh X),\qquad \wh X:= \{\nu_h([0,x])\,:\, x\in X \}.
\eqen
Then it holds a.s.\ that
\eqb
\dim_{\mcl H}(X) = \left(1+\frac{\gamma^2}{4} \right) \dim_{\mcl H}(\wh X)   - \frac{\gamma^2}{4}  \dim_{\mcl H}(\wh X)^2.
\label{eqn-dimcalc-1}
\eqe
(The function $\zeta$ in the statement of \cite[Theorem~4.1]{rhodes-vargas-log-kpz} can be obtained using the scaling properties of the free boundary GFF and \cite[Proposition~2.5]{rhodes-vargas-log-kpz}.)  At the end of Section~\ref{sec-dim-calc} we will include a short example showing how one may use this theorem, combined with results of \cite{wedges}, to obtain the Hausdorff dimension of the points where a chordal SLE$_{\kappa}(\rho)$, $\kappa\in(0,8)$, in $\BB H$ intersects the real line. 
\end{remark}

We will now discuss how our version of the KPZ formula relates to other KPZ-type formulas in the literature. The results of \cite{shef-kpz} relate the expected Euclidean mass of the Euclidean $\delta$-neighborhood of a set $X$ to the expected quantum mass of the so-called quantum $\delta$-neighborhood of $X$, which is defined in terms of Euclidean balls of quantum mass $\delta>0$. The scaling exponents for these dimensions are proven to satisfy the KPZ equation. In \cite{rhodes-vargas-log-kpz, bjrv-gmt-duality, shef-renormalization}, the authors consider a notion of quantum Hausdorff dimension in terms of the quantum mass of Euclidean balls covering a set $X$ and obtain KPZ formulas relating the a.s.\ quantum Hausdorff dimension to the a.s.\ Euclidean Hausdorff dimension. The KPZ relations in the works \cite{shef-kpz,rhodes-vargas-log-kpz, bjrv-gmt-duality, shef-renormalization} all rely on the Euclidean geometry, since Euclidean balls or squares are used to cover $X$. In our formulation, by contrast, we obtain a cover of $X$ by pushing forward a cover of the time set $\wh X$ via the curve $\eta'$, hence the quantum dimension does not rely on the Euclidean geometry.  
The versions given in \cite{grv-kpz,gp-kpz} also do not rely on Euclidean geometry because the notions of quantum dimension in these papers are defined in terms of the heat kernel for the Liouville Brownian motion (see \cite{grv-lbm,berestycki-lbm,grv-heat-kernel,rhodes-vargas-criticality, jackson-lbm}) and the LQG metric, respectively, which are intrinsic to the LQG surface.  As we shall see in Section~\ref{sec-dim-calc}, however, the formulation considered here appears to be more amenable to explicit calculation.

\subsection{Background and notation}
\label{sec-intro-background}

We will now describe the basic notation and objects that we will use throughout the paper, including quantum cones, quantum wedges and space-filling $\SLE_{\kappa'}$. We refer the reader to \cite{wedges,ig4,shef-zipper} for more details. 

\subsubsection{Notation}
\label{sec-notation}

We adopt the convention of~\cite{ig1,ig2,ig3,ig4} that $\kappa$ denotes a parameter in $(0,4]$ and $\kappa' := 16/\kappa \in (4,\infty)$. If the $\kappa$-value is not restricted to either of the two intervals $(0,4]$ or $(4,\infty)$, we will simply write $\kappa$. We also use the following notation.

\begin{notation}\label{def-asymp}
If $a$ and $b$ are two quantities, we write $a\preceq b$ (resp.\ $a \succeq b$) if there is a constant $C$ (independent of the parameters of interest) such that $a \leq C b$ (resp.\ $a \geq C b$). We write $a \asymp b$ if $a\preceq b$ and $a \succeq b$. 
\end{notation}

\begin{notation} \label{def-o-notation}
If $a$ and $b$ are two quantities which depend on a parameter $x$, we write $a = o_x(b)$ (resp.\ $a = O_x(b)$) if $a/b \rta 0$ (resp.\ $a/b$ remains bounded) as $x \rta 0$ (or as $x\rta\infty$, depending on context). We write $a = o_x^\infty(b)$ if $a = o_x(b^s)$ for each $s \in\BB R$. 
\end{notation}

Unless otherwise stated, all implicit constants in $\asymp, \preceq$, and $\succeq$ and $O_x(\cdot)$ and $o_x(\cdot)$ errors involved in the proof of a result are required to satisfy the same dependencies as described in the statement of the result.

\subsubsection{Quantum cones and quantum wedges}
\label{sec-cone-prelim}

Fix $\gamma \in (0,2)$.  As explained above, a $\gamma$-LQG surface is an equivalence class of pairs $(D,h)$, where $D$ is a planar domain and $h$ is an instance of some form of the GFF on $D$, see e.g \cite{shef-kpz,shef-zipper,wedges}, where two such pairs are said to be equivalent if they are related as in~\eqref{eqn::change_of_coordinates}.  The surface $(D,h)$ is equipped with a quantum area measure that can be formally represented as $\mu_h =e^{\gamma h(z)}\,dz $(where $dz$ is Lebesgue measure) as well as a quantum length measure $\nu_h = e^{\gamma h(z)/2} dz$ (where $dz$ is Lebesgue length measure in the case when the boundary is a straight line),  which is defined on $\bdy D$ as well as on certain curves in the interior of $D$, including $\SLE_\kappa$-type curves for $\kappa  = \gamma^2$~\cite{shef-zipper}.  We refer to a particular choice of equivalence class representative $(D ,h)$ as an \emph{embedding} of the quantum surface. If $(D,h)$ and $(\wt D,\wt h)$ are equivalent and $\varphi:D\to \wt D$ is the conformal map of~\eqref{eqn::change_of_coordinates}, $\mu_{\wt h}$ is almost surely the push-forward of $\mu_{h}$ under $\varphi$, i.e., $\mu_{h}(A)=\mu_{\wt h}(\varphi(A))$ for any $A\subseteq D$ \cite[Proposition~2.1]{shef-kpz}.  A similar statement is true for~$\nu_h$.

Several types of quantum surfaces of the form $(D,h,x_1,...,x_k)$, where $x_1,...,x_k\in\overline{D}$ are additional marked points, are introduced in \cite{shef-zipper,wedges}.  Two such surfaces $(D,h,x_1,\dots ,x_k)$ and $(\wt D,\wt h, \wt x_1,\dots , \wt x_k)$ are defined to be equivalent if there exists a conformal map $\varphi \colon D \to \wt{D}$ where $\wt{h},h$ are related as in~\eqref{eqn::change_of_coordinates} and $\varphi(x_j) = \wt{x}_j$ for each $1 \leq j \leq k$.  In this paper we will mainly consider $\alpha$-quantum cones, $\alpha<Q$, which are a one-parameter family of doubly marked quantum surfaces homeomorphic to $\BB C$. In Section~\ref{sec-dim-calc} we will also need some theory of quantum wedges.  

Let $\mcl H(\BB C)$ be the Hilbert space closure modulo a global additive constant of the subspace of functions $f\in C^\infty(\BB C)$ satisfying $\|f\|^2_{\nabla}:=(f,f)_{\nabla}<\infty$, where $(f,g)_\nabla:= (2\pi)^{-1} \int_{\BB C}\nabla f\cdot \nabla g\,dz$ for $g\in C^\infty(\BB C)$ for which the integral is well-defined and finite.  Let $\mcl H_1(\BB C)\subset\mcl H(\BB C)$ be the subspace of functions that are radially symmetric about the origin, and let $\mcl H_2(\BB C)\subset\mcl H(\BB C)$ be the subspace of functions (modulo of global additive constant) which have mean zero about all circles centered at the origin. By \cite[Lemma~4.9]{wedges} we have $\mcl H(\BB C)=\mcl H_1(\BB C)\oplus \mcl H_2(\BB C)$.  Recall that a whole-plane GFF $h$ is a modulo additive constant distribution on the complex plane (i.e., a continuous linear functional defined on the subspace of functions $f\in C_0^\infty(\C)$ with $\int_{\C} f(x)dx=0$) which can be represented as $h=\sum_{n\in\N} \alpha_n f_n$, where $(\alpha_n)_{n\in\N}$ is a series of i.i.d.\ standard normal random variables, and $(f_n)_{n\in\N}$ is an orthonormal basis for $\mcl H(\C)$.

\begin{defn} \label{def-quantum-cone}
Let $\alpha\in (0,Q]$. \emph{An $\alpha$-quantum cone} is the doubly marked quantum surface $(\BB C , h , 0 , \infty)$, where $h=h^\dagger+h^0$ is a random distribution sampled as follows. The radially symmetric function $h^\dagger$ takes the value $A_s$ on $\partial B_{e^{-s}}(0)$, where $A_s=\wt B_s+\alpha s$ for $\wt B$ a standard two-sided Brownian motion, conditioned such that $A_s\geq Qs$ for $s<0$. The distribution $h^0$ is independent of $h^\dagger$, and is given by the projection of a whole-plane GFF onto $\mcl H_2(\BB C)$. 
\end{defn}

There is a two-parameter family of embeddings of a quantum cone into $(\BB C , 0 ,\infty)$ (i.e., choices of the distribution~$h$), corresponding to multiplication of $\BB C$ by a complex number. The distribution $h$ in Definition~\ref{def-quantum-cone} is one such choice.

\begin{defn}\label{def-circle-embedding}
For $\alpha \in (0,Q]$, the \emph{circle average embedding} of an $\alpha$-quantum cone is the distribution $h$ of Definition~\ref{def-quantum-cone}. 
\end{defn}
  
\begin{remark} \label{remark-cone-gff}
One of the main reasons why we are interested in the embedding of Definition~\ref{def-circle-embedding} is that under this embedding, $h |_{\BB D}$ agrees in law with the restriction to $\BB D$ of a whole-plane GFF plus $-\alpha \log |\cdot|$, with additive constant chosen so that its circle average over $\bdy \BB D$ vanishes. Indeed, this is a straightforward consequence of Definition~\ref{def-quantum-cone}.
\end{remark}

We refer to~\cite[Section~4.3]{wedges} for further details regarding quantum cones.

For $\alpha \leq Q$ (with $Q$ as in~\eqref{eqn::change_of_coordinates}), an $\alpha$-quantum wedge is a doubly marked quantum surface, which is homeomorphic $\BB H$. For $\alpha \in (Q,Q+\frac{\gamma}{2})$ an $\alpha$-quantum wedge is a Poissonian chain of finite volume doubly-marked quantum surfaces, each of which is homeomorphic to $\BB D$ and has two marked points. In the first case we say that the quantum wedge is thick, and in the second case it is thin. A thick wedge can be represented as a quantum surface $(\BB H,h,0,\infty)$, where $h=h^0+h^\dagger$ is a decomposition of $h$ into a distribution $h^0$ of mean zero on all half-circles around the origin, and a radially symmetric function $h^\dagger$.  For an $\alpha$-quantum wedge with $\alpha\leq Q$ which is parameterized by $\BB H$ one possible embedding is such that the law of $h^\dagger$ on $\BB H\cap \partial B(0,e^{-s})$ is identical to the function $A$ described above in Definition~\ref{def-circle-embedding}, except that $\wt B_s$ is replaced by $\wt B_{2s}$.

If we conformally weld the two boundaries of a quantum wedges according to quantum boundary length we obtain a quantum cone.  Conversely, we obtain a quantum wedge if we cut out a surface by considering an independent whole-plane $\SLE_{\kappa}(\rho)$ curve, $\kappa=\gamma^2$, on top of a quantum cone \cite[Theorem~1.5]{wedges}, with $\rho$ depending on the parameter $\alpha$ of the cone. We also obtain a wedge by conformally welding together multiple wedges according to quantum boundary length, and we obtain two independent wedges if we cut a wedge into two or several components by a collection of $\SLE_\kappa(\ul\rho)$ curves for certain values of $\kappa$ and $\ul\rho$. Quantum wedges and cones can be described by their weight $W=W(\alpha) $ (which is defined to be $W = \gamma (\gamma/2 +Q-\alpha)$ for a wedge and $W = 2\gamma(Q-\alpha)$ for a cone) rather than $\alpha$. The weight of the surfaces is additive under the operations of gluing/welding and cutting as described above.

\subsubsection{Space-filling SLE}
\label{sec-sle-prelim}

Here we give a moderately detailed overview of the construction and basic properties of whole-plane space-filling SLE$_{\kappa'}$ from $\infty$ to $\infty$ for $\kappa' > 4$, which was originally defined in~\cite[Section~1.4.1]{wedges}, building on the chordal definition in~\cite[Sections 1.2.3 and 4.3]{ig4}. For $\kappa' \geq 8$, whole-plane space-filling SLE$_{\kappa'}$ from $\infty$ to $\infty$ is just a certain curve from $\infty$ to $\infty$ which locally looks like an SLE$_{\kappa'}$. For $\kappa' \in (4,8)$, space-filling SLE$_{\kappa'}$ from $\infty$ to $\infty$ traces points in the same order as a curve which locally looks like SLE$_{\kappa'}$, but fills in the ``bubbles" which it disconnects from its target point with a continuous space-filling loop. 

To construct whole-plane space-filling SLE$_{\kappa'}$ from $\infty$ to $\infty$, fix a deterministic countable dense subset $\mcl C\subset \BB C$ and let $\wh h$ be a whole-plane GFF, viewed modulo a global additive multiple of $2\pi \chi$ where $\chi =  \sqrt{\kappa'}/2 - 2/\sqrt{\kappa'}$. It is shown in~\cite{ig4} that for each $z\in\mcl C$, one can make sense of the flow lines $\eta_z^L$ and $\eta_z^R$ of angles $\pi/2$ and $-\pi/2$, respectively, started from $z$. These flow lines are SLE$_\kappa(2-\kappa)$ curves for $\kappa = 16/\kappa'$~\cite[Theorem~1.1]{ig4}. The flow lines $\eta_z^L$ and $\eta_w^L$ (resp.\ $\eta_z^R$ and $\eta_w^R$) started at distinct points in $\mcl C$ eventually merge together, such that the collection of flow lines $\eta_z^L$ (resp.\ $\eta_z^R$) for $z\in\mcl C$ form the branches of a tree rooted at $\infty$. 

We define a total order on $\mcl C$ by declaring that $w$ comes after $z$ if and only if $w$ lies in a connected component of $\BB C\setminus (\eta_z^L \cup \eta_z^R)$ whose boundary is traced by the right side of $\eta_z^L$ and the left side of $\eta_z^R$. It can be shown using the same argument as in the chordal case~\cite[Section 4.3]{ig4} (or alternatively deduced from the chordal case; see~\cite[Footnote 4]{wedges}) that there is a unique space-filling curve $\eta' : \BB R\rta\BB C$ which traces the points in $\mcl C$ in order, is continuous when parameterized by Lebesgue measure, and is such that $(\eta')^{-1}(\mcl C)$ is a dense set of times. The curve $\eta'$ does not depend on the choice of $\mcl C$ and is defined to be whole-plane space-filling SLE$_{\kappa'}$ from $\infty$ to $\infty$. 

For each fixed $z\in\BB C$, it is a.s.\ the case that the left and right outer boundaries of $\eta'$ stopped at the first (and a.s.\ only) time $\tau_z$ that it hits $z$ are given by the flow lines $\eta_z^L$ and $\eta_z^R$. For $\kappa' \geq 8$, these two flow lines do not intersect so $\BB C\setminus \eta'((-\infty,t])$ for each time $t$ has the topology of the half-plane. For $\kappa' \in (4,8)$, the curves $\eta_z^L$ and $\eta_z^R$ intersect each other so $\BB C\setminus \eta'((-\infty,t])$ instead consists of a string of domains with the topology of the disk, separated by the intersection points. By~\cite[Footnote 4]{wedges}, if we condition on $\eta_z^L $ and $\eta_z^R$ (equivalently, on $\eta'((-\infty,\tau_z])$ or $\eta'([\tau_z,\infty))$, then the conditional law of $\eta'|_{[\tau_z,\infty)}$ is that of a chordal SLE$_{\kappa'}$ from 0 to $\infty$ in $\eta'([\tau_z,\infty))$ if $\kappa' \geq 8$; or a concatenation of independent chordal space-filling SLE$_{\kappa'}$ curves in the connected components of the interior of $\eta'([\tau_z,\infty))$ if $\kappa' \in (4,8)$. The conditional law of the time reversal of $\eta'|_{(-\infty,\tau_z]}$ admits a similar description. 

The curve $\eta'$ is also closely related to the SLE$_{\kappa'}(\kappa'-6)$ \emph{counterflow lines} of $\wh h$ from $\infty$ to $z$ for any given $z\in\BB C$. In particular, if we parameterize $\eta'$ by capacity as seen from $z$, so we skip all of the bubbles filled in by $\eta'$ before it hits $z$, then we a.s.\ recover the counterflow line from $\infty$ targeted at $z$. The collection of all of these counterflow lines, targeted at a countable dense set of points, forms a whole-plane branching SLE$_{\kappa'}(\kappa'-6)$ process, which can be used to construct a whole-plane CLE$_{\kappa'}$ via a whole-plane analog of the construction in~\cite{shef-cle}. Hence a whole-plane space-filling SLE$_{\kappa'}$ from $\infty$ to $\infty$ encodes a whole-plane CLE$_{\kappa'}$.

\subsection{Outline}
\label{sec-intro-outline}
Section~\ref{sec-dim-calc} gives a number of examples of $\SLE$ sets of known Hausdorff dimension, for which Theorem~\ref{thm-dim-relation} provides an alternative derivation.

Section~\ref{sec-gff-esimates} contains various SLE and GFF estimates which we will need for the proof of Theorem~\ref{thm-dim-relation}, both for the upper bound and for the lower bound. In Section~\ref{sec-sle-big} we will prove that, with high probability, any segment of a space-filling $\SLE_{\kappa'}$ curve of diameter $\ep \in (0,1)$, contains a Euclidean ball of radius $\epsilon^{1+o_\ep(1)}$. An interesting corollary of this result is that space-filling $\SLE_{\kappa'}$ is a.s.\ locally $\alpha$-H\"older continuous for any $\alpha<1/2$ when parameterized by Lebesgue measure. In Sections~\ref{sec-mu_h-big} and~\ref{sec-gff-cont} we will prove some estimates which give that the quantum mass of a small Euclidean ball is unlikely to be much smaller than what is predicted from the GFF circle average process at a nearby point. 

In Section~\ref{sec-thick-pt-dim} we prove that the dimension of the intersection of a general Borel set $A$ with the $\alpha$-thick points of a GFF $h$ is a.s.\ equal to $\dim_{\mcl H} A - \alpha^2/2$. The proof is a generalization of the argument in \cite{hmp-thick-pts}, and is used in the proof of the upper bound of Theorem~\ref{thm-dim-relation}.

Section~\ref{sec-dimH} contains the proof of Theorem~\ref{thm-dim-relation}. Unlike for most Hausdorff dimension calculations, the upper bound for $\dim_{\mcl H}(X)$ is more challenging to prove than the lower bound. Using the results of Section~\ref{sec-gff-esimates}, we obtain an estimate for the diameter of $\eta'(I)$ for $I\subset\BB R$, in terms of $\op{diam}(I)$ and the thickness of the field at a point near $\eta'(I)$. This leads to an upper bound for the dimension of the intersection of $X$ with the $\alpha$-thick points of $h$ in terms of $\dim_{\mcl H} (\wh X)$. Using the result of Section~\ref{sec-thick-pt-dim} and optimizing over $\alpha$, we obtain the upper bound in~\eqref{eqn-dim-relation}. The lower bound in~\eqref{eqn-dim-relation} is proven via a more direct argument based on moment estimates for the quantum measure along with the estimates of Sections~\ref{sec-sle-big} and~\ref{sec-mu_h-big}.  

In Section~\ref{sec-multiple-pt} we use Theorem~\ref{thm-dim-relation} to give a proof for the Hausdorff dimension of the set of $m$-tuple points of space-filling $\SLE_{\kappa'}$, which in the setting of Theorem~\ref{thm-dim-relation} correspond to the image under $\eta'$ of the so-called $(m-2)$-tuple cone times of the two-dimensional correlated Brownian motion. An $m$-tuple cone time can be described by a ``cone vector" in $\BB R^{m+1}$ consisting of $m$ cone times~\cite{shimura-cone,evans-cone} for $Z$ and the time-reversal of $Z$, where the end of one cone excursion marks the beginning of another cone excursion in the opposite direction. We obtain the Hausdorff dimension of the set of cone vectors by standard techniques, including a two-point estimate for correlations; then obtain the dimension of the set of $(m-2)$-tuple cone times (which is a subset of $\BB R$) by projection. Our result generalizes the result in \cite{evans-cone}, where the dimension of the set of single cone times is calculated. 

Finally, in Section~\ref{sec-problems}, we list some open problems related to the results of this paper.

\subsubsection*{Acknowledgements}
E.G.\ was supported by the U.S. Department of Defense via an NDSEG fellowship. N.H.\ was supported by a fellowship from the Norwegian Research Council. J.M.\ was partially supported by DMS-1204894. Part of this work was carried out during the Random Geometry semester at the Isaac Newton Institute, Cambridge University, and the authors would like to thank the institute and the organizers of the program for their hospitality. The authors would also like to thank Peter M\"orters, Scott Sheffield, and Xin Sun for helpful discussions and an anonymous referee for helpful comments on an earlier version of this paper.

\section{Examples}
\label{sec-dim-calc}

In this section we will use Theorem~\ref{thm-dim-relation} to give alternative proofs of several Hausdorff dimensions already known in the literature, in addition to a calculation of a new Brownian motion dimension for which the corresponding SLE dimension is already known. Throughout, we let $\eta'$ be a space-filling $\SLE_{\kappa'}$ on top of an independent $\gamma$-quantum cone $(\BB C,h,0,\infty)$, $\kappa'>4$. We parameterize $\eta'$ by quantum area, and let $L$ and $R$ denote the left and right boundary length process, respectively, relative to time $0$. Define $Z=(L,R)$, and recall that $Z$ has the law of a two-dimensional Brownian motion with covariances as in~\eqref{eq-multiplept-10}. Before presenting the examples we recall the definitions of cone times, cone intervals and cone excursions.
\begin{defn}
Let $\alpha\in(0,2\pi)$, $t\in\BB R$, and let $\BB v(\alpha')$ denote the unit vector in direction $\alpha'$ for any $\alpha'\in(0,2\pi)$. Then $t$ is an $\alpha$-cone time of $Z$ if there is a time $s>t$ such that, for each $s'\in[t,s]$, there exists $r_{s'} \in [0,\infty)$ and an angle $\alpha_{s'}\in[0,\alpha]$, satisfying $Z_{s'}=Z_t+r_{s'}\BB v(\alpha_{s'})$. If we do not specify an angle $\alpha$, we will assume $\alpha=\frac{\pi}{2}$, i.e., a cone time is a $\frac{\pi}{2}$-cone time. 
\label{def-conetime}
\end{defn}
For a cone time $t$ of $Z$ we define the function $v$ by
\eqb \label{eqn-cone-exit-time}
v(t) = \inf\{s>t\,:\,R_s<R_t\text{ or }L_s<L_t\}.
\eqe
The interval $(t,v(t))$ is called a \emph{cone interval} and $Z|_{(t,v(t))}$ is called a \emph{cone excursion}. We say that $t$ is a right cone time for $Z$ if $R_{v(t)}> R_t$ (equivalently, $L_{v(t)}=L_t$), and we say that $t$ is a left cone time for $Z$ if $L_{v(t)}>L_t$ (equivalently, $R_{v(t)}=R_t$). 

The Hausdorff dimension of $\SLE_\kappa$ was first calculated in \cite{schramm-sle} and \cite{beffara-dim}. For $\kappa\in(0,4)$ we obtain an alternative proof by using that the boundary of $\eta'([0,\infty))$ has the law of an $\SLE_\kappa$ curve.

\begin{example} 
The Hausdorff dimension of an $\SLE_{\kappa}$ curve for $\kappa\in(0,4)$ is a.s.\ equal to $1+\frac{\kappa}{8}$.
\label{prop-dimcalc-1}
\end{example}
\begin{proof}
Let $\kappa' := 16/\kappa$. If we stop the space-filling $\SLE_{\kappa'}$ process $\eta'$ upon reaching $0$, the boundary of the already traced region is given by two flow lines of a whole-plane GFF with angle gap $\pi$, see \cite[Footnote~4]{wedges}. The marginal law of each of these flow lines is that of a whole-plane $\SLE_\kappa(2-\kappa)$, see \cite[Theorem~1.1]{ig4}.  If $\eta'$ is parameterized by quantum mass, the times at which $\eta'$ traces the left and right boundaries of $\eta'([0,\infty))$ correspond exactly to the running infima of the left and right boundary length processes $L$ and $R$, respectively, relative to time $0$. This time set has Hausdorff dimension $1/2$ \cite[Theorem~4.24]{peres-bm}.  Combining with Theorem~\ref{thm-dim-relation} we see that a whole-plane $\SLE_\kappa(2-\kappa)$ curve a.s.\ has Hausdorff dimension $1+\frac{\kappa}{8}$. In fact, the same argument shows that a.s.\ every non-trivial segment of a whole-plane $\SLE_\kappa(2-\kappa)$ curve has Hausdorff dimension $1+\frac{\kappa}{8}$. By local absolute continuity of $\SLE_\kappa(2-\kappa)$ and $\SLE_\kappa$ away from the self-intersection times of the former~\cite[equation (9)]{sw-coord}, an ordinary radial or whole-plane $\SLE_\kappa$ also has a.s.\ Hausdorff dimension $1+\frac{\kappa}{8}$, and using local absolute continuity again~\cite[Theorems 3 and 6]{sw-coord} we deduce the same result for ordinary chordal $\SLE_\kappa$.
\end{proof}

The Hausdorff dimension of $\SLE_{\kappa'}$, $\kappa'\in(4,8)$, is obtained by using that $\SLE_{\kappa'}$ corresponds to the so-called \emph{ancestor free times} $(t(s))_{s\geq 0}$ of $Z$.  A time $s\geq 0$ is ancestor free if it is not contained in any $\pi/2$-cone interval for $Z$ which is contained in $[0,\infty)$. In other words, $s$ is ancestor free if there is no $t\in[0,s)$ such that $L_u \geq L_t$ and $R_u \geq R_t$ for all $u\in (t,s]$.

\begin{example}
The Hausdorff dimension of an $\SLE_{\kappa'}$ curve for $\kappa'\in(4,8)$ in $\BB C$ or in a domain whose boundary has dimension at most $1 + \frac{\kappa'}{8}$ is a.s.\ equal to $1+\frac{\kappa'}{8}$.  
\label{prop-dimcalc-2}
\end{example}

\begin{proof}
Consider the space-filling $\SLE_{\kappa'}$ $\eta'$ described above. Let $(t(s))_{s\geq 0}$ denote the inverse of the local time at the ancestor free times of $(L,R)$ relative to $t=0$, as described in \cite[Section~1.4.2]{wedges}. Conditioned on $\eta'([0,\infty))$ the law of $(\eta'(t(s)))_{s\geq 0}$ is that of a concatenation of independent $\SLE_{\kappa'}(\kappa'/2-4 ;\kappa'/2-4)$ processes in each of the bubbles (connected components of the interior) of $\eta'([0,\infty))$, see \cite[Lemma~10.4]{wedges}. For any such bubble $D$ let $D_n\subset D$ consist of the points in $D$ at distance at least $1/n$ from $\partial D$. By local absolute continuity~\cite{sw-coord}, a.s.\ the intersection of the $\SLE_{\kappa'}(\kappa'/2-4 ; \kappa'/2-4)$ with $D_n$ for sufficiently large $n$ has the same Hausdorff dimension as the intersection of an ordinary $\SLE_{\kappa'}$ curve with a sub-domain at positive distance from the boundary of its domain. Since $\partial D$ has Hausdorff dimension $1+\frac{2}{\kappa'}<1+\frac{\kappa'}{8}$ by Example \ref{prop-dimcalc-1}, it follows that it is sufficient to prove that the image of $(\eta'(t(s)))_{s\geq 0}$ has dimension $1+\frac{\kappa'}{8}$. By \cite[Proposition~1.13]{wedges}, the processes $(L_{t(s)})_{s\geq 0}$ and $(R_{t(s)})_{s\geq 0}$ are independent $\kappa'/4$-stable processes. Hence the range of $(L_{t(s)},R_{t(s)})_{s\geq 0}$ has dimension $\kappa'/4$; see the discussion right after \cite[Theorem~1.2]{pt69}. Kaufman's theorem \cite[Theorem~9.28]{peres-bm} implies that the corresponding time set for $(L,R)$ has dimension $\kappa'/8$.  An application of Theorem~\ref{thm-dim-relation} completes the proof. 
\end{proof}

The dimension of the cut points and double points of $\SLE_{\kappa'}$ for $\kappa' \in (4,8)$ were first calculated in \cite{miller-wu-dim}. Recall that the set of cut points of a curve $\eta$ is the set $\{\eta(t)\,:t\in(0,\infty),\,\eta((0,t))\cap\eta((t,\infty))=\emptyset\}$.  The set of local cut points of a curve $\eta$ parameterized by $\R_+$ is the set $\{\eta(t) \,: t > 0, \exists s > 0,\, \eta((t-s,t)) \cap \eta((t,t+s)) = \emptyset\}$.  The set of cut points of $\eta'|_{[t,\infty)}$ for $t \in\BB R$ is contained in the set of points where the left and the right boundaries of $\eta'([t,\infty))$ meet in the manner described in Figure \ref{fig-cutdouble}, which in turn corresponds to the set of times when both $L$ and $R$ are at a simultaneous running infimum relative to time $t$.
 
\begin{example}
The Hausdorff dimension of the sets of cut points and local cut points of a chordal, radial, or whole-plane $\SLE_{\kappa'}$ for $\kappa'\in(4,8)$ are each a.s.\ equal to $3-\frac{3}{8}\kappa'$.
\label{prop-dimcalc-3}
\end{example}
\begin{proof}
Let $(t(s))_{s\geq 0}$ be defined as in the proof of Example~\ref{prop-dimcalc-2}. Let $\wh\eta'$ be the curve from $\infty$ to $0$ which is equal to the time-reversal of $(\eta'(t(s)))_{s\geq 0}$, which has the law of a whole-plane SLE$_{\kappa'}(\kappa'-6)$; see Section~\ref{sec-sle-prelim} and~\cite[Theorem 1.20]{ig4}.

We will first prove that the set of local cut points of $\wh\eta'$  a.s.\ has dimension $3-3\kappa'/8$. The set of local cut points of $\wh\eta'$ is contained in the union over all $t \in \BB Q \cap [0,\infty)$ of the set of points where the left boundary of $\eta'([t,\infty))$ hits the right boundary of $\eta'([t,\infty))$ on the left-hand side (except for the point $\eta'(t)$ itself) and contains this set for $t=0$. By countable stability of Hausdorff dimension and translation invariance, it suffices to show that the dimension of the set of points where the left and right boundaries of $\eta'([0,\infty))$ intersect in this manner a.s.\ has dimension $3-3\kappa'/8$. 

The pre-image of this set under $\eta'$ (parameterized by quantum area) is equal to the set of times when the correlated Brownian motions~$L$ and~$R$ attain a simultaneous running infimum relative to time~$0$. A simultaneous running infimum of $L$ and $R$ is the same as a $\pi/2$-cone time $t$ for the time-reversal of $(L,R)$ with the property that~$0$ is contained in the corresponding cone interval. By~\cite[Theorem~1]{evans-cone} (c.f.\ the proof of~\cite[Lemmas~8.5]{wedges}), it follows that the Hausdorff dimension of this set of times is a.s.\ $1-\kappa'/8$; in fact, the same is a.s.\ true of its intersection with $[0,s]$ for any $s>0$. By applying Theorem~\ref{thm-dim-relation}, we obtain that the dimension of this set of intersection points of the boundaries of $\eta'|_{[0,\infty)}$, and hence also the set of local cut points of $\wh\eta'$, is given by $3-3\kappa'/8$; in fact the same is a.s.\ true for the set of local cut points of every non-trivial segment of $\wh\eta'$.

We will now argue by local absolute continuity that the a.s.\ dimension of the set of local cut points of whole-plane, chordal, or radial SLE$_{\kappa'}$ is the same as the a.s.\ dimension of the set of local cut points of $\wh\eta'$. We have local absolute continuity of the curves when we do not consider points at which the curves hit their domain boundary or their past~\cite{sw-coord}, and to conclude it is sufficient to argue that the cut point dimension of the curves does not decrease if we remove points of this kind. 
Choral SLE$_{\kappa'}$ a.s.\ does not have global cut points which intersect the domain boundary, since the left boundary of the SLE$_{\kappa'}$ has the law of an SLE$_\kappa(\kappa-4;\kappa/2-2)$ \cite[Theorem~1.4]{ig4}, which implies by \cite[Lemma~15]{dubedat-duality} that the left boundary of the SLE$_{\kappa'}$ a.s.\ does not hit the right domain boundary. It follows from reversibility~\cite{ig2,ig3} and the domain Markov property that chordal SLE$_{\kappa'}$ cannot have any local cut points which are also multiple-points or which intersect the domain boundary. By local absolute continuity, the same result follows for the radial and whole-plane cases, i.e., a radial or whole-plane SLE does not have cut points which are also multiple-points or which intersect the domain boundary. The curve $\wh\eta'$ can have a local cut point which is also a multiple point, but by local absolute continuity with respect to ordinary SLE$_{\kappa'}$ away from the times it interacts with its force point, it has to wrap around the origin between the first time it hits the point and the time when it has a local cut point there,  
so since any non-trivial segment of $\wh\eta'$ has the same local cut point dimension of $3-3\kappa'/8$, the set of local cut points of this kind does not have a larger dimension than the set of local cut points which are not multiple points.

We now argue that the dimension of the set of global cut points is also a.s.\ equal to $3-3\kappa'/8$. 
By the conformal Markov property and transience of SLE$_{\kappa'}$, for any $t_0\in\R$ and $\ep > 0$, if we condition on the initial segment $\wt\eta'|_{t\leq t_0}$ of a chordal, radial or whole-plane SLE$_{\kappa'}$ curve $\wt\eta'$, the global cut points for $\wt\eta'|_{t\leq t_0}$ which lie at distance at least $\ep$ from $\wt\eta'(t_0)$ will be global cut points for $\wt\eta'$ with positive probability. This implies that for any $\zeta>0$ the global cut points of $\wt\eta'$ have dimension at least $3-\frac{3}{8}\kappa'-\zeta$ with positive probability. Here we subtract a small parameter $\zeta$ since the dimension of the cut points of $\wt\eta'([0,t_0])$ might be slightly larger than the dimension of the cut points of $\wt\eta'([0,t_0])\setminus B_\ep(\wt\eta'(t_0))$. By scale invariance, for (say) a chordal SLE$_{\kappa'}$ in $\BB H$ from 0 to $\infty$ and for any $r>0$, the probability that the intersection of the set of global cut points of the curve with $B_r(0)$ has dimension $\geq 3-\frac{3}{8}\kappa'-\zeta$, is independent of $r$. If $\wh h$ is a GFF whose imaginary geometry counterflow line~\cite{ig1} is our given SLE$_{\kappa'}$ curve, then the sigma algebra $\cap_{r>0} \sigma(\wh h |_{B_r(0)})$ is trivial (see, e.g.,~\cite[Proposition~3.2]{ig1}).

Since the chordal SLE$_{\kappa'}$ is locally determined by $\wh h$ this implies that the global cut point dimension is $\geq 3-\frac{3}{8}\kappa' -\zeta$ almost surely. Sending $\zeta\rta 0$ we get that the global cut point dimension of a chordal SLE$_{\kappa'}$ is $3-\frac{3}{8}\kappa'$. A similar argument using GFF tail triviality works in the case of whole-plane SLE, and the radial case follows from the whole-plane case.  
\end{proof}

\begin{figure}[ht!]
\begin{center}
\includegraphics[scale=0.92]{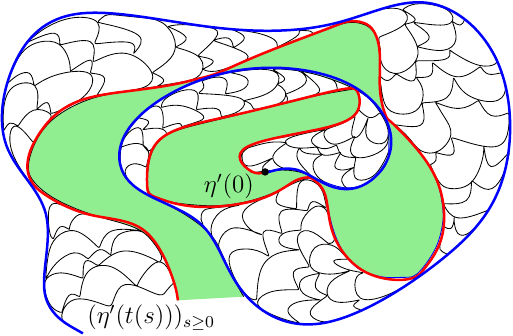}
\end{center}
\caption{\label{fig-cutdouble} Illustration of Examples~\ref{prop-dimcalc-2} and~\ref{prop-dimcalc-3}. The figure shows the west-going (resp.\ east-going) flow line from the origin in red (resp.\ blue), and the region $\eta'((-\infty,0])$ is shown in green. The counterflow line $(\eta'(t(s)))_{s\geq 0}$, whose marginal law is a whole-plane $\SLE_{\kappa'}(\kappa'-6)$, is shown in black. The cut points of the counterflow line are contained in the set of points where the west-going flow line from the origin hits the east-going flow line from the origin on the left-hand side. These points of intersection correspond to simultaneous running infima of $L$ and $R$ relative to time $0$.}
\end{figure}

\begin{example}
\label{prop-dimcalc-4}
The Hausdorff dimension of the double points of $\SLE_{\kappa'}$ is a.s.\ equal to
\eqb \label{eqn-dimcalc-4}
2-\frac{(12-\kappa')(4+\kappa')}{8\kappa'} \quad\text{for}\quad \kappa'\in(4,8) \quad\text{and}\quad
1+\frac{2}{\kappa'} \quad\text{for}\quad \kappa'\geq 8.
\eqe
\end{example}
\begin{proof}
For $\kappa'\geq 8$ the set of double points of SLE$_{\kappa'}$ has the same dimension as the boundary of $\eta'([0,\infty))$, since, conditioned on $\eta'((-\infty,0])$, $(\eta'(t))_{t\geq 0}$ has the law of a chordal SLE$_{\kappa'}$. It follows that the set of double points has dimension $1+2/\kappa'$, since the left and right boundaries of $\eta'([0,\infty))$ marginally each have the law of a whole-plane $\op{SLE}_{\kappa}(2-\kappa)$ from $0$ to $\infty$, $\kappa=16/\kappa'$.

Now assume $\kappa' \in (4,8)$. Let $U$ be a connected component of $\C\setminus\eta'((-\infty,0])$, chosen in a manner which does not depend on $h$, and let $z_1$ (resp.\ $z_2$) be the first (resp.\ last) point on $\partial U$ visited by $\eta'$ after time 0. Recall from Section~\ref{sec-sle-prelim} that the conditional law given $\eta'((-\infty,0])$ of the segment of $\eta'$ contained in $\ol U$ is that of a space-filling chordal SLE$_{\kappa'}$ from $z_1$ to $z_2$. Let $\wh\eta'$ be the curve obtained by skipping the bubbles filled in by this segment of $\eta'$, which is an ordinary SLE$_{\kappa'}$ from $z_1$ to $z_2$ in $U$ (the curve $\wh\eta'$ can be obtained by skipping the times contained in reverse $\pi/2$-cone excursions; see right before Example \ref{prop-dimcalc-gasket} for the definition). 

Since $\wh\eta'$ is obtained by skipping the bubbles filled in by $\eta'$ during a certain interval of times, we find that if $s_1 < s_2$ are such that $z_0:=\wh\eta'(s_1) = \wh\eta '(s_2)$, then there exists $t_0 \in\BB Q \cap [0,\infty)$ such that the following is true. There is a connected component $U_{t_0}$ of $\BB C\setminus \eta'((-\infty,t_0])$, such that if $\tau_{t_0}$ is the first time that $\wh\eta' $ enters $U_{t_0}$, then $s_1 < \tau_{t_0} < s_2$ and $z_0$ is a point of intersection between the chordal SLE$_{\kappa'}$ in the bead $U_{t_0}$ and the boundary of the domain (in particular, $U_{t_0}$ is the connected component of $U\setminus \wh\eta'([0,\tau_{t_0}])$ with $z_2$ on its boundary). Furthermore, there is a closed arc of $\bdy U_{t_0}$ containing the initial point $\wh\eta'(\tau_{t_0})$ such that every intersection point of the chordal SLE$_{\kappa'}$ with this arc is a double point of $\wh\eta'$; the reason this property does not hold for all intersection points between the chordal SLE$_{\kappa'}$ and $\bdy U_{t_0}$ is that some of these intersection points will be contained in $\bdy U$. 

Since $\eta'(\cdot-t_0) \eqD \eta'$ modulo rotation and scaling~\cite[Theorem~1.9]{wedges}, to show that the double point dimension of $\wh\eta'$ is a.s.\ given by~\eqref{eqn-dimcalc-4}, it suffices to show that the dimension of the intersection of $\wh\eta'$ with any non-trivial arc of $\bdy U$ containing $z_1$ is a.s.\ given by~\eqref{eqn-dimcalc-4}. 

Let $\psi : [0,\infty) \rta \BB C$ be the parameterization of the left boundary of $\eta'((-\infty,0])$ with $\psi(0) = 0$ and such that for each $0 \leq u < v$ we have that the quantum length of the segment from $\psi(u)$ to $\psi(v)$ is equal to $v-u$.  Let $a>0$ be such that $\psi(a)$ is where the left boundary of $\eta'((-\infty,0])$ first hits $\partial U$.  Let $\varphi \colon \R \times [0,\pi] \to U$ be the unique conformal transformation which takes $-\infty$ (resp.\ $+\infty$) to the initial (resp.\ terminal) point of $\wh{\eta}'$ such that the field $\wh{h} = h \circ \varphi + Q\log |\varphi'|$ on $\R \times [0,\pi]$ has the horizontal translation chosen so that the supremum of its projection onto the space of functions which are constant on vertical lines is hit at $u=0$.  For each $M\in\R$, let $A_M = \psi^{-1}(\wh{\eta}' \cap \varphi((-\infty,M]\times\{\pi\}))$.  Let $(\wt L_t,\wt R_t)_{t\in\R}$ be the time-reversal of $(R_t, L_t)_{t\in\BB R}$, so $(\wt L_t,\wt R_t)_{t\in\R}$ is the pair of Brownian motions encoding the time-reversal of $\eta'$ on top of the independent quantum cone.  
If we define 
\eqbn
\wh X = \{t\geq 0\,:\,\wt R_t=\inf_{s\in[0,t]}\wt R_s,\  \varphi^{-1}(\wh\eta'(t)) \in (-\infty,M]\times \{\pi \} \}
\eqen
then $\eta'(\wh X) = \psi(A_M)$. 

We claim that for any $M>0$ the law of the set $A_M-a$ is absolutely continuous with respect to the law of the range of a stable subordinator of index $\kappa'/4-1$ stopped at some positive time. To see this, we first describe the law of the triple $(U , h|_U , \wh\eta')$ viewed as a curve-decorated quantum surface (i.e., modulo conformal maps). Let $U_{\mcl Q}$ be the first bead of $\eta'([0,\infty))$ such that the sum of the quantum masses of the previous beads (including $U_{\mcl Q}$) is at least 1 and let $\wh\eta'_{\mcl Q}$ be the associated chordal SLE$_{\kappa'}$ curve between its marked points. Since $U_{\mcl Q}$ is chosen in a manner which does not depend on the particular embedding of the quantum surface parametrized by $\eta'([0,\infty))$ into $\BB C$, it follows that the conditional law of the curve-decorated quantum surface $(U_{\mcl Q} , h|_{U_{\mcl Q}} , \wh\eta'_{U_{\mcl Q}})$ given its quantum area and boundary length is that of a single bead of a weight-$2-\gamma^2/2$ quantum wedge decorated by an independent chordal SLE$_{\kappa'}$ curve between its marked points. Since $U$ is independent from $h$, it a.s.\ holds with positive conditional probability given $\eta'$ (viewed modulo monotonte parametrization) and $(U , h|_U , \wh\eta')$ that $ (U_{\mcl Q} ,\wh\eta'_{\mcl Q}) = (U,\wh\eta')   $. Hence the law of $(U , h|_U , \wh\eta')$ is absolutely continuous with respect to the law of $(U_{\mcl Q} , h|_{U_{\mcl Q}} , \wh\eta'_{U_{\mcl Q}})$. 

The law of the left and the right boundary length process for $\wh\eta '$ when $\wh\eta'$ is parameterized by quantum natural time and run until it exits $\varphi( (-\infty,M]\times[0,\pi] )$ is absolutely continuous with respect to a $\kappa'/4$-stable L\'evy process with only negative jumps stopped at a certain time by the preceding paragraph,~\cite[Corollary~1.19]{wedges}, and since when mapping $U$ to the strip as above, the law of the restriction of the field to $(-\infty,M]\times[0,\pi]$ is absolutely continuous with respect to the analogous restricted field for a thick quantum wedge of weight $\frac{3\gamma^2}{2}-2$ \cite[Section~4.4 and Footnote~1]{wedges}. Hence our claim follows by \cite[Lemma~VIII.1]{bertoin-book}.

The set of times $t\geq 0$ when $\wt R_t=\inf_{s\in[0,t]}\wt R_s$ has the law of the range of a stable subordinator of index $1/2$. Hence the law of $\wh X$ is absolutely continuous with respect to the law of the range of the composition of two (not necessarily independent) subordinators of index $1/2$ and $\kappa'/4-1$, respectively. By the uniform dimension transformation result for subordinators~\cite[Theorem 4.1]{hawkes-uniform}, a.s.\ $\dim_{\mcl H}(\wh X)=\kappa'/8-1/2$. By Theorem~\ref{thm-dim-relation}, a.s.\ the dimension of the intersection of $\wh\eta'$ with any non-trivial segment of $\bdy U$ is given by~\eqref{eqn-dimcalc-4}. 
Recalling the argument at the beginning of the proof, a.s.\ the set of double points of $\wh\eta'$ is a.s.\ given by~\eqref{eqn-dimcalc-4}. Since chordal SLE$_{\kappa'}$ a.s.\ does not have any boundary double points~\cite[Remark 5.3]{miller-wu-dim}, the double point dimension for other types of SLE$_{\kappa'}$ is obtained via local absolute continuity, as in the preceding examples.
\end{proof}

\begin{figure}[ht!]
\begin{center}
\includegraphics[scale=0.92]{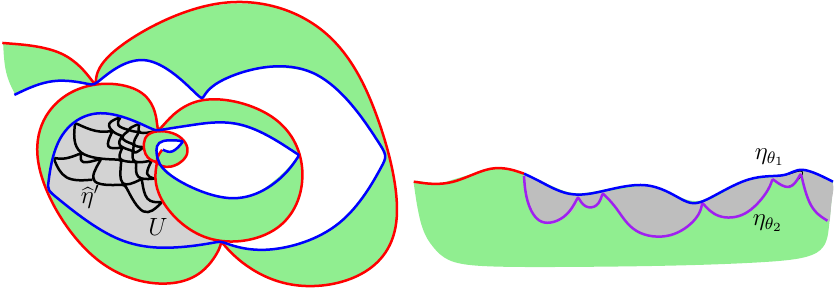}
\end{center}
\caption{The left and right figure illustrate Example~\ref{prop-dimcalc-4} and Example~\ref{prop-dimcalc-6}, respectively. Both figures illustrate $\eta'$ stopped at time zero, with the region $\eta'((-\infty,0])$ shown in green.  For $\kappa' \in (4,8)$, the $\SLE_{\kappa'}$ double points have the same Hausdorff dimension as the points of intersection between the chordal $\SLE_{\kappa'}$ $\wh\eta'$ in $U$ and the left frontier of $\eta'((-\infty,0])$. The intersection points between the two GFF flow lines on the right figure are calculated by using that the gray region between the curves is a thin quantum wedge.} 
\label{fig-examples}
\end{figure}

The dimension of the intersection points of two GFF flow lines (in the sense of \cite{ig1}) was first obtained in \cite[Theorem~1.5]{miller-wu-dim}. In our calculation below we assume the flow lines are generated from the same GFF as $\eta'$, and we assume without loss of generality that the first flow line is given by the right boundary of $\eta'([0,\infty))$.  As we will explain just below, we can sample from the law of the flow line intersection points by considering a Bessel process $Y$ which encodes the quantum wedge which lies between the two flow lines. The Bessel process $Y$ that we consider has a different dimension than the Bessel process in \cite[Section~4.4]{wedges}, since the excursions of $Y$ encode quantum boundary lengths rather than quantum areas. We derive the dimension of $Y$ in the following lemma.

\begin{lem}
Let $\mcl W$ be a (thin) quantum wedge of weight $W\in(0,\gamma^2/2)$. There is a Bessel process $Y$ of dimension $d=4W/\gamma^2$ with the following property. The ordered sequence of quantum disks of $\mcl W$ correspond to the ordered sequence of the excursions of $Y$ from $0$, and the right quantum boundary length of each quantum disk is identical to the length of the corresponding excursion of $Y$ from $0$.
\label{prop-dimcalc-7}
\end{lem}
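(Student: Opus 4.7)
The plan is to combine the Poisson point process description of a thin quantum wedge with It\^o's excursion theory for Bessel processes. By \cite[Section~4.4]{wedges}, the thin wedge $\mcl W$ of weight $W \in (0, \gamma^2/2)$ is a Poissonian chain of doubly-marked quantum disks, i.e., a Poisson point process on $(0,\infty) \times \mcl D$ (where $\mcl D$ denotes the space of doubly-marked quantum disks) with an explicit intensity. Using the scaling $h \mapsto h + c$, which multiplies quantum boundary length by $e^{\gamma c/2}$ and quantum area by $e^{\gamma c}$, together with the Poissonian structure, the ordered right quantum boundary lengths of the disks in $\mcl W$ form a Poisson point process on $(0, \infty)$ whose intensity is a pure power law of the form $c_{W,\gamma}\, \ell^{-(2 - 2W/\gamma^2)} \, d\ell$.

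From It\^o's excursion theory, for $d \in (0,2)$ a $d$-dimensional Bessel process $Y$ is recurrent and its excursion lengths from $0$, listed in order of local time at $0$, form a Poisson point process whose intensity is proportional to $\ell^{-(2 - d/2)} \, d\ell$. Equating this exponent with the one above yields $d = 4W/\gamma^2$, and the range $d \in (0,2)$ corresponds exactly to $W \in (0, \gamma^2/2)$. One then constructs $Y$ by It\^o synthesis from the Poisson point process of (length, excursion shape) pairs appearing in $\mcl W$: concatenate standard dimension-$d$ Bessel excursions whose lengths are the right boundary lengths $\ell_i$, in the order determined by the Poissonian parameter of $\mcl W$. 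Because both orderings, that of disks in $\mcl W$ and that of excursions of $Y$ by local time at $0$, are driven by $(1 - d/2)$-stable subordinators with matching intensities, the resulting process is a Bessel process of dimension $d = 4W/\gamma^2$ whose excursion lengths from $0$, in their natural order, coincide with the right quantum boundary lengths of the disks of $\mcl W$, in their natural order.

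The main obstacle is the careful derivation of the boundary length intensity in the thin wedge starting from the definition in \cite{wedges}: one must correctly identify which boundary arc of each doubly-marked disk is the ``right'' arc (an orientation convention inherited from the Poissonian chain), and one must normalize the Poisson parameter of the chain against the local time of $Y$ so that the two stable subordinators agree exactly rather than merely up to a deterministic constant. Both issues can be handled by invoking the invariance of the thin wedge under the natural rescaling action on the Poisson parameter together with the scaling properties of the dimension-$d$ Bessel process, which together pin down both the power law of the intensity and the normalization of local time.
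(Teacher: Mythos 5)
Your approach is genuinely different from the paper's, and it would work in principle, but the key computation is asserted rather than derived, and the gap is not trivial.

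The paper does not argue via scaling. It takes the radial part $\wt X^e=2\gamma^{-1}\log(e)$ of the field on each disk (where $e$ is an excursion of the dimension-$\wt d=1+2W/\gamma^2$ Bessel process $\wt Y$ from \cite[Definition~4.13]{wedges}), observes that the conditional expectation of the right boundary length given the radial part is proportional to $\int\exp(\gamma\wt X^e_t/2)\,dt$, and then shows directly that reparameterizing $\exp(\gamma X^e_t/2)$ by quadratic variation yields an excursion of a Bessel process of dimension $d=2\wt d-2$. The identification of $d$ is checked via the Poisson point process of excursion maxima, using $e^* = (\wt e^*)^{1/2}$ to convert the intensity $u^{\wt d-3}\,du$ into $u^{2\wt d-5}\,du = u^{d-3}\,du$. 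Finally the passage from conditional-expected to actual boundary lengths is handled via the argument of \cite[Proposition~4.16]{wedges}. Your route --- scaling covariance of the disk intensity forces a power law for the boundary-length marginal, then It\^o synthesis --- is also valid and perhaps conceptually cleaner, and avoids the conditional-expectation detour entirely.

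However, the step you describe as following from ``the scaling $h\mapsto h+c$ ... together with the Poissonian structure'' is exactly the nontrivial content and you have not supplied it. Scaling covariance alone tells you the marginal intensity is a pure power $\ell^{-\alpha}\,d\ell$, but it does not determine $\alpha$: to pin it down you must know how the Poisson parameter (the local time variable of $\wt Y$) rescales under $h\mapsto h+c$. That rescaling is exactly the quantity encoded in the dimension $\wt d = 1+2W/\gamma^2$ of $\wt Y$, and extracting $\alpha = 2-2W/\gamma^2$ from it requires comparing against the known intensity of the area marginal (which equals the excursion-length intensity of $\wt Y$, namely $\propto A^{-(3/2-W/\gamma^2)}\,dA$) together with the differing scaling exponents $e^{\gamma c}$ for area versus $e^{\gamma c/2}$ for boundary length. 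You name the derivation of the intensity as your ``main obstacle'' but then claim it is resolved ``by invoking the invariance of the thin wedge under the natural rescaling action,'' which is circular at this level of detail: invariance gives the functional equation, not the exponent. Filling in this one computation would make your proof complete and would be shorter than the paper's; as written, the load-bearing identity is asserted rather than established.
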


\begin{proof}
By \cite[Definition~4.15]{wedges}, $\mcl W$ is a Poissonian chain of beads, corresponding to the ordered sequence of excursions from $0$ of a Bessel process $\wt Y$ of dimension $\wt d=1+2W/\gamma^2$, such that each surface can be parameterized as follows. Let $\mcl S=\BB R\times[0,\pi]$, and for each excursion $e$ of $\wt Y$, let $(\mcl S,h^e)$ be a parameterization of the corresponding bead of $\mcl W$. The distribution $h^e$ is given by $h^e=h^{\dagger}+h^{0}$, where $h^{\dagger}((t,u))=\wt X^e_t$ for all $(t,u)\in\mcl S$, $\wt X^e_t$ is equal to the reparameterization  of $2\gamma^{-1}\log(e)$ to have quadratic variation $2dt$, and $h^{0}$ is independent of $h^\dagger$ and equal in law to the projection of a free boundary GFF onto the space of distributions with mean $0$ on each vertical line.  Let $a=W/\gamma-\gamma/2$. If we take the horizontal translation so that $\wt X^e$ reaches its supremum at $t=0$, then by \cite[Propositions 3.4 and 3.5, Remark~3.7]{wedges}, we have $\wt X^e_t=\wt B^{e,1}_{2t}+at$ for $t\geq 0$, and $\wt X^e_t=\wt B^{e,2}_{-2t}-at$ for $t<0$, where $\wt B^{e,1}$ and $\wt B^{e,2}$ are two independent Brownian motions started from $\wt b^e:=2\gamma^{-1}\log(\sup(e))$, conditioned on $\wt X^e_t\leq \wt b^e$ for all $t\in \BB R$.  (We note that since $a < 0$, this conditioning can be made sense of as in \cite[Remark~4.4]{wedges}.)

The conditional expectation of the right quantum boundary length of $(\mcl S,h^e)$ given $(\wt X^e_t)_{t\in \BB R}$ is proportional to
\begin{equation}
\int_{\BB R} \exp(\gamma \wt X^e_t/2)\,dt 
=4\int_{\BB R} \exp(\gamma X^e_t)\,dt,
\label{eqn-dimcalc-5}
\end{equation}
where $X^e_t=B^{e,1}_{2t}+2at$ for $t\geq 0$ and $X^e_t=B^{e,2}_{-2t}-2at$ for $t<0$, and $B^{e,1}$ and $B^{e,2}$ are two independent Brownian motions started from $b^e:=\wt b^e/2$, conditioned on $ X_t^e\leq b^e$ for all $t\in \BB R$. 

We will argue that we obtain a Bessel process of dimension $d:=2\wt d-2=\frac{4W}{\gamma^2}$ if we reparameterize $\exp(\gamma X^e_t/2)$ to have quadratic variation $2dt$ for each $e$, and concatenate the resulting excursions in the order given by~$\wt Y$. By~\eqref{eqn-dimcalc-5} and \cite[Remark~4.16]{wedges} this will imply that the collection of conditional expected right boundary lengths of $\mcl W$, given the projection of each $h^e$ onto the space of functions that are constant on $\{t\}\times[0,\pi]$ for all $t\in\BB R$, has the law of the excursion lengths of a Bessel process of dimension $d$. By the same argument as in the proof of \cite[Proposition~4.18]{wedges}, this implies that there is a $d$-dimensional Bessel process $Y$ such that the length of an excursion $e$ is equal to the actual quantum boundary length of the corresponding surface, hence we can conclude the proof of the lemma.

By \cite[Proposition~3.4, Lemma~3.6, and Remark~3.7]{wedges} we would obtain a Bessel process of dimension $d$ by the above procedure, given that the collection of maxima $b^e$ of the processes $X^e_t$ has the right law, since we know that the drift $\pm 2a$ of $X_t^e$ corresponds to a Bessel process of dimension $d$; see \cite[Table~1.1]{wedges}. Given an excursion $e$ define $\wt e^*:=\sup(e)$, and note that $e^*:=(\wt e^*)^{1/2}$ is the maximum of the excursion obtained by reparameterizing $\exp(\gamma X^e_t/2)$. By \cite[Remark~3.7]{wedges} the law of $\wt e^*$ can be described by considering a Poisson point process of intensity $ds\otimes u^{\wt d-3}\,du$, where $ds$ and $du$ denote Lebesgue measure on $\R_+$. A realization of the Poisson point process is a collection of points $(s,\wt e^*)$, where the second coordinate gives the maximum value of a Bessel excursion, and the Bessel excursions are ordered chronologically by the first coordinate. The collection of points $(s,e^*)=(s,(\wt e^*)^{1/2})$ has the law of a Poisson point process of intensity proportional to $ds\otimes u^{2\wt d-5}\,du=ds\otimes u^{d-3}\,du$, hence our wanted result follows.
\end{proof}

\begin{example}
Let $\theta_1, \theta_2 \in \R$ and suppose that $\theta:=\theta_1-\theta_2\in(0,\pi]$. Consider two flow lines $\eta_{\theta_i}$, $i\in \{1,2\}$, of a whole-plane GFF $\wh h$, started from $z\in\BB C$ (in the sense of~\cite{ig4}). If $\theta\in(0,\frac{\pi\kappa}{4-\kappa}\wedge \pi ]$ the Hausdorff dimension of $\eta_{\theta_1} \cap \eta_{\theta_2}$ is a.s.\ given by
\eqbn
2-\frac{1}{2\kappa}\left( \rho+\frac{\kappa}{2}+2 \right)\left(\rho-\frac{\kappa}{2}+6 \right),
\eqen
where $\rho=\theta(2-\kappa/2)/\pi-2$. If $\kappa\leq 2$ and $\theta\in [\frac{\pi\kappa}{4-\kappa},\pi]$, the flow lines a.s.\ do not intersect.
\label{prop-dimcalc-6}
\end{example}

\begin{proof}
By \cite[Theorem~1.1]{ig4}, $\eta_{\theta_1}$ has the law of a whole-plane $\SLE_{\kappa}(2-\kappa)$. By \cite[Theorem~1.11]{ig4}, the conditional law of $\eta_{\theta_2}$ given $\eta_{\theta_1}$ is that of a chordal $\SLE_\kappa(\rho_1 ; \rho_2)$ from $0$ to $\infty$ in $\BB C\backslash \eta_{\theta_1}$, where $\rho_i=W_i-2$, $W_1=\theta(2-\gamma^2/2)/\pi\geq 0$, $W_2=W-W_1\geq 0$ and $W=4-\gamma^2$. Note that $\rho_1=\rho$, with $\rho$ as defined in the statement of the example.  Let $(\BB C,h,0,\infty)$ be a weight-$W$ quantum cone (equivalently, a $\gamma$-quantum cone) independent of $\eta_{\theta_i}$, $i\in\{1,2\}$, and assume w.l.o.g.\ that $z=0$. By \cite[Theorem~1.2]{wedges}, the quantum surface $\mcl W_1$ (resp.\ $\mcl W_2$) having $\eta_{\theta_1}$ as left (resp.\ right) boundary and $\eta_{\theta_2}$ as right (resp.\ left) boundary, is a quantum wedge of weight $W_1$ (resp.\ $W_2$). If $\kappa\leq 2$ and $\theta\in[\frac{\pi\kappa}{4-\kappa},\pi]$, $\mcl W_1$ and $\mcl W_2$ are thick wedges, implying that $\eta_{\theta_1}\cap\eta_{\theta_2}=\{0\}$. Assume $\kappa> 2$ or $\theta\not\in[\frac{\pi\kappa}{4-\kappa},\pi]$. By Lemma~\ref{prop-dimcalc-7}, there is a Bessel process $\wh B$ of dimension $d=4W_1/\gamma^2$, such that the ordered lengths of its excursions from $0$, are identical to the ordered sequence of the right boundary lengths of the bubbles. By the comment right after \cite[Proposition~2.2]{bertoin-sub}, there is a subordinator $S_1$ of index $\alpha_1=1-d/2$ such that the zero set of $\wh B$ is equal to its range.

Let $\eta'$ be a whole-plane space-filling $\op{SLE}_{\kappa'}$ from $\infty$ to $\infty$ as above. The right boundary of $\eta'([0,\infty))$ has the law of an $\SLE_\kappa(2-\kappa)$, so we can assume w.l.o.g.\ that $\eta_{\theta_1}$ is equal to the right boundary of $\eta'([0,\infty))$.

Let $\wh X\subset [0,\infty)$ be the set of times that $\eta'|_{[0,\infty)}$ visits a point in $\eta_{\theta_1}\cap\eta_{\theta_2}$ for the first time.  Note that $\eta'|_{[0,\infty)}$ visits a point in $\eta_{\theta_1}$ exactly when $R$ is equal to its running infimum since time zero, and $\eta'$ visits a point in the intersection $\eta_{\theta_1}\cap\eta_{\theta_2}$ when the additional condition $-R_t \in S_1(\BB R_+)$ holds.  Hence,
\eqbn
\dim_{\mcl H} (Z_t)_{t\in \wh X} 
= \dim_{\mcl H} \{(L_t, R_t)\,: -R_t\in S_1(\BB R^+),\, R_t = \inf_{0\leq s\leq t} R_s\}.
\eqen
The set of times $t$ where $R_t=\inf_{0\leq s\leq t} R_s$, is equal to the range of a stable subordinator $S_2$ of index $\alpha_2=1/2$, such that $S_2(x)$ is the first time that $R_t$ hits $-x$, for any $x\geq 0$.  It follows that $\wh X=S_2(S_1(\BB R^+))$.  By \cite[Theorem~4.1]{hawkes-uniform} it holds a.s.\ that $\dim_{\mcl H}(\wh X) = \alpha_1\alpha_2=1/2-W_2/\gamma^2=1/2-(\rho+2)/\kappa$.
Applying Theorem~\ref{thm-dim-relation} completes the proof.
\end{proof}

In our final application of Theorem~\ref{thm-dim-relation} we will use the theorem in the reverse direction as compared to the examples above. We use the dimension of the CLE$_{\kappa'}$ gasket determined in \cite{ssw09,msw-gasket} for $\kappa'\in(4,8)$ to calculate the dimension of times not contained in any left $\pi/2$-cone intervals for a correlated Brownian motion. Recall that a left (resp.\ right) $\pi/2$-cone interval for $Z|_{[0,\infty)}$ is a time interval $(s,t)\subset[0,\infty)$ such that $R_u\geq R_s$ and $L_u\geq L_s$ for all $u\in(s,t)$, and such that $R_s=R_t$ (resp.\ $L_s=L_t$).  Furthermore a reverse $\pi/2$-cone interval for $Z|_{[0,\infty)}$ is a time interval $(s,t)\subset[0,\infty)$ such that $(-t,-s)$ is a cone interval for the time-reversal $(Z_{-t})_{t\leq 0}$ of $Z$.
\begin{example} \label{prop-dimcalc-gasket}
Let $\kappa'\in(4,8)$.
Consider $(Z_t)_{t\geq 0}$ and let $\wh X$ be the set of times that are not contained in any left cone intervals, i.e.,
\eqb
\wh X = [0,\infty)\backslash \{u\geq 0\,:\,\exists \text{ left cone interval }(s,t), 0\leq s<t, \text{ s.t. }u\in(s,t)\}.
\eqe
Then $\op{dim}_{\mcl H}(\wh X)=\frac 12+\frac{\kappa'}{16}$.
\end{example}
\begin{proof}
It is sufficient to consider a reverse right cone interval $(t_1,t_2)$, $0<t_1<t_2<\infty$, and prove that the set
\eqb 
\wh X' = (t_1,t_2)\backslash \{u\in(t_1,t_2)\,:\,
\exists 
\text{\,a reverse left cone interval }
(s_1,s_2)\subset(t_1,t_2), s_1<s_2, \text{ s.t. }u\in(s_1,s_2)\}
\eqe
satisfies $\op{dim}_{\mcl H}(\wh X')=\frac 12+\frac{\kappa'}{16}$. This is sufficient by invariance in law under time-reversal of Brownian motion, since any compact subset of $[0,\infty)$ is a.s.\ contained in some reverse right $\pi/2$-cone interval (possibly starting before time $0$), and since the interval $[0,\infty)$ a.s.\ contains some reverse right $\pi/2$-cone interval by \cite{evans-cone}. 
 As mentioned in Section~\ref{sec-intro-background}, $\eta'$ encodes a whole-plane CLE$_{\kappa'}$. The interior $U$ of the image of the reverse right cone interval $(t_1,t_2)$ under $\eta'$ is a ``bubble" disconnected from $\infty$ by $\eta'$, with the boundary traced in the clockwise direction by $\eta'$. The restriction of the CLE$_{\kappa'}$ to $U$ has the law of a CLE$_{\kappa'}$ in $U$. It follows from e.g.\ \cite{shef-cle,msw-gasket,gwynne-miller-cle} that the interiors of the outermost CLE$_{\kappa'}$ loops in $U$ associated with the space-filling $\SLE_{\kappa'}$ $\eta'$ correspond to outermost reverse left cone excursions of $Z|_{[t_1,t_2]}$. The gasket of the $\op{CLE}_{\kappa'}$ in $U$ is the set of points in $U$ not contained in the interiors of any of these loops. The result now follows from Theorem~\ref{thm-dim-relation}, since we know by \cite{msw-gasket,ssw09} that the CLE$_{\kappa'}$ gasket a.s.\ has dimension $2-(8-\kappa')(3\kappa'-8)/(32\kappa')$.
\end{proof}

The final example in this section will be an application of \cite[Theorem~4.1]{rhodes-vargas-log-kpz} to calculate the Hausdorff dimension of the points of intersection between the real line and a chordal SLE$_{\kappa}(\rho)$, $\kappa>0$, in the upper half-plane where $\rho$ is in the range of values in which the process does not fill the boundary. This Hausdorff dimension was first obtained in \cite{alberts-shef-bdy-dim} for the special case $\rho=0$ and $\kappa>4$, and the formula was proved for general values of $\rho$ and $\kappa$ in \cite[Theorem~1.6]{miller-wu-dim}. Our main result Theorem~\ref{thm-dim-relation} cannot be used in this setting, since it only applies to SLE and CLE sets in the interior of a domain. 
\begin{example}
Let $\kappa >0$, $\kappa\not=4$, and $\rho\in(-2\vee (\frac{\kappa}{2}-4),\frac{\kappa}{2}-2)$, and consider an SLE$_{\kappa}(\rho)$ $\eta$ on $\BB H$ from $0$ to $\infty$ with force-point at $0^+$. Almost surely, 
\eqbn
\dim_{\mcl H}(\eta\cap\BB R_+)=1-\frac{1}{\kappa}(\rho+2)\left(\rho+4-\frac{\kappa}{2}\right).
\eqen
\label{ex1}
\end{example}
\begin{proof}
First we consider the case $\kappa>4$, hence we will write $\kappa' $ instead of $\kappa$ and $\eta'$ instead of $\eta$.  Let $\eta'$ be a chordal $\SLE_{\kappa'}$ curve from 0 to $\infty$ in $\BB H$ on top of an independent quantum wedge $(\BB H,h,0,\infty)$ of weight $\frac{3\gamma^2}{2}-2+\frac{\gamma^2}{4}\rho$. Let $D$ be the open subset of $\BB H$ which is between the right boundary of $\eta'$ and $[0,\infty)$. By \cite[Theorem~1.16]{wedges}, $(D,h,0,\infty)$ has the law of a thin quantum wedge of weight $W=\gamma^2-2+\frac{\gamma^2}{4}\rho$. Defining $\wh X\subset[0,\infty)$ by $\wh X:=\{\nu([0,x])\,:\,x\in \eta'\cap\BB R_+ \}$ it follows by Lemma~\ref{prop-dimcalc-7} and \cite[Proposition~2.2]{bertoin-sub} that $\wh X$ has the law of the range of a stable subordinator of index $\kappa'/4-1-\rho/2$, hence $\op{dim}_{\mcl H}(\wh X)=\kappa'/4-1-\rho/2$. An application of~\eqref{eqn-dimcalc-1} completes the proof. Note that we may apply this formula to the field $h$ since, if $h$ is given the circle-average embedding, say, then the restriction of $h$ to any sub-domain of $\BB H$ bounded away from 0, $\infty$, and $\bdy\BB D\cap \BB H$ is mutually absolutely continuous with respect to the corresponding restriction of a free-boundary GFF on $\BB H$ normalized to have average zero on $\partial\BB D\cap\BB H$.

We proceed by the exact same argument when $\kappa\in(0,4)$, except that we apply \cite[Theorem~1.2]{wedges} instead of \cite[Theorem~1.16]{wedges}. Alternatively, we may obtain this dimension by using the result for $\kappa'>4$ and SLE duality \cite{zhan-duality1,zhan-duality2,dubedat-duality,ig1,ig4}.
\end{proof}

\section{SLE and GFF estimates}
\label{sec-gff-esimates}

In this section we will prove various estimates for space-filling SLE and for GFFs which we will need in the sequel. 

\subsection{Space-filling SLE absorbs a ball with positive probability}
\label{sec-sle-ball}

Throughout this subsection, we fix $\kappa' > 4$ and let $\eta'$ be a whole-plane space-filling SLE$_{\kappa'}$ from $\infty$ to $\infty$, parameterized by Lebesgue measure and satisfying $\eta'(0) = 0$. Our goal is to prove the following lemma which (together with a multi-scale argument) will be used in the next subsection to argue that $\eta'$ is very unlikely to travel a long distance without absorbing a large Euclidean ball (see Lemma~\ref{prop-hitting-bubble}). 
Define
\eqb \label{eqn-sle-exit}
T_\rho := \inf\left\{ t\geq 0 : \eta'([0,t])\not\subset B_\rho(0) \right\} ,\quad \forall \rho > 0.
\eqe 
The main result of this subsection is the following lemma.

\begin{lem} \label{prop-sle-ball-pos}
There are constants $\delta , p \in (0,1)$ depending only on $\kappa'$ such that the following is true. For any $\ep\in(0,1)$, it a.s.\ holds with conditional probability at least $p$ given $\eta'|_{[0,T_1]}$ that $\eta'([T_1,T_{1+\ep} ])$ contains a ball of radius at least $\delta \ep$.
\end{lem}

The proof of Lemma~\ref{prop-sle-ball-pos} proceeds via a combination of elementary complex analysis and facts from imaginary geometry~\cite{ig1,ig2,ig3,ig4}. See Figure~\ref{fig-sle-ball-pos} for an illustration and outline of the proof.

\begin{figure}[ht!]
\begin{center}
\includegraphics[scale=0.92]{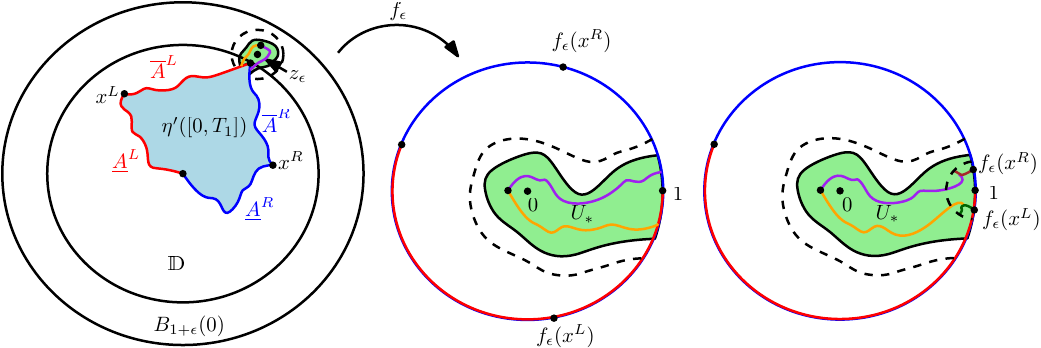}
\end{center}
\caption{\label{fig-sle-ball-pos}
Illustration of the proof of Lemma~\ref{prop-sle-ball-pos}. We seek to show that $\eta'$ absorbs the ball $B_{\delta\ep}(z_\ep)$ for some $\delta = \delta(\kappa')>0$ (not shown) before exiting $B_{1+\ep}(0)$. To do this, we study the conformal map $f_\ep : \BB C\setminus K_{T_1} \rta \BB D$ taking $z_\ep$ to 0 and $\eta'(T_1)$ to 1. The set $U_*$ is a neighborhood of a path from $0$ to 1 in $\BB D$ whose pre-image under $f_\ep$ is contained in $V_\ep \subset B_{1+\ep}(0)$ (in the figure, $f_\ep^{-1}(U_*)$ is contained in $B_{\ep/2}(z_\ep)$, but in general it might just be disconnected from $\infty$ by $K_{T_1}\cup B_{\ep/2}(z_\ep)$). Consider the flow lines started from a point near $z_\ep$ which form the left and right boundaries of $\eta'$ at the time when it hits this point. These flow lines and their images under $f_\ep$ are shown in orange and purple. In the case when $f_\ep(x^L)$ and $f_\ep(x^R)$ are at macroscopic distance from 1 (left and middle panels) we can use a local absolute continuity argument to show that with uniformly positive conditional probability given $\eta'|_{[0,T_1]}$, the orange and purple flow lines in the middle figure together with $\bdy\BB D$ form a pocket contained in $U_*$ which itself contains $f_\ep(B_{\delta \ep}(z_\ep))$. The pre-image of this pocket under $f_\ep$ is contained in $\eta'([T_1,T_{1+\ep}])$. If instead $f_\ep(x^L)$ and $f_\ep(x^R)$ are very close to $1$ (as shown in the right panel) we instead need to grow flow lines started from $f_\ep(x^L)$ and $f_\ep(x^R)$ (dark green and brown), which can equivalently be described as small segments of $f_\ep(\eta_0^L\setminus \eta'([0,T_1]) )$ and $f_\ep(\eta_0^R \setminus \eta'([0,T_1]) )$. If only one of $f_\ep(x^L)$ or $f_\ep(x^R)$ is very close to 1, we only need to grow a single extra flow line. On a uniformly positive probability event, if we map the complementary connected component containing 0 of the green and brown flow lines to $\BB D$, the images of the tips will be at uniformly positive distance from 1. This gives us a configuration which looks like the one in the middle figure, which allows us to argue that with uniformly positive probability, the union of all 4 flow lines in the right panel and $\partial\D$ forms a pocket surrounding $f_\ep(B_{\delta\ep}(z_\ep) ) $. The pre-image of this pocket under $f_\ep$ will again be contained in $\eta'([T_1,T_{1+\ep}])$. }
\end{figure}

We remark that in the case when $\kappa' \in (4,8]$, one can give a somewhat simpler argument (which does not directly use imaginary geometry). The reason is that in this case, the time reversal of $\eta'|_{[0,\infty)}$ traces points in the same order as the associated SLE$_{\kappa'}(\kappa'-6)$ curve started from $\infty$, and the time reversal of this curve is also an SLE$_{\kappa'}(\kappa'-6)$~\cite[Theorem 1.20]{ig4}. So, we can reduce our problem to proving that if $\eta_0'$ is an SLE$_{\kappa'}(\kappa'-6)$ from 0 to $\infty$, then with uniformly positive conditional probability given $\eta_0'$ up to the first time it exits $\BB D$, it holds that $\eta_0'$ forms a bubble which contains a ball of radius $\delta \ep$ before exiting $B_{1+\ep}(0)$. This statement can be proven using basic complex analysis plus the Markov property of SLE$_{\kappa'}(\kappa'-6)$ and~\cite[Lemma 2.4]{miller-wu-dim}. 
However, this alternative argument does not work in the case when $\kappa' > 8$ since in this case the whole-plane SLE$_{\kappa'}(\kappa'-6)$ is not reversible. In fact, the marginal law of $\eta'|_{[0,\infty)}$ is not that of an SLE$_{\kappa'}(\rho)$ for any choice of $\rho$. So, we instead need to control this curve using interior flow lines of a GFF (which form its left and right boundaries). 

We now proceed with the proof of Lemma~\ref{prop-sle-ball-pos}.
First we introduce some notation.
For $t\geq 0$, let $K_t$ be the hull generated by $\eta'([0,t])$, i.e.\ the union of $\eta'([0,t])$ and the set of points which it disconnects from $\infty$ (this hull is just $\eta'([0,t])$ if $\kappa' \geq 8$). 

Following~\cite{ig1}, we define the constants
\eqb \label{eqn-ig-param}
\kappa := \frac{16}{\kappa'} ,\quad \chi := \frac{2}{\sqrt\kappa} - \frac{\sqrt\kappa}{2} ,\quad \lambda := \frac{\pi}{\sqrt\kappa},\quad \lambda' := \frac{\pi}{\sqrt{\kappa'}} .
\eqe
We also let $\wh h$ be the whole-plane GFF viewed modulo a global additive multiple of $2\pi\chi$ which is used to construct $\eta'$ as in Section~\ref{sec-sle-prelim}. For $z\in\BB C$ we let $\eta_z^L$ (resp.\ $\eta_z^R$) be the flow line of $\wh h$ started from $z$ with angle $\pi/2$ (resp.\ $-\pi/2$), so that a.s.\ $\eta_z^L$ and $\eta_z^R$ are the left and right boundaries of $\eta'$ at the first time it hits $z$. 

The set $\bdy \eta'([0,T_1])$ can be divided into four distinguished arcs, which we denote as follows.
\begin{itemize}
\item $\ul A^L$ (resp.\ $\ul A^R$) is the arc of $\bdy K_{T_1}$ traced by $\eta_0^L$ (resp.\ $\eta_0^R$). 
\item $\ol A^L$ (resp.\ $\ol A^R$) is the arc of $\bdy K_{T_1} $ not traced by $\eta_0^L$ or $\eta_0^R$ which lies to the left (resp.\ right) of $\eta'(T_1)$. 
\end{itemize}  

Using the notation~\eqref{eqn-sle-exit}, we define the $\sigma$-algebra
\eqbn
\mcl F_1 := \sigma\left( \eta'|_{[0,T_1]} ,\, \wh h|_{\eta'([0,T_1])} \right) .
\eqen

\begin{lem} \label{prop-local-set}
The set $\eta'([0,T_1])$ is a local set for $\wh h$ in the sense of~\cite[Lemma 3.9]{ss-contour}. 
In particular, the boundary data for the conditional law of $h|_{\BB C\setminus K_{T_1} }$ given $\mcl F_1$ on each of the arcs $\ul A^L$, $\ul A^R$, $\ol A^L$, and $\ol A^R$ coincides with the boundary data of the corresponding flow line of $\wh h$ (i.e., it is given by flow line boundary conditions as described in~\cite[Figure 9]{ig4}). 
\end{lem}
\begin{proof}
We first check that $\eta'([0,T_1])$ is a local set for $\wh h$. By~\cite[Lemma 3.9, condition 1]{ss-contour}, it suffices to show that for each deterministic open set $U\subset \BB C$, the event $\{\eta'([0,T_1]) \cap U \not=\emptyset\}$ is a.s.\ determined by $h|_{\BB C\setminus U}$. For $z\in\BB C$, let $S_z^L$ (resp.\ $S_z^R$) be the first time that the flow line $\eta_z^L$ (resp.\ $\eta_z^R$) enters $U$. Since flow lines are local sets, each $\eta_z^q([0,S^q_z])$ for $q \in \{L,R\}$ and $z\in\BB C$ is a.s.\ determined by $h|_{\BB C\setminus U}$. Since the outer boundary of $\eta'$ at the first time it hits any given rational $z\in\BB C$ is equal to $\eta_z^L\cup \eta_z^R$, we see that a.s.\ $\eta'([0,T_1])$ intersects $U$ if and only if there is a $z\in\BB Q\setminus \BB D$ such that $\eta_z^L$ merges into $\eta_0^L([0,S_0^L])$ before time $S_z^L$; and the same is true with ``$R$" in place of ``$L$". This latter event is a.s.\ determined by $h|_{\BB C\setminus U}$. 

By~\cite[Lemma 3.11]{ss-contour} (applied to the local sets $\eta'([0,T_1])$ and $\eta_z^L , \eta_z^R$ for $z\in\BB Q$) and the known boundary data for interior flow lines of a whole-plane GFF~\cite[Theorem 1.1]{ig4}, we obtain the claimed description of the boundary data for the conditional law of $\wh h$ given $\mcl F_1$. 
\end{proof}

Let $z_\ep$ be the point of $\bdy B_{1+\ep/4}(0)$ closest to $\eta'(T_1)$ and let $f_\ep : (\BB C\cup \{\infty\}) \setminus K_{T_1} \rta \BB D$ be the conformal map which takes $z_\ep$ to $0$ and $\eta'(T_1)$ to 1. 
Let $V_\ep$ be the union of $B_{\ep/2}(z_\ep) \setminus \bdy K_{T_1}$ and the set of points which it disconnects from $\infty$ in $\BB C\setminus K_{T_1}$. 
Then $\bdy K_{T_1} \cap \bdy V_\ep$ is a connected arc of $\bdy K_{T_1}$. 
Let $I^L$ (resp.\ $I^R$) be the sub-arc of $\bdy K_{T_1} \cap \bdy V_\ep$ lying to the left (resp.\ right) of $\eta'(T_1)$ as viewed from $\eta'(T_1)$, looking out from $\BB D$. Note that $I^L$ (resp.\ $I^R$) need not be part of the left (resp.\ right) outer boundary of $K_{T_1}$ if all of this left (resp.\ right) outer boundary is part of $ \bdy V_\ep$.
 
There is a universal constant $q \in (0,1/2)$ such that conditional on $\eta'|_{[0,T_1]}$, a Brownian motion started from $z_\ep$ has probability at least $q$ to exit $B_{3\ep/8}(z_\ep)$ at a point outside of $B_{3\ep/8}(z_\ep)\cap\BB D$, then make a counterclockwise loop around $B_{\ep/4}(z_\ep)$ before re-entering $B_{\ep/4}(z_\ep)$ or leaving $B_{\ep/2}(z_\ep)$. If it does so, then such a Brownian motion first hits $K_{T_1}$ at a point of $I^L$ before exiting $V_\ep$. Symmetrically, Brownian motion started from $z_\ep$ has conditional probability at least $q$ to first hit $K_{T_1}$ at a point of $I^R$ before exiting $V_\ep$.

From the above estimates and the conformal invariance of Brownian motion, we infer that there is a universal constant $c_0 \in (0,1)$ such that each point of $f_\ep(\BB C\setminus (B_{\ep/2}(z_\ep) \cup K_{T_1}))$ lies at distance at least $c_0$ from 0 and each of the arcs $f_\ep(I^L)$ and $f_\ep(I^R)$ of $\bdy\BB D$ has Euclidean length at least $c_0$. In fact, the probability that a Brownian motion started from 0 hits any given ball of radius $c$ centered at a point of $\BB D\setminus B_{c_0}(0)$ before exiting $\BB D$ tends to 0 as $c\rta0$, uniformly over all possible choices of center for the ball. 
Hence the estimate of the first paragraph implies that we can find a universal constant $c \in (0,c_0/2]$ and a random path $\alpha$ in $\ol{\BB D}$ from 0 to $1$ such that $B_c(\alpha) \subset f_\ep(V_\ep)$.

Let $\mcl U$ be the collection of all simply connected open subsets of $\BB D$ which take the form $U = B_{c/100}(\beta)$ for $\beta$ a simple piecewise linear path from 0 to 1 in $\BB D$ whose linear segments all connect nearest neighbor points $(c/50) \BB Z^2$ (by slightly shrinking $c$, we can assume without loss of generality that $1/c$ is an integer, so that $1\in (c/50)\BB Z^2$). Then $\mcl U$ is a finite set and there a.s.\ exists $U_*  = B_{c/100}(\beta_*) \in \mcl U$ with $U_* \subset B_c(\alpha) \subset f_\ep(B_{\ep/2}(z_\ep)\setminus K_{T_1})$.  

By the Koebe quarter theorem, $|f_\ep'(z_\ep)| \asymp \ep^{-1}$, with universal implicit constant, so by the Koebe growth theorem $f_\ep^{-1}(B_{c/10^{10}}(0))$ contains $B_{\delta\ep}(z_\ep)$ for a universal choice of $\delta\in (0,1)$. Hence it suffices to show that $B_{c/10^{10}}(0)$ is contained in $f_\ep(\eta'([T_1,  T_{1+\ep}]))$ with uniformly positive conditional probability given $\eta'|_{[0,T_1]}$. 
 
Recall the imaginary geometry parameters from~\eqref{eqn-ig-param}. Let $\wh h_\ep := \wh h \circ f_\ep^{-1} - \chi \op{arg} (f_\ep^{-1})'$, so that $\wh h_\ep$ is similar to a GFF on $\BB D$ with Dirichlet boundary data determined by the images of the distinguished arcs $\ul A^L$, $\ul A^R$, $\ol A^L $, and $\ol A^R$ under $f_\ep$ (this boundary data is described in Lemma~\ref{prop-local-set}) except that it possesses a singularity at $f_\ep(\infty)$.

Let $\wh h_\ep^{U_*}$ be a GFF on $U_*$ with Dirichlet boundary data which coincides with that of $\wh h_\ep^*$ on $ \bdy U_* \cap\bdy \BB D$ and whose boundary data on $\bdy U_* \setminus\bdy \BB D$ is 0. As we will see, the laws of $\wh h_\ep$ and $\wh h_\ep^{U_*}$ are mutually absolutely continuous on subsets of $U_*$ at positive distance from $\bdy U_*\setminus \bdy\BB D$. Recalling that $ U_* = B_{c/100}(\beta_*)$ for the piecewise linear curve $\beta_*$, we define 
\eqb \label{eqn-domain-shrink}
U_*^r := B_{rc/100}(\beta_*),\quad \forall r \in (0,1] .
\eqe

\begin{lem} \label{prop-rn-restrict}
Let $\wh{\frk h}_\ep$ (resp.\ $\wh{\frk h}_\ep^{U_*}$) be the harmonic part of $\wh h_\ep|_{ U_*}$ (resp.\ $\wh h_\ep^{ U_*}$).
For $r \in (0,1)$, 
the conditional laws of $\wh h_\ep|_{U_*^r}$ and $\wh h_\ep^{U_*}|_{U_*^r}$ given $\mcl F_1 \vee \sigma(\wh{\frk h}_\ep, \wh{\frk h}_\ep^{U_*})$ are a.s.\ mutually absolutely continuous. Furthermore, if $M = M_r$ is the Radon-Nikodym derivative of the former with respect to the latter, then there is a $\xi = \xi(  \kappa' , r)  > 0$ such that $\BB E(M^{-\xi} \,|\,\mcl F_1) $ is bounded above by a deterministic constant depending only on $\kappa'$ and $r$. 
\end{lem}
\begin{proof}
Set $r ' = (1+r)/2$. Also let $\phi$ be a smooth bump function which equals 1 on $U_*^r$ and $0$ on $\BB C\setminus U_*^{r'}$, chosen in a manner which depends only on $U_*$ and $r$. 
The proof of~\cite[Proposition 3.4]{ig1} (applied to the zero-boundary parts of $\wh h_\ep|_{U_*}$ and $\wh h_\ep^{U_*}$) shows that if we condition on $\mcl F_1 \vee \sigma(\wh{\frk h}_\ep, \wh{\frk h}_\ep^{U_*})$ then the conditional law of $\wh h_\ep|_{ U_*^r}$ is absolutely continuous with respect to the conditional law of $\wh h_\ep^{U_*}|_{ U_*^r}$, with Radon-Nikodym derivative given by
\eqbn
M := \BB E\left[ \exp\left( (\wh h_\ep^{U_*} - \wh{\frk h}_\ep^{U_*}  , g)_\nabla -\frac12 (g,g)_\nabla \right)  \,|\, \wh h_\ep^{U_*} |_{U_*^r} ,\, \mcl F_1 \vee \sigma(\wh{\frk h}_\ep, \wh{\frk h}_\ep^{U_*}) \right] 
\eqen
where $g:=  \phi (\wh{\frk h}_\ep - \wh{\frk h}_\ep^{U_*} )$ and $(\cdot,\cdot)_\nabla$ denotes the Dirichlet inner product. 
By Jensen's inequality applied to the convex function $x\mapsto x^{-\xi}$ (in order to pass the exponent inside the conditional expectation given $\wh h_\ep|_{U_*^r}$) and since $(\wh h_\ep^{U_*}  - \wh{\frk h}_\ep^{U_*}  , g)_\nabla$ is Gaussian with variance $(g,g)_\nabla$ under the conditional law given $\mcl F_1 \vee \sigma(\wh{\frk h}_\ep, \wh{\frk h}_\ep^{U_*})$,
\begin{align} \label{eqn-rn-jensen}
\BB E\left[ M^{-\xi} \,|\, \mcl F_1 \vee \sigma(\wh{\frk h}_\ep, \wh{\frk h}_\ep^{U_*}) \right]
&\leq \BB E\left[ \exp\left(  - \xi (\wh h_\ep^{U_*}  - \wh{\frk h}_\ep^{U_*}  , g)_\nabla  + \frac{\xi}{2} (g,g)_\nabla \right)  \,|\,  \mcl F_1 \vee \sigma(\wh{\frk h}_\ep, \wh{\frk h}_\ep^{U_*}) \right] \notag \\
& =  \exp\left(  \frac{\xi^2 + \xi}{2} (g,g)_\nabla \right)   .
\end{align}

Since $\phi$ and all its derivatives vanish on $\bdy U_* \setminus \bdy\BB D$ and $\wh{\frk h}_\ep^{U_*} - \wh{\frk h}_\ep$ is harmonic and vanishes on $\bdy U_* \cap \bdy\BB D$, a short computation using integration by parts shows that
\begin{align} \label{eqn-parts}
(g,g)_\nabla 
&=   \int_{U_*} \left( \frac12 \Delta(\phi(w)^2)  - \phi(w) \Delta\phi(w) \right) (\wh{\frk h}_\ep^{U_*}(w) - \wh{\frk h}_\ep(w) )^2 \,dw  \notag \\
&\preceq \int_{U_*} (\wh{\frk h}_\ep^{U_*}(w) - \wh{\frk h}_\ep(w) )^2 \,dw
\end{align} 
with the implicit constant depending only on $\phi$, and hence only on $U_*$ and $r$.
By~\cite[Lemma 6.4]{ig1} (applied to the conditional field given $\mcl F_1$), for each $w \in U_*^{r'}$ the conditional law given $\mcl F_1$
of each of $\wh{\frk h}_\ep(w) $ and $\wh{\frk h}_\ep^{U_*}(w) $ is Gaussian with variance bounded above by a universal constant and mean bounded above in absolute value by a constant depending only on $\kappa'$ and $r$ (for this latter statement, we use that the boundary data for each of $\wh h_\ep$ and $\wh h_\ep^{U_*}$ is bounded).
From this and Jensen's inequality applied to the exponential function, we infer that the conditional expectation given $\mcl F_1$ of the right side of~\eqref{eqn-rn-jensen} is a.s.\ finite and bounded above by a constant depending only on $U_*$, $\kappa'$, and $r$ provided $\xi$ is chosen sufficiently small depending only on $U_*$, $\kappa'$, and $r$. Since there are only finitely many possible realizations $U \in \mcl U$ of $U_*$, we obtain the statement of the lemma by taking a minimum over all such $ U $. 
\end{proof}

\begin{proof}[Proof of Lemma~\ref{prop-sle-ball-pos}]
Let $x^L$ (resp.\ $x^R$) be the a.s.\ unique point of $\ul A^L\cap \ol A^L$ (resp.\ $\ul A^R\cap \ol A^R$) and with $U_*^{1/2}$ as in~\eqref{eqn-domain-shrink} let
\eqbn
H_\ep := \left\{ f_\ep(x^L) ,\, f_\ep(x^R) \notin \bdy U_*^{1/2} \right\} \subset \mcl F_1 .
\eqen 
On $H_\ep$, the conditional law given $\mcl F_1$ of the auxiliary GFF $\wh h_\ep^{U_*}$ depends only on the choice of $U_*$ (i.e., the boundary data of $\wh h_\ep$ does not depend on $\mcl F_1$). 

Let $\wh\eta_\ep^{L,U_*}$ (resp.\ $\wh\eta_\ep^{R,U_*}$) be the flow line of $\wh h_\ep^{U_*}$ started from $-c/500$ with angle $\pi/2$ (resp.\ $-\pi/2$), as in~\cite[Theorem 1.1]{ig4}. We stop $\wh\eta_\ep^{L,U_*}$ (resp.\ $\wh\eta_\ep^{R,U_*}$) at the first time it hits $f_\ep(\ol A_\ep^L)\cap \bdy U_*$ (resp.\ $f_\ep(\ol A_\ep^R)\cap \bdy U_*$). Let $G_\ep^{U_*}$ be the event that the region whose boundary is formed by the left side of $\wh\eta_\ep^{L,U_*}$, the right side of $\wh\eta_\ep^{R,U_*}$, and $\bdy\BB D\cap \bdy U_*$ contains $B_{c/10^{10} }(0)$ and is contained in $U_*^{1/2}$. By~\cite[Lemma~3.9]{ig4} applied to $\wh\eta_\ep^{L,U_*}$ and then~\cite[Lemma~2.5]{miller-wu-dim} applied to the conditional law of $\wh\eta_\ep^{R,U_*}$ given $\wh\eta_\ep^{L,U_*}$, we infer that a.s.\ $\BB P(G_\ep^{U_*} \,|\, \mcl F_1)  > 0$ on $H_\ep$. Since this conditional probability depends only on $U_*$ on $H_\ep$ and there are only finitely many possible choices of $U_*$, we can find a $p_0 \in (0,1)$ depending only on $\kappa'$ such that a.s.\ $\BB P(G_\ep^{U_*} \,|\, \mcl F_1)\BB 1_{H_\ep} \geq p_0 \BB 1_{H_\ep} $. 

To transfer this to an estimate for $\wh h_\ep$ (rather than for $\wh h_\ep^{U_*}$), let $\wh\eta_\ep^{L }$ (resp.\ $\wh\eta_\ep^{R }$) be the flow line of $\wh h_\ep $ started from $-c/500$ with angle $\pi/2$ (resp.\ $-\pi/2$), stopped at the first time it hits $f_\ep(\ul A_\ep^L \cup \ol A_\ep^L)$ (resp.\ $f_\ep(\ul A_\ep^R \cup \ol A_\ep^R$), if this time is finite.  Equivalently, $\wh\eta_\ep^L = f_\ep\circ \eta_{f_\ep^{-1}(-c/500)}^L $, stopped at the first time it hits merges into the left boundary of $\eta'([0,T_1])$; and similarly for $\wh\eta_\ep^R$. 
Let $G_\ep$ be the event that $ \wh\eta_\ep^L$ and $\wh\eta_\ep^R$ are contained in $U_*^{1/2}$; and the region whose boundary is formed by the left side of $\eta_\ep^{L,U_*}$, the right side of $\eta_\ep^{R,U_*}$, and $\bdy\BB D\cap \bdy U_*$ contains $B_{c/10^{10} }(0)$. 
 
We will now deduce from the previous two paragraphs and Lemma~\ref{prop-rn-restrict} that there is a $p  \in (0,1)$, depending only on $\kappa'$, such that a.s.\ $\BB P(G_\ep   \,|\, \mcl F_1) \BB 1_{H_\ep} \geq p  \BB 1_{H_\ep} $. To see this, define the harmonic parts $\wh{\frk h}_\ep, \wh{\frk h}_\ep^{U_*}$ and the Radon-Nikodym derivative $M$ as in Lemma~\ref{prop-rn-restrict} (the latter with $r = 1/2$). Since $\BB P(G_\ep^{U_*} \,|\, \mcl F_1)\BB 1_{H_\ep} \geq p_0 \BB 1_{H_\ep} $, there is a $p_1 = p_1(\kappa') \in (0,1)$ such that it a.s.\ holds with conditional probability at least $p_1$ given $\mcl F_1 $ that 
\eqbn
\BB P\left(  G_\ep^{U_*} \,|\, \mcl F_1 \vee \sigma(\wh{\frk h}_\ep, \wh{\frk h}_\ep^{U_*}) \right) \BB 1_{H_\ep} \geq p_1  \BB 1_{H_\ep} .  
\eqen
Since $\BB E(M^{-\xi} \,|\,\mcl F_1) $ is bounded above by a constant depending only on $\kappa'$, we can find $b > 0$ depending only on $\kappa'$ such that with conditional probability at least $1-p_1/2$ given $\mcl F_1$, it holds that
\eqbn
\BB P\left( M \geq b \,|\, \mcl F_1 \vee \sigma(\wh{\frk h}_\ep, \wh{\frk h}_\ep^{U_*}) \right) \BB 1_{H_\ep} \geq ( 1-p_1/2 ) \BB 1_{H_\ep} .
\eqen
Since flow lines are determined locally by the field, on $H_\ep$ it holds with conditional probability at least $p_1/2$ given $\mcl F_1$ that
\eqbn
\BB P\left( G_\ep \,|\, \mcl F_1 \vee \sigma(\wh{\frk h}_\ep, \wh{\frk h}_\ep^{U_*}) \right) 
= \BB E\left( M \BB 1_{G_\ep^{U_*}} \,|\, \mcl F_1 \vee \sigma(\wh{\frk h}_\ep, \wh{\frk h}_\ep^{U_*}) \right)
\geq \frac12 b p_1 .
\eqen
Taking expectations conditional on $\mcl F_1$ proves our claim with $p = b p_1^2/4$.

Since $U_*\subset f_\ep(B_{\ep/2}(z_\ep)\setminus\BB D) $, it follows from the definition of space-filling SLE that on $G_\ep$ the region in the definition of $G_\ep$ is contained in the interior of $f_\ep(\eta'([T_1,T_{1+\ep}])$. Since we have chosen $\delta>0$ so that $B_{\delta\ep}(z_\ep)\subset f_\ep^{-1}(B_{c/10^{10}}(0))$, we infer from the preceding paragraph that $\BB P(F_\ep   \,|\, \mcl F_1) \BB 1_{H_\ep} \geq p  \BB 1_{H_\ep} $.  

It remains to treat the case when $H_\ep$ does not occur. The idea is to extend $K_{T_1}$ by growing small segments of $\eta_0^L$ and $\eta_0^R$, beyond the ones contained in $\eta'([0,T_1])$, to get a larger hull for which an analog of $H_\ep$ occurs. See the right panel of Figure~\ref{fig-sle-ball-pos} for an illustration. For simplicity, suppose that $H_\ep$ does not occur and that both $f_\ep(x^L)$ and $ f_\ep(x^R) $ are contained in $ \bdy U_*^{1/2}$ (the case when only one of these points is contained in $\bdy U_*$ is treated similarly by extending only one flow line instead of two). 
Let $\rng\eta_\ep^L$ (resp.\ $\rng\eta_\ep^R$) be the flow line of $\wh h_\ep$ started from $x^L$ (resp.\ $x^R$) with angle $\pi/2$ (resp.\ $-\pi/2$) targeted at $-1$, say, and let $\rng S_\ep^L$ (resp.\ $\rng S_\ep^R$) be its exit time from $ B_{c/100}(1) \subset U_*$. Note that by uniqueness of flow lines (c.f.~\cite[Theorem~2.4]{ig1}) $\rng\eta_\ep^L|_{[0,S_\ep^L]}$ is an initial segment of $ f_\ep(\eta_0^L\setminus K_{T_1} )$ and similarly with ``$R$'' in place of ``$L$''.  Let $\rng D$ be the connected component containing $0$ of $\BB D\setminus ( \rng\eta_\ep^L([0,\rng S_\ep^L]) \cup \rng\eta_\ep^R([0,\rng S_\ep^R]) )$.

Fix a small constant $a> 0$, to be chosen later, and let $\rng E_\ep$ be the event that the harmonic measure from 0 in $\rng D$ of each of the left side of $\rng\eta_\ep^L([0,S_\ep^L])$ and the right side of $\rng \eta_\ep^R([0,\rng S_\ep^R])$ is at least $a$. Using Lemma~\ref{prop-rn-restrict} and two applications of~\cite[Lemma~2.4]{miller-wu-dim} (which we emphasize does not depend on the particular location of the force points) and a similar argument to the one above, we infer that there is a universal choice of $a>0$ and a universal constant $\rng q \in (0,1)$ such that $\BB P(\rng E_\ep \,|\, \mcl F_1) \geq \rng q$ a.s.\ on the event $\left\{ f_\ep(x^L) ,\, f_\ep(x^L) \in \bdy U_*^{1/2} \right\}$. 

Let $\rng f_\ep : \rng D \rta \BB D$ be the conformal map which fixes 0 and such that $\rng f_\ep^{-1}(1)$ is equal to 1 if $\kappa'\geq 8$ or to the last (in chronological order along either curve) intersection point of the right side of $ \rng\eta_\ep^L|_{[0, \rng S_\ep^L]} $ and the left side of $\rng\eta_\ep^R|_{[0, \rng S_\ep^R]} $ if $\kappa' \in (4,8)$. 
Also let $\rng h_\ep := \wh h_\ep \circ \rng f_\ep^{-1} - \chi \op{arg}(f_\ep^{-1})'$. The field $\rng h_\ep$ has the same boundary data along the image of the left side of $\rng\eta_\ep^L([0,S_\ep^L])$ as it does along $\ol A_\ep^R$, and similarly with ``$R$" in place of ``$L$". 

If we apply exactly the same argument as in the case when $H_\ep$ occurs but with the field $\rng h_\ep$ in place of the field $\wh h_\ep$, then pull back to $\rng D$, we find that that after possibly shrinking $p$ it a.s.\ holds with conditional probability at least $p$ given $\mcl F_1$ that the interior flow lines $\wh \eta_\ep^L$ and $\wh\eta_\ep^R$ defined above merge into $\rng\eta_\ep^L([0,\rng S_\ep^L])$ and $\rng\eta_\ep^R([0,\rng S_\ep^R])$, respectively, before leaving $U_*$ and the region enclosed by these 4 flow lines run up to the merging time contains $B_{c/10^{10}}(0)$. By definition of space-filling SLE, on this event this region is contained in $f_\ep(\eta'([T_1,T_\ep])$. 
Hence a.s.\ $\BB P(F_\ep   \,|\, \mcl F_1)  \geq p$, as required.
\end{proof}

\subsection{Continuity estimates for space-filling SLE}
\label{sec-sle-big}

The goal of this subsection is to prove that it is unlikely that a space-filling $\SLE_{\kappa'}$ travels a long distance without filling in a big ball. More precisely,

 \begin{prop} \label{prop-sle-bubble}
Let $\eta'$ be a space-filling $\SLE_{\kappa'}$ from $\infty$ to $\infty$ in $\BB C$. For $r \in (0,1)$, $R > 0$, and $\ep > 0$, let $\mcl E_\ep = \mcl E_\ep(R,r)$ be the event that the following is true. For each $\delta\in (0,\ep]$ and each $a < b\in\BB R$ such that $\eta'([a,b]) \subset B_R(0)$ and $\op{diam} \eta'([a,b]) \geq \delta^{1-r}$, the set $\eta'([a,b])$ contains a ball of radius at least $\delta $. Then
\eqbn
\lim_{\ep\rta 0} \BB P\!\left(\mcl E_\ep \right) = 1 .
\eqen
\end{prop}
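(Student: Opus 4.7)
The plan is to use Borel-Cantelli together with a two-parameter union bound (over dyadic scales and over a spatial cover of $B_R(0)$) to reduce Proposition~\ref{prop-sle-bubble} to a single-scale ``inradius-vs-diameter'' estimate for space-filling $\SLE_{\kappa'}$. Concretely, for $k\in\BB N$ let $\mcl F_k$ denote the event that some $\eta'([a,b])\subset B_R(0)$ has diameter in $[2^{-k(1-r)},\,2\cdot 2^{-k(1-r)}]$ but contains no ball of radius $2^{-k}$. Since any witness $\delta\in(0,\ep]$ for $\mcl E_\ep^c$ falls into one of these dyadic ranges (up to absorbing a factor of $2$ into the constant), $\mcl E_\ep^c\subset\bigcup_{k\geq\lceil\log_2(1/\ep)\rceil}\mcl F_k$, so it is enough to show $\BB P(\mcl F_k)$ decays polynomially in $2^{-k}$. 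I would then cover $B_R(0)$ by $N_k=O(2^{2k(1-r)})$ balls of radius $2^{-k(1-r)}$; any witness segment for $\mcl F_k$ is contained in $O(1)$ such balls around one of them. By translation and scale invariance of whole-plane space-filling $\SLE_{\kappa'}$ modulo parameterization, the per-ball probability equals the probability $q_k$ that $\eta'$ contains a segment starting in $B_1(0)$, contained in $B_3(0)$, of Euclidean diameter $\geq 1$, but containing no ball of radius $\delta_k:=2^{-kr}$. A union bound yields $\BB P(\mcl F_k)\preceq N_k q_k$, so the problem reduces to showing $q_k=O(\delta_k^\alpha)$ for some $\alpha>2(1-r)/r$.

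The crux of the proof is this single-scale estimate $q_k=O(\delta_k^\alpha)$. For $\kappa'\geq 8$, where $\eta'$ is ordinary $\SLE_{\kappa'}$, this is essentially an inradius bound for SLE hulls: conditional on having diameter $\gtrsim 1$, with probability $\geq 1-O(\delta_k^\alpha)$ the hull contains a Euclidean ball of radius $\geq \delta_k$. Such a bound should follow from a Loewner/Bessel-process analysis of the driving function combined with Koebe's quarter theorem, which relates the conformal radius of the complement to Euclidean inradii. For $\kappa'\in(4,8)$, I would use the branching $\SLE_{\kappa'}(\kappa'-6)$ structure reviewed in Section~\ref{sec-sle-prelim}: a localized segment of $\eta'$ decomposes into pieces of the main flow-line skeleton together with fills of complementary bubbles, where each filled bubble is, conditional on its bounding flow lines, a conformal image of an SLE-type process in a simply connected domain. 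Koebe's theorem again converts bounds on the conformal radius of each bubble into inscribed Euclidean balls. A pigeonhole argument then shows that if the total segment has diameter $\geq 1$, at least one skeleton arc or constituent bubble has diameter bounded below, at which point the single-object estimate applies.

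The main obstacle is precisely the single-scale estimate in the $\kappa'\in(4,8)$ regime. The subtlety is that the space-filling curve can in principle traverse a macroscopic distance by stringing together many small bubbles and narrow ribbons of the flow-line skeleton, and we must rule out the pathological scenario where \emph{every} contributing piece is anomalously thin. Quantifying this requires a polynomial tail bound on the ratio $(\text{inradius})/(\text{diameter})$ for SLE hulls and filled bubbles that is \emph{uniform} in the bounding flow-line configuration; I expect this to follow from standard SLE regularity estimates and Koebe distortion, but the combinatorial decomposition into bubbles and the uniformity in the conditional boundary data is the most delicate step. All remaining ingredients (Borel-Cantelli, the spatial cover, and the rescaling) are routine.
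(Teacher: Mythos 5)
Your high-level reduction — dyadic scales, a spatial cover, and Borel–Cantelli — is sound and correctly isolates the single-scale claim $q_k = O(\delta_k^\alpha)$ with $\alpha > 2(1-r)/r$ as the crux. The trouble is that this is precisely where the proof has to live, and the argument you sketch for it does not close. For $\kappa'\in(4,8)$ you propose to decompose an arbitrary segment $\eta'([a,b])$ into "skeleton arcs and constituent bubbles" and then pigeonhole; but an arbitrary segment typically begins and ends in the interior of bubbles, and when the curve threads across the annulus through many small bubbles and narrow ribbons of the skeleton, no single full bubble or skeleton arc need have macroscopic diameter. The estimate you need is about arbitrary subsegments of $\eta'$, and those do not decompose cleanly into the objects for which you propose inradius/diameter tail bounds. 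This is exactly the scenario you flag as "the main obstacle," but without resolving it the proof is incomplete: it is the content, not a technicality.

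The paper handles this with a different device. It does not try to control all segments at once by an inradius bound; instead Lemma~\ref{prop-hitting-bubble} shows that, starting from a \emph{fixed} point $z$, the segment traced between the first hit of $z$ and the first exit from $B_{\ep^{1-r}}(z)$ fails to contain a ball of radius $\ep$ only with probability $e^{-a\ep^{-r}}$, by decomposing the reversed branch towards $z$ (a radial $\SLE_{\kappa'}(\kappa'-6)$) into $\asymp \ep^{-r}$ annular steps, each producing a macroscopic ball with uniformly positive probability via \cite[Lemma~2.5]{miller-wu-dim}. The reason a macroscopic segment must pass through such a distinguished point is supplied by the flow-line geometry: Lemmas~\ref{prop-big-pocket} and~\ref{prop-lattice-pockets} show that, with high probability, the complementary pockets of the flow lines $\eta_z^\pm$ started from the lattice $\mcl S_\ep(\rho)$ all have diameter $\preceq \ep^{1-r}$ inside $B_R(0)$, and that the curve hits a lattice point for the first time whenever it exits a pocket. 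Your pigeonhole step would need some analogue of this pocket structure to rule out the "all thin pieces" scenario, and that is what is genuinely absent from your outline. Note also that the $\kappa'\geq 8$ and $\kappa'\in(4,8)$ regimes are treated uniformly by the radial $\SLE_{\kappa'}(\kappa'-6)$ reparameterization, so the case split you introduce is unnecessary; and your Borel–Cantelli route requires a polynomial bound on $q_k$, while the paper — absent the inputs from~\cite{hs-euclidean} noted in Remark~\ref{remark-sle-bubble-poly} — does not establish even a polynomial rate for $\BB P(\mcl E_\ep^c)$ because Lemma~\ref{prop-big-pocket} has no quantitative decay, so the bar you have set for yourself is higher than the one the paper clears.
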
  

Proposition~\ref{prop-sle-bubble} yields as a corollary the optimal H\"older exponent for $\eta'$ when it is parameterized by Lebesgue measure.

\begin{cor} \label{prop-sle-holder}
Let $\eta'$ be a space-filling $\SLE_{\kappa'}$ from $\infty$ to $\infty$ in $\BB C$, parameterized by Lebesgue measure. Almost surely, $\eta'$ is locally H\"older continuous with any exponent $<1/2$, and is not locally H\"older continuous with any exponent $>1/2$. 
\end{cor}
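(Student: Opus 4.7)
The plan is to deduce both claims directly from Proposition~\ref{prop-sle-bubble} together with the identity $\op{Leb}(\eta'([a,b])) = b-a$ afforded by the Lebesgue parameterization of $\eta'$. For the upper H\"older bound, fix $\alpha < 1/2$ and choose $r \in (0,1)$ with $(1-r)/2 > \alpha$. Since the events $\mcl E_\ep(R,r)$ are monotone in $\ep$ (a smaller $\ep$ is a weaker requirement) and have probability tending to $1$, Proposition~\ref{prop-sle-bubble} gives, for each fixed integer $R \geq 1$, that a.s.\ $\mcl E_\ep(R,r)$ holds for some random $\ep = \ep(R) > 0$. On this event, for any $a < b$ with $\eta'([a,b]) \subset B_R(0)$ and $D := \op{diam}\eta'([a,b]) \leq \ep^{1-r}$, I apply Proposition~\ref{prop-sle-bubble} with $\delta := D^{1/(1-r)}$ (so that $\delta \leq \ep$ and $\delta^{1-r} = D$) to produce a Euclidean ball of radius $\delta$ inside $\eta'([a,b])$, and comparing areas yields
\[
b - a \;=\; \op{Leb}(\eta'([a,b])) \;\geq\; \pi D^{2/(1-r)} \qquad \text{hence} \qquad D \;\leq\; \pi^{-(1-r)/2}(b-a)^{(1-r)/2}.
\]
Since any compact time interval $I$ satisfies $\eta'(I) \subset B_R(0)$ for some random integer $R$, this short-range estimate extends in the standard way (using the uniform bound $|\eta'(s) - \eta'(t)| \leq 2R$ to cover pairs $s,t \in I$ with $|t-s|$ not small) to an $\alpha$-H\"older bound on all of $I$, with a larger constant. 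Taking a countable intersection over $R\in\BB N$ gives local $\alpha$-H\"older continuity for every $\alpha < 1/2$.

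For the lower H\"older exponent I argue by contradiction. Suppose that on some compact interval $\eta'$ is H\"older of some order $\alpha > 1/2$. Then for all sufficiently small $b-a$ in that interval, $\eta'([a,b])$ is contained in a Euclidean ball of radius $C(b-a)^\alpha$, hence has area at most $\pi C^2 (b-a)^{2\alpha}$. But this area equals $b-a$ by the Lebesgue parameterization, so $1 \leq \pi C^2 (b-a)^{2\alpha - 1}$, which fails as $b - a \to 0$ since $2\alpha - 1 > 0$.

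The main obstacle in the overall argument is Proposition~\ref{prop-sle-bubble} itself; once that input is granted, the H\"older dichotomy is an essentially immediate consequence of comparing diameter with area through the Lebesgue parameterization.
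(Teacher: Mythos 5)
Your proof is correct and follows essentially the same route as the paper: both use Proposition~\ref{prop-sle-bubble} together with the identity $\operatorname{Leb}(\eta'([a,b])) = b-a$ to compare the diameter of $\eta'([a,b])$ with its area, and the inverse inequality $\op{diam}\eta'([a,b])^2 \geq \pi^{-1}(b-a)$ to rule out H\"older exponents above $1/2$. Your choice $\delta = D^{1/(1-r)}$ and the discussion of the short-range to global extension are just slightly more explicit renderings of the same argument.
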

\begin{proof}
Since $\eta'$ is parameterized by Lebesgue measure, we always have $\op{diam} \eta'([a,b]) \geq 2\pi^{-1/2}(b-a)^{1/2}$, so $\eta'$ cannot be H\"older continuous for any exponent $>1/2$. 

Conversely, it suffices to prove the H\"older continuity of the restriction of $\eta'$ to $(\eta')^{-1}(B_R(0))$ for some fixed $R>0$. Also fix $r>0$. By Proposition~\ref{prop-sle-bubble}, it is a.s.\ the case that for sufficiently small $\delta>0$ and each $a < b\in\BB R$ such that $\eta'([a,b]) \subset B_{2R}(0)$ and $\op{diam} \eta'([a,b]) \geq \delta^{1-r}$, the set $\eta'([a,b])$ contains a ball of radius at least $\delta $, whence $b-a \geq \pi\delta^2$. Hence for sufficiently small $\delta$, it holds that whenever $a,b\in (\eta')^{-1}(B_R(0))$ with $a<b$ and $b-a \leq \pi^{1/2} \delta$, we have $\op{diam} \eta'([a,b]) \leq \delta^{(1-r)/2}$. This proves the desired H\"older continuity. 
\end{proof}

The key input in the proof of Proposition~\ref{prop-sle-bubble} is the following estimate, which is an easy consequence of the results of Section~\ref{sec-sle-ball}.

 \begin{lem} \label{prop-hitting-bubble}
Let $\eta'$ be a whole-plane space-filling $\SLE_{\kappa'}$ from $\infty$ to $\infty$ with any choice of parameterization.
For $z\in\BB C$, let $\tau_z$ be the first time $\eta'$ hits $z$ and for $\rho \geq 0$ let $\tau_z(\rho)$ be the first time after $\tau_z$ at which $\eta$ exits $B_\rho(z)$. 
For $\ep \in (0,1)$, let $E_z^\ep(\rho)$ be the event that $\eta'([\tau_z , \tau_z(\rho)])$ contains a Euclidean ball of radius at least $\ep \rho$.  
There are constants $a_0,a_1>0$ depending only on $\kappa'$ such that for each $\rho > 0$ and each $\ep \in (0,1)$, 
\eqbn
\BB P\!\left( E_z^\ep(\rho)^c \right) \leq  a_0 e^{-a_1/\ep} .
\eqen   
\end{lem}
\begin{proof}
Fix $C >1$ to be chosen later, depending only on $\kappa'$. For $\ep \in (0,1)$, let $N_\ep := \lfloor (C \ep \rho)^{-1} \rfloor$ and for $k\in \{1,\dots,N_\ep\}$, let $\rho^\ep(k) := k C \ep \rho$. For $k\in \{2,\dots, N_\ep\}$, let $F_z^\ep(k)$ be the event that $\eta'([\tau_z(\rho^\ep(k-1))  ,\tau_z(\rho^\ep(k )) ])$ contains a ball of radius at least $\ep \rho$. Then $\bigcup_{k=2}^{N_\ep} F_z^\ep(k) \subset E_z^\ep(\rho)$. 

By Lemma~\ref{prop-sle-ball-pos} and scale and translation invariance of the law of whole-plane SLE$_{\kappa'}$, if we choose $C>1$ sufficiently large and $p \in (0,1)$ sufficiently small, depending only on $\kappa'$, then for $k\in \{1,\dots,N_\ep-1\}$,
\eqb \label{eqn-scale-bubble-pos}
\BB P\!\left( F_z^\ep(k+1) \,|\, \eta'|_{[0, \tau_z^\ep(\rho^\ep(k) )]}  \right) \geq p . 
\eqe
Multiplying this estimate over all $k\in \{1,\dots, N_\ep-1\}$ gives
\eqbn
\BB P\!\left( E_z^\ep(\rho)^c \right) \leq (1-p)^{N_\ep-1} \leq a_0 e^{-a_1/\ep }
\eqen
for an appropriate choice of $a_0 , a_1 >0$ as in the statement of the lemma. 
\end{proof}

We now want to extract Proposition~\ref{prop-sle-bubble} from Lemma~\ref{prop-hitting-bubble}. 
For $\ep > 0$ and $\rho > 0$, let 
\eqb \label{eqn-lattice-def}
\mcl S_\ep(\rho) := B_\rho(0) \cap (\ep \BB Z^2) .
\eqe 
Lemma~\ref{prop-hitting-bubble} tells us that with high probability, each segment of the space-filling $\SLE_{\kappa'}$ curve $\eta'$ which has diameter at least $\ep^{1-r}$ and which starts from the first time $\eta'$ hits a point of $\mcl S_\ep(\rho)$ contains a ball of radius at least $\ep$. However, it is still possible that there exists a segment of $\eta'$ contained in $B_R(0)$ for $R<\rho$ which has diameter $>\ep^{1-r}$, fails to contain a ball of radius $\ep$, and fails to contain any point of $\mcl S_\ep(\rho)$. In the remainder of this subsection, we will rule out this possibility. 

To this end, we will view the space-filling $\SLE_{\kappa'}$ curve $\eta'$ as being coupled with a whole-plane GFF $h$, defined modulo a global additive multiple of $2\pi\chi$, as in \cite{ig4}. For $z\in\BB C$, let $\eta_z^\pm$ be the flow lines of $h$ started from $z$ with angles $\mp\pi/2$. By (the whole-plane analog of) the construction of space-filling $\SLE_{\kappa'}$ in \cite{ig4}, the flow lines $\eta_z^\pm$ form the left and right boundaries of $\eta'$ stopped at the first time it hits $z$.

 \begin{lem} \label{prop-big-pocket}
Suppose we are in the setting described just above. Fix $R>0$. For $\rho >R$, let $\wt F (\rho) = \wt F (\rho,R)$ be the event that the following is true. For each $\ep \in (0,1)$, there exists $z_0  , w_0 \in \mcl S_\ep(\rho) \setminus B_R(0)$ such that the flow lines $\eta_{w_0}^-$ and $\eta_{z_0}^-$ and the flow lines $\eta_{z_0}^+$ and $\eta_{w_0}^+$ merge and form a pocket containing $B_R(0)$ before leaving $B_\rho(0)$. Then for each fixed $R> 0$, we have $\BB P\!\left(\wt F (\rho)\right) \rta 1$ as $\rho\rta\infty$. 
\end{lem}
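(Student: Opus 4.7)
The plan is to first identify random points $z_*, w_* \in \BB C$ whose flow lines enclose $B_R(0)$ in a pocket contained in $B_\rho(0)$ with probability tending to $1$, and then transfer this property to lattice approximations $z_0(\ep), w_0(\ep) \in \ep \BB Z^2$ via continuity of flow lines in the starting point.

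Since $\eta'$ is a.s.\ continuous and space-filling, the times $\wt \tau_- := \inf\{t \in \BB R : \eta'(t) \in \overline{B_{R+3}(0)}\}$ and $\wt \tau_+ := \sup\{t \in \BB R : \eta'(t) \in \overline{B_{R+3}(0)}\}$ are finite a.s. Set $z_* := \eta'(\wt \tau_-)$ and $w_* := \eta'(\wt \tau_+) \in \partial B_{R+3}(0)$. Then $\eta'([\wt \tau_-, \wt \tau_+])$ is a.s.\ bounded and contains $\overline{B_{R+3}(0)} \supset B_R(0)$. By the imaginary geometry description of space-filling $\SLE_{\kappa'}$ (see \cite[Section~4.3]{ig4} and \cite[Footnote~9]{wedges}), the boundary of $\eta'([\wt \tau_-, \wt \tau_+])$ consists of portions of the flow lines $\eta_{z_*}^\pm, \eta_{w_*}^\pm$, which pairwise merge (same-angle pairs) inside this bounded region. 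In particular, the event $\{\eta'([\wt \tau_-, \wt \tau_+]) \subset B_{\rho/2}(0)\}$ has probability tending to $1$ as $\rho \to \infty$. With an arbitrarily small additional loss of probability, we may also ensure $|z_* - w_*| > 2$, e.g.\ by modifying the definition of $\wt \tau_\pm$ so that $z_*, w_*$ are the first/last hitting times of two prescribed opposite arcs of $\partial B_{R+3}(0)$.

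For each $\ep \in (0,1)$, let $z_0(\ep), w_0(\ep) \in \ep \BB Z^2$ be the lattice points closest to $z_*, w_*$. Then $|z_0 - z_*|, |w_0 - w_*| \leq \sqrt{2}/2$, so $z_0, w_0 \in \mcl S_\ep(\rho) \setminus B_R(0)$ for $\rho \geq R+4$ and $z_0 \neq w_0$. By the merging property of same-angle flow lines in the whole-plane GFF (cf.\ \cite{ig4}), each pair $(\eta_{z_0}^\pm, \eta_{z_*}^\pm)$ and $(\eta_{w_0}^\pm, \eta_{w_*}^\pm)$ a.s.\ merges and the two coincide thereafter. The main obstacle is a quantitative continuity statement: we need these four mergings to take place inside a neighborhood contained in $B_{R+5}(0) \setminus B_R(0)$, with probability close to $1$ uniformly in $\ep \in (0,1)$. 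Once established, this implies that $\eta_{z_0}^\pm, \eta_{w_0}^\pm$ make only short detours outside $B_R(0)$ before joining the reference flow lines, and hence pairwise merge inside $B_\rho(0)$ and enclose a pocket containing $B_R(0)$. The required uniform continuity of flow lines in their starting point should follow from the imaginary geometry estimates of \cite[Sections~2--3]{ig4}, using that the perturbation $|z_0 - z_*| \leq \sqrt{2}/2$ is bounded independently of $\ep$.
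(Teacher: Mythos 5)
Your proposal takes a genuinely different route from the paper, and the difference is not cosmetic: you leave the hardest step unproved, and the paper's argument is designed precisely to avoid that step.

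Your plan is to fix two continuum points $z_*, w_*$ (first and last hitting times of $\overline{B_{R+3}(0)}$ by $\eta'$), then for each $\ep$ take the nearest lattice points $z_0(\ep), w_0(\ep)$, and transfer the pocket-forming property from $(z_*,w_*)$ to $(z_0(\ep),w_0(\ep))$ via continuity of flow lines in the starting point. You explicitly label the resulting claim --- that the flow lines $\eta_{z_0(\ep)}^\pm$ merge into $\eta_{z_*}^\pm$ inside $B_{R+5}(0)\setminus B_R(0)$, \emph{uniformly over all $\ep\in(0,1)$} --- as ``the main obstacle,'' and then assert that it ``should follow'' from \cite{ig4}. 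This is a genuine gap. The merging estimate \cite[Proposition~4.14]{ig4} gives, for each fixed scale $\ep$, that flow lines from points at distance $\asymp\ep$ merge within distance $\ep^{1-r}$ off an event of probability $o_\ep^\infty(\ep)$; but you need this for \emph{all} $\ep\in(0,1)$ simultaneously, with starting points $z_0(\ep)$ that vary with $\ep$ and do not run along any single fixed lattice. Making this uniform would require a Borel--Cantelli argument over scales combined with some control of how $z_0(\ep)$ moves between consecutive dyadic scales, none of which appears in your sketch. There is also a secondary issue with the choice $w_*=\eta'(\wt\tau_+)$: the pocket enclosed by $\eta_{z_*}^\pm$ and $\eta_{w_*}^\pm$ is $\eta'([\tau_{z_*},\tau_{w_*}])$, where $\tau_{w_*}$ is the \emph{first} hitting time of $w_*$, and this need not be after $\eta'$ has finished filling in $B_R(0)$; you should instead choose $w_*$ along the curve after the last visit to $B_R(0)$.

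The paper sidesteps both problems by a more elementary trick. It chooses random radii $R<\rho_0<\rho_1$ so that $\eta'$ first enters $B_{\rho_0}(0)$, completely fills $B_R(0)$, and then exits $B_{\rho_1}(0)$, and so that the pieces $\eta'([\tau_{\rho_0},t_R])$ and $\eta'([t_R',\sigma_{\rho_0}^{\rho_1}])$ each contain a Euclidean ball of radius $1$. Since any ball of radius $1$ meets $\ep\BB Z^2$ for every $\ep\in(0,1)$, for each $\ep$ one can pick lattice points $z_0,w_0$ that are themselves hit by $\eta'$ \emph{before} time $t_R$ and \emph{after} time $t_R'$ respectively. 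The pocket enclosed by $\eta_{z_0}^\pm,\eta_{w_0}^\pm$ is then exactly $\eta'$ of a time interval containing $[t_R,t_R']$ and contained in $[\tau_{\rho_0},\sigma_{\rho_0}^{\rho_1}]$, hence it contains $B_R(0)$ and sits inside $B_{\rho_1}(0)$, with no continuity-in-the-starting-point needed at all. Finally $\BB P(\rho_1<\rho)\rta 1$. If you want to repair your proof, you should either prove the uniform merging statement you invoke (e.g.\ via \cite[Proposition~4.14]{ig4} along dyadic scales plus a careful union bound over lattice points near $z_*$), or switch to the paper's device of choosing $z_0,w_0$ inside fixed random balls of radius $1$ that $\eta'$ fills before and after it covers $B_R(0)$.
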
 
\begin{proof}
Let $t_R$ (resp.\ $t_R'$) be the first time $\eta'$ hits (resp.\ finishes filling in) $B_R(0)$. 
For $\rho_1 > \rho_0  > R$, let $\tau_{\rho_0}$ be the first time $\eta'$ hits $B_{\rho_0}(0)$ and let $\sigma_{\rho_0}^{\rho_1}$ be the first time after $\tau_{\rho_0}$ at which $\eta'$ leaves $B_{\rho_1}(0)$. Since $\eta'$ is a.s.\ continuous and a.s.\ hits every point of $\BB C$, it follows that there a.s.\ exists a random $\rho_1 >\rho_0 > R$ such that the following is true.
\begin{enumerate}
\item $\tau_{\rho_0} < t_R < t_R' < \sigma_{\rho_0}^{\rho_1}$. 
\item $\eta'([\tau_{\rho_0} , t_R])$ and $\eta'([t_R' , \sigma_{\rho_0}^{\rho_1}])$ each contain a ball of radius 1.
\end{enumerate}
For this choice of $\rho_0$, $\rho_1$ and any $\ep \in (0,1)$, there exists $z_0 \in \mcl S_\ep(\rho_1) \cap \eta'([\tau_{\rho_0} , t_R])$ and $w_0 \in \mcl S_\ep(\rho_1) \cap\eta'([t_R' , \sigma_{\rho_0}^{\rho_1}])$.  
By the construction of space-filling SLE, the pocket formed by the flow lines $\eta_{z_0}^\pm$ and $\eta_{w_0}^\pm$ stopped at the first time they merge is precisely the set of points traced by $\eta'$ between the first time it hits $z_0$ and the first time it hits $w_0$. Since $\eta'([\tau_{\tau_0} , \sigma_{\rho_0}^{\rho_1}]) \subset B_{\rho_1}(0)$ and $\eta'([t_R , t_R']) \supset B_R(0)$, it follows that this pocket contains $B_R(0)$ and is contained in $B_{\rho_1}(0)$. Since $\rho_1$ is a.s.\ finite, it follows that $\BB P\!\left(\rho_1 < \rho\right) \rta 1$ as $\rho\rta \infty$. The statement of the lemma follows. 
\end{proof}

\begin{lem} \label{prop-lattice-pockets}
Suppose we are in the setting described just above Lemma~\ref{prop-big-pocket}. Fix $R>0$. For $\ep \in (0,1)$ and $\rho > R$, let $\mcl P_\ep(\rho)$ be the set of complementary connected components of 
\eqbn
\bigcup_{z\in \mcl S_\ep(\rho)} (\eta_z^+ \cup \eta_z^-) 
\eqen
which intersect $B_R(0)$.
For $r \in (0,1)$, let $\wt{\mcl E}_\ep(\rho) = \wt{\mcl E}_\ep(\rho ;R,r)$ be the event that $\sup_{P\in\mcl P_\ep(\rho)} \op{diam} P \leq   \ep^{1-r}$. Also let $\wt F (\rho)$ be defined as in Lemma~\ref{prop-big-pocket}. Then for each fixed $\rho > R$, we have
\eqbn 
 \BB P\!\left(\wt{\mcl E}_\ep(\rho)^c \cap \wt F (\rho) \right)  \leq \rho^2 o_\ep^\infty(\ep) 
\eqen
at a rate depending only on $R$. 
\end{lem}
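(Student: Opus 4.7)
The plan is to build a combinatorial correspondence between the pockets of $\mcl P_\ep(\rho)$ and the time intervals between first-hit times of lattice points in $\mcl S_\ep(\rho)$, and then to use Lemma~\ref{prop-hitting-bubble} to show that a pocket of large diameter would force $\eta'$ to visit some ``missed'' lattice point in its interior, a contradiction. First I would use the imaginary-geometry construction of $\eta'$: for each $z$ the union $\eta_z^+ \cup \eta_z^-$ is the topological boundary of $\eta'((-\infty,\tau_z])$, so two points $u_1,u_2\in\BB C$ lie in the same connected component of $\BB C\setminus \bigcup_{z\in\mcl S_\ep(\rho)}(\eta_z^+\cup\eta_z^-)$ iff no $\tau_z$ with $z\in\mcl S_\ep(\rho)$ lies strictly between $\tau_{u_1}$ and $\tau_{u_2}$. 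Since $\mcl S_\ep(\rho)$ is finite and the $\tau_z$'s are a.s.\ distinct, I can order them $\tau_{z_{(1)}}<\cdots<\tau_{z_{(N)}}$, and each $P\in\mcl P_\ep(\rho)$ satisfies $\ol P=\eta'([\tau_{z_{(i)}},\tau_{z_{(i+1)}}])$ for a unique consecutive pair. On $\wt F(\rho)$ the flow lines $\eta_{z_0}^\pm,\eta_{w_0}^\pm$ are among those we use, so every $P\in\mcl P_\ep(\rho)$ meeting $B_R(0)$ is trapped inside the pocket they bound and hence lies in $B_\rho(0)$.

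Next, fix any $r'\in(0,r)$ and apply Lemma~\ref{prop-hitting-bubble} with $r$ replaced by $r'$ to each $z\in\mcl S_\ep(\rho)$: this produces, for each $z$, a ball $B^*(z)$ of radius $\geq\ep$ inside $\eta'([\tau_z,\sigma_z])$, where $\sigma_z$ now denotes the first exit of $\eta'$ from $B_{\ep^{1-r'}}(z)$ after $\tau_z$. Since $|\mcl S_\ep(\rho)|=O(\rho^2/\ep^2)$ and the per-point failure probability is $e^{-a\ep^{-r'}}$, a union bound shows that all of these events succeed outside an event of probability $O(\rho^2\ep^{-2})e^{-a\ep^{-r'}}=\rho^2 o_\ep^\infty(\ep)$. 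A small strengthening of Lemma~\ref{prop-hitting-bubble}---take $C\geq 3$ in its proof and discard the $k=1$ contribution to $\bigcup_k F_\ep^k(z)$---lets me additionally require $B^*(z)$ to lie at Euclidean distance $\geq 2\ep$ from $z$, with the same probability scaling. I would also record the a.s.\ fact that no $z\in\mcl S_\ep(\rho)$ is a double point of $\eta'$: automatic for $\kappa'\geq 8$, and for $\kappa'\in(4,8)$ it follows from translation invariance and Fubini applied to the a.s.\ Lebesgue-null set of double points, followed by a countable union over $\mcl S_\ep(\rho)$.

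To derive the contradiction on the intersection of these events with $\wt F(\rho)$, suppose some $P\in\mcl P_\ep(\rho)$ satisfies $\op{diam}P>\ep^{1-r}$ and write $\ol P=\eta'([\tau_{z_i},\tau_{z_{i+1}}])$ with $z_i:=z_{(i)}$. Since $z_i\in\ol P$, the triangle inequality gives a point of $\ol P$ at distance $>\tfrac12\ep^{1-r}$ from $z_i$; for $\ep$ small one has $\tfrac12\ep^{1-r}>\ep^{1-r'}$ because $r'<r$, so $\eta'$ exits $B_{\ep^{1-r'}}(z_i)$ before $\tau_{z_{i+1}}$, i.e.\ $\sigma_{z_i}<\tau_{z_{i+1}}$. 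The ball $B^*(z_i)$ therefore lies in $\eta'([\tau_{z_i},\sigma_{z_i}])\subset\ol P\subset B_\rho(0)$ and stays at distance $\geq 2\ep$ from $z_i$. Any ball of radius $\geq\ep$ in $\BB C$ contains a point of $\ep\BB Z^2$, so $B^*(z_i)$ contains some $z^*\in\mcl S_\ep(\rho)\setminus\{z_i\}$ (the distance bound rules out $z^*=z_i$). Since $z^*$ is a simple point, its unique visit time $\tau_{z^*}$ lies in $[\tau_{z_i},\sigma_{z_i}]\subset[\tau_{z_i},\tau_{z_{i+1}})$, and $z^*\neq z_i$ upgrades this to $\tau_{z^*}\in(\tau_{z_i},\tau_{z_{i+1}})$, contradicting the consecutivity of $z_{(i)},z_{(i+1)}$. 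The only delicate point in the argument is the strengthening of Lemma~\ref{prop-hitting-bubble} that keeps $B^*(z_i)$ a definite lattice-scale distance from $z_i$: without it the ball could be essentially $B_\ep(z_i)$, in which case $z_i$ could be the only lattice point inside and the extraction of a ``new'' first-visit would fail.
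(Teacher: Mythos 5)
Your approach is genuinely different from the paper's and contains a gap in its very first step. The paper controls pockets directly via the flow line merging estimate \cite[Proposition~4.14]{ig4} applied at each lattice point, then argues that every pocket boundary is a union of at most four arcs (possibly traced by flow lines from only one lattice point). You instead want a bijection between pockets and consecutive first-hit times $\tau_{z_{(i)}}<\tau_{z_{(i+1)}}$ and then apply Lemma~\ref{prop-hitting-bubble}. That bijection is false when $\kappa'\in(4,8)$.

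The issue is the ``if'' half of your claim ``$u_1,u_2$ lie in the same complementary component iff no $\tau_z$ lies strictly between $\tau_{u_1}$ and $\tau_{u_2}$,'' and consequently the claim $\ol P=\eta'([\tau_{z_{(i)}},\tau_{z_{(i+1)}}])$. For $\kappa'\in(4,8)$ one has $\kappa=16/\kappa'\in(2,4)$, and the two flow lines $\eta_z^-,\eta_z^+$ started from a single point $z$ have angle gap $\pi$, which for $\kappa>2$ means they \emph{do} intersect each other away from $z$ (cf.\ the discussion around Example~\ref{prop-dimcalc-6}). Consequently $\eta_z^-\cup\eta_z^+$ already separates $\BB C$ into more than two pieces, and $\mcl P_\ep(\rho)$ contains ``lens''-shaped pockets bounded purely by arcs of $\eta_z^-$ and $\eta_z^+$ for a \emph{single} $z$, and even pockets bounded by a single arc of $\eta_z^-$ or $\eta_z^+$ forming a loop. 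The paper's own paragraph beginning ``Since $\wt F(\rho)$ occurs, $\partial P$ consists of either an arc of a single flow line\ldots'' and the caption of Figure~\ref{fig-lattice-pockets} (the pockets $P_3,P_4$) record exactly these cases. For such a pocket $P$, $\ol P=\eta'([s_1,s_2])$ where $s_1,s_2$ are the entry/exit times of $P$ and $(s_1,s_2)$ is generically a \emph{proper} sub-interval of $(\tau_{z_{(i)}},\tau_{z_{(i+1)}})$; moreover $P$ need contain no lattice point at all, so your opening move ``Since $z_i\in\ol P$\ldots'' fails and the construction of $z^*$ never gets off the ground. (For $\kappa'\geq 8$ the two flow lines from a given $z$ do not intersect and your picture is closer to correct, but the lemma must cover all $\kappa'>4$.) To repair the argument you would need, at minimum, a separate mechanism bounding the diameter of these one-point pockets --- which is precisely what the merging estimate in the paper's proof supplies, and which Lemma~\ref{prop-hitting-bubble} alone does not give you, since that lemma requires the excursion to start at a lattice point.

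Two smaller remarks. First, even for the four-arc pockets you still need $z^*\neq z_i$ to produce a new first-hit strictly inside the interval, and your device (discard the $k=1$ shell in the proof of Lemma~\ref{prop-hitting-bubble} and take $C\geq 3$) is a reasonable fix for that, though you would want to check that the hull $K_{\tau_\ep^k}^z\setminus K_{\tau_\ep^{k-1}}^z$ can be arranged to stay outside $B_{2\ep}(z)$, since these hull differences are not literally contained in the annulus $B_{kC\ep}(z)\setminus B_{(k-1)C\ep}(z)$ (the outer hull re-enters the inner ball). Second, your Fubini argument that a.s.\ no $z\in\mcl S_\ep(\rho)$ is a multiple point of $\eta'$ is fine and is needed in the $\kappa'\geq 8$ case as well.
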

\begin{proof}
The proof given here is more or less implicit in the proof of continuity of space-filling SLE in \cite[Section~4.3]{ig4}, but we give a full proof for completeness. See Figure~\ref{fig-lattice-pockets} for an illustration.

For $\rho > R$ and $z\in B_\rho(0)$, let $\wt E_\ep^z(\rho)  $ be the event that the following is true. There exists $w \in \mcl S_\ep(\rho)$ such that $w\not=z$ and the curve $\eta_z^-$ hits (and subsequently merges with) $\eta_w^-$ on its left side before leaving $\partial B_{\ep^{1-r}/8}(z)$; and the same is true with $\eta_z^+$ in place of $\eta_z^-$ and/or ``right" in place of ``left". Let
\eqbn
\wt{\mcl E}_\ep'(\rho) := \bigcap_{z\in \mcl S_\ep(\rho)} \wt E_\ep^z(\rho) . 
\eqen
By \cite[Proposition~4.14]{ig4}
\footnote{The statement of \cite[Proposition~4.14]{ig4} does not specify the side at which the merging occurs, but the proof shows that we can require the merging to occur on a particular side of the curve.}  
we have $\BB P\!\left(\wt E_\ep^z(\rho)^c \right) = o_\ep^\infty(\ep)$, so by the union bound, $\BB P\!\left(\wt{\mcl E}_\ep'(\rho)\right) = 1-\rho^2 o_\ep^\infty(\ep)$. Hence to complete the proof it suffices to show that $\wt{\mcl E}_\ep'(\rho) \cap \wt F(\rho) \subset \wt{\mcl E}_\ep(\rho)$.

We first argue that on $\wt F(\rho)$, the boundary of each $P\in\mcl P_\ep(\rho)$ is entirely traced by curves $\eta_z^\pm$ for $z\in \mcl S_\ep(\rho)$. To see this, let $z_0, w_0 \in \mcl S_\ep(\rho)$ be as in the definition of $\wt F (\rho)$ and let $P_0$ be the pocket formed by $\eta_{z_0}^\pm$ and $\eta_{w_0}^\pm$ surrounding $B_R(0)$. Then for $z\notin B_\rho(0)$, the flow lines $\eta_z^\pm$ cannot cross $\partial P_0$ without merging into $\eta_{z_0}^\pm$, hence cannot enter $B_R(0)$ without merging into flow lines $\eta_z^\pm$ for $z\in \mcl S_\ep(\rho)$. 

Now suppose $\wt{\mcl E}'_\ep(\rho) \cap \wt F(\rho)$ occurs and $P\in \mcl P_\ep(\rho)$. We must show $\op{diam} P \leq \ep^{1-r}$. Since $\wt F (\rho)$ occurs, $\partial P$ consists of either two arcs traced by a pair of flow lines $\eta_z^-$ and $\eta_z^+$ for some $z\in \mcl S_\ep(\rho)$; or four arcs traced by $\eta_z^\pm$ and $\eta_w^\pm$ for some $z,w \in \mcl S_\ep(\rho)$. 

Suppose first that we are in the latter case, i.e., that there exists $z,w\in \mcl S_\ep(\rho)$ with the property that $\partial P$ contains non-trivial arcs traced by the left side of $\eta_z^-$, the right side of $\eta_z^+$, the right side of $\eta_w^-$, and the right side of $\eta_w^+$. Let $I_z^-$ be the arc of $\partial P$ traced by the left side of $\eta_z^-$. The curve $\eta_z^-$ cannot hit $\eta_v^-$ for any $v\in \mcl S_\ep(\rho)$ on its left side before $\eta_z^-$ finishes tracing $I_z^-$ (otherwise part of this arc $I_z^-$ would lie on the boundary of a pocket other than $P$). The same is true if we replace $\eta_z^-$ with one of the other three arcs of $\partial P$ in our description of $\partial P$. Since $\wt E_\ep^z(\rho) \cap \wt E_\ep^w(\rho)$ occurs, each of these four arcs has diameter at most $\frac14 \ep^{1-r}$. Therefore, $\op{diam} P \leq   \ep^{1-r}$. By a similar argument, but with only two distinguished boundary arcs instead of four, we obtain $\op{diam} P \leq \ep^{1-r}$ in the case when $\partial P$ is traced by a pair of flow lines $\eta_z^-$ and $\eta_z^+$ for $z\in  \mcl S_\ep(\rho)$. Thus $\wt{\mcl E}_\ep'(\rho)  \cap \wt{F}(\rho)  \subset \wt{\mcl E}_\ep(\rho)$, as required.
\end{proof}

\begin{figure}[ht!]
 \begin{center}
\includegraphics[scale=0.92]{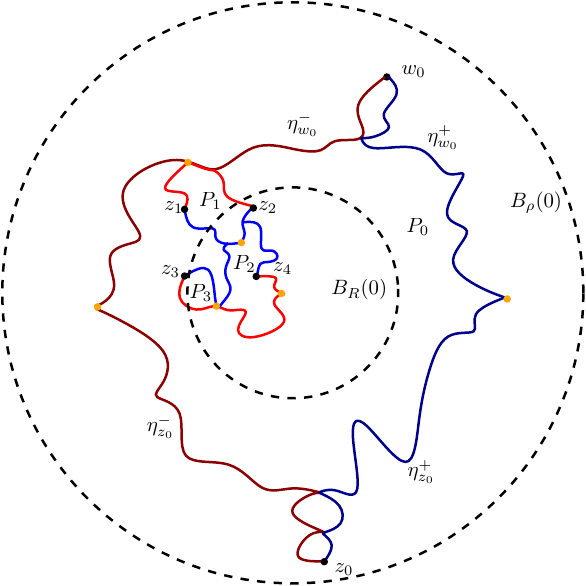} 
\caption{An illustration of the proof of Lemma~\ref{prop-lattice-pockets}. On the event $\wt F(\rho)$, there exists points $z_0$ and $w_0$ in the lattice $\mcl S_\ep(\rho)$ such that the flow lines $\eta_{z_0}^\pm$ and $\eta_{w_0}^\pm$ intersect at points shown in orange to form a pocket $P_0$ which separates $B_R(0)$ from $\partial B_\rho(0)$. Also shown are four points $z_1 , z_2, z_3,z_4 \in \mcl S_\ep(\rho)$ and three pockets $P_1,P_2,P_3 \in \mcl P_\ep(\rho)$ formed by the flow lines started at these points. Flow lines $\eta_{z_i}^-$ for $i\in\{1,\dots ,4\}$ are shown in red, and flow lines $\eta_{z_i}^+$ are shown in blue. The points where two of these flow lines merge are shown in orange. The pockets $P_1$ and $P_2$ are of the type with boundary arcs traced by four flow lines started from two points and the pocket $P_3$ is of the type with boundary arcs traced by two flow lines started from a single point. } \label{fig-lattice-pockets}
\end{center}
\end{figure}

\begin{proof}[Proof of Proposition~\ref{prop-sle-bubble}]
Fix $r' \in (0,r)$. For $\ep \in (0,1)$, let $k_\ep$ be the smallest $k\in\BB N$ such that $2^{-k} \geq \ep$. Let $ \mcl E_\ep^1$ be the event that the following is true. For each $k \geq k_\ep$ and each $z \in \mcl S_{2^{-k}}(2R)$ (defined as in~\eqref{eqn-lattice-def}), the event $E_z^{2^{-kr'} }(2^{-k( 1-r')} )$ of Lemma~\ref{prop-hitting-bubble} occurs (i.e., with $2^{-k( 1-r')}$ in place of $\rho$ and $2^{-kr}$ in place of $\ep$). By Lemma~\ref{prop-hitting-bubble} and the union bound, we have
\eqbn
\BB P\!\left( \mcl E_\ep^1 \right) = 1-o_\ep^\infty(\ep). 
\eqen

Fix $\rho > R$ to be chosen later and define the event $\wt F(\rho)$ as in Lemma~\ref{prop-big-pocket} and the events $\wt{\mcl E}_\ep(\rho)$ as in Lemma~\ref{prop-lattice-pockets}, the latter with $r'$ in place of $r$. Also let $\mcl P_\ep(\rho)$ be the set of pockets as defined in Lemma~\ref{prop-lattice-pockets}. Let
\eqbn
 \mcl E_\ep^2(\rho) := \bigcup_{k=k_\ep}^\infty \wt{\mcl E}_{2^{-k}}(\rho) .
\eqen
By Lemma~\ref{prop-lattice-pockets}, the union bound, and the argument given just above, we have
\eqbn
\BB P\!\left(  \mcl E_\ep^1 \cap  \mcl E_\ep^2(\rho) \cap \wt F(\rho) \right) = \BB P\!\left(\wt F(\rho)\right) - \rho^2 o_\ep^\infty(\ep) .
\eqen
Since $\BB P\!\left(\wt F(\rho)\right) \rta 1$ as $\rho \rta \infty$ (by Lemma~\ref{prop-big-pocket}), to complete the proof of the proposition it suffices to show that $\mcl E_\ep^1 \cap  \mcl E_\ep^2(\rho) \subset \mcl E_\ep$ for each choice of $\rho > R$ and each sufficiently small $\ep \in (0,1)$.  

To this end, suppose $\mcl E_\ep^1 \cap  \mcl E_\ep^2(\rho)$ occurs and we are given $\delta\in (0,\ep]$ and $a < b\in\BB R$ such that $\eta'([a,b]) \subset B_R(0)$ and $\op{diam} \eta'([a,b]) \geq \delta^{1-r}$. Let $t_*$ be the smallest $t\in [a,b]$ such that $\op{diam} \eta'([a,t_*]) \geq \frac12 \delta^{1-r}$. Let $k_\delta$ be the largest $k\in\BB N$ such that $2^{-k} \geq \delta$. Since $\wt{\mcl E}_{2^{-k_\delta}}(\rho)$ occurs with $r'$ in place of $r$, it follows that for sufficiently small $\ep$, the set $\eta'([a,t_*])$ is not contained in any single pocket in $\mcl P_{2^{-k_\delta}}(\rho)$. Whenever $\eta'$ exits a pocket in $\mcl P_{2^{-k_\delta}}(\rho)$, it hits a point $z\in \mcl S_{2^{-k_\delta}}(2 R)$ for the first time. Hence there exists $t_{**} \in [a,t_*]$ and $z\in \mcl S_{2^{-k_\delta}}(2 R)$ such that $\eta'$ hits $z$ for the first time at time $t_{**}$. The set $\eta'([t_{**}, b])$ has diameter at least $\frac12 \delta^{1-r}$. Since $E_{2^{-k_\delta}}(z)$ occurs, it follows that this set contains a ball of radius $\delta$. Thus $\eta'([a,b])$ contains a ball of radius $\delta$, which concludes the proof.
\end{proof}

\begin{remark} \label{remark-sle-bubble-poly}
If we use some additional facts about space-filling SLE which are proven in~\cite{hs-euclidean}, our proof yields a more quantitative version of Proposition~\ref{prop-sle-bubble}, namely that
\eqb \label{eqn-sle-bubble-poly}
\BB P\!\left( \mcl E_\ep^c \right) = o_\ep^\infty(\ep) ,\quad \text{as $\ep\rta 0$},
\eqe  
i.e., this probability decays faster than any positive power of $\ep$. 
Indeed, by~\cite[Proposition~6.2]{hs-euclidean} and scale invariance there exists some $\xi = \xi(\kappa) > 0$ such that if $\eta'$ is parameterized by Lebesgue measure with $\eta'(0) = 0$ then for $R > 0$,
\eqb \label{eqn-sle-ball-tail}
\BB P\!\left( B_R(0)\not\subset \eta'([-C , C]) \right) \preceq C^{-\xi}  \quad\forall C > 0,
\eqe 
with the implicit constant independent of $C$. We claim that this estimate implies that the conclusion of Lemma~\ref{prop-big-pocket} (for fixed choice of $R$) can be improved to
\eqb \label{eqn-big-pocket-sharper}
\BB P\!\left(\wt F(\rho)\right) \leq \rho^{- 2\xi  + o_\rho(1)} .
\eqe
It is immediate from Lemma~\ref{prop-hitting-bubble} that except on an event of probability $o_C^\infty(C)$ as $C\rta \infty$, the set $\eta'([-2C ,2C])$ is contained in $B_{C^{1/2+o_C(1)}}(0)$. By~\eqref{eqn-sle-ball-tail}, the set $\eta'([-C, C])$ contains $B_R(0)$ except on an event of probability $O_C(C^{-\xi})$. By Lemma~\ref{prop-hitting-bubble} and the translation invariance of $\eta'$ parameterized by Lebesgue measure~\cite[Lemma~2.3]{hs-euclidean}, each of $\eta'([C , 2C])$ and $\eta'([-2C , -C])$ contains a ball of radius 1 with high probability. Taking $C = \rho^{2 + o_\rho(1)}$ and the points $z_0 , w_0 \in S_\ep(\rho)$ in Lemma~\ref{prop-big-pocket} to be contained in these balls of radius 1 yields~\eqref{eqn-big-pocket-sharper}.  
We now obtain~\eqref{eqn-sle-bubble-poly} by taking $\rho = \ep^{-N}$ for large $N \in\BB N$ and using~\eqref{eqn-big-pocket-sharper} instead of Lemma~\ref{prop-big-pocket} in the above proof of Proposition~\ref{prop-sle-bubble}.
\end{remark}

\subsection{Estimate for the quantum measure}
\label{sec-mu_h-big}

The following lemma will reduce the problem of estimating the $\gamma$-quantum measure of a quantum cone (i.e., the quantum surface appearing in Theorem~\ref{thm-dim-relation}) to the problem of estimating the quantum measure induced by a whole-plane Gaussian free field. 
  
\begin{lem} \label{prop-annulus-abs-cont}
Let $\alpha < Q$ and let $h$ be the circle-average embedding of an $\alpha$-quantum cone (Definition~\ref{def-circle-embedding}). Also let $\wt h$ be a whole-plane GFF normalized so that its circle average over $\bdy\BB D$ is 0. There exists $c>0$ depending only on $\alpha$ and $\gamma$ and a coupling of $h$ and $\wt h$ such that $h - \wt h$ is a.s.\ a continuous function on $\BB C\setminus \{0\}$ and for each $R>1$ and each $M>1$, 
\eqb \label{eqn-annulus-abs-cont}
\BB P\left(\sup_{z\in B_R(0) \setminus B_{1/R}(0) } |(h - \wt h)(z)|  > M \right) \preceq e^{-c M}
\eqe 
with the implicit constant depending only on $R$, $\alpha$, and $\gamma$. 
\end{lem}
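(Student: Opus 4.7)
The plan is to reduce the lemma to a one-dimensional estimate by matching the non-radial parts of $h$ and $\tilde h$ and then controlling the radial parts separately on the two sides of the unit disk.

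By Definition~\ref{def-quantum-cone} and the orthogonal decomposition $\mcl H(\BB C) = \mcl H_1(\BB C) \oplus \mcl H_2(\BB C)$, write $h = h^\dagger + h^0$ with $h^\dagger|_{\partial B_{e^{-s}}(0)} = A_s$ as in that definition, and similarly $\tilde h = \tilde h^\dagger + \tilde h^0$ with $\tilde h^\dagger|_{\partial B_{e^{-s}}(0)} = B^{\tilde h}_s$ for a standard two-sided Brownian motion $B^{\tilde h}$ with $B^{\tilde h}_0 = 0$. The non-radial parts $h^0$ and $\tilde h^0$ have the same law (the projection of a whole-plane GFF onto $\mcl H_2(\BB C)$) and are each independent of the corresponding radial part, so I would couple them to be equal, which makes $h - \tilde h$ a radially symmetric continuous function satisfying
\[
\sup_{z \in B_R(0) \setminus B_{1/R}(0)} |(h - \tilde h)(z)| = \sup_{s \in [-\log R, \log R]} |A_s - B^{\tilde h}_s|.
\]

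To couple the radial parts, I would set $B^{\tilde h}|_{[0,\infty)}$ equal to the driving Brownian motion $\tilde B|_{[0,\infty)}$ in the definition of $A$, so that $A_s - B^{\tilde h}_s = \alpha s$ deterministically on $[0, \log R]$, and take $B^{\tilde h}|_{(-\infty, 0)}$ to be an independent standard Brownian motion on negative times. This is a valid coupling because $h^0 = \tilde h^0$ is independent of both $\tilde B|_{[0,\infty)}$ and this independent BM, so $\tilde h$ still has the law of a whole-plane GFF with zero circle average over $\partial \BB D$. On $[0, \log R]$ the difference is deterministic and of size at most $\alpha \log R$, which is absorbed into the implicit constant in~\eqref{eqn-annulus-abs-cont}. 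On $[-\log R, 0]$, the triangle inequality reduces the problem to showing that each of $\sup_{s \in [-\log R, 0]} |A_s|$ and $\sup_{s \in [-\log R, 0]} |B^{\tilde h}_s|$ has an exponential tail in $M$. The latter is the classical Gaussian tail for the maximum of Brownian motion on a bounded interval.

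The main obstacle is the tail bound for $\sup_{s \in [-\log R, 0]} |A_s|$, since on $(-\infty, 0]$ the process $A$ is not simply a drifted BM but is subjected to the Doob $h$-transform corresponding to the conditioning $\{A_u \geq Q u \;\forall u < 0\}$; after time-reversal, this becomes a Brownian motion with positive drift $Q - \alpha > 0$ started at $0$ and conditioned to stay nonnegative, with $h$-function $h(x) = 1 - e^{-2(Q-\alpha) x}$. On any interval $[-\log R, -\delta]$ bounded away from $s = 0$, the $h$-transform density is bounded above and below by constants depending only on $\alpha, Q$, so the law of $A$ there is absolutely continuous with bounded Radon--Nikodym derivative with respect to that of a BM with drift $\alpha$, inheriting a Gaussian tail. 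A complementary argument near $s = 0$, using the Bessel-$3$-like behavior of the conditioned process at its starting point (where $A_s = O(|s|^{1/2})$ with high probability), handles the remaining interval $[-\delta, 0]$. Combining these bounds with the deterministic bound on $[0, \log R]$ yields~\eqref{eqn-annulus-abs-cont} with constants depending only on $R, \alpha, \gamma$.
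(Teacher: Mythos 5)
Your coupling is exactly the one the paper uses: match the non-radial parts, set the radial part of $\wt h$ equal to the driving Brownian motion $\wt B$ of $A$ on $s \ge 0$ (so $A_s - \wt h_{e^{-s}}(0) = \alpha s$ there, deterministic and bounded on the annulus), and treat $s < 0$ by the triangle inequality with separate tail estimates for $|A_s|$ and for the free Brownian motion. The Gaussian tail for the latter is standard. The real content is the exponential tail for $\sup_{s\in[-\log R,0]}|A_s|$, and this is where your argument has a genuine gap.

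You assert that ``on any interval $[-\log R,-\delta]$ bounded away from $s=0$, the $h$-transform density is bounded above and below by constants depending only on $\alpha,Q$''. This is not true as stated. For the process $Y_t = A_{-t}+Qt$ (a Brownian motion with drift $\mu := Q-\alpha>0$ conditioned to stay nonnegative, started at the singular point $0$), the Radon--Nikodym derivative of the conditioned law on $[\delta,T]$ with respect to the unconditioned law, \emph{given} $Y_\delta$, is $h(Y_T)/h(Y_\delta)\,\mathbb 1\{Y>0 \text{ on } [\delta,T]\}$ with $h(y)=1-e^{-2\mu y}$. Since $h(0^+)=0$ and $Y_\delta$ is not bounded below (it takes values arbitrarily close to $0$ with positive probability), this derivative is not bounded; one cannot simply ``inherit'' a Gaussian tail. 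A repair is possible (e.g.\ split on $\{Y_\delta\ge\varepsilon_0\}$ and separately bound the contribution of $\{Y_\delta<\varepsilon_0\}$, or use stochastic comparison after $Y$ first exceeds a fixed level), but none of this is in your sketch, and the hand-wave ``Bessel-$3$-like behavior near $s=0$'' does not by itself give the $e^{-cM}$ tail for $\sup_{t\le\delta}Y_t$ at the right uniformity. The paper sidesteps all of this by invoking the concrete representation of the conditioned radial part from \cite[Remark~4.4]{wedges}: writing $\wh B_t = B_{t+\tau}+\mu(t+\tau)$ where $\tau$ is the last zero of $t\mapsto B_t+\mu t$, one has $\{\wh B_{-s}\}_{s\le 0}\eqD\{h_{e^{-s}}(0)-\alpha s\}_{s\le 0}$; the tail bound then reduces to $\BB P(\tau>T)\preceq e^{-\mu^2 T/2}$ together with the reflection principle, which gives $e^{-cM}$ directly with the correct dependence on $R,\alpha,\gamma$. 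I would recommend replacing your $h$-transform step with this representation.
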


Note that if $h$ and $\wt h$ are coupled as in the statement of the lemma, the $\gamma$-quantum area measures $\mu_h$ and $\mu_{\wt h}$ are a.s.\ mutually absolutely continuous on $B_R(0) \setminus B_{1/R}(0)$ and the estimate~\eqref{eqn-annulus-abs-cont} implies that the Radon-Nikodym derivative satisfies
\eqb \label{eqn-annulus-measure}
\BB P\left(\sup_{z\in B_R(0) \setminus B_{1/R}(0) } \left| \frac 1\gamma \log \frac{d\mu_h}{d\mu_{\wt h}}(z) \right|  > M \right) \preceq e^{-c M} .
\eqe

\begin{proof}[Proof of Lemma~\ref{prop-annulus-abs-cont}]
For $r>0$ let $h_r(0)$ and $\wt h_r(0)$ be the circle averages of $h$ and $\wt h$, respectively, over $\bdy B_r(0)$. By Definition~\ref{def-quantum-cone}, the laws of $h - h_{|\cdot|}(0)$ and $\wt h  -\wt h_{|\cdot|}(0)$ agree. Furthermore, these distributions are independent from $h_{|\cdot|}(0)$ and $\wt h_{|\cdot|}(0)$, respectively. The law of $\wt h_{e^{-s}}(0)$ for $s\in\BB R$ is that of a standard linear two-sided Brownian motion~\cite[Proposition~3.2]{shef-kpz}. The law of $h_{e^{-s}}(0) - \alpha s$ for $s\geq 0$ is that of a standard linear Brownian motion and its law for $s < 0$ is that of a standard linear Brownian motion conditioned on the event that $h_{e^{-s}}(0) - \alpha s \geq Q s$. It follows that we can couple $h$ and $\wt h$ in such a way that $h - h_{|\cdot|}(0) = \wt h  -\wt h_{|\cdot|}(0)$ and a.s.\ $h_r(0)  - \wt h_r(0) = \alpha \log r^{-1}$ for $r < 1$. Henceforth assume $h$ and $\wt h$ are coupled in this way. We need to bound $\sup_{r\in [1 , R]} |h_r(0) -\wt h_r(0)|$. 
 
 By standard estimates for Brownian motion, we have $\BB P\left(\sup_{r\in [1,R]} |\wt h_r(0)| > M \right) \preceq e^{- c M }$ for every choice of $c>0$. By~\cite[Remark 4.4]{wedges}, we can express the law of $h_{e^{-s}}(0) - \alpha s$ for $s\leq 0$ as follows. Let $B$ be a standard linear Brownian motion and let $\tau$ be the largest $t>0$ for which $B_t + (Q-\alpha) t = 0$ (which is a.s.\ finite). If we set $\wh B_t := B_{t + \tau} + (Q-\alpha) (t+\tau)$, then $\{\wh B_{-s}\}_{s \leq 0} \eqD \{h_{e^{-s}}(0) - \alpha s\}_{s\leq 0}$. By the reflection principle, the Gaussian tail bound, and the union bound, for $T>0$ we have
\eqbn 
 \BB P\left( \tau > T \right)
 \leq \BB P\left( \text{$\exists t > T$ such that $|B_t| > (Q-\alpha) t$} \right) 
\preceq e^{-(Q-\alpha)^2 T/2} .
\eqen
Consequently, we have
\alb
&\BB P\left( \sup_{t\in [0, \log R  ]} |B_{t + \tau} + (Q-\alpha) (t+\tau)| > M \right)  \\
&\qquad \leq \BB P\left( \sup_{t\in [0,  \frac12 (Q-\alpha)^{-1} M  ]} |B_t| > M/2 \right) + \BB P\left( \tau >  \frac12 (Q-\alpha)^{-1} M - \log R \right)  
\preceq e^{-c M}
\ale
for $c>0$ as in the statement of the lemma. By combining this with our above description of the law of $\{h_{e^{-s}}(0) - \alpha s\}_{s\leq 0}$ and the triangle inequality, we obtain~\eqref{eqn-annulus-abs-cont}. 
\end{proof}

In the remainder of this section we will prove several estimates for the quantum measure induced by a whole-plane GFF. The estimates of this section will be used in conjunction with Proposition~\ref{prop-sle-bubble} and Lemma~\ref{prop-annulus-abs-cont} to estimate the size of the pre-image of the set $X$ under $\eta'$ in the proof of Theorem~\ref{thm-dim-relation}.

\begin{lem} \label{prop-harmonic-var}
Let $h$ be a whole plane GFF. Let $\frk h$ be the harmonic part of $h|_{\BB D}$. For each $r \in (0,1)$ and each $\rho \in (0,1)$, we have
\eqb \label{eqn-harmonic-var}
\BB P\!\left(\sup_{z\in B_\rho(0)} |\frk h(z) - \frk h(0)| \geq r\log \ep^{-1} \right) = o_\ep^\infty(\ep) .
\eqe 
\end{lem}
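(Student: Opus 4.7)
The plan is to exploit the fact that $z \mapsto \frk h(z) - \frk h(0)$ is a centered Gaussian field on $\BB D$ (well-defined independent of the additive-constant ambiguity in the whole-plane GFF $h$), and then invoke standard Gaussian concentration. Since $\frk h$ is harmonic on $\BB D$, Poisson's formula identifies it with the harmonic extension of $h|_{\bdy \BB D}$, so in terms of a Fourier decomposition $h|_{\bdy \BB D} = \sum_n a_n e^{in\theta}$ this reads
\[
\frk h(re^{i\theta}) - \frk h(0) = \sum_{n \neq 0} a_n r^{|n|} e^{in\theta}.
\]
A standard computation starting from the covariance $-\log|z-w|$ of the whole-plane GFF and the Fourier identity $-\log|e^{i\theta}-e^{i\phi}| = \sum_{n\geq 1} n^{-1}\cos(n(\theta-\phi))$ shows that $(a_n)_{n \neq 0}$ is a sequence of independent centered complex Gaussians (modulo the symmetry $a_{-n} = \overline{a_n}$) with variance $\asymp 1/|n|$.

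Granting this, the pointwise bound $|r^{|n|} e^{in\theta}| \leq \rho^{|n|}$ for $z = re^{i\theta} \in B_\rho(0)$ gives
\[
\sup_{z\in B_\rho(0)} |\frk h(z) - \frk h(0)| \leq S := 2 \sum_{n \geq 1} |a_n| \rho^n .
\]
Since $\rho < 1$, both $\BB E[S]$ and $\op{Var}(S)$ are finite constants depending only on $\rho$, and a standard sub-Gaussian tail bound for sums of independent sub-Gaussian summands (applied to $S - \BB E[S]$, whose $n$-th term has $\psi_2$-norm of order $\rho^n/\sqrt{n}$) yields $\BB P(S \geq t) \leq 2 e^{-c_\rho t^2}$ for all sufficiently large $t$, with $c_\rho > 0$ depending only on $\rho$. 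Taking $t = r \log \ep^{-1}$ then bounds the probability in~\eqref{eqn-harmonic-var} by $2\ep^{c_\rho r^2 \log \ep^{-1}}$, which decays faster than any polynomial in $\ep$ and hence gives the claimed $o_\ep^\infty(\ep)$.

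The main point requiring care will be pinning down the joint law of the coefficients $(a_n)$ under the chosen normalization of $h$ and correctly interpreting $h|_{\bdy \BB D}$ as a random distribution on the circle; once the variance and independence statements are in place, the tail bound is immediate. A cleaner alternative would bypass the Fourier expansion altogether: observe that $\frk h - \frk h(0)$ is a.s.\ continuous on $\ol{B_\rho(0)}$ (being harmonic on $\BB D$), verify that $\sup_{z \in B_\rho(0)} \op{Var}(\frk h(z) - \frk h(0))$ is bounded by a finite constant depending only on $\rho$ using smoothness of the harmonic-part covariance kernel away from the boundary, and apply the Borell--TIS inequality directly to obtain the same Gaussian-in-$\log\ep^{-1}$ tail bound.
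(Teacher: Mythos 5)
Your proof is correct, but it takes a genuinely different route from the one in the paper. You decompose $\frk h - \frk h(0)$ into its Fourier series on $\partial \BB D$ and bound the supremum over $B_\rho(0)$ by the geometrically weighted sum $2\sum_{n\geq 1}|a_n|\rho^n$, then invoke independence of the (uncorrelated, jointly Gaussian, rotation-symmetric) coefficients and sub-Gaussian concentration; your proposed alternative via Borell--TIS is also valid and is essentially what your argument reduces to once the continuity and bounded-variance facts are in place. The paper instead avoids ever touching the covariance structure of $\frk h$: it uses the mean value property to bound the supremum over $B_\rho(0)$ by a constant times $\int_{B_{\rho'}(0)} |\frk h(w) - \frk h(0)|\,dw$ with $\rho' = (1+\rho)/2$, applies Jensen to pass the exponential inside the integral, cites \cite[Proposition~6.4]{ig1} for the single fact that each $\frk h(w) - \frk h(0)$ is centered Gaussian with $\rho$-uniformly bounded variance (whence all exponential moments are finite), and then applies Markov's inequality for each fixed $p$ and sends $p \to \infty$. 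The paper's route is softer and more self-contained given that one pointwise variance bound; your Fourier route requires one to actually pin down the law of the coefficients $(a_n)$ (as you yourself flag), but pays off with a genuinely quantitative super-polynomial tail $\ep^{c_\rho r^2 \log\ep^{-1}}$ rather than the bare $o_\ep^\infty(\ep)$. Either argument suffices for the downstream use in the paper.
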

\begin{proof}
Let $\rho' := \frac{1+\rho}{2}$. By the mean value property of harmonic functions,  
\eqbn
\sup_{z\in B_\rho(0)} |\frk h(z) - \frk h(0)| \preceq \int_{B_{\rho'}(0)} |\frk h(w) - \frk h(0)| \, dw  ,
\eqen 
with the implicit constants depending only on $\rho$. 
By Jensen's inequality, for each $p > 0$ we have
\eqbn
\sup_{z\in B_\rho(0)} e^{p|\frk h(z) - \frk h(0)|} \leq  c_0 \int_{B_{\rho'}(0)} e^{ p c_1 |\frk h(w) - \frk h(0)|} \, dw   
\eqen
for constants $c_0$ and $c_1$ depending only on $\rho$. 
By \cite[Proposition~6.4]{ig1}, for each $w\in B_{\rho'}(0)$, $  \frk h(w) - \frk h(0)$ is a centered Gaussian with variance bounded above by a constant depending only on $\rho$. Hence for each $p>0$, 
\eqbn
\BB E\!\left( c_0 \int_{B_{\rho'}(0)} e^{p c_1 |\frk h(w) - \frk h(0)|} \, dw   \right) < \infty.  
\eqen
By applying the Chebyshev inequality and letting $p \rta \infty$, we infer~\eqref{eqn-harmonic-var}. 
\end{proof}

\begin{lem} \label{prop-mu_h-inf}
Fix $r > 0$ and $R>1$. Let $h$ be a whole-plane GFF, let $\mu_h$ be its $\gamma$-quantum are measure, and let $(h_\ep)$ be its circle average process. For each $z\in \BB C$, each $\ep \in (0,1)$, and each $\delta \in (0,1)$,  
\eqb \label{eqn-mu_h-inf}
\BB P\!\left( \mu_h(B_\ep(z)) < \delta \ep^{2+\frac{\gamma^2}{2} }  e^{\gamma h_\ep(z)} \right)   =o_\delta^\infty(\delta) , 
\eqe  
at a rate depending only on $r$ and $R$. 
\end{lem}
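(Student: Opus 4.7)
The plan is to recast the inequality in scale-invariant form via the LQG coordinate change, and then reduce to a negative-moment bound for the quantum mass of the unit disk.

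By translation invariance of the law of the whole-plane GFF modulo additive constant, I may take $z=0$. Apply the coordinate change~\eqref{eqn::change_of_coordinates} with $\varphi(w)=\ep w$ and set $\wt h(w) := h(\ep w) + Q\log\ep$, so that $\mu_h(B_\ep(0)) = \mu_{\wt h}(\BB D)$. Writing $\wh h := \wt h - \wt h_1(0)$, the scale-invariance of the whole-plane GFF modulo additive constant gives that $\wh h$ has the law of a whole-plane GFF normalized to have zero circle-average over $\partial\BB D$, and is independent of $\wt h_1(0)$. Since $\wt h_1(0) = h_\ep(0) + Q\log\ep$ and $\gamma Q = 2+\gamma^2/2$, this yields
\eqbn
\mu_h(B_\ep(0)) \;=\; \ep^{\gamma Q} e^{\gamma h_\ep(0)} \mu_{\wh h}(\BB D) \;=\; \ep^{2+\gamma^2/2}\, e^{\gamma h_\ep(0)}\, M,
\eqen
where $M := \mu_{\wh h}(\BB D)$ is independent of $h_\ep(0)$ and has a law not depending on $\ep$.

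Consequently, the event in~\eqref{eqn-mu_h-inf} has probability $\BB P(M \leq \ep^r)$. By Markov's inequality applied with arbitrary exponent $p>0$,
\eqbn
\BB P(M \leq \ep^r) \;\leq\; \ep^{rp}\, \BB E[M^{-p}],
\eqen
so once I show $\BB E[M^{-p}]<\infty$ for every $p>0$, taking $p$ arbitrarily large gives the $o_\ep^\infty(\ep)$ bound, and I note that the dependence on $R$ enters only through the (trivial) dependence of the statement on whether $z$ lies in a bounded region, so the rate is uniform as claimed.

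The main step is thus the negative-moment bound for $M$, which is a standard finiteness statement for subcritical Gaussian multiplicative chaos. One way I would prove it is by a cascade argument: fix a finite collection of disjoint sub-disks $B_{1/4}(w_j) \subset \BB D$, $j=1,\dots,N$, and use the Markov decomposition $\wh h|_{B_{1/2}(w_j)} = \wh h_j + \frk h_j$ with $\wh h_j$ a zero-boundary GFF and $\frk h_j$ harmonic. The quantum mass of $B_{1/4}(w_j)$ then factors as a lognormal $e^{\gamma \frk h_j(w_j)}$ times an independent random variable whose law is mutually absolutely continuous with (a fixed multiple of) the law of $M$, with uniformly bounded Radon–Nikodym derivative. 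Using $M \geq \sum_j \mu_{\wh h}(B_{1/4}(w_j))$ and iterating yields a stochastic lower bound on $M$ by a supremum of products of i.i.d.\ lognormal factors with i.i.d.\ copies of $M$, from which all negative moments can be bounded via induction on the depth. The harmonic corrections $\frk h_j$ are controlled via Lemma~\ref{prop-harmonic-var} (and have Gaussian tails), which is the only delicate input; alternatively the finiteness $\BB E[M^{-p}]<\infty$ for all $p>0$ can be cited from the general theory of subcritical GMC (e.g., Rhodes--Vargas). The main obstacle is this negative-moment estimate — the scale reduction itself is mechanical — but it is insensitive to $\ep$ and follows from well-known techniques.
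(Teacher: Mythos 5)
Your proof is correct, and the scaling reduction — conjugating by $\phi_z^\ep(w)=\ep w+z$, applying the coordinate change to pull out $\ep^{2+\gamma^2/2}$, identifying the circle average of the rescaled field over $\partial\BB D$ with $h_\ep(z)$, and subtracting it to obtain a field $\wh h$ with $\ep$-independent law — is identical to the paper's. The only divergence is in the concluding step. The paper directly bounds $\BB P(\mu_{\wh h}(\BB D)\le\ep^r)$ by splitting $\wh h|_{\BB D}$ into a zero-boundary GFF $\dot h$ plus an independent harmonic part $\frk h$, citing \cite[Lemma~4.5]{shef-kpz} for the super-polynomial decay of $\BB P(\mu_{\dot h}(B_{1/2}(0))\le\ep^{r/2})$ and Lemma~\ref{prop-harmonic-var} for the harmonic correction. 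You instead recast the same bound through Markov's inequality applied to $M^{-p}$ and invoke finiteness of all negative moments of the (harmonically corrected) GMC mass, which is equivalent content in slightly different packaging; the harmonic part still needs controlling via Lemma~\ref{prop-harmonic-var} exactly as you note. Two small remarks: the independence of $\wh h$ from $h_\ep(z)$ that you establish, while true, is not actually used — the factor $e^{\gamma h_\ep(z)}$ cancels algebraically in the event, so only the law of $\wh h$ is needed (which is the route the paper takes). And your cascade sketch for the negative-moment bound is more than necessary here; citing the standard subcritical GMC result (or the small-ball probability in \cite{shef-kpz} directly) suffices.
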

\begin{proof}
The estimate~\eqref{eqn-mu_h-inf} is independent of the choice of additive constant for $h$, can assume without loss of generality that $h$ is normalized so that the circle average $h_1(0)  $ is equal to 0. Fix $z\in \BB C$. For $\ep \in (0,1)$, let $\phi_z^\ep(w) := \ep w + z$ be an affine map which takes $\BB D$ to $B_\ep(z)$ and let $\wt h_z^\ep := h \circ \phi_z^\ep$. By \cite[Proposition~2.1]{shef-kpz}, we have 
\eqb \label{eqn-mu_h-ball-shift}
\mu_h(B_\ep(z)) =  \mu_{\wt h_z^\ep+ Q\log \ep }(B_1(0)) = \ep^{2+\frac{\gamma^2}{2}}  \mu_{\wt h_z^\ep}(B_1(0)) .
\eqe 
By translation and scale invariance $\wt h_z^\ep$ has the law of a whole plane GFF, modulo additive constant. The circle average of $\wt h_z^\ep$ at $\partial\BB D$ is equal to $h_\ep(z)$. To see this, we observe that the former circle average is equal to the conditional mean of $\wt h_z^\ep$ evaluated at $0$, given its values outside of $\BB D$. By conformal invariance of harmonic functions and of the zero-boundary GFF, this equals the conditional mean of $h$ at $z$ given its values outside $B_\ep(z)$, which equals $h_\ep(z)$. 

Hence the field $\wh h_z^\ep := \wt h_z^\ep - h_\ep(z)$ agrees in law with $h$ (not just modulo additive constant), and by~\eqref{eqn-mu_h-ball-shift}, we have
\eqb \label{eqn-B_ep-B_1}
\mu_h(B_\ep(z)) = e^{\gamma h_\ep(z)} \ep^{2+\frac{\gamma^2}{2}}  \mu_{\wh h_z^\ep}(B_1(0)) .
\eqe 
It remains to argue that $\mu_{\wh h_z^\ep}(B_1(0))$ is unlikely to be small; equivalently, $\mu_h(B_1(0))$ is unlikely to be small. To see this, let $\frk h$ be the conditional mean of $h$ given its values outside $\BB D$ and let $\dot h := h|_{\BB D} - \frk h$. Then $\dot h$ is a zero-boundary GFF on $\BB D$ which is conditionally independent from $\frk h$ given $h|_{\BB C\setminus \BB D}$, and we have 
\eqb \label{eqn-mu_h(D)-split}
\mu_h(\BB D) \geq   \mu_{\dot h}(B_{1/2}(0)) \exp\left( - \gamma \sup_{w\in B_{1/2}(0)} |\frk h(w)| \right) .
\eqe 
By \cite[Lemma~4.5]{shef-kpz},
\eqb \label{eqn-mu_h(D)-split1}
\BB P\!\left( \mu_{\dot h}(B_{1/2}(0))< \delta^{1/2} \right) = o_\delta^\infty(\delta) .
\eqe 
Since $\frk h(0) = h_1(0) = 0$, Lemma~\ref{prop-harmonic-var} implies
\eqb \label{eqn-mu_h(D)-split2}
\BB P\!\left( \exp\left( - \gamma \sup_{w\in B_{1/2}(0)} |\frk h(w)| \right) < \delta^{1/2} \right) = o_\delta^\infty(\delta)  .
\eqe
By~\eqref{eqn-mu_h(D)-split},~\eqref{eqn-mu_h(D)-split1}, and~\eqref{eqn-mu_h(D)-split2} we obtain $\BB P\!\left(\mu_h(\BB D) < \delta \right) =  o_\delta^\infty(\delta) $. By~\eqref{eqn-B_ep-B_1}, it follows that~\eqref{eqn-mu_h-inf} holds.
\end{proof}

\subsection{Circle average continuity}
\label{sec-gff-cont}

In this subsection we will prove various results which says that the circle average of a whole-plane GFF around a small circle or the quantum mass of a small ball is unlikely to differ too much from the value we would expect given the circle average around a larger circle centered at a nearby point. We start with a basic continuity estimate for the circle average. 
 
\begin{lem} \label{prop-circle-cont}
Fix $\rho > R> 0$. Suppose that either $h$ is a zero-boundary GFF on $B_{\rho}(0)$ or $h$ is a whole-plane GFF. Let $(h_\ep)$ be the circle average process of $h$. There a.s.\ exists a modification of $h_\ep$ (still denoted by $h_\ep$) such that the following is true. For $r > 0$ and $C>1$, let $\wt{\mcl C}(C) = \wt{\mcl C}(C, R,r)$ be the event that
\eqb \label{eqn-circle-cont}
|h_\ep(z) - h_{\ep'}(z')| \leq C \frac{|(z,\ep) - (z',\ep')|^{(1-r)/2}}{\ep^{1/2 }}
\eqe 
for each $\ep , \ep' \in (0,1]$ with $\frac12 \leq \ep/\ep' \leq 2$ and each $z,z' \in B_R(0)$. Then $\BB P\!\left(\wt{\mcl C}(C) \right) \rta 1$ as $C\rta \infty$.  
\end{lem}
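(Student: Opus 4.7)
The strategy is a Kolmogorov--\v{C}entsov continuity argument applied to the three-parameter field $(z,\ep) \mapsto h_\ep(z)$, combined with a dyadic decomposition in $\ep$ to handle the loss of regularity as $\ep \to 0$. The essential input is the standard variance bound
\begin{equation*}
\BB E\bigl[(h_\ep(z) - h_{\ep'}(z'))^2\bigr] \preceq \frac{|(z,\ep)-(z',\ep')|}{\ep}
\end{equation*}
for $z,z' \in B_R(0)$ and $\ep/\ep' \in [1/2,2]$; this is well known in the whole-plane case (see e.g.~\cite{hmp-thick-pts}), and follows in the zero-boundary case from the Markov property and smoothness of the harmonic extension on $B_R(0) \subset B_\rho(0)$. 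Since the difference of two circle averages is a centered Gaussian, this upgrades to $\BB E[(h_\ep(z) - h_{\ep'}(z'))^{2k}] \preceq c_k(|(z,\ep)-(z',\ep')|/\ep)^k$ for every integer $k \geq 1$.

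For each $n \geq 0$, I would cover $B_R(0)$ by $O(2^{2n})$ Euclidean balls $B_{2^{-n}}(z_0)$ and, on each product region $B_{2^{-n}}(z_0) \times [2^{-n-1},2^{-n}]$, change variables to $(\wt z, s) := (2^n(z-z_0), 2^n\ep)$. The rescaled field $\wt h_{z_0,n}(\wt z, s) := h_{2^{-n}s}(z_0 + 2^{-n}\wt z)$ lives on the isotropic unit-size region $B_1(0) \times [1/2,1]$, and the moment bound reduces to $\BB E[(\wt h_{z_0,n}(\wt z,s) - \wt h_{z_0,n}(\wt z',s'))^{2k}] \preceq c_k|(\wt z,s) - (\wt z',s')|^k$, which is precisely the hypothesis of Kolmogorov--\v{C}entsov on a three-dimensional unit parameter region. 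For $k$ sufficiently large, this yields a continuous modification of $\wt h_{z_0,n}$ whose $\alpha$-H\"older constant $K_{z_0,n}$ (for any $\alpha < 1/2 - 3/(2k)$) has moments $\BB E[K_{z_0,n}^{2k}]$ bounded uniformly in $(z_0,n)$. Translated back to the original variables, this reads $|h_\ep(z) - h_{\ep'}(z')| \leq K_{z_0,n}(2^n)^\alpha|(z,\ep) - (z',\ep')|^\alpha$ on $B_{2^{-n}}(z_0) \times [2^{-n-1},2^{-n}]$; pairs $(z,\ep),(z',\ep')$ in the same dyadic shell but different small balls are handled by chaining through $O(1)$ neighboring ball centers.

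Taking $\alpha = (1-r)/2$ and $k > 4/r$, Chebyshev gives $\BB P(K_{z_0,n} > (2^n)^{r/4}) \preceq (2^n)^{-kr/2}$; the union bound over the $O(2^{2n})$ balls per shell is then summable in $n$, so Borel--Cantelli yields that $\sup_{n,z_0} K_{z_0,n}/(2^n)^{r/4}$ is a.s.\ finite. On the event that this supremum is at most some threshold one obtains $|h_\ep(z) - h_{\ep'}(z')| \preceq (2^n)^{1/2-r/4}|(z,\ep)-(z',\ep')|^{(1-r)/2} \leq \ep^{-1/2}|(z,\ep)-(z',\ep')|^{(1-r)/2}$, which is exactly the event $\wt{\mcl C}(C)$ for all sufficiently large $C$; the probability of this event tends to $1$ as $C \to \infty$ by a.s.\ finiteness. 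Continuous modifications on adjacent shells agree on overlaps by uniqueness of continuous modifications, and they glue to a global modification on $B_R(0) \times (0,1]$. The main obstacle is precisely the anisotropy and scale-dependence of the variance estimate, which rules out a single application of Kolmogorov--\v{C}entsov on the full parameter region and forces the dyadic-plus-rescaling step that produces the unavoidable $\ep^{-1/2}$ prefactor in the conclusion.
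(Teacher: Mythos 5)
Your approach is genuinely different from the paper's. The paper disposes of this lemma in three lines: for the zero-boundary case it cites \cite[Proposition~2.1]{hmp-thick-pts}, which is exactly the claimed modulus of continuity for the circle-average process of a Dirichlet GFF, and it then handles the whole-plane case by local mutual absolute continuity of $h|_{B_R(0)}$ with a zero-boundary GFF on $B_\rho(0)$ (via \cite[Proposition~3.2]{ig1}). You instead re-derive the cited proposition from scratch via a Kolmogorov--\v{C}entsov argument with dyadic rescaling in~$\ep$, and treat both cases uniformly through the variance bound. That is a reasonable route (it is essentially the proof of the cited proposition), and your bookkeeping on the exponents is correct: $\alpha=(1-r)/2$ forces $k>3/r$ from Kolmogorov--\v{C}entsov and $k>4/r$ from summability of the union bound over the $O(2^{2n})$ balls per shell, and the factor $(2^n)^{1/2-r/4}\leq \sqrt 2\,\ep^{-1/2}$ recovers the $\ep^{-1/2}$ prefactor. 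The paper's route is shorter but less self-contained; yours would be the right thing to do if one wanted an explicit argument or if the cited proposition were not directly applicable.

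There is one genuine gap in your write-up: the step ``pairs $(z,\ep),(z',\ep')$ in the same dyadic shell but different small balls are handled by chaining through $O(1)$ neighboring ball centers'' does not cover pairs at macroscopic spatial separation, and if one tries to chain through the $m\asymp 2^n|z-z'|$ intermediate centers needed when $|z-z'|\asymp 1$, the bound degrades. Each step contributes $\preceq K_{\cdot,n}\asymp (2^n)^{r/4}$, so the chained bound is $\preceq 2^n|z-z'|\cdot (2^n)^{r/4}$, which is of order $2^{n(1+r/4)}$ and far exceeds the target $C|z-z'|^{(1-r)/2}\ep^{-1/2}\asymp 2^{n/2}$. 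The lemma statement requires the bound for \emph{all} $z,z'\in B_R(0)$, so this regime must be addressed. The fix is cheap but is a separate ingredient: for $|(z,\ep)-(z',\ep')|\geq \ep^{1-r/2}$ (say), the right-hand side of~\eqref{eqn-circle-cont} is $\succeq \ep^{-r(1-r)/4}$, a positive power of $\ep^{-1}$, while a standard Gaussian tail plus Borel--Cantelli argument (or, again, \cite[Proposition~2.1]{hmp-thick-pts} with large $\gamma$) shows $\sup_{z\in B_R(0)}|h_\ep(z)|=O(\log\ep^{-1})$ a.s.\ uniformly over dyadic $\ep$, so the triangle inequality handles far-apart pairs directly without any chaining. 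With that supplement your argument is complete.
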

\begin{proof}
The statement for the case of a zero-boundary GFF on $B_{\rho}(0)$ follows from \cite[Proposition~2.1]{hmp-thick-pts} (c.f.\ the proof of \cite[Proposition~8.4]{qle}). In particular, for such a zero-boundary GFF we have $\BB P\!\left( \bigcup_{C>1} \wt{\mcl C}(C) \right) =1$. If $h$ is a whole-plane GFF, then the law of $h|_{B_R(0)}$ is mutually absolutely continuous with respect to the law of a zero-boundary GFF on $B_{\rho}(0)$ restricted to $B_R(0)$ (see, e.g.\ \cite[Proposition~3.2]{ig1}). Hence $\BB P\!\left( \bigcup_{C>1} \wt{\mcl C}(C) \right) = 1$ in this case, so $\BB P\!\left(\wt{\mcl C}(C) \right) \rta 1$ as $C\rta \infty$.
\end{proof}

We henceforth assume that we have replaced $(h_\ep)$ with a modification as in Lemma~\ref{prop-circle-cont}.

\begin{lem} \label{prop-circle-shift}
Fix $\rho > R  > 1$ and $r \in (0,1)$. Let $h$ be either a zero-boundary GFF on $B_{\rho}(0)$ or a whole-plane GFF and let $(h_\ep)$ be the circle average process for $h$. Define the events $\wt{\mcl C}(C) = \wt{\mcl C}(C, R,r)$ as in Lemma~\ref{prop-circle-cont}. For each $a> 0$, each $c> 0$, and each $z,w\in B_R(0)$ with $|z-w| \leq c\ep$, we have 
\eqbn
\BB P\!\left( |h_{\ep }(w) - h_{\ep^{1 -   r}}(z)| \geq  a  \log \ep^{-1} ,\, \wt{\mcl C}(C) \right) \preceq \ep^{\frac{a^2 }{2 r}}  
\eqen
with the implicit constant depending only on $c$, $C$, $R$, and $r$.  
\end{lem}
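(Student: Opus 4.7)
The plan is to apply the triangle inequality
\[
h_\ep(w) - h_{\ep^{1-r}}(z) = \bigl[h_\ep(w) - h_{\ep^{1-r}}(w)\bigr] + \bigl[h_{\ep^{1-r}}(w) - h_{\ep^{1-r}}(z)\bigr]
\]
and treat the first term (an increment of the circle-average process at the single center $w$) via a direct Gaussian tail bound, and the second term (a same-scale difference at two nearby centers) via the H\"older event $\wt{\mcl C}(C)$.

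First I would apply Lemma~\ref{prop-circle-cont} to the pairs $(w,\ep^{1-r})$ and $(z,\ep^{1-r})$: the two radii coincide so the ratio is $1\in[1/2,2]$, and $|w-z|\leq c\ep$, yielding
\[
|h_{\ep^{1-r}}(w) - h_{\ep^{1-r}}(z)| \leq C \frac{(c\ep)^{(1-r)/2}}{(\ep^{1-r})^{1/2}} = C c^{(1-r)/2} =: K
\]
on $\wt{\mcl C}(C)$, where the powers of $\ep$ cancel exactly. This is the essential role of the H\"older event: it converts the nearby-center difference into a deterministic constant depending only on $c,C,r$.

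Next, the increment $A := h_\ep(w) - h_{\ep^{1-r}}(w)$ is a centered Gaussian of variance exactly $r\log\ep^{-1}$ for $\ep$ small enough that $\bdy B_\ep(w)\subset B_{\ep^{1-r}}(w)$. This follows from a direct Green's-function computation: with $\mu = \phi_{w,\ep} - \phi_{w,\ep^{1-r}}$ and $G(x,y)=-\log|x-y|$, the mean-value property (together with $\bdy B_\ep(w)\subset B_{\ep^{1-r}}(w)$) gives $\op{Var}(A) = -\log\ep + \log\ep^{1-r} = r\log\ep^{-1}$. For the zero-boundary case on $B_\rho(0)$, the harmonic correction $F(x,y)$ to $G$ contributes $F(w,w)-2F(w,w)+F(w,w)=0$ since both circle averages are at the single center $w$, so the variance identity persists exactly.

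Combining, on $\wt{\mcl C}(C)$ the event in question implies $|A|\geq a\log\ep^{-1}-K$, and the standard Gaussian tail bound yields
\[
\BB P\bigl(|A|\geq a\log\ep^{-1}-K\bigr) \leq 2\exp\!\left(-\frac{(a\log\ep^{-1}-K)^2}{2r\log\ep^{-1}}\right) \preceq \ep^{a^2/(2r)}.
\]
The main subtlety will be tracking the expansion of the square in the exponent: the cross term $2aK\log\ep^{-1}$ divided by $2r\log\ep^{-1}$ gives a factor $e^{aK/r}$, and the quadratic term $K^2$ gives a vanishing correction; these sub-leading terms should be absorbed into the implicit constant (which must be allowed to depend on the fixed parameters, including $a$ insofar as it enters a bounded range in the applications) and verified to be uniform over the regime of $\ep$ of interest.
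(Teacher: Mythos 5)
Your proposal is correct and takes essentially the same approach as the paper: split $h_\ep(w) - h_{\ep^{1-r}}(z)$ by adding and subtracting $h_{\ep^{1-r}}(w)$, control the spatial increment at scale $\ep^{1-r}$ via the H\"older event $\wt{\mcl C}(C)$ (which gives an $\ep$-independent bound), and apply a Gaussian tail bound to the radial increment $h_\ep(w)-h_{\ep^{1-r}}(w)$, whose variance is $r\log\ep^{-1}$. The only stylistic difference is that you verify the variance by a direct Green's-function calculation (correctly handling the harmonic correction $F_D$ in the zero-boundary case), whereas the paper invokes the Brownian-motion representation of the circle-average process from~\cite[Section~3.1]{shef-kpz}; your observation about the residual $e^{aK/r}$ factor in absorbing constants is also the same subtlety present in the paper's argument.
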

\begin{proof} 
If $z,w\in B_R(0)$ with $|z-w| \leq c \ep$ and $\wt{\mcl C}(C)$ occurs, then 
\eqbn
|h_{\ep^{1- r}}(z) - h_{\ep^{1- r}}(w)| \preceq 1 .
\eqen
For $t > 0$, let $B_t :=  h_{e^{-t} \ep^{1- r} }(w) - h_{\ep^{1- r}}(w)$. By the calculations in \cite[Section~3.1]{shef-kpz}, $B$ is a standard linear Brownian motion. Therefore,
\eqbn
\BB P\!\left(| h_{\ep }(w) - h_{\ep^{1- r}}(w)| \geq a  \log \ep^{-1} - C\right) = \BB P\!\left(|B_{ r\log\ep^{-1}}| \geq a  \log \ep^{-1} - C\right) \preceq \ep^{\frac{a^2 }{2 r}} .
\eqen
We conclude by means of the triangle inequality. 
\end{proof}

\begin{lem} \label{prop-gff-cont-spread}
Let $\rho > R > 0$. Let $h$ be either a zero-boundary GFF on $B_{\rho}(0)$ or a whole-plane GFF and let $(h_\ep)$ be the circle average process for $h$. For $r \in (0,1/2)$ and $\ep \in (0,1)$, let $\mcl C_\ep = \mcl C_\ep(R,r  )$ be the event that the following is true. For each $\delta \in (0,\ep]$ and each $z,w\in B_R(0)$ with $|z-w| \leq 2\delta$, we have
\eqbn
|h_{\delta }(w) - h_{\delta^{1 -  r}}(z)| \leq 3\sqrt{10 r} \log \delta^{-1} .
\eqen
For each $r\in (0,1/2)$, we have $\BB P\!\left(\mcl C_\ep\right) \rta 1$ as $\ep\rta 0$. 
\end{lem}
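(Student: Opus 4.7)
The plan is to upgrade the pointwise estimate of Lemma~\ref{prop-circle-shift} to a uniform one via a discretization-and-union-bound argument, with the continuity estimate of Lemma~\ref{prop-circle-cont} used to control the error in passing from the discretization to arbitrary $(z,w,\delta)$. The coefficient $3\sqrt{10r}$ is chosen with substantial slack, so the combinatorics of the union bound is not tight.

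First I would fix an auxiliary parameter $r' \in (0, 1-r)$ (small, eventually sent to $0$) and a large constant $C > 0$ so that $\BB P(\widetilde{\mcl C}(C)) \geq 1 - r'$, where $\widetilde{\mcl C}(C) = \widetilde{\mcl C}(C,R,r')$ is the event from Lemma~\ref{prop-circle-cont}; it suffices to work on this event. Choose also some $a$ with $2\sqrt{r} < a < 3\sqrt{10r}$, say $a = 2\sqrt{3r}$, so that $a^2/(2r) > 5$ while $a + $ (a bounded error) $< 3\sqrt{10r}$.

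Next, for each dyadic scale $\delta_k = 2^{-k}$ with $k \geq \lceil \log_2 \ep^{-1}\rceil$, I would discretize by choosing a lattice $\mcl L_k \subset B_R(0) \cap (\delta_k^2 \BB Z^2)$, of cardinality $O(\delta_k^{-4})$, together with a set of reference scales $T_k := \{\delta_k - j\delta_k^2 : 0 \leq j \leq \lfloor \delta_k^{-1}/2\rfloor\} \subset [\delta_{k+1},\delta_k]$, of cardinality $O(\delta_k^{-1})$. For each triple $(\tau, z_0, w_0) \in T_k \times \mcl L_k \times \mcl L_k$ with $|z_0 - w_0| \leq 3\delta_k$, Lemma~\ref{prop-circle-shift} yields
\[
\BB P\!\left(|h_\tau(w_0) - h_{\tau^{1-r}}(z_0)| \geq a \log \tau^{-1},\, \widetilde{\mcl C}(C)\right) \preceq \delta_k^{a^2/(2r)}.
\]
Summing over the $O(\delta_k^{-5})$ discrete triples at scale $k$ gives an excess probability $O(\delta_k^{a^2/(2r) - 5})$, and since $a^2/(2r) > 5$ this is summable in $k \geq \lceil \log_2 \ep^{-1}\rceil$ with total bound $o_\ep(1)$.

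Finally, given arbitrary $\delta \in (0,\ep]$ and $z,w \in B_R(0)$ with $|z-w| \leq 2\delta$, let $k$ be the integer with $\delta \in [\delta_{k+1}, \delta_k]$, and pick $\tau \in T_k$, $z_0, w_0 \in \mcl L_k$ within $\delta_k^2$ of $\delta, z, w$, respectively; then $|z_0 - w_0| \leq 3\delta_k$. On $\widetilde{\mcl C}(C)$, Lemma~\ref{prop-circle-cont} applied at the pairs $((w,\delta),(w_0,\tau))$ and $((z,\delta^{1-r}),(z_0,\tau^{1-r}))$ (the scale-ratio condition $\tfrac12 \leq \epsilon/\epsilon' \leq 2$ is trivially met since $\delta/\tau = 1 + O(\delta_k)$ and hence $\delta^{1-r}/\tau^{1-r} = 1+O(\delta_k)$) gives continuity errors bounded by
\[
C\,\frac{\delta_k^{2 \cdot (1-r')/2}}{\delta_k^{1/2}} \asymp \delta_k^{(1-2r')/2} \quad\text{and}\quad C\,\frac{\delta_k^{(2-r)(1-r')/2}}{\delta_k^{(1-r)/2}} \asymp \delta_k^{((1-r)-r'(2-r))/2},
\]
both of which are $O(1)$ once $r' < 1/2$ is chosen. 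Combining with the discrete bound $|h_\tau(w_0) - h_{\tau^{1-r}}(z_0)| \leq a\log\tau^{-1} \leq (a + o_\ep(1))\log\delta^{-1}$ yields $|h_\delta(w) - h_{\delta^{1-r}}(z)| \leq 3\sqrt{10r}\log\delta^{-1}$ for all sufficiently small $\ep$. The main (mild) obstacle is the bookkeeping in the continuity step: one needs both the spatial lattice and the scale lattice fine enough (both of spacing $\delta_k^2$) so that the $\ep^{-1/2}$ in the denominator of Lemma~\ref{prop-circle-cont} is absorbed, but not so fine that $a^2/(2r)$ fails to dominate the cardinality exponent; the choice of polynomial spacing $\delta_k^2$ balances these two constraints comfortably thanks to the slack in the constant $3\sqrt{10r}$.
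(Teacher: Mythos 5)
Your proposal follows the same two-lemma strategy as the paper's proof --- discretize, apply the pointwise Gaussian estimate of Lemma~\ref{prop-circle-shift}, take a union bound, and then pass from the discrete grid to arbitrary $(z,w,\delta)$ via the continuity estimate of Lemma~\ref{prop-circle-cont}. The only structural difference is how the discretization is organized: the paper first discretizes space at a \emph{single} scale $\delta$ using a lattice of mesh $\delta^{1/(1-r)}$ (chosen so that the continuity error is exactly $O(1)$), proves the event $\mcl C'_\delta$ costs $\preceq \delta$ at that scale, and then passes to all $\delta$ by choosing a carefully tuned sequence $(\zeta_n)$ satisfying~\eqref{eqn-zeta_n-choice} and interpolating between consecutive $\zeta_n$ with Lemma~\ref{prop-circle-cont}. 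You instead discretize $z$, $w$, \emph{and} $\delta$ jointly within each dyadic scale-window $[\delta_{k+1},\delta_k]$ using a single fine grid of mesh $\delta_k^2$, which is more direct and sidesteps the $\zeta_n$ bookkeeping at the cost of a larger lattice and hence a larger Gaussian exponent.

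Two arithmetic slips, both non-fatal because of the slack below $3\sqrt{10r}$. First, your triple count is off: for each $z_0 \in \mcl L_k$ the number of $w_0 \in \mcl L_k$ with $|z_0 - w_0| \le 3\delta_k$ is $O\bigl((\delta_k/\delta_k^2)^2\bigr) = O(\delta_k^{-2})$, so the number of triples is $O(\delta_k^{-1}) \cdot O(\delta_k^{-4}) \cdot O(\delta_k^{-2}) = O(\delta_k^{-7})$, not $O(\delta_k^{-5})$. Consequently $a = 2\sqrt{3r}$, which gives $a^2/(2r) = 6$, is not enough; you need $a^2/(2r) > 7$, e.g.\ $a = \sqrt{16r}$, which is still comfortably below $3\sqrt{10r} = \sqrt{90r}$. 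Second, the exponent in your second continuity estimate should be $\bigl(1 - r'(2-r)\bigr)/2$ rather than $\bigl((1-r) - r'(2-r)\bigr)/2$ (you lost a $+r/2$ when subtracting the denominator exponent $(1-r)/2$); this is still positive for $r' < 1/(2-r)$, so the conclusion stands. With those corrections the argument is sound and closely parallels the paper's.
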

\begin{proof}
Fix $C>1$ and define the event $\wt{\mcl C}(C) = \wt{\mcl C}(C,R,r)$ as in Lemma~\ref{prop-circle-cont}. By Lemma~\ref{prop-circle-shift}, for each $z,w\in B_R(0)$ with $|z-w| \leq 6\delta$,
\eqb \label{eqn-good-on-C}
\BB P\!\left( |h_{\delta }(z) - h_{\delta^{1 -   r}}(w)| \geq  \sqrt{10 r} \log \delta^{-1}  ,\, \wt{\mcl C}(C) \right) \preceq \delta^{5}  
\eqe 
with the implicit constant depending only on $C$, $R$, and $r$. Choose a finite collection $\mcl S_\delta$ of at most $O_\delta(\delta^{-\frac{2}{1-r}}  )$ points in $B_R(0)$ such that for each $z\in B_R(0)$, there exists $z' \in \mcl S_\delta$ with $|z-z'| \leq \delta^{\frac{1}{1-r}} $. On $\wt{\mcl C}(C)$, for such a $z$ and $z'$ we have
\eqbn
|h_{\delta }(z) - h_{\delta}(z') | \leq C ,\qquad |h_{\delta^{1- r}}(z) - h_{\delta^{1- r}}(z') | \leq C  .
\eqen
By~\eqref{eqn-good-on-C} and the union bound, on $\wt{\mcl C}(C)$ it holds except on an event of probability $\preceq \delta$ (implicit constant depending only on $C$, $R$, and $r$) that
\eqbn
|h_{\delta}(z) - h_{\delta^{1-r} }(w)| \leq  \sqrt{10 r} \log \delta^{-1} 
\eqen
whenever $z,w \in \mcl S_\delta$ with $|z-w| \leq  6\delta$.  
By the triangle inequality, whenever this is the case and $\delta$ is sufficiently small (depending on $r$ and $C$), we have
\eqb \label{eqn-discrete-good}
 |h_{\delta }(w) - h_{\delta^{1 - r}}(z)| \leq 2\sqrt{10 r} \log \delta^{-1}   
\eqe 
whenever $z,w\in B_R(0)$ with $|z-w| \leq 2\delta$. For $\delta>0$, let $\mcl C_\delta'$ be the event that this last statement holds, so that
\eqb \label{eqn-C'-prob}
\BB P\!\left( (\mcl C_\delta' )^c ,\, \wt{\mcl C}(C) \right) \preceq \delta .
\eqe 

Fix a sequence $(\zeta_n)$ decreasing to $0$ such that 
\eqb \label{eqn-zeta_n-choice}
\lim_{n\rta \infty} \frac{(\zeta_n - \zeta_{n+1})^{(1-r)/2}}{\zeta_{n+1}^{1/2}} = \lim_{n\rta \infty} \frac{(\zeta_n^{1-r} - \zeta_{n+1}^{1-r})^{(1-r)/2}}{\zeta_{n+1}^{(1-r)/2}} =  0 \quad \op{and} \quad \sum_{n=1}^\infty \zeta_n < \infty ,
\eqe
e.g.\ $\zeta_n =n^{-q}$ for appropriate $q > 1$, depending on $r$. For $\ep \in (0,1)$, let $n_\ep$ be the greatest integer $n$ such that $\zeta_{n } \geq \ep^{1-r}$. By~\eqref{eqn-C'-prob} and the union bound,
\eqbn
\liminf_{\zeta\rta 0} \BB P\!\left( \wt{\mcl C}(C) \cap \bigcap_{n=n_\ep}^\infty  \mcl C_{\zeta_n}'   \right) = \BB P\!\left(\wt{\mcl C}(C)\right) - o_\ep(1)  .
\eqen
Since $\BB P\!\left(\wt{\mcl C}(C)\right) \rta 1$ as $C\rta \infty$ (by Lemma~\ref{prop-circle-cont}), it suffices to show that for sufficiently small $\ep >0$,
\eqb \label{eqn-bigcup-contain}
\wt{\mcl C}(C) \cap \bigcap_{n=n_\ep}^\infty \mcl C_{\zeta_n}'  \subset \mcl C_\ep   .
\eqe 
To see this, suppose given $\delta \in (0,\ep]$ and each $z,w\in B_R(0)$ with $|z-w| \leq 2\delta$. Let $n_\delta$ be chosen so that $\delta \in [\zeta_{n_\delta+ 1} , \zeta_{n_\delta }]$. 
By our choice of $(\zeta_n)$, on $\wt{\mcl C}(C)$ we have
\eqbn
|h_\delta(w) - h_{\zeta_{n_\delta}}(w)| \leq C o_\ep(1) ,\qquad |h_{\delta^{1-r}}(z) - h_{\zeta_{ n_\delta}^{1-r}}(z)| \leq C o_\ep(1) .
\eqen
By~\eqref{eqn-discrete-good} with $ \zeta_{n_\delta}$ in place of $\delta$, along with the triangle inequality, we obtain~\eqref{eqn-bigcup-contain}. 
\end{proof}

\begin{prop} \label{prop-thick-local}
Let $\rho > R > 1$. Suppose that either $h$ is zero-boundary GFF on $B_\rho(0)$ or a whole-plane GFF. Let $(h_\ep)$ be the circle average process for $h$ and let $\mu_h$ be its $\gamma$-quantum area measure. There is a function $\psi : (0,\infty) \rta (0,\infty)$ with $\lim_{r\rta 0} \psi(r) = 0$, depending only on $\gamma$, such that the following holds. For $r \in (0,1/2)$, and $\ep \in (0,1)$, let $\mcl G_\ep = \mcl G_\ep(R,r)$ be the event that the following is true. For each $\delta \in (0,\ep]$ and each $z,w\in B_R(0)$ with $|z-w| \leq \delta$, we have
\eqbn
\mu_h(B_\delta(w) ) \geq \delta^{2+\frac{\gamma^2}{2} + \psi(r) } e^{\gamma h_{\delta^{1- r}}(z)}  .
\eqen
Then $\BB P\!\left(\mcl G_\ep\right) \rta 1$ as $\ep\rta 0$. 
\end{prop}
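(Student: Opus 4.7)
The plan is to combine a union-bounded, discretized version of Lemma~\ref{prop-mu_h-inf} with the uniform circle-average continuity estimate of Lemma~\ref{prop-gff-cont-spread}. Fix a small $r \in (0,1/2)$. For each $k \in \BB N$, set $\delta_k := 2^{-k}$ and introduce a lattice $\mcl L_k \subset B_R(0)$ of $O(\delta_k^{-2})$ points such that every $w \in B_R(0)$ lies within distance $\delta_k/100$ of some $w' \in \mcl L_k$. By Lemma~\ref{prop-mu_h-inf}, for each $w' \in \mcl L_k$ the probability that $\mu_h(B_{\delta_k/100}(w')) < (\delta_k/100)^{2+\gamma^2/2+r} e^{\gamma h_{\delta_k/100}(w')}$ is $o_{\delta_k}^\infty(\delta_k)$, so a union bound over $\mcl L_k$ and over $k$ with $\delta_k \leq \ep$ produces an event $\mcl F_\ep$ of probability tending to $1$ on which the reverse inequality holds for every such $(w',k)$.

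Given $\delta \in (0,\ep]$ and $z,w \in B_R(0)$ with $|z-w| \leq \delta$, I would choose $k$ so that $\delta_k \in (\delta, 2\delta]$ and $w' \in \mcl L_k$ with $|w-w'| \leq \delta_k/100$. Then $B_{\delta_k/100}(w') \subseteq B_\delta(w)$, so on $\mcl F_\ep$,
\[ \mu_h(B_\delta(w)) \geq (\delta_k/100)^{2+\gamma^2/2+r} e^{\gamma h_{\delta_k/100}(w')} \geq c\, \delta^{2+\gamma^2/2+r} e^{\gamma h_{\delta_k/100}(w')} \]
for a constant $c > 0$. To compare $h_{\delta_k/100}(w')$ with $h_{\delta^{1-r}}(z)$, I would also work on the high-probability event $\mcl C_\ep$ of Lemma~\ref{prop-gff-cont-spread}; applying that lemma at scale $\delta$ to the points $w'$ and $z$ (noting $|w'-z| \leq \delta_k/100 + \delta \leq 2\delta$) gives $|h_\delta(w') - h_{\delta^{1-r}}(z)| \leq 3\sqrt{10r}\log\delta^{-1}$.

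It remains to estimate $|h_{\delta_k/100}(w') - h_\delta(w')|$: two circle averages at a common point whose scales differ by a bounded factor. Since $t \mapsto h_{e^{-t}}(w')$ is a Brownian motion in $t$, this difference is an increment over a log-interval of length $O(1)$, and a routine Gaussian-maximum argument (a union bound over $\mcl L_k$ combined with a subgaussian tail estimate, handled simultaneously for all dyadic $k$) controls it by $O(\sqrt{\log \delta^{-1}})$ uniformly in $w'$ and $\delta$ on a further high-probability event. Since $e^{\gamma\, O(\sqrt{\log\delta^{-1}})} = \delta^{-o_\delta(1)}$ is sub-polynomial, this loss is absorbable. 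Combining everything gives $\mu_h(B_\delta(w)) \geq \delta^{2+\gamma^2/2+\psi(r)} e^{\gamma h_{\delta^{1-r}}(z)}$ with $\psi(r) = r + 3\gamma\sqrt{10 r} + o_r(1) \to 0$ as $r \to 0$, which is the desired conclusion.

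The main obstacle will be the uniformity argument for $|h_{\delta_k/100}(w') - h_\delta(w')|$, because Lemma~\ref{prop-gff-cont-spread} is directly tailored to scale pairs $(\delta, \delta^{1-r})$ rather than to two comparable scales at the same point. A clean fix is either the Gaussian-maximum argument sketched above, or to avoid the scale mismatch altogether by choosing a non-dyadic discretization in which $\delta_k/100$ exactly plays the role of $\delta$ in the circle-average comparison.
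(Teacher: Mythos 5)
Your overall structure — lattice of anchor points, union-bounded application of Lemma~\ref{prop-mu_h-inf}, then a circle-average comparison via Lemma~\ref{prop-gff-cont-spread} — is the same as the paper's, and you correctly identify the one genuine obstacle: the mismatch between the scale $\delta_k/100$ at which the one-point estimate delivers its circle average and the scale $\delta$ (hence $\delta^{1-r}$) at which you ultimately need it. Your proposed fallback, the ``alternative'' discretization, is in fact exactly what the paper does, and I would elevate it from afterthought to the actual proof.

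Some detail on why the paper avoids your main route. The paper replaces your dyadic scales $\delta_k = 2^{-k}$ with a slowly varying sequence $\zeta_n$ (e.g.\ $\zeta_n = n^{-q}$) satisfying~\eqref{eqn-zeta_n-choice}, and simultaneously makes the lattice mesh $\zeta_n^p$ for a large power $p$, much finer than the ball radius $\zeta_n$. The consequence is that the ball $B_{\zeta_{n_\delta}}(w')$ that sits inside $B_\delta(w)$ has radius $\zeta_{n_\delta} \in (\delta/2,\delta)$, \emph{comparable} to $\delta$, so Lemma~\ref{prop-gff-cont-spread} applies directly at scale $\zeta_{n_\delta}$ with no scale bridge on the side of $w'$. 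The only residual bridge is from $h_{\zeta_{n_\delta}^{1-r}}(z)$ to $h_{\delta^{1-r}}(z)$ — two circle averages at the same point $z$ and comparable scales — and this is exactly where the slow variation of $\zeta_n$ is essential: with $\zeta_{n_\delta}$ the largest term of the sequence below $\delta$, the gap $\delta - \zeta_{n_\delta}$ is of smaller order than $\zeta_{n_\delta}$ itself, so the H\"older bound of Lemma~\ref{prop-circle-cont}, of the form $C\lvert\delta^{1-r}-\zeta_{n_\delta}^{1-r}\rvert^{(1-r)/2}/\zeta_{n_\delta}^{(1-r)/2}$, tends to zero as $\delta\to 0$. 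This is precisely what the two limits in~\eqref{eqn-zeta_n-choice} encode. By contrast, with a \emph{dyadic} sequence the analogous bound would be $\asymp \delta^{-r(1-r)/2}$, which is polynomially large, so Lemma~\ref{prop-circle-cont} would be of no use; the slow variation is load-bearing, not cosmetic.

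Your main route can be made to work, but the Gaussian-maximum step deserves more than ``routine.'' You need a simultaneous tail bound on $\sup_{\delta\in[\delta_k/2,\delta_k]}\lvert h_{\delta_k/100}(w')-h_\delta(w')\rvert$ over all $w'\in\mcl L_k$ and all dyadic $k\geq k_\ep$; the threshold must grow like $C\sqrt{\log\delta_k^{-1}}$ to survive a union bound over $O(\delta_k^{-2})$ points, and the resulting exponential factor $e^{\gamma C\sqrt{\log\delta^{-1}}}$ is $\delta^{-o_\delta(1)}$ rather than a clean $\delta^{-\psi(r)}$. That is indeed absorbable, but only by arguing separately that $\delta^{-o_\delta(1)}\leq \delta^{-r}$ once $\delta\leq\ep$ is small, and then folding this into $\psi$ — at which point the constant $\psi$ is no longer independent of $\ep$ on the nose and one has to be careful with the order of quantifiers in ``there is a function $\psi$ such that $\BB P(\mcl G_\ep)\to 1$.'' It is doable, but it is exactly the kind of bookkeeping the paper avoids by choosing the mesh and the scale sequence together.
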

\begin{proof} 
For $C>1$, define the event $\wt{\mcl C}(C) = \wt{\mcl C}(C, R,r)$ as in Lemma~\ref{prop-circle-cont}. Define the event $\mcl C_\ep = \mcl C_\ep(R,r)$ as in Lemma~\ref{prop-gff-cont-spread}. Also fix a sequence $\zeta_n \rta 0$ satisfying~\eqref{eqn-zeta_n-choice}. 

Fix $p > 1$. For $n\in\BB N$, choose a finite collection $\mcl S_n$ of at most $O_n(\zeta_n^{-2p})$ points in $B_R(0)$ such that each point of $B_R(0)$ lies within distance $\zeta_n^p$ of some point in $\mcl S_n$. For $\ep \in (0,1)$, let $n_\ep$ be the greatest integer $n$ such that $\zeta_n \geq \ep^{1-r}$ and let
\[
\mcl D_\ep := \bigcap_{n=n_\ep}^\infty \bigcap_{z\in \mcl S_n} \left\{\mu_h(B_{\zeta_n}(z)) \geq \zeta_n^{2+\frac{\gamma^2}{2} + r } e^{\gamma  h_{\zeta_n}(z)} \right\} .
\]
By Lemma~\ref{prop-mu_h-inf} and the union bound, we have $\BB P\!\left(\mcl D_\ep\right)  \rta 1$ as $\ep\rta 0$. 

Since
\eqbn
\BB P\!\left(\wt{\mcl C}(C) \cap \mcl C_\ep \cap \mcl D_\ep \right) \rta 1
\eqen
as $\ep \rta 0$ and then $C \rta \infty$, it suffices to show that if $p$ is sufficiently large and $\ep$ is sufficiently small (depending on $p$), then 
\eqbn
\wt{\mcl C}(C) \cap \mcl C_\ep \cap \mcl D_\ep \subset \mcl G_\ep
\eqen
for an appropriate choice of $\psi(r)$ depending only on $r$ and $\gamma$. 
To this end, suppose that $\wt{\mcl C}(C) \cap \mcl C_\ep \cap \mcl D_\ep$ occurs and we are given $\delta   \in (0,\ep]$ and $z,w\in\BB A_R$ with $|z-w| \leq \delta$. By definition of $\wt{\mcl C}(C)$ and by~\eqref{eqn-zeta_n-choice}, if $p$ is chosen sufficiently large, depending only on $r$ and the choice of sequence $(\zeta_n)$, we can find $n_\delta   \in \BB N \cap [n_\ep , \infty)$ and $w' \in \mcl S_{n_\delta}$ such that
\eqbn
\delta/2 < \zeta_{n_\delta} < \delta , \quad |w' - w| \leq \zeta_{n_\delta}^p < \delta - \zeta_{n_\delta},   \quad \op{and} \quad |h_{\delta^{1-r}}(z) - h_{\zeta_{n_\delta }^{1-r }}(z )| \leq C o_\ep(1) .
\eqen 
By the definitions of $\mcl C_\ep $ and $ \mcl D_\ep$, we therefore have  
\alb
\mu_h(B_\delta(w) ) 
&\geq \mu_h(B_{\zeta_{n_\delta}}(w'))  
\succeq \delta^{2+\frac{\gamma^2}{2} + r  } e^{\gamma h_{\zeta_{n_\delta}}(w')  } 
 \geq \delta^{2+\frac{\gamma^2}{2} + r + 5\gamma\sqrt{10 r} } e^{\gamma h_{\zeta_{n_\delta}^{1-r }}(z)  } \\
&\succeq \delta^{2+\frac{\gamma^2}{2} + r + 5\gamma\sqrt{10 r} } e^{\gamma h_{\delta^{1-r }}(z)  } , 
\ale
with the implicit constants depending on $C$ but tending to $C$-independent constants as $\ep \rta 0$. This proves the statement of the lemma in the case of a whole-plane GFF with $\psi(r)$ slightly larger than $ r + 5\gamma\sqrt{10 r}$. 
\end{proof}

\section{Intersection of the thick points of a GFF with a Borel set}
\label{sec-thick-pt-dim}

Let $h$ be a GFF on a domain $D\subset \BB C$ and let $(h_\ep)$ be its circle average process. Recall that for $\alpha \geq 0$, a point $z\in D$ is called an \emph{$\alpha$-thick points} of $h$ provided
\eqbn
\lim_{\ep\rta 0} \frac{h_\ep(z)}{\log\ep^{-1}} = \alpha .
\eqen
Let $T_h^\alpha$ be the set of $\alpha$-thick points of $h$. Also let
\eqbn
\wh T_h^\alpha := \left\{z\in \BB C \,:\, \liminf_{\ep\rta 0} \frac{h_\ep(z)}{\log\ep^{-1}} \geq \alpha \right\} .
\eqen
Thick points are introduced and studied in~\cite{hmp-thick-pts}
\footnote{The authors of \cite{hmp-thick-pts} use a different normalization of the GFF from the one used in this paper, so our $\alpha$-thick points are the same as their $\alpha^2/2$-thick points}.
In particular, it is proven in~\cite[Theorem~1.2]{hmp-thick-pts} that a.s.\ $\dim_{\mcl H}(T_h^\alpha) = 2-\alpha^2/2$. In this section we will adapt the proof of~\cite[Theorem~1.2]{hmp-thick-pts} to obtain a generalization of this fact which gives the a.s.\ dimension of the intersection of $T_h^\alpha$ with a general Borel set. The lower bound from this result will be needed in the proof of the upper bound in Theorem~\ref{thm-dim-relation}.

\begin{thm} \label{thm-thick-intersect}
Let $D\subset \BB C$ be a simply connected domain and let $h$ be a zero-boundary GFF on $D$. Also let $A\subset D$ be a deterministic Borel set. If $0\leq \alpha^2/2 \leq \dim_{\mcl H} A$, then almost surely 
\eqbn
\dim_{\mcl H}\!\left(T_h^\alpha \cap A\right) =  \dim_{\mcl H}\!\left(\wh T_h^\alpha \cap A\right) =   \dim_{\mcl H} A  - \frac{\alpha^2}{2}  .
\eqen
If $\alpha^2/2 > \dim_{\mcl H} A$, then a.s.\ $T_h^\alpha\cap A = \wh T_h^\alpha \cap A= \emptyset$. 
\end{thm}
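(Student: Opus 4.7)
The plan is to adapt the argument of \cite[Theorem~1.2]{hmp-thick-pts}, which treats the case $A = D$, by replacing Lebesgue measure on $D$ with a Frostman measure supported on the given set $A$. I would establish the upper bound $\dim_{\mcl H}(\wh T_h^\alpha \cap A) \leq \dim_{\mcl H} A - \alpha^2/2$ via a direct covering argument, and the matching lower bound $\dim_{\mcl H}(T_h^\alpha \cap A) \geq \dim_{\mcl H} A - \alpha^2/2$ by constructing a Gaussian multiplicative chaos type measure carried by $T_h^\alpha \cap A$. Since $T_h^\alpha \subset \wh T_h^\alpha$, the two bounds sandwich both quantities in the stated equality, and the final assertion (emptiness when $\alpha^2/2 > \dim_{\mcl H} A$) will emerge as a degenerate case of the covering argument with $s = 0$.

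For the upper bound, fix $\delta > 0$ and $\beta$ slightly larger than $\dim_{\mcl H} A$. For each $n$ cover $A$ by balls $B_{r_i}(z_i)$ with $r_i \leq 2^{-n}$ and $\sum_i r_i^\beta \leq 2^{-n}$. For $B_r(z_0)$ to meet $\wh T_h^\alpha$ there must exist $z \in B_r(z_0)$ with $h_r(z) \geq (\alpha - \delta)\log r^{-1}$; by the circle-average continuity of Lemma~\ref{prop-circle-cont} this forces $h_r(z_0) \geq (\alpha - 2\delta)\log r^{-1}$ outside an event of negligible probability. Since $h_r(z_0)$ is Gaussian with variance $\log r^{-1} + O(1)$, the Gaussian tail yields probability at most $r^{\alpha^2/2 - O(\delta)}$, so
\[
\BB E\Bigl[\sum_i r_i^s \mathbf 1_{B_{r_i}(z_i) \cap \wh T_h^\alpha \neq \emptyset}\Bigr] \preceq \sum_i r_i^{s + \alpha^2/2 - O(\delta)} ,
\]
which tends to zero as $n \to \infty$ whenever $s + \alpha^2/2 - O(\delta) > \beta$. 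Letting $\beta \searrow \dim_{\mcl H} A$, $\delta \searrow 0$, and applying Borel-Cantelli gives $\mcl H^s(\wh T_h^\alpha \cap A) = 0$ a.s.\ for every $s > \dim_{\mcl H} A - \alpha^2/2$; the same bound with $s = 0$ rules out nonempty intersection when $\alpha^2/2 > \dim_{\mcl H} A$.

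For the lower bound assume $\alpha^2/2 < \dim_{\mcl H} A$, fix $s < \dim_{\mcl H} A - \alpha^2/2$, and set $d := s + \alpha^2/2 \in (\alpha^2/2 , \dim_{\mcl H} A)$. By Frostman's lemma there is a compactly supported Borel probability measure $\sigma$ on $A$ with $\sigma(B_r(z)) \leq C r^d$. I would construct the Gaussian multiplicative chaos measure
\[
M^\alpha := \lim_{\epsilon \to 0} M^\alpha_\epsilon , \qquad M^\alpha_\epsilon(dz) := \epsilon^{\alpha^2/2} e^{\alpha h_\epsilon(z)} \sigma(dz) ,
\]
show via a Girsanov/rooted-measure argument that $M^\alpha$ is a.s.\ supported on $T_h^\alpha \cap A$ (shifting the GFF by the log-kernel makes $\sigma$-typical points $\alpha$-thick), and verify that the $s$-energy $\iint |z-w|^{-s} M^\alpha(dz) M^\alpha(dw)$ has finite expectation. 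The latter implies $\dim_{\mcl H} \op{supp} M^\alpha \geq s$ by Frostman's theorem, and combining this with the support property and letting $s \nearrow \dim_{\mcl H} A - \alpha^2/2$ gives the desired bound. Both the convergence of $M^\alpha_\epsilon$ and the energy bound reduce to second-moment computations whose kernels are $|z-w|^{-\alpha^2}$ and $|z-w|^{-s - \alpha^2}$ integrated against $\sigma \otimes \sigma$, which are finite under the Frostman bound provided $\alpha^2 < d$.

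The main obstacle is that the naive $L^2$ argument just described succeeds only when $\alpha^2 < d$, which together with $s < \dim_{\mcl H} A - \alpha^2/2$ forces $\alpha^2 < \dim_{\mcl H} A$, strictly stronger than the hypothesis $\alpha^2/2 < \dim_{\mcl H} A$. In the intermediate regime $\dim_{\mcl H} A \leq \alpha^2 < 2\dim_{\mcl H} A$ the second moment $\BB E[M^\alpha_\epsilon(A)^2]$ diverges, and one must truncate the chaos, e.g.\ by inserting the indicator $\mathbf 1_{\{\max_{r \in [\epsilon,1]}(h_r(z) - \alpha\log r^{-1}) \leq L\}}$, passing to the limit $\epsilon \to 0$ for fixed $L$, and only then letting $L \to \infty$. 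Checking that the truncated measures have uniformly bounded second moments and agree with the untruncated chaos $\sigma$-a.e.\ is the delicate Kahane-type step; it is, however, standard in the multiplicative chaos literature and transfers directly to the Frostman-measure setting once the log-correlated structure of the circle-average process is invoked as in \cite{hmp-thick-pts}.
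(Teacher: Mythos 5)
Your upper bound is essentially the paper's argument. The lower bound, however, has a genuine gap, and the source is the exponent in the two-point kernel.

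With $\sigma$ a Frostman measure on $A$ of exponent $d$, the chaos $M^\alpha_\epsilon(dz)=\epsilon^{\alpha^2/2}e^{\alpha h_\epsilon(z)}\sigma(dz)$ has two-point correlation $\BB E\bigl[M^\alpha_\epsilon(dz)\,M^\alpha_\epsilon(dw)\bigr]\asymp|z-w|^{-\alpha^2}\sigma(dz)\sigma(dw)$ for $|z-w|\gg\epsilon$, as you note; so the $s$-energy $\iint|z-w|^{-s}M^\alpha(dz)M^\alpha(dw)$ has finite expectation precisely when $s+\alpha^2<d$, not when $\alpha^2<d$. With your choice $d=s+\alpha^2/2$ the condition $s+\alpha^2<d$ reads $\alpha^2/2<0$ and never holds, so the $s$-energy you propose to control is always divergent. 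Even if you drop that choice and take $d$ arbitrarily close to $\dim_{\mcl H}A$, the Frostman/$s$-energy conclusion caps out at $s<d-\alpha^2<\dim_{\mcl H}A-\alpha^2$: you come up $\alpha^2/2$ short of the claimed $\dim_{\mcl H}A-\alpha^2/2$. This is the same well-known gap one sees for Lebesgue base (the $s$-energy method gives $2-\gamma^2$, not $2-\gamma^2/2$, for the carrier of planar GMC), and it is not caused by the moment explosion in the $L^1$-but-not-$L^2$ regime, so the Kahane truncation you describe does not repair it; the truncation restores finiteness of moments but does not change the $|z-w|^{-\alpha^2}$ order of the two-point function.

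The paper avoids this by replacing the GMC weight $e^{\alpha h_\epsilon(z)}$ with the indicator of the ``perfect-point'' event $E^n(z)$ of Hu--Miller--Peres, which constrains $h_\epsilon(z)/\log\epsilon^{-1}$ to stay near $\alpha$ across \emph{all} dyadic scales down to $\epsilon_n$ (Section~\ref{sec-thick-lower}, events $E_{z,j}$, $F_{z,j}$). The decisive point is the two-point estimate Lemma~\ref{prop-thick-2pt} (quoting~\cite[Lemma~3.3]{hmp-thick-pts}):
\begin{equation*}
\frac{\BB P\bigl(E^n(z)\cap E^n(w)\bigr)}{\BB P\bigl(E^n(z)\bigr)\BB P\bigl(E^n(w)\bigr)}\;\le\;|z-w|^{-\alpha^2/2+o_{|z-w|}(1)},
\end{equation*}
with exponent $\alpha^2/2$ rather than $\alpha^2$. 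The halving occurs because the events $E^n(z)$ and $E^n(w)$ are forced to agree on the shared coarse scales $r\ge|z-w|$, so you pay the thick-point cost there only once; by contrast, $e^{\alpha h_\epsilon(z)}e^{\alpha h_\epsilon(w)}$ size-biases the shared circle average by $e^{2\alpha h_r}$ on those scales and is rewarded, not penalized, for overshooting $\alpha$. Feeding this sharper kernel into exactly the $b$-energy computation you attempt, with $\nu_n(dx)=\sum_{z\in\mcl D_n}\BB 1_{E^n(z)}\BB 1_{S_n(z)}(x)\,d\mu(x)/\BB P(E^n(z))$ and $\mu$ a Frostman measure of exponent $d$ on $A$, gives a uniformly bounded $b$-energy for all $b<d-\alpha^2/2$, hence $\dim_{\mcl H}(T_h^\alpha\cap A)\ge d-\alpha^2/2$, which is the sharp bound after letting $d\uparrow\dim_{\mcl H}A$. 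If you want a chaos-flavoured route to the same exponent you would need to argue exact (local) dimensionality of $M^\alpha$ under a Peyri\`ere/Girsanov tilting rather than bounding $s$-energy, which is a substantially different argument from the one you sketch.
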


By \cite[Theorem~B.2.5]{peres-fractal}, for each $d < \dim_{\mcl H}(A)$, there exists a closed set $A'\subset A$ with $\dim_{\mcl H}(A') \geq d$. Hence we can assume without loss of generality that $A$ is closed. We make this assumption throughout the remainder of this section.

Before we commence with the proof of Theorem~\ref{thm-thick-intersect}, we observe that $\dim_{\mcl H}(T_h^\alpha \cap A)$ and $\dim_{\mcl H}(T_h^\alpha \cap A)$ are each a.s.\ equal to a constant.

\begin{lem} \label{prop-thick-zero-one}
Suppose we are in the setting of Theorem~\ref{thm-thick-intersect}. There are deterministic constants $a , \wh a \geq 0$ such that $\dim_{\mcl H} (T_h^\alpha \cap A) = a$ and $\dim_{\mcl H} (T_h^\alpha \cap A) = \wh a$ a.s.\ 
\end{lem}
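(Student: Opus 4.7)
The plan is to apply Kolmogorov's 0--1 law to a Karhunen--Lo\`eve expansion of $h$. Since $C_c^\infty(D)$ is dense in the Cameron--Martin space $H_0^1(D)$ (equipped with the Dirichlet inner product $(f,g) = \int_D \nabla f \cdot \nabla g$), Gram--Schmidt applied to any countable dense family in $C_c^\infty(D)$ produces an orthonormal basis $(e_n)_{n \geq 1}$ of $H_0^1(D)$ with each $e_n \in C_c^\infty(D)$. Then $h = \sum_{n=1}^\infty \alpha_n e_n$ in the space of distributions on $D$, where $(\alpha_n)_{n \geq 1}$ are i.i.d.\ standard normals.

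The key observation is that $T_h^\alpha$ and $\wh T_h^\alpha$ are unchanged if we add to $h$ any function that is continuous on $D$. Indeed, if $\phi \in C(D)$ and $z\in D$, then for small $\ep>0$ the circle average $\phi_\ep(z)$ converges to $\phi(z)$, so $\phi_\ep(z)/\log\ep^{-1}\to 0$ and hence $z\in T_h^\alpha$ (resp.\ $\wh T_h^\alpha$) if and only if $z\in T_{h+\phi}^\alpha$ (resp.\ $\wh T_{h+\phi}^\alpha$). For any fixed $N\in\BB N$ the partial sum $S_N := \sum_{n=1}^N \alpha_n e_n$ a.s.\ lies in $C_c^\infty(D) \subset C(D)$, so applying this observation with $\phi = -S_N$ yields
\eqbn
T_h^\alpha \cap A = T_{h-S_N}^\alpha \cap A \quad \text{and} \quad \wh T_h^\alpha \cap A = \wh T_{h-S_N}^\alpha \cap A .
\eqen
Since the right-hand sides are each measurable functions of $h - S_N$, which is determined by $(\alpha_n)_{n > N}$, the random variables $\dim_{\mcl H}(T_h^\alpha \cap A)$ and $\dim_{\mcl H}(\wh T_h^\alpha \cap A)$ are both $\sigma(\alpha_n : n > N)$-measurable.

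Since $N$ is arbitrary, both random variables are measurable with respect to the tail $\sigma$-algebra $\bigcap_{N \geq 1} \sigma(\alpha_n : n > N)$, which is trivial by Kolmogorov's 0--1 law. Hence each a.s.\ equals a deterministic constant, yielding the $a$ and $\wh a$ in the statement. The proof is essentially a 0--1 law, so no genuine analytic obstacle is involved; the only care required is in two standard bookkeeping steps---verifying that the Hausdorff dimension of the random Borel set $T_h^\alpha \cap A$ (and likewise $\wh T_h^\alpha \cap A$) is a measurable random variable, and confirming that a Karhunen--Lo\`eve expansion of $h$ can be chosen with smooth compactly supported basis elements as above.
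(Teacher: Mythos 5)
Your proof is correct, but it takes a slightly different route than the paper's. The paper writes $h = \sum_j a_j f_j$ with $\{f_j\}$ a smooth, compactly supported orthonormal basis for $H_0^1(D)$ and the $a_j$ i.i.d.\ standard Gaussians, observes that permuting finitely many of the $a_j$ changes $h$ by a $C_c^\infty$ function and hence does not affect $T_h^\alpha$, $\wh T_h^\alpha$, or the dimensions of interest, and then invokes the Hewitt--Savage zero-one law (exchangeability). You instead observe that subtracting the partial sum $S_N = \sum_{n\le N}\alpha_n e_n$ changes $h$ by a $C_c^\infty$ function, so the dimension is $\sigma(\alpha_n : n>N)$-measurable for every $N$, hence tail-measurable, and you invoke Kolmogorov's 0--1 law. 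Both arguments hinge on the same key fact---adding a continuous function shifts every circle average by a bounded amount and so leaves the thick point sets unchanged---but they route it through different zero-one laws. Your tail-measurability argument is, if anything, a touch more direct and uses the more elementary of the two 0--1 laws; the paper's exchangeability argument is equally valid and a common idiom in this literature (it is also the approach in the original Hu--Miller--Peres paper~\cite{hmp-thick-pts}). Either way the proof is sound.
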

\begin{proof}
Let $\{f_j\}$ be an orthonormal basis for the Hilbert space closure for the Dirichlet inner product on the set of compactly supported smooth functions in $D$, with each $f_j$ smooth and compactly supported. We can write $h = \sum_{j=1}^\infty a_j f_j$, where the $a_j$'s are i.i.d.\ standard Gaussians. Permuting a finite number of coefficients in this series expansion does not affect whether a given point is an $\alpha$-thick point for $h$, nor does it affect the dimension of $T_h^\alpha \cap A$ or $\wh T_h^\alpha \cap A$. By the Hewitt-Savage zero-one law, we obtain the statement of the lemma.
\end{proof}

\subsection{Upper bound}
\label{sec-thick-upper}

In this subsection we will prove the upper bound in Theorem~\ref{thm-thick-intersect}. It is clear that $T_h^\alpha\subset \wh T_h^\alpha$, so we only need to prove an upper bound on the dimension of $\wh T_h^\alpha \cap A$. To do this we will need a couple of basic lemmas.

\begin{lem} \label{prop-thick-uniform}
Suppose we are in the setting of Theorem~\ref{thm-thick-intersect}. Fix $\alpha \in (0,2]$ and $r>0$. 
Almost surely, there exists a random $\ol \ep = \ol\ep(\alpha , r) > 0$ such that the following is true. If we set
\eqbn
A^{\alpha,r} := \left\{z\in A \,:\, \frac{h_\ep(z)}{\log\ep^{-1}} \geq \alpha- r \: \forall \ep \in (0,\ol\ep] \right\}
\eqen
then $\dim_{\mcl H} A^{\alpha,r} \geq \dim_{\mcl H}(\wh T_h^\alpha \cap A) - r$.
\end{lem}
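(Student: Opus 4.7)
The strategy is to approximate $\wh T_h^\alpha \cap A$ from above by an increasing sequence of sets on which the lower bound on $h_\ep(z)/\log\ep^{-1}$ holds uniformly below a fixed (deterministic) scale, and then apply countable stability of Hausdorff dimension.

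Concretely, for each $n \in \BB N$ I would set
\[
A_n := \left\{z \in A : \frac{h_\ep(z)}{\log \ep^{-1}} \geq \alpha - r \text{ for all } \ep \in (0,1/n] \text{ for which the circle average is defined}\right\}.
\]
The sequence $(A_n)$ is increasing in $n$, and by definition of $\wh T_h^\alpha$, any $z \in \wh T_h^\alpha$ satisfies $\liminf_{\ep \to 0} h_\ep(z)/\log\ep^{-1} \geq \alpha > \alpha - r$, so there is some finite $n = n(z)$ for which $z \in A_n$. Hence
\[
\wh T_h^\alpha \cap A \;\subset\; \bigcup_{n=1}^\infty A_n.
\]

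By countable stability of Hausdorff dimension applied to this (increasing) union,
\[
\dim_{\mcl H}\!\left(\wh T_h^\alpha \cap A\right) \;\leq\; \sup_{n} \dim_{\mcl H}(A_n) \;=\; \lim_{n \to \infty} \dim_{\mcl H}(A_n).
\]
In particular, almost surely there exists a (random) $n_0 = n_0(\omega)$ such that
\[
\dim_{\mcl H}(A_{n_0}) \;\geq\; \dim_{\mcl H}\!\left(\wh T_h^\alpha \cap A\right) - r.
\]
Setting $\ol\ep := 1/n_0$ gives $A^{\alpha,r} = A_{n_0}$, which yields the claim.

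There is no substantial obstacle here; the entire argument is a consequence of the definition of $\wh T_h^\alpha$ (uniformity in $\ep$ below some threshold) plus the countable stability of $\dim_{\mcl H}$. The only minor point to be careful of is that the circle average $h_\ep(z)$ is only defined when $B_\ep(z) \subset D$, but this causes no trouble since the $\liminf$ condition defining $\wh T_h^\alpha$ already implicitly restricts to such small $\ep$.
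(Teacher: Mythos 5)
Your proof is correct and is essentially identical to the paper's: both write $\wh T_h^\alpha\cap A \subset \bigcup_n \{z\in A : h_\ep(z)/\log\ep^{-1}\geq \alpha-r\ \forall \ep\in(0,1/n]\}$ and invoke countable stability of Hausdorff dimension. You have merely spelled out the final step (picking $n_0$ with $\dim_{\mcl H}(A_{n_0})$ within $r$ of the supremum) that the paper leaves implicit.
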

\begin{proof}
We have
\eqbn
 \wh T_h^\alpha \cap A  \subset \bigcup_{n=1}^\infty \left\{z\in A \,:\, \frac{h_\ep(z)}{\log\ep^{-1}} \geq \alpha- r \: \forall \ep \in (0, 1/n] \right\}
\eqen
so the statement of the lemma follows from countable stability of Hausdorff dimension. 
\end{proof}

\begin{lem} \label{prop-thick-upper}
Fix $r \in (0,1/2)$, $\rho > R > 0$, and $\alpha>0$. Let $h$ be a zero-boundary GFF on $B_\rho(0)$. For $\ep \in (0,1)$ and $z\in B_R(0)$, let $E_\ep^{\alpha,r}(z)$ be the event that there is a $w\in B_{\ep}(z)   $ such that $h_{\ep}(w) \geq (\alpha -r) \log \ep^{- 1}$. Also let $\mcl C_\ep = \mcl C_\ep(R,r)$ be the event of Lemma~\ref{prop-gff-cont-spread}. Then for each $\wt \ep \geq \ep$, we have
\eqb \label{eqn-thick-upper}
\BB P\!\left(E_\ep^{\alpha ,r} (z) \cap \mcl C_{\wt \ep} \right) \preceq \ep^{\frac{\alpha^2}{2} +o_r(1)} ,
\eqe 
with the implicit constant and the $o_r(1)$ independent of $\ep$ and uniform for $z \in B_R(0)$. 
\end{lem}
\begin{proof} 
By definition of $\mcl C_{\wt \ep}$, if $E_\ep^{\alpha,r}(z) \cap \mcl C_{\wt \ep}$ occurs, then 
\eqbn
h_{\ep^{1-r} }(z) \geq \left( \alpha - r -3\sqrt{10 r}    \right) \log \ep^{-1} .
\eqen
Since $h_{\ep^{1-r}}(z)$ is Gaussian with variance $\log \ep^{-(1-r)} + O_\ep(1)$, statement of the lemma follows from the Gaussian tail bound.
\end{proof}

\begin{proof}[Proof of Theorem~\ref{thm-thick-intersect}, upper bound]
By conformal invariance (see \cite[Section~4]{hmp-thick-pts}) we can assume without loss of generality that $D= B_\rho(0)$ for some $\rho > 0$. By countable stability of Hausdorff dimension we can assume without loss of generality that $A \subset B_R(0)$ for some $R \in (0,\rho)$. Fix $r\in (0,\alpha\wedge 1/2)$ and $p \in (0,1)$. Let $\ol\ep = \ol\ep(\alpha,r)  > 0$ and $A^{\alpha,r}$ be as in Lemma~\ref{prop-thick-uniform}. By the defining property of $A^{\alpha,r}$, we can find a deterministic $\ep_0 \in (0,1)$ such that with probability at least $1-p$, we have $\ol \ep \geq \ep_0$, in which case
\eqb \label{eqn-A^r-uniform}
   h_{\ep }(z) \geq (\alpha-r) \log \ep^{-1} ,\quad \forall \ep \in (0,\ep_0],\quad \forall z \in A^{\alpha,r}. 
\eqe  
Let $F^{\alpha,r}$ be the event that~\eqref{eqn-A^r-uniform} holds, so that $\BB P\!\left(F^{\alpha,r}\right) \geq 1-p$.

By the definition of Hausdorff dimension, for each $\beta > \dim_{\mcl H} A$ and each $k\in\BB N$, we can find a countable collection of balls $\{B_{\ep_k^j}(z_k^j)\}_{j\in\BB N}$ of radius $\ep_k^j \leq \ep_0 \wedge 2^{-k}$ such that 
\eqbn
A\subset \bigcup_{j=1}^\infty B_{\ep_k^j}(z_k^j) \quad \op{and} \quad \sum_{j=1}^\infty (\ep_k^j)^\beta \leq 2^{-k} .
\eqen 
By the definition of $A^{\alpha,r}$, if $B_{\ep_k^j}(z_k^j) \cap A^{\alpha,r} \neq\emptyset$ and the event $F^{\alpha,r}$ of~\eqref{eqn-A^r-uniform} occurs, then the event $E_{\ep_k^j}^{\alpha,r}(z_k^j)$ of Lemma~\ref{prop-thick-upper} occurs.

By Lemma~\ref{prop-thick-upper}, for each $\xi > 0$,
\eqb \label{eqn-hausdorff-sum}
\BB E\!\left(\BB 1_{\mcl C_{2^{-k}} \cap F^{\alpha,r}}  \sum_{j=1}^\infty (\ep_k^j)^\xi \BB 1_{\left(B_{\ep_k^j}(z_k^j) \cap A^{\alpha,r} \not=\emptyset\right)} \right) \leq  \sum_{j=1}^\infty (\ep_k^j)^{\xi + \alpha^2/2 + o_r(1)}    
\eqe  
with $\mcl C_{2^{-k}} = \mcl C_{2^{-k}}(R,r)$ as in Lemma~\ref{prop-gff-cont-spread}. If $\xi > \beta - \alpha^2/2$, then for sufficiently small $r$, this sum is $\leq 2^{-k}$. Since $\BB P\!\left(\mcl C_{2^{-k}} \right) \rta 1$ as $k\rta\infty$, if $F^{\alpha,r}$ occurs then it is a.s.\ the case that for infinitely many $k$, we have
\eqbn
\sum_{j=1}^\infty (\ep_k^j)^\xi \BB 1_{\left(B_{\ep_k^j}(z_k^j) \cap A^{\alpha,r} \not=\emptyset \right)}  \leq 2^{-k/2} .
\eqen 
Therefore $\dim_{\mcl H} A^{\alpha,r} \leq \beta - \alpha^2/2$ a.s. on $F^{\alpha,r}$. Since $p$ can be made arbitrarily close to 0, a.s.\ 
\eqbn
\dim_{\mcl H} (\wh T_h^\alpha\cap A) \leq \dim_{\mcl H}A^{\alpha,r} +r \leq \beta - \alpha^2/2 + r .
\eqen
Upon letting $\beta \rta \dim_{\mcl H} A$ and $r\rta 0$ we get $\dim_{\mcl H} (\wh T_h^\alpha \cap A) \leq \dim_{\mcl H}(A) - \alpha^2/2$. If $ \dim_{\mcl H}(A)  - \alpha^2/2 < 0$, then for $\beta$ sufficiently close to $\dim_{\mcl H}(A)$ and $r$ sufficiently small, the sum~\eqref{eqn-hausdorff-sum} is $\leq 2^{-k/2}$ when $\xi = 0$. Hence it is a.s.\ the case that on $F^{\alpha,r}$, it holds for arbitrarily large $k$ that none of the balls $B_{\ep_k^j}(z_k^j)$ intersect $A^{\alpha,r}$. Therefore $A^{\alpha,r} =\emptyset$ a.s., so by Lemma~\ref{prop-thick-uniform} we have $\wh T_h^{\alpha } \cap A =\emptyset$ a.s.  
\end{proof}

\subsection{Lower bound}
\label{sec-thick-lower}

In this subsection we will prove the lower bound in Theorem~\ref{thm-thick-intersect}. It suffices to prove the lower bound for $\dim_{\mcl H}(T_h^\alpha\cap A)$. By considering the intersection of $A$ with a dyadic square and re-scaling, we can assume without loss of generality that $A\subset [0,1]^2 \subset D$. We make this assumption in addition to the assumption that $A$ is closed throughout the remainder of this section.

Fix $\alpha > 0$. We make the following definitions (as in~\cite[Section~3.2]{hmp-thick-pts}). For $n\in\BB N$, let $\ep_n := 1/n!$ and $t_n := \log n!$. Define the following events for each $\ep > 0$, $z\in D$, and $n, j\in\BB N$. 
\begin{align}
E_{z,j} &:= \left\{\sup_{\ep \in [\ep_{j+1} , \ep_{j}]} |h_\ep(z) - h_{\ep_j}(z) - \alpha (\log \ep^{-1} - \log \ep_j^{-1}  ) | \leq \sqrt{\log \ep_{j+1}^{-1} - \log \ep_j^{-1}} \right\}  \\
F_{z,j} &:= \left\{|h_\ep(z)   - h_{\ep_j}(z)| \leq \log \ep^{-1} - \log \ep_j^{-1} + 1 \quad \forall \ep \leq \ep_j \right\} \\
E^n(z) &:= F_{z,n+1} \cap \bigcap_{j=1}^n E_{z,j}  
\end{align} 

For $n\in\BB N$, divide $[0,1]^2$ into $\ep_n^{-2}$ squares of side length $\ep_n $, which intersect only along their boundaries. Let $\wt{\mcl D}_n$ be the set of points in $[0,1]^2$ which are centers of these squares and for $z\in \wt{\mcl D}_n$, let $  S_n(z)$ be the square of side length $\ep_n$ centered at $z$. Let $\mcl D_n$ be the set of $z\in \wt{\mcl D}_n$ such that $S_n(z) \cap A \not=\emptyset$ and let $\mcl D_n^*$ be the set of those $z\in\mcl D_n$ for which $E^n(z)$ occurs. We define the \emph{$\alpha$-perfect-points} by
\eqb  \label{eqn-perfect-def}
\mcl P^\alpha = \mcl P^\alpha(h , A) := \bigcap_{k\geq 1} \ol{\bigcup_{n\geq k} \bigcup_{z\in \mcl D_n^*} S_n(z)} .
\eqe 
It is shown in \cite[Lemma~3.2]{hmp-thick-pts} that $\mcl P^\alpha\subset T_h^\alpha$ a.s. Since we have assumed that $A$ is closed, we a.s.\ have 
\eqb \label{eqn-perfect-contain}
\mcl P^\alpha \subset T_h^\alpha\cap A .
\eqe

We next need estimates for the probabilities of the events $E^n(z)$. 

\begin{lem} \label{prop-thick-1pt}
For $z\in \wt{\mcl D}_n$ and $n\in\BB N$, we have
\eqbn
\BB P\!\left(E^n(z)\right) \geq  \ep_n^{\alpha^2/2+ o_n(1)} 
\eqen
with the $o_n(1)$ uniform for $z\in \wt{\mcl D}_n$.
\end{lem}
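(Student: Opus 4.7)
The plan is to reduce this to a Brownian motion estimate via the circle-average representation of the GFF and then apply Girsanov. Fix $z \in \wt{\mcl D}_n$. Because $A \subset [0,1]^2$ and $\ol{[0,1]^2}\subset D$, there is some $\ep_0 > 0$, independent of $z$, such that $B_{\ep_0}(z) \subset D$ for every $z\in [0,1]^2$. By the standard description of the zero-boundary GFF circle-average process (\cite{shef-kpz}), the process $B_t := h_{e^{-t}}(z) - h_{\ep_0}(z)$ is, for $t \geq -\log\ep_0$, a standard linear Brownian motion started from $0$. Let $j_0$ be the smallest $j$ with $\ep_j \leq \ep_0$. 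For $j \geq j_0$ the event $E_{z,j}$ depends only on the increments of $B$ on $[t_j,t_{j+1}]$, and $F_{z,n+1}$ depends only on the increments past $t_{n+1}$; these are disjoint intervals, so the events are mutually independent. The (finitely many) events for $j<j_0$ can be absorbed into an absolute constant, since continuity of the circle average for $\ep\in[\ep_{j_0},\ep_1]$ gives a uniform positive lower bound on their joint probability for $z\in[0,1]^2$. It thus suffices to prove
$$
\BB P(F_{z,n+1})\prod_{j=j_0}^n \BB P(E_{z,j}) \geq \ep_n^{\alpha^2/2+o_n(1)}.
$$

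I would bound the two kinds of factors separately. The event $F_{z,n+1}$ translates, for a standard BM $W$ (restarted at $t_{n+1}$), into $\sup_{s\geq 0}(|W_s|-s)\leq 1$, and the exponential martingale bound $\BB P(\sup_{s\geq 0}(W_s-s)>\lambda)=e^{-2\lambda}$ gives $\BB P(F_{z,n+1})\geq 1-2e^{-2}>0$. For $E_{z,j}$ with $j\geq j_0$, set $\Delta_j := t_{j+1}-t_j = \log(j+1)$; after centering and rescaling the event becomes $\sup_{s\in[0,\Delta_j]}|W_s-\alpha s|\leq \sqrt{\Delta_j}$. Applying Cameron--Martin with drift $\alpha$ and using that on this event $|W_{\Delta_j}-\alpha\Delta_j|\leq\sqrt{\Delta_j}$, one obtains
$$
\BB P(E_{z,j}) \geq c_0\, e^{-\alpha^2\Delta_j/2 - \alpha\sqrt{\Delta_j}},
$$
where $c_0 := \BB P(\sup_{u\in[0,1]}|W_u|\leq 1)>0$ (by Brownian scaling on the post-Girsanov standard BM).

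Finally, I would multiply the factors. Since $\sum_{j=j_0}^n \Delta_j = \log((n+1)!/j_0!)$, the product of the leading terms is $\prod_{j=j_0}^n e^{-\alpha^2\Delta_j/2} = (j_0!)^{\alpha^2/2}\ep_{n+1}^{\alpha^2/2} = \ep_n^{\alpha^2/2+o_n(1)}$. The remaining contributions are $\alpha\sum_{j=j_0}^n\sqrt{\Delta_j} = O(n\sqrt{\log n})$ and $n\log c_0^{-1}$, both of which are $o(n\log n) = o(\log\ep_n^{-1})$ and therefore absorbed into the $o_n(1)$ in the exponent. Since $\ep_0$, $c_0$, $j_0$, and the constant handling the $j<j_0$ events are uniform for $z\in[0,1]^2$, the bound is uniform in $z\in\wt{\mcl D}_n$, as required.

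The only real obstacle is the Girsanov calculation for $\BB P(E_{z,j})$ with a rapidly growing $\Delta_j = \log(j+1)$: the scaling $\ep_n=1/n!$ here is different from the $\ep_n = 1/n$ used in \cite[Lemma~3.1]{hmp-thick-pts}, so one must check that the error $\alpha\sqrt{\Delta_j}$ still sums to something negligible on the exponential scale $\log\ep_n^{-1}\sim n\log n$; as shown above this works. Everything else is a routine transcription of the argument in \cite{hmp-thick-pts}.
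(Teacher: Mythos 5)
Your proof is correct and takes essentially the same route as the paper's: factor $\BB P(E^n(z))$ over the disjoint time blocks of the circle-average Brownian motion, bound each block probability via Girsanov (the paper simply cites \cite[Lemma~A.3]{hmp-thick-pts} for this one-block estimate, which is the same Cameron--Martin computation you write out), and check that the sub-exponential error $\sum_j \alpha\sqrt{\Delta_j} + O(n) = O(n\sqrt{\log n})$ is $o(\log \ep_n^{-1}) = o(n\log n)$. The only point handled slightly differently is the finitely many outermost scales $j<j_0$ where the circle may exit $D$: you absorb them into a constant via the Markov property, whereas the paper tacitly ignores them; this makes no difference to the asymptotics.
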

\begin{proof}
Since $t\mapsto h_{e^{-t}}(z)$ evolves as a standard linear Brownian motion, \cite[Lemma~A.3]{hmp-thick-pts} applied with $T = \log\ep_{j+1}^{-1} - \log\ep_j^{-1}  $ implies that for each $z\in \wt{\mcl D}_n$ and $j\in\BB N$, we have
\eqbn
\BB P\!\left(E_{z,j}\right) \geq (\ep_{j+1}/\ep_j)^{\alpha^2/2 + o_j(1)} .
\eqen
Furthermore, we have $\BB P\!\left(F_j(z)\right) \succeq 1$. 
By the Markov property,
\eqbn
\BB P\!\left(E^n(z) \right)  =\BB P\!\left(F_{n+1}(z)\right) \prod_{j=1}^n \BB P\!\left(E_{z,j} \right)   \geq \ep_{n+1}^{\alpha^2/2 + o_n(1)} = \ep_n^{\alpha^2/2 + o_n(1)} .
\eqen
\end{proof}
 
The following is a restatement of \cite[Lemma~3.3]{hmp-thick-pts}.

\begin{lem} \label{prop-thick-2pt}
There is a constant $C>0$ depending only on $D$ and $\alpha$ such that the following is true. For each $l\in\BB N$, each $z,w\in [0,1]^2$ with $w\in S_l(z) \setminus S_{l+1}(z)$, and each $n \geq l$, 
\eqbn
\BB P\!\left(E^n(z) \cap E^n(w)\right) \leq C^l \beta_l^{-\alpha^2/2} \ep_l^{-\alpha^2/2} \BB P(E^n(z))\BB P(E^n(w))
\eqen
where
\eqbn
\beta_l = \prod_{k=1}^l e^{\frac12 \sqrt{\log k} }.
\eqen
\end{lem}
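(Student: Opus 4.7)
The desired bound is a direct translation of~\cite[Lemma~3.3]{hmp-thick-pts}, so the strategy is to adapt their two-point argument. The key idea: since $|z-w|\asymp \ep_l$, the circle-average processes $t\mapsto h_{e^{-t}}(z)$ and $t\mapsto h_{e^{-t}}(w)$ are standard linear Brownian motions with covariance $\log(|z-w|^{-1}\wedge e^t)+O(1)$, so they are $O(1)$-close for $t\leq \log\ep_l^{-1}$ (coarse scales) and evolve as approximately independent Brownian motions for $t>\log\ep_l^{-1}$ (fine scales). The plan is to exploit this split to reduce the two-point estimate to a product of one-point estimates, and then invoke a sharpened one-point lower bound.

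First I would handle the coarse scales $j\leq l$. Because $\op{Var}(h_{\ep_j}(z)-h_{\ep_j}(w))=O(1)$ uniformly in $j\leq l$, the constraints of $E_{z,j}$ on the Brownian motion at $z$ imply the analogous constraints of $E_{w,j}$ at $w$ up to a bounded additive error, and vice versa. Combining this with the independent-increments structure of $t\mapsto h_{e^{-t}}(z)$ yields, for a universal constant $C$,
\eqbn
\BB P(E^l(z)\cap E^l(w))\leq C^l\,\BB P(E^l(z)),
\eqen
each scale $j\leq l$ contributing a uniformly bounded factor $C$.

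Next I would decouple the fine scales using the Markov property of the GFF: condition on $h$ outside $B_{\ep_l/4}(z)\cup B_{\ep_l/4}(w)$, so that the restrictions $h|_{B_{\ep_l/4}(z)}$ and $h|_{B_{\ep_l/4}(w)}$ become conditionally independent (each is a harmonic extension plus an independent zero-boundary GFF). For $j>l$ with $\ep_j\leq \ep_l/4$, the events $E_{z,j}$ and $F_{z,n+1}$ are measurable with respect to $h|_{B_{\ep_l/4}(z)}$, and similarly for $w$. After absorbing the $O(1)$ harmonic contributions into $C^l$, this gives the conditional factorization
\eqbn
\BB P\!\left(\textstyle\bigcap_{j>l}(E_{z,j}\cap E_{w,j})\cap F_{z,n+1}\cap F_{w,n+1}\;\big|\;E^l(z)\cap E^l(w)\right)\leq C^l\prod_{j>l}\BB P(E_{z,j})\BB P(E_{w,j})\BB P(F_{z,n+1})\BB P(F_{w,n+1}).
\eqen
Combining with the coarse-scale bound and using the independent-increment factorization $\BB P(E^n(z))=\BB P(F_{z,n+1})\prod_{j=1}^n\BB P(E_{z,j})$ (and likewise for $w$) yields
\eqbn
\BB P(E^n(z)\cap E^n(w))\leq \frac{C^l\,\BB P(E^n(z))\,\BB P(E^n(w))}{\BB P(E^l(z))}.
\eqen

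Finally I would sharpen the one-point bound to $\BB P(E^l(z))\geq C^{-l}\beta_l^{\alpha^2/2}\ep_l^{\alpha^2/2}$, which is the delicate bookkeeping step and the main technical obstacle. By Girsanov and the explicit killed-Brownian-motion density, the probability at scale $j$ factors as
\eqbn
\BB P(E_{z,j})=e^{-\alpha^2 T_j/2}\,\BB E_0\!\left[e^{-\alpha B_{T_j}}\BB 1_{\{\sup_{t\in[0,T_j]}|B_t|\leq\sqrt{T_j}\}}\right]\geq c\,(\ep_{j+1}/\ep_j)^{\alpha^2/2}\,\frac{e^{\alpha\sqrt{T_j}}}{\sqrt{T_j}},
\eqen
where the final inequality comes from the Laplace-type estimate on the truncated Gaussian integral with $T_j=\log(j+1)$. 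Taking the product over $j\leq l$ and noting that $\sum_{j=1}^l\alpha\sqrt{\log(j+1)}\geq \tfrac{\alpha^2}{2}\cdot\tfrac12\sum_{k=1}^l\sqrt{\log k}=\tfrac{\alpha^2}{2}\log\beta_l$ (valid for the relevant range $\alpha\in(0,2]$, where $2\alpha\geq \alpha^2/2$) converts the $e^{\alpha\sqrt{T_j}}$ factors into $\beta_l^{\alpha^2/2}$ modulo a further $C^{-l}$ loss from the $1/\sqrt{T_j}$ polynomial corrections. This provides the needed lower bound on $\BB P(E^l(z))$ and, substituted into the previous display, completes the proof. The principal difficulty throughout is tracking these subpolynomial Gaussian corrections carefully across scales so that they aggregate to exactly $\beta_l$ rather than an unquantified $\ep_l^{o_l(1)}$.
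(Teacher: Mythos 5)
The paper does not actually prove this lemma: it simply says it is ``a restatement of [HMP, Lemma~3.3]'' and cites Hu--Miller--Peres. Your proposal, by contrast, reconstructs the HMP argument from scratch, and the overall architecture (coarse-scale $O(1)$-coupling of the circle averages, Markov-decoupling of the fine scales, dividing by a sharpened one-point lower bound for $\BB P(E^l(z))$) is indeed the structure of the HMP proof of that lemma, so you are in effect re-proving the cited result rather than matching the paper's (non-)proof.

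There is, however, a genuine bookkeeping error at the step you yourself flagged as delicate. Writing $T_j = \log(j+1)$, your one-scale lower bound $\BB P(E_{z,j}) \geq c\,(\ep_{j+1}/\ep_j)^{\alpha^2/2}\,e^{\alpha\sqrt{T_j}}/\sqrt{T_j}$ is fine (up to replacing $\sqrt{T_j}$ by $T_j$, which is harmless), and your observation that $\alpha\sqrt{T_j} \geq \tfrac{\alpha^2}{4}\sqrt{T_j}$ for $\alpha\in(0,2]$ does convert $\prod_j e^{\alpha\sqrt{T_j}}$ into a factor $\geq \beta_l^{\alpha^2/2}$. But the residual $\prod_{j\leq l}(1/\sqrt{T_j}) \asymp \exp\bigl(-\tfrac12\sum_{j\leq l}\log\log(j+1)\bigr)$ is \emph{not} a ``$C^{-l}$ loss'': it is of size $(\log l)^{-l/2 + o(l)}$, which is strictly smaller than $C^{-l}$ for every fixed $C$, so if you first discard the slack in $\alpha \geq \alpha^2/4$ and only then try to absorb the $1/\sqrt{T_j}$ factors, the argument does not close. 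The fix is to combine the two factors \emph{at each scale} before taking the product: $e^{\alpha\sqrt{T_j}}/\sqrt{T_j} \geq e^{\alpha^2\sqrt{T_j}/4}$ holds as soon as $(\alpha - \alpha^2/4)\sqrt{\log(j+1)} \geq \tfrac12\log\log(j+1)$, which is true for all $j\geq j_0(\alpha)$; the finitely many small scales contribute only a constant depending on $\alpha$, and the remaining product is $\geq \beta_l^{\alpha^2/2}$ up to an absorbable $e^{O(\sqrt{\log l})}$ factor. So the claim is correct but the order of estimation in your write-up is not: one must not separate the exponential gain from the polynomial loss before comparing to $\beta_l^{\alpha^2/2}$.

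One more minor point: the balls $B_{\ep_l/4}(z)$ and $B_{\ep_l/4}(w)$ need not be disjoint, since $w\in S_l(z)\setminus S_{l+1}(z)$ only gives $|z-w|\geq \ep_{l+1}/2 = \ep_l/(2(l+1))$. You should decouple at a finer radius (e.g.\ $\ep_{l+2}$), handling the two or three intermediate scales directly; this just contributes an extra $O((l+2)^{\alpha^2/2})$ factor, easily absorbed into $C^l$.
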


\begin{remark}
In the setting of Lemma~\ref{prop-thick-2pt}, we have $\ep_l = |z-w|^{1+o_{|z-w|}(1)}$ and $\beta_l = |z-w|^{o_{|z-w|}(1)}$, so the estimate of Lemma~\ref{prop-thick-2pt} can be re-stated as
\eqbn
\frac{\BB P\!\left(E^n(z) \cap E^n(w)\right)}{\BB P(E^n(z))\BB P(E^n(w))} \leq |z-w|^{-\alpha^2/2 + o_{|z-w|}(1)}  
\eqen
with the $o_{|z-w|}(1)$ depending only on $z$ and $w$. 
\end{remark}

\begin{proof}[Proof of Theorem~\ref{thm-thick-intersect}, lower bound]
Fix $d \in (0, \dim_{\mcl H}(A))$. By Frostman's lemma \cite[Theorem~4.30]{peres-bm} there exists a Borel probability measure $\mu$ on $A$ such that
\eqb  \label{eqn-frostman-int}
\int_A \int_A \frac{1}{|x-y|^d} \, d\mu(x) \, d\mu(y) < \infty. 
\eqe 
We extend $\mu$ to $\BB C$ by setting $\mu(B) = \mu(B\cap A)$ for each Borel set $B\subset \BB C$. From~\eqref{eqn-frostman-int}, we obtain that for any $a , b \geq 0$ with $a+b \leq d$, 
\alb
&\int_A \int_A \frac{1}{|x-y|^d} \, d\mu(x) \, d\mu(y) \\
&\qquad = \sum_{z\not= w\in \mcl D_n} \int_{S_n(z)} \int_{S_n(w)} \frac{1}{|x-y|^d} \, d\mu(x) \, d\mu(y)  + \sum_{z\in \mcl D_n} \int_{S_n(z)} \int_{S_n(z)} \frac{1}{|x-y|^d}     \, d\mu(x) \, d\mu(y)  \\
&\qquad \succeq \sum_{z\not= w\in \mcl D_n} |z-w|^{-a} \int_{S_n(z)} \int_{S_n(w)} \frac{1}{|x-y|^b} \, d\mu(x) \, d\mu(y)  + \sum_{z\in \mcl D_n} \ep_n^{-a}  \int_{S_n(z)} \int_{S_n(z)} \frac{1}{|x-y|^b}     \, d\mu(x) \, d\mu(y)  .
\ale
Hence for any such $a$ and $b$, 
\eqb \label{eqn-frostman-2terms}
\sum_{z\not= w\in \mcl D_n} |z-w|^{-a} \int_{S_n(z)} \int_{S_n(w)} \frac{1}{|x-y|^b} \, d\mu(x) \, d\mu(y)  + \sum_{z\in \mcl D_n} \ep_n^{-a}\int_{S_n(z)} \int_{S_n(z)} \frac{1}{|x-y|^b}     \, d\mu(x) \, d\mu(y)  \preceq 1  .
\eqe

For $n\in\BB N$ define a measure $\nu_n$ on $A$ by
\eqbn
d\nu_n(x) = \sum_{z\in \mcl D_n} \frac{  \BB 1_{E^n(z)} \BB 1_{S_n(z) }(x)}{\BB P\!\left(E^n(z)\right)} \, d\mu(x) . 
\eqen
Observe that
\eqbn
\BB E\!\left(\nu_n(A)\right) = \sum_{z\in \mcl D_n} \mu(S_n(z)) = \mu(A) = 1.
\eqen
By Lemmas~\ref{prop-thick-1pt} and~\ref{prop-thick-2pt}, 
\alb
\BB E\!\left(\nu_n(A)^2\right) 
&= \sum_{z\not=w \in \mcl D_n}  \frac{ \BB P\!\left(E^n(z) \cap E^n(w)\right)     }{\BB P\!\left(E^n(z)\right) \BB P\!\left(E^n(w)\right)} \mu\!\left(  S_n(z)   \right)    \mu\!\left(  S_n(w)   \right)   +   \sum_{z \in \mcl D_n}  \frac{    \mu\!\left(  S_n(z)   \right)^2    }{\BB P\!\left(E^n(z)\right)  }       \\
&\preceq \sum_{z\not=w \in \mcl D_n} \frac{\mu\!\left(  S_n(z)   \right)    \mu\!\left(  S_n(w)   \right) }{ |z-w|^{   \alpha^2/2 + o_{|z-w|}(1)}}        +   \sum_{z \in \mcl D_n} \frac{\mu(S_n(z))^2 }{\ep_n^{  \alpha^2/2   +o_n(1)}}  .       
\ale
By~\eqref{eqn-frostman-2terms} applied with $b = 0$, this is bounded above by an $n$-independent constant provided $d - \alpha^2/2 > 0$. 

Similarly, for $b > 0$ we have 
\alb
&\BB E\!\left(\int_A \int_A \frac{1}{|x-y|^b} \, d\nu_n(x) \, d\nu_n(y) \right) \\
&\qquad  =  \sum_{z\not=w \in \mcl D_n}  \frac{ \BB P\!\left(E^n(z) \cap E^n(w)\right)     }{\BB P\!\left(E^n(z)\right) \BB P\!\left(E^n(w)\right)} \int_{S_n(z)} \int_{S_n(w)} \frac{1}{|x-y|^b} \,d\mu(x) \,d\mu(y) \\
&\qquad\qquad  +   \sum_{z \in \mcl D_n}  \frac{1  }{\BB P\!\left(E^n(z)\right)  } \int_{S_n(z)} \int_{S_n(z)} \frac{1}{|x-y|^b} \,d\mu(x) \,d\mu(y)       \\
&\qquad  \preceq  \sum_{z\not=w \in \mcl D_n}  |z-w|^{-\alpha^2/2 + o_{|z-w|}(1)} \int_{S_n(z)} \int_{S_n(w)} \frac{1}{|x-y|^b} \,d\mu(x) \,d\mu(y) \\
&\qquad\qquad  +   \sum_{z \in \mcl D_n} \ep_n^{ -\alpha^2/2   +o_n(1)} \int_{S_n(z)} \int_{S_n(z) } \frac{1}{|x-y|^b} \,d\nu_n(x) \,d\nu_n(y)       .
\ale
By~\eqref{eqn-frostman-2terms}, this is bounded above by an $n$-independent constant provided $b < d - \alpha^2/2$. It follows from the usual argument (see the proof of \cite[Lemma~3.4]{hmp-thick-pts}) that for such a $b$, it holds with positive probability that we can find a weak subsequential limit $\nu$ of the measures $\nu_n$ such that $\nu$ is supported on $\mcl P^\alpha$, $\nu(\mcl P^\alpha) > 0$ and 
\eqbn
\int_{\mcl P^\alpha} \int_{\mcl P^\alpha} \frac{1}{|x-y|^b} \, d\nu(x) \, d\nu(y) <\infty. 
\eqen
By~\eqref{eqn-perfect-contain} and \cite[Theorem~4.27]{peres-bm}, it holds with positive probability that $\dim_{\mcl H}(A \cap T_h^\alpha) \geq b$.  By Lemma~\ref{prop-thick-zero-one}, this probability is in fact equal to 1 for each $b < d-\alpha^2/2$. Therefore $\dim_{\mcl H}(A \cap T_h^\alpha) \geq d - \alpha^2/2$.
\end{proof}

\section{Proof of Theorem~\ref{thm-dim-relation}}
\label{sec-dimH}

\subsection{Upper bound}
\label{sec-dimH-upper}

In this subsection we will prove the upper bound in Theorem~\ref{thm-dim-relation}. 

\begin{proof}[Proof of Theorem~\ref{thm-dim-relation}, upper bound]
We start with some reductions. By the countably stability of Hausdorff dimension, we can assume without loss of generality that a.s.\ $X \subset  B_R(0) \setminus B_{1/R}(0)$ for some deterministic $R> 0$.  By Lemma~\ref{prop-annulus-abs-cont}, we can couple $h$ with a whole-plane GFF $\wt h$ normalized so that its circle average over $\bdy \BB D$ is 0 in such a way that the $\gamma$-quantum area measures $\mu_h$ and $\mu_{\wt h}$ are a.s.\ mutually absolutely continuous on $B_R(0) \setminus B_{1/R}(0)$, with Radon-Nikodym derivative bounded above and below by (random) positive constants. In such a coupling, it holds that for each interval $I\subset \BB R$ with $\eta'(I) \subset B_R(0) \setminus B_{1/R}(0)$ that $|I| \asymp \mu_{\wt h}( \eta'(I) )$ with random but $I$-independent implicit constants. In particular, $ \dim_{\mcl H} (\eta')^{-1}(X)$ is unchanged if we parameterize $\eta'$ by $\mu_{\wt h}$ instead of $\mu_h$.  Hence we can assume that $h$ is a whole-plane GFF normalized so that its circle average over $\bdy\BB D$ is 0, instead of the circle average embedding of a $\gamma$-quantum cone. We make this assumption throughout the remainder of the proof. 
  
Let $\alpha \in (0,2]$ and $r \in (0,1/2)$. By~\cite[Proposition~3.2]{ig1} the law of the restriction of $h$ to $B_R(0)$ is absolutely continuous with respect to the law of the corresponding restriction of a zero-boundary GFF $h^0$ on $B_{2R}(0)$, minus a random constant $ C$ equal to the circle average of $h^0$ over $\bdy\BB D$. Since the set $X$ is independent from $h+C$, it follows from Theorem~\ref{thm-thick-intersect} and Lemma~\ref{prop-thick-uniform} that we can find a random set $X^{\alpha,r} \subset X$ and a random $\ol\ep > 0$ such that a.s.\ 
\eqb \label{eqn-perfect-dim}
 \dim_{\mcl H} X^{\alpha , r}   \geq   \dim_{\mcl H} X - \frac{\alpha^2}{2} - r
\eqe  
and a.s.\ 
\eqbn 
  h_\ep(z) + C \geq (\alpha -r) \log\ep^{-1} , \quad \forall \ep \in (0,\ol\ep] \quad \forall z \in X^{\alpha,r} .
\eqen 
By decreasing $\ol\ep$ we can arrange that in fact
\eqb \label{eqn-thick-uniform}
h_\ep(z) \geq (\alpha -r) \log\ep^{-1} , \quad \forall \ep \in (0,\ol\ep] \quad \forall z \in X^{\alpha,r} .
\eqe 	 
 
Now let $\wh X \subset \BB R$ be as in the theorem statement for our given choice of $X$. We will prove an upper bound for $\dim_{\mcl H}\!\left( X^{\alpha , r} \right)$ in terms of $\dim_{\mcl H}(\wh X)$. Let
\[
\wh X^{\alpha,r}  := (\eta')^{-1} \!\left(X^{\alpha , r} \right) \cap \wh X .
\]
For $\ep > 0$, let $\mcl E_\ep =\mcl E_\ep(R,r)$ and $\mcl G_\ep = \mcl G_\ep( R,r)$ be defined as in Propositions~\ref{prop-sle-bubble} and~\ref{prop-thick-local}, respectively. Also let
\eqbn
\mcl S_\ep := \left\{\sup_{z\in B_R(0)} h_1(z) \leq \sqrt{\log \ep^{-1}} \right\} .
\eqen
Note that $\BB P\!\left(\mcl E_\ep \cap \mcl G_\ep \cap \mcl S_\ep\right) \rta 1$ as $\ep \rta 0$. 
 
By the definition of Hausdorff dimension, for each $\zeta > 0$, we can find a countable collection $\mcl I_\zeta$ of intervals of length at most $\zeta$, each of which contains a point of $\wh X^{\alpha,r}$,
such that 
\eqb \label{eqn-cover-properties}
\wh X^{\alpha,r} \subset \bigcup_{I\in\mcl I_\zeta}  I \quad \op{and}\quad \sum_{I\in\mcl I_\zeta} \op{diam}(I)^{\dim_{\mcl H}(\wh X) + r } \leq \zeta .
\eqe  

We claim that on $\mcl E_\ep \cap \mcl G_\ep \cap \mcl S_\ep$, there a.s.\ exists a random $\zeta_0 > 0$ such that for each $\zeta \in (0,\zeta_0]$ and each choice of $\wh X \subset \BB R$ and cover $\mcl I_\zeta$ as above that
\eqb \label{eqn-spatial-diam-upper}
\op{diam} \eta'( I) \leq    \op{diam}(I)^{ (2+\gamma^2/2- \gamma\alpha )^{-1}  + o_r(1) } \qquad \forall  I \in  \mcl I_\zeta ,
\eqe
with the $o_r(1)$ deterministic and independent of $I$.
Indeed, let $\ol\ep$ be as in~\eqref{eqn-thick-uniform} and set $\zeta_0 = \ol\ep \wedge \ep^{3}$. 
Suppose $\mcl E_\ep \cap \mcl G_\ep \cap \mcl S_\ep$ occurs, $\zeta \in (0,\zeta_0]$, $\wh X$ and $\mcl I_\zeta$ are as above, and $I \in \mcl I_\zeta$. Let $\delta_I :=   \op{diam}  \eta'(I) \wedge \ep^2$. By definition of $\mcl E_\ep$, we can find $z\in X^{\alpha , r}$ and $w\in  \eta'(I) \cap B_{\delta_I }(z)$ such that $B_{ \delta_I^{\frac{1}{1-r}}}(w) \subset \eta'(I)$. 
By definition of $\mcl G_\ep$, we have
\eqbn
\mu_h\!\left(\eta'(I)\right) \geq  \delta_I^{2+\frac{\gamma^2}{2} + o_r(1)} e^{\gamma h_{\delta_I }(z)} .
\eqen
Since $z \in  X^{\alpha , r}$,~\eqref{eqn-thick-uniform} and~\eqref{eqn-spatial-diam-upper} imply that if $\zeta \leq \zeta_0$, then $e^{\gamma h_{\delta_I }(z)} \geq \delta_I^{-\gamma \alpha  + \gamma r}$. Hence for such a $\zeta$, we have
\eqbn
\mu_h\!\left(\eta'(I)\right) \geq \delta_I^{2+\frac{\gamma^2}{2} - \gamma\alpha + o_r(1)} .  
\eqen
Since $\eta'$ is parameterized by quantum mass, we infer that
\eqbn
\op{diam}(I) \geq  \delta_I^{2+\frac{\gamma^2}{2} - \gamma\alpha + o_r(1)} . 
\eqen
Since $\op{diam}(I) \leq \zeta^{1/2}$, this implies~\eqref{eqn-spatial-diam-upper}. 

By~\eqref{eqn-cover-properties}, $\{\eta'(I) \,:\, I\in\mcl I_\zeta\}$ is a cover of $X^{\alpha , r}$. Moreover, if we are given $r' > 0$ and we choose $r >0$ sufficiently small, depending only on $r'$, then for $\zeta \in (0,\zeta_0]$ we have by~\eqref{eqn-cover-properties} and~\eqref{eqn-spatial-diam-upper} that
\eqbn
\sum_{I\in\mcl I_\zeta } (\op{diam} \eta'(I))^{ \left( \dim_{\mcl H}(\wh X) + r'\right) \left(2+\frac{\gamma^2}{2} - \gamma \alpha\right)   } \leq \zeta .
\eqen
Hence for such a choice of $r$, whenever $\mcl E_\ep \cap \mcl G_\ep \cap \mcl S_\ep$ occurs it holds for every choice of $\wh X$ as in the theorem statement that  
\[
\dim_{\mcl H}\!\left(X^{\alpha , r} \right) \leq \left( \dim_{\mcl H}(\wh X) + r'\right) \left(2+\frac{\gamma^2}{2} - \gamma\alpha\right)      .
\]
By letting $\ep\rta 0$ we infer that this relation holds a.s.\ for every choice of $\wh X$ simultaneously.  
By~\eqref{eqn-perfect-dim}, it is a.s.\ the case that for each choice of $\wh X$ as in the theorem statement, we have
\eqbn
\dim_{\mcl H}(X)   \leq   \left( \op{dim}_{\mcl H}(\wh X) + r'\right) \left(2+\frac{\gamma^2}{2} - \gamma\alpha\right) + \frac{\alpha^2}{2}   +r  .
\eqen
The right side is minimized when $r=r'=0$ by taking $\alpha= \gamma  \dim_{\mcl H}(\wh X)$. Since $r$ is arbitrary and $r'$ can be made as small as we like by shrinking $r$, this yields the upper bound in~\eqref{eqn-dim-relation}. 
\end{proof}

\subsection{Lower bound}
\label{sec-dimH-lower}

We will prove the lower bound in Theorem~\ref{thm-dim-relation} by covering $X$ with balls $B$ of radius $\ep_B$, such that $\sum_{B}\ep_B^d$ is small for some  $d>\dim_{\mcl H}(X)$. We obtain a cover of the time set $\widehat{X}$ by considering the pre-images of these balls under $\eta'$. The length of the intervals covering $\widehat{X}$ is estimated by considering the quantum mass of the balls in our cover via the circle average process, and by bounding the number of time intervals corresponding to each ball in the cover of $X$. The proof also relies on Proposition~\ref{prop-sle-bubble}, Lemma~\ref{prop-harmonic-var}, and \cite[Theorem~2.11]{rhodes-vargas-review}.

\begin{lem}
For each $R>1$, $r\in(0,1)$, $z\in B_R(0)$, $\tilde\ep>0$, and $\ep>0$, the ball $B_{\epsilon^{1-r}}(z)$ can be written as a union of sets of the form $\eta'(I)$ which intersect only along their boundaries, where $I$ is an interval that is not contained in any larger interval $I'$ satisfying $\eta'(I')\subset B_{\epsilon^{1-r}}(z)$. On the event $\mathcal{E}_{\tilde\ep^{1-r}}=\mathcal{E}_{\tilde\ep^{1-r}}(r,R)$ of Proposition~\ref{prop-sle-bubble}, the number of such sets that intersect $B_\epsilon(z)$, is bounded above by $\ep^{-3r}$ for all sufficiently small $\ep$, i.e., for $\ep<\ep(r,\tilde\ep)$.
\label{low_res2}
\end{lem}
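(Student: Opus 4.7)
The plan is to handle the decomposition as a purely topological observation and then bound the number of intervals that intersect $B_\ep(z)$ by an area-packing argument enabled by Proposition~\ref{prop-sle-bubble}.

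For the first assertion I would argue that, since $\eta'$ is continuous, the preimage $U := (\eta')^{-1}(B_{\ep^{1-r}}(z))$ is open in $\R$, hence a countable disjoint union of open intervals $\{I_j\}$. These are exactly the maximal intervals with $\eta'(I_j) \subset B_{\ep^{1-r}}(z)$: any strictly larger interval contains an endpoint $a$ of $I_j$, and continuity together with maximality forces $\eta'(a) \in \partial B_{\ep^{1-r}}(z)$. Since $\eta'$ is space-filling, $\bigcup_j \eta'(I_j) = B_{\ep^{1-r}}(z)$; the intervals $I_j$ themselves are pairwise disjoint, while the images $\eta'(I_j)$ can intersect only at the (Lebesgue-null) set of points visited by $\eta'$ more than once.

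For the counting bound, I would suppose $\mcl E_{\tilde\ep^{1-r}}$ occurs and fix one of the intervals $I = (a,b)$ above with $\eta'(I) \cap B_\ep(z) \neq \emptyset$. Because $\eta'(a), \eta'(b) \in \partial B_{\ep^{1-r}}(z)$ while $\eta'(I)$ also contains a point of $B_\ep(z)$, the diameter $\op{diam} \eta'(I) \geq \ep^{1-r} - \ep$ exceeds $(\ep/4)^{1-r}$ once $\ep$ is sufficiently small depending only on $r$. Taking $R$ in Proposition~\ref{prop-sle-bubble} large enough that $B_{\ep^{1-r}}(z) \subset B_R(0)$, and requiring $\ep/4 \leq \tilde\ep^{1-r}$, that proposition applied with $\delta = \ep/4$ produces a Euclidean ball of radius $\ep/4$ contained in $\eta'(I)$.

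The main (minor) obstacle is a disjointness check: the balls produced for distinct intervals $I_j, I_k$ must be disjoint, because any intersection would be an open subset of $\eta'(I_j) \cap \eta'(I_k) \subset \{\text{multiple points of } \eta'\}$, which has Lebesgue measure zero. (For $\kappa' \geq 8$ this is immediate from simplicity of ordinary $\op{SLE}_{\kappa'}$, and for $\kappa' \in (4,8)$ it follows since the multiple points of space-filling SLE lie on an $\op{SLE}_{\kappa'}$ trace of Hausdorff dimension $1+\kappa'/8 < 2$.) Once disjointness is in hand, these balls of radius $\ep/4$ all sit inside $B_{\ep^{1-r}}(z)$, and a Lebesgue-area comparison yields $N (\ep/4)^2 \leq \ep^{2(1-r)}$, i.e., $N \leq 16\,\ep^{-2r}$, which is $\leq \ep^{-3r}$ for $\ep \leq 16^{-1/r}$.
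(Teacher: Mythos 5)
Your argument is correct and matches the paper's own proof in all essentials: decompose $(\eta')^{-1}(B_{\ep^{1-r}}(z))$ into its maximal open-interval components, observe that any component $I$ with $\eta'(I)\cap B_\ep(z)\neq\emptyset$ has $\op{diam}\eta'(I)\geq \ep^{1-r}-\ep$, invoke Proposition~\ref{prop-sle-bubble} on $\mathcal E_{\tilde\ep^{1-r}}$ to produce a Euclidean ball of radius of order $\ep$ inside $\eta'(I)$, and divide areas. The only (harmless) difference is that you explicitly check that the extracted balls are pairwise disjoint before comparing areas, whereas the paper simply sums the areas of the $\eta'(I_j)$ directly, using that distinct $\eta'(I_j)$ overlap only in the Lebesgue-null set of multiple points of $\eta'$.
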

\begin{proof}
On $\mathcal{E}_{\tilde\ep^{1-r}}$, every time interval $I$ of the form above with $\eta'(I)\cap B_\epsilon(z)\neq\emptyset$ and $\ep>0$ sufficiently small satisfies Area$(\eta'(I))\geq (\ep^{1-r}-\ep)^{\frac{2}{1-r}}=\epsilon^{2+o_r(1)}$. Since $\eta'(I)\subset B_{\epsilon^{1-r}}(z)$ and $B_{\epsilon^{1-r}}(z)$ has area $\ep^{2-2r}=\epsilon^{2+o_r(1)}$, the lemma follows. The exact exponent $-3r$ is obtained by dividing the area of $B_{\epsilon^{1-r}}(z)$, by the bound for the area of $\eta'(I)$.
\end{proof}

\begin{lem}
Let $h$ be a whole-plane GFF normalized such that $h_1(0)=0$, and let $R>0$ and $z\in B_R(0)$. For $0 \leq \beta< \frac{4}{\gamma^2}$ and $\ep \in (0,1)$, we have
\eqbn
\BB E[\mu_h(B_\ep(z))^\beta] \leq \ep^{f(\beta)+o_\ep(1)},
\eqen
where $f(\beta)=(2+\frac{\gamma^2}{2})\beta-\frac{\gamma^2}{2}\beta^2$ and the $o_\ep(1)$ depends on $\alpha$, $\beta$, and $R$, but not on $z$.
\label{low-res3}
\end{lem}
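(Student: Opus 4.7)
The strategy is to exploit the exact scaling of the whole-plane GFF and use an independence argument to reduce the $\beta$-th moment at scale $\ep$ to the product of a Gaussian exponential moment and the $\beta$-th moment at scale one. Following the calculation in the proof of Lemma~\ref{prop-mu_h-inf}, set $\phi_z^\ep(w):=\ep w+z$ and $\wh h_z^\ep:=h\circ\phi_z^\ep-h_\ep(z)$. By \cite[Proposition~2.1]{shef-kpz} and the definition of $\mu_h$, I would write
\eqbn
\mu_h(B_\ep(z))=\ep^{2+\gamma^2/2}\,e^{\gamma h_\ep(z)}\,\mu_{\wh h_z^\ep}(B_1(0)),
\eqen
and note that $\wh h_z^\ep$ agrees in law with $h$ (a whole-plane GFF normalized so its circle average on $\partial\BB D$ vanishes).

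The crucial observation is that $h_\ep(z)$ is independent of $\mu_{\wh h_z^\ep}(B_1(0))$. Since the latter depends on $\wh h_z^\ep$ only through its restriction to $B_1(0)$, equivalently through $(h-h_\ep(z))|_{B_\ep(z)}$, it suffices to verify that this restriction is independent of $h_\ep(z)$. Writing the Markov decomposition $h|_{B_\ep(z)}=P+Z$ with $P$ the harmonic extension of $h|_{\partial B_\ep(z)}$ and $Z$ an independent zero-boundary GFF on $B_\ep(z)$, we have that $Z$ is independent of the boundary data (hence of $h_\ep(z)=P(z)$); and $P-P(z)$ is the harmonic extension of the non-constant Fourier modes of the Gaussian process $h|_{\partial B_\ep(z)}$, which by rotational symmetry around $z$ are Gaussian-orthogonal to, and hence independent from, the constant mode $h_\ep(z)$.

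Taking $\beta$-th moments and applying this independence,
\eqbn
\BB E[\mu_h(B_\ep(z))^\beta]=\ep^{(2+\gamma^2/2)\beta}\,\BB E[e^{\gamma\beta h_\ep(z)}]\,\BB E[\mu_h(B_1(0))^\beta].
\eqen
For $\beta<4/\gamma^2$, the final factor is a finite, $\ep$- and $z$-independent constant by the classical GMC moment bound \cite[Theorem~2.11]{rhodes-vargas-review}. For the middle factor, $h_\ep(z)$ is a centered Gaussian with variance $\log\ep^{-1}+O(1)$ (uniformly in $z$ over any fixed bounded region), so the Gaussian moment generating function gives $\BB E[e^{\gamma\beta h_\ep(z)}]=\ep^{-\gamma^2\beta^2/2+o_\ep(1)}$; the additive $O(1)$ in the variance is absorbed into the $o_\ep(1)$. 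Multiplying the three factors yields $\ep^{(2+\gamma^2/2)\beta-\gamma^2\beta^2/2+o_\ep(1)}=\ep^{f(\beta)+o_\ep(1)}$.

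The principal obstacle is the independence argument in the second paragraph. It is intuitively clear that the circle average should decouple from the ``shape'' of the GFF inside $B_\ep(z)$, but making this precise requires carefully combining the GFF Markov property (for the zero-boundary piece) with the Fourier/Gaussian-orthogonality decomposition of the boundary data (for the harmonic piece). Once this is in hand, the remaining steps are routine Gaussian calculations together with the classical GMC moment bound.
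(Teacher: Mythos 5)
The gap is in the independence claim that drives your factorization. You assert that $h_\ep(z)$ is independent of $\mu_{\wh h_z^\ep}(B_1(0))$, equivalently of $(h - h_\ep(z))|_{B_\ep(z)}$, and you justify this via ``rotational symmetry around $z$'' of the boundary data $h|_{\partial B_\ep(z)}$. But the whole-plane GFF normalized by $h_1(0)=0$ is rotationally symmetric only around $0$, and this matters quantitatively. Its covariance kernel is $G(x,y) = -\log|x-y| + \log_+|x| + \log_+|y|$ (where $\log_+=\max(\log,0)$), and the term $\log_+|x|+\log_+|y|$, which comes precisely from fixing the additive constant via the circle average on $\partial \BB D$, spoils rotational symmetry of $G|_{\partial B_\ep(z)\times\partial B_\ep(z)}$ around $z$ once $B_\ep(z)\not\subset\BB D$. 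Since the lemma is invoked with $z$ ranging over $B_R(0)\setminus B_{1/R}(0)$ for $R>1$, the regime $|z|>1$ is exactly where the lemma is needed. A direct computation for $B_{\ep'}(z')\subset B_\ep(z)$ with $B_\ep(z)$ disjoint from $\overline{\BB D}$ gives
\[
\op{Cov}\bigl(h_{\ep'}(z') - h_\ep(z),\; h_\ep(z)\bigr) = \log|z'| - \log|z|,
\]
which is nonzero whenever $|z'|\ne|z|$. So the constant Fourier mode $h_\ep(z)$ of the boundary trace is \emph{not} orthogonal to the non-constant modes, $(h - h_\ep(z))|_{B_\ep(z)}$ is not independent of $h_\ep(z)$, and the identity $\BB E\bigl[e^{\gamma\beta h_\ep(z)}\,\mu_{\wh h_z^\ep}(B_1(0))^\beta\bigr] = \BB E[e^{\gamma\beta h_\ep(z)}]\,\BB E[\mu_h(B_1(0))^\beta]$ is unjustified. (Your heuristic is correct when $B_\ep(z)\subset\BB D$; it fails outside $\BB D$ and when $B_\ep(z)$ straddles $\partial\BB D$.)

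The paper avoids this by performing the domain Markov decomposition at the \emph{larger} scale $B_{2\ep}(z)$: writing $h\circ\phi_z^\ep=\dot h+\frk h$ with $\dot h$ a zero-boundary GFF on $B_2(0)$ independent of the harmonic piece $\frk h$, it uses that $h_{2\ep}(z)=\frk h(0)$ is genuinely independent of $\mu_{\dot h}(B_1(0))$. The price is the residual factor $\sup_{w\in B_1(0)}e^{\gamma\beta(\frk h(w)-\frk h(0))}$ --- exactly the dependence you are trying to wave away --- which the paper controls via the event $A_{r,\ep}$ (using Lemma~\ref{prop-harmonic-var}) and H\"older's inequality. The rest of your outline (coordinate-change exponent $\ep^{(2+\gamma^2/2)\beta}$, Gaussian moment $\ep^{-\gamma^2\beta^2/2+o_\ep(1)}$ from the circle average, finite unit-scale GMC moment for $\beta<4/\gamma^2$) matches the paper's skeleton; it is the factorization step that needs to be replaced by the Markov decomposition at scale $2\ep$ together with a quantitative bound on the oscillation of the harmonic part.
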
 
\begin{proof}
Defining $\phi_z^\ep(w)=\ep w+z$ we have $h\circ \phi_z^\ep=\dot h+\mathfrak h$, for $\dot h$ a zero-boundary GFF in $B_{2}(0)$ and $\mathfrak h$ harmonic in $B_2(0)$ and independent of $\dot h$. As explained after \eqref{eqn-mu_h-ball-shift} the average of $\frk h$ around $\partial B_2(0)$ is equal to the circle average $h_{2\ep}(z)$. By the coordinate change formula for quantum surfaces we have
\eqbn
\begin{split}
\BB E\big(\mu_h(B_\epsilon(z))^\beta \big) 
&= \BB \ep^{(2+\gamma^2/2)\beta}
\BB E\big(\mu_{\dot h+\mathfrak h}(B_1(0))^\beta \big)\\
&\leq \ep^{(2+\gamma^2/2)\beta}\BB E\big(e^{\gamma\beta h_{2\ep}(z)}\times \mu_{\dot h}(B_1(0))^\beta \times \sup_{w\in B_1(0)} e^{\gamma \beta(\frk h(w)-h_{2\ep}(z))}\big).
\end{split}
\eqen 
Let $r>0$, and define the event $A_{r,\ep}$ by
\eqbn
A_{r,\ep}=\{\sup_{w\in B_1(0)} e^{\gamma (\frk h (w)-h_{2\ep}(z))}> \ep^{-r}\}
\eqen
By Lemma~\ref{prop-harmonic-var}, we have $\BB P(A_{r,\ep})=o^\infty_\ep(\ep)$. Since $h_{2\ep}(z)$ is Gaussian with variance at most $(1+o_\ep(1)) \log\ep^{-1}$, for each $\wt \beta > 0$ we have 
\eqb
\BB E\big(e^{\tilde\beta h_{2\ep}(z)}\big)=\ep^{-\frac 12 \tilde{\beta^2}+o_\ep(1)}.
\label{low2}
\eqe
By \cite[Theorem~2.11]{rhodes-vargas-review}, $\mu_{\dot h}(B_1(0))$ has finite moments of all orders $<4/\gamma^2$. Choose $p_j>1$, $j=1,2,3,4$, such that $\beta p_2<\frac{4}{\gamma^2}$ and $\sum_{j=1}^4 p_j^{-1}=1$. By H\"older's inequality,
\eqbn
\begin{split}
\BB E\big(e^{\gamma\beta h_{2\ep(z)}}&\times \mu_{\dot h}(B_1(0))^\beta \times \sup_{w\in B_1(0)} e^{\gamma \beta(\frk h(w)-h_{2\ep}(z))}\BB 1_{A_{r,\ep}}\big)\\
&\leq 
\BB E\big(e^{\gamma \beta p_1 h_{2\ep}(z)}\big)^{p_1^{-1}} \times
\BB E\big( \mu_{\dot h}(B_1(0))^{\beta p_2}\big)^{p_2^{-1}} \times
\BB E\big( \sup_{w\in B_1}e^{\gamma\beta p_3(\frk h (w)-h_{2\ep}(z))}\big)^{p_3^{-1}} \times
\BB P\big({A_{r,\ep}}\big)^{p_4^{-1}} \\
&\leq \ep^{-\frac 12 \gamma^2\beta^2 p_1+o_\ep(1)} \BB P\big({A_{r,\ep}}\big)^{p_4^{-1}}\\
&=o_\ep^\infty(\ep),
\end{split}
\eqen
which implies
\eqbn
\BB E\big(\mu_h(B_\epsilon(z))^\beta \big) \leq 
\ep^{(2+\gamma^2/2-r)\beta} \BB E\big(\ep^{\gamma\beta h_{2\ep}(z)} \mu_{\dot h}(B_1(0))^\beta \big) + o_\ep^\infty(\ep)= \ep^{(2+\gamma^2/2-r)\beta-\frac 12\gamma^2\beta^2+o_\ep(1)}
\eqen
by independence of $\dot h$ and $h_{2\ep}$.
\end{proof}

We are now ready to prove the lower bound of our main theorem:

\begin{proof}[Proof of Theorem~\ref{thm-dim-relation}, lower bound]
As in the proof of the upper bound in Section~\ref{sec-dimH-upper}, we can assume without loss of generality that $X\subset B_R(0) \setminus B_{1/R}(0)$ for some fixed $R >0$ and we can replace $h$ with a whole-plane GFF normalized so that its circle average over $\bdy\BB D$ is 0. 

 If dim$_{\mcl H}(X)=2$ the lower bound clearly holds, so we assume dim$_{\mcl H}(X)<2$. Let $\beta_0\in[0,\frac{4}{\gamma^2})$ be the (random) solution of $\dim_{\mcl H}(X)=f(\beta_0)$, and let $\beta\in(\beta_0,\frac{4}{\gamma^2})$. Then choose some $d\in(\dim_{\mcl H}(X),f(\beta))$, and some $r>0$ satisfying
\eqb
d<-3r+(1-r)f(\beta).
\label{low1}
\eqe
Then choose a sequence $\{\delta_n\}_{n\in\BB N}$, such that $\delta_n \in (0,1)$ for each $n$ and $\sum_{n=1}^\infty \delta_n^{r}<\infty$. Since $d>\dim_{\mcl H}(X)$, we can find, for each $n\in\BB N$, a random collection of balls $\mcl{B}_n$, measurable with respect to $\sigma(X)$ and covering $X$, such that 
\eqbn
\sum_{B\in\mcl{B}_n} \ep_B^{d}<\delta_n,
\eqen
where $\ep_B$ denotes the radius of $B$. For any $B\in\mcl{B}_n$, let $\mathcal I(B)$ denote the set of intervals $I$ as defined in Lemma~\ref{low_res2}, such that $\eta'(I)$ intersects $B$, and let $B'$ be the ball of radius $\ep_B^{1-r}$ centered at the same point as $B$. Let $\tilde{\ep}>0$, and assume without loss of generality that $\ep_B<\ep(r,\tilde{\ep})$ for all $B\in\mcl{B}_n$ and $n\in\BB N$ as defined in Lemma~\ref{low_res2}. Define the random variable $Z_n$ by 
\eqbn
\begin{split}
Z_n&= \BB 1_{\mathcal{E}_{\tilde{\ep}^{1-r}}} \sum_{B\in\mcl{B}_n}\sum_{I\in\mathcal{I}(B)} |I|^\beta,
\end{split}
\eqen
where $\mathcal{E}_{\tilde\ep^{1-r}}=\mathcal{E}_{\tilde\ep^{1-r}}(r,R)$ is the event of Proposition~\ref{prop-sle-bubble}.
By using $\sum_{I\in\mathcal I(B)}|I|\leq\mu(B')$ and $|\mathcal I(B)|<\ep_B^{-3r}$, we have
\eqbn
Z_n\leq\BB 1_{\mathcal{E}_{\tilde{\ep}^{1-r}}}
\sum_{B\in\mcl{B}_n}\ep_B^{-3r}\mu(B')^\beta.
\eqen
It follows from Lemma~\ref{low-res3} and~\eqref{low1} that for all sufficiently large $n$,
\eqbn
\BB E(Z_n \,|\, X) \leq \sum_{B\in\mcl{B}_n}\ep_B^{-3r+(1-r)f(\beta)+o_{n} (1)}\leq\sum_{B\in\mcl{B}_n}\ep_B^{d}<\delta_n.
\eqen
By Chebyshev's inequality, $\BB P(Z_n>\delta_n^{1-r}   )\leq\delta_n^r$, so by the Borel-Cantelli lemma the event $\{Z_n>\delta_n^{1-r}\}$ happens at most finitely often almost surely. It follows that $Z_n\rightarrow 0$ almost surely, and by letting $\tilde{\ep}\rightarrow 0$ and using Proposition~\ref{prop-sle-bubble}, a.s.\
\eqbn
\lim_{n\rta\infty} \sum_{B\in\mcl{B}_n}\sum_{I\in\mathcal{I}(B)} |I|^\beta =0.
\eqen
This completes the proof, since $\cup_{B\in\mcl{B}_n,I\in\mathcal I(B)}I$ is a cover of any set $\wh X$ as in the theorem statement.
\end{proof}
   
\section{Multiple points of space-filling SLE}
\label{sec-multiple-pt}

SLE$_{\kappa'}$ for $\kappa'\in(4,8)$ does not have triple points, and for $\kappa'\geq 8$ the set of triple points is countable, see \cite[Remark~5.3]{miller-wu-dim}. Space-filling SLE$_{\kappa'}$, $\kappa'\in(4,8)$, however, has uncountably many points with multiplicity at least $3$.  Moreover, as $\kappa' \downarrow 4$, the maximal multiplicity of the path a.s.\ tends to $\infty$. In this section we will calculate the a.s.\ Hausdorff dimension of $m$-tuple points for space-filling SLE$_{\kappa'}$ for $m\geq 3$. We remark that this dimension could also have been derived from \cite[Theorem~1.4]{miller-wu-dim} by arguing that the $m$-tuple points of space-filling SLE$_{\kappa'}$ have the same dimension as the $m$-tuple points of an SLE$_{\kappa'}(\kappa'-6)$.

As in the above sections, let $\eta'$ be a whole--plane space-filling SLE$_{\kappa'}$ parameterized by quantum area with respect to an independent $\gamma$-quantum cone $(\BB C,h,0,\infty)$. Let $Z=(L,R)$ denote the associated quantum boundary length processes, and recall that $Z$ is a two-dimensional Brownian motion with covariance given by~\eqref{eq-multiplept-10}.

Let $m\geq 3$. Except for the countable number of triple points corresponding to local minima of $L$ or $R$, there is a bijection between $m$-tuple points of SLE$_{\kappa'}$ and the $(m-2)$-tuple cone times of $Z$, which will be defined just below.  Recall the definition of an ordinary ($\pi/2$-)cone time in Definition~\ref{def-conetime}, in addition to the associated time $v(t)$ defined in~\eqref{eqn-cone-exit-time}. Let $t$ be a cone time of $Z$ (resp.\ cone time of the time-reversal of $Z$), and define the function $u$ (resp.\ $v^R$ and $u^R$) by
\eqbn
\begin{split}
v^R(t) &= \sup\{s<t\,:\,R_s<R_t\text{ or }L_s<L_t\},\\
u(t)&=\inf\left\{s>t\,:\,\inf_{s'\in [t,s]}R_{s'}<R_t\text{ and }\inf_{s'\in[s,t]}L_{s'}<L_t\right\},\\
u^R(t)&=\sup\left\{s<t\,:\,\inf_{s'\in[s,t]}R_{s'}<R_t\text{ and }\inf_{s'\in[s,t]}L_{s'}<L_t\right\}.
\end{split}
\eqen
An $m$-tuple cone time is a generalization of a cone time (see Figure~\ref{fig-multiplept1}).
\begin{defn}
Let $m\geq 3$ and $\BB t\in \BB R^{m-1}$, $\BB t=(t_0,\ldots,t_{m-2})$. Then $\BB t$ is an $(m-2)$-tuple cone vector of type (I) (resp.\ (II)) if there exists a $t_{m-1}>t_0$ such that the following properties are satisfied for any $j\in\{1,\ldots,m-2\}$.   For $j$ even (resp.\ odd) $t_j$ is a cone time of $Z$, and we have either $t_{j+1}=v(t_j)$ and $t_{j-1}=u(t_j)$, or $t_{j+1}=u(t_j)$ and $t_{j-1}=v(t_j)$. For $j$ odd (resp.\ even) $t_j$ is a cone time of the time-reversal of $Z$, and we have either $t_{j+1}=v^R(t_j)$ and $t_{j-1}=u^R(t_j)$, or $t_{j+1}=u^R(t_j)$ and $t_{j-1}=v^R(t_j)$. 

Let $\wt{\mcl T}(m)\subset\BB R^{m-1}$ denote the set of $(m-2)$-tuple cone vectors, and let ${\mcl T}(m)\subset \wt{\mcl T}(m)$ denote the set of cone vectors satisfying (I), and for which $t_0$ is running infimum of $L$.

We say that $t\in\BB R$ is an $(m-2)$-tuple cone time if $t=t_1$ for some $(m-2)$-tuple cone vector $\BB t$. Let $\wt{\mathfrak T}(m)\subset\BB R$ denote the set of $(m-2)$-tuple cone times, and let ${\mathfrak T}(m)\subset \wt{\frk T}(m)$ denote the set of cone times corresponding to elements of ${\mcl T}(m)$.
\label{def-multiplept-1}
\end{defn}

\begin{remark}
Note that $\eta'(t_j)=\eta'(t_0)$ for all $j\in\{1,\ldots,m-1\}$. Also note that, for $\BB t\in\mcl T(m)$, we have $t_{2j-2},t_{2j}<t_{2j-1}$, $L_{2j-1}=L_{2j-2}$ and $R_{2j-1}=R_{2j}$ for all relevant $j$, in particular $t_{m-1}$ is a running infimum of $R$ (resp.\ $L$) for odd (resp.\ even) $m$. Furthermore, note that we have chosen to let an $(m-2)$-tuple cone vector be represented by an $(m-1)$-dimensional vector, i.e., in the definition of cone vectors above we have chosen to include $t_0$ in the vector in addition to the $(m-2)$ cone times, while we have not included $t_{m-1}$. We chose not to include $t_{m-1}$ in order to simplify the calculation of some probabilities in Sections~\ref{sec-multiple-pt1} and~\ref{sec-multiple-pt2}, while we did include $t_0$ since it will be needed for one of our regularity conditions in Section~\ref{sec-multiple-pt2}. By symmetry $\wt{\mcl T}(m)$ (resp.\ $\wt{\frk T}(m)$) and $\mcl T(m)$ (resp.\ $\frk T(m)$) have the same Hausdorff dimension almost surely.
\end{remark}

\begin{figure}[ht!]
\begin{center}
\includegraphics[scale=0.92]{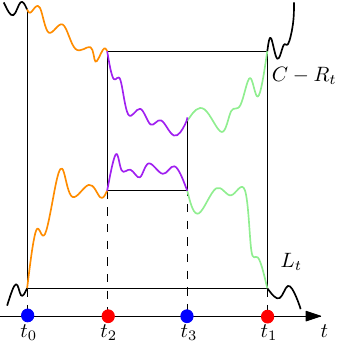}\,\,\,\,\,\,
\includegraphics[scale=0.92]{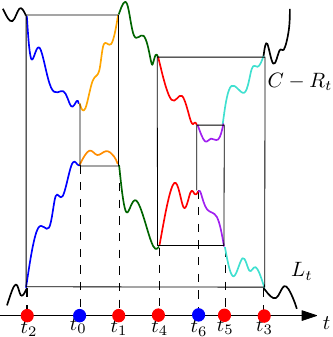}\,\,\,\,\,\,
\includegraphics[scale=0.92]{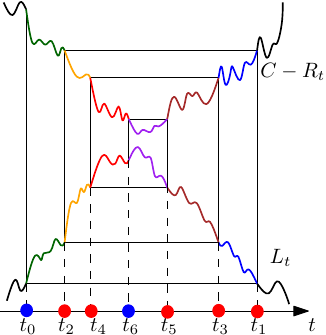}
\newline\newline
\includegraphics[scale=0.92]{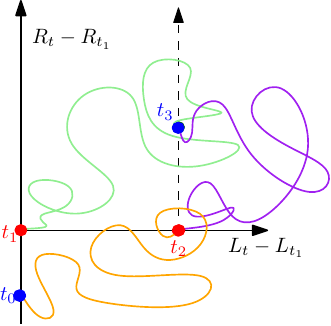}\,\,\,\,\,\,
\includegraphics[scale=0.92]{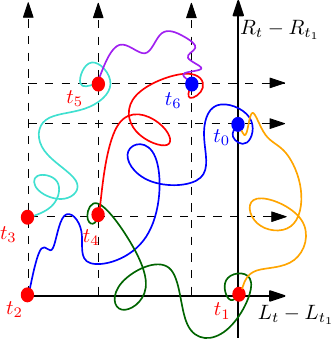}\,\,\,\,\,\,
\includegraphics[scale=0.92]{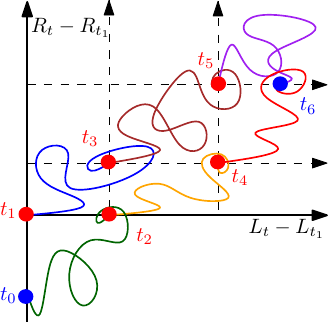}
\end{center}
\caption{Points of multiplicity $m$ can be illustrated as a rectangular path of $m$ vertical line segments and $m-1$ horizontal line segments in the peanosphere construction of \cite{wedges}, see the three top figures for some examples with $m=4$ and $m=7$. Each horizontal line segment intersects $L$ (or $C-R$) at its end points, and $L$ (or $C-R$) is above (or below) the line segment between the intersection times, implying that $L$ (or $R$) has a local running infimum where the line segment ends, and that its time-reversal has a local running infimum where the line segment starts. The local running infima correspond to a sequence of $\frac \pi 2$-cone times of the planar Brownian motion $(L,R)$, see the bottom figures. The $(m-2)$ times corresponding to $\frac{\pi}{2}$-cone times are marked with red dots, while the times $t_0$ and $t_{m-1}$, which are running infima of $L$ or $R$, and correspond to the same spatial point of $\eta'$, are marked with blue dots.} 
\label{fig-multiplept1}
\end{figure}

We now state the main results of this section.

\begin{thm} 
Let $\kappa'\in(4,8)$. Let $m\in[3,(2\kappa'-4)/(\kappa'-4)] \cap\BB N$. Then a.s.
\eqb
\dim_{\mcl H} \wt{\mathfrak T}(m)
=\dim_{\mcl H} \wt{\mcl T}(m) 
= \frac 12-(m-2)\left(\frac{\kappa'}{8}-\frac 12\right).
\label{eq-multiplept-19}
\eqe
If $m>(2\kappa'-4)/(\kappa'-4)$, $\wt{\mathfrak T}(m)$ and $\wt{\mcl T}(m)$ are empty a.s.
\label{prop-multiplept-1}
\end{thm}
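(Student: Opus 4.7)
The approach is the classical first-moment/second-moment method for Hausdorff dimensions of random sets, based on sharp one- and two-point estimates for approximate cone vectors. The computation relies on the single $\pi/2$-cone exit exponent for the correlated planar Brownian motion $Z = (L,R)$, which can be read off either from \cite[Theorem~1]{evans-cone} or from the cone-time analysis of \cite[Section~9]{wedges}; in particular, this recovers the dimension $1 - \kappa'/8$ of single cone times used already in Example~\ref{prop-dimcalc-3}.

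First I would establish a one-point (first-moment) estimate in $\BB R^{m-1}$. For $\BB t \in [-T,T]^{m-1}$ and small $\delta > 0$, let $A_\delta(\BB t)$ be the event that the $\delta$-cube around $\BB t$ contains a cone vector of the required type, with cone durations $t_{j+1} - t_j \asymp 1$. Applying the strong Markov property of $Z$ at each of the $m-1$ cone endpoints together with Brownian scaling, and using the single-cone exit exponent, the probability $\BB P(A_\delta(\BB t))$ satisfies
\eqbn
\BB P(A_\delta(\BB t)) \asymp \delta^{m - 3/2 + (m-2)(\kappa'/8 - 1/2)}.
\eqen
Summing over a $\delta$-grid of $(T/\delta)^{m-1}$ cubes, the expected number of cubes meeting $\wt{\mcl T}(m) \cap [-T,T]^{m-1}$ is of order $\delta^{-(1/2 - (m-2)(\kappa'/8 - 1/2))}$. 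When $(m-2)(\kappa'/8 - 1/2) > 1/2$, equivalently $m > (2\kappa'-4)/(\kappa'-4)$, this first moment tends to $0$ as $\delta \to 0$, so $\BB P(\wt{\mcl T}(m) \cap [-T,T]^{m-1} \neq \emptyset) = 0$ for each $T$ and hence $\wt{\mcl T}(m) = \emptyset$ a.s., which is the second assertion of the theorem. In the complementary regime, Markov's inequality along a geometric sequence of scales combined with Borel--Cantelli yields the upper bound $\dim_{\mcl H} \wt{\mcl T}(m) \leq \tfrac12 - (m-2)(\kappa'/8 - \tfrac12)$.

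For the matching lower bound I would work with the one-dimensional time set $\wt{\frk T}(m) \subset \BB R$: since the coordinate projection $\pi \colon \BB R^{m-1} \to \BB R$ onto $t_1$ is $1$-Lipschitz, any lower bound on $\dim_{\mcl H} \wt{\frk T}(m)$ transfers to $\wt{\mcl T}(m)$ as well. Place a Frostman-type random measure $\nu_\delta$ on the set of $(m-2)$-tuple approximate cone times, normalized so that $\BB E[\nu_\delta([-T,T])] \asymp 1$. Writing $B_\delta(t)$ for the event that $[t, t+\delta]$ contains an $(m-2)$-tuple cone time, the crucial input is a two-point estimate
\eqbn
\BB P(B_\delta(s) \cap B_\delta(t)) \preceq \BB P(B_\delta(s))\, \BB P(B_\delta(t))\, |s-t|^{-(1/2 + (m-2)(\kappa'/8 - 1/2)) + o_{|s-t|}(1)}
\eqen
for $s, t \in [-T,T]$, obtained by splitting $Z$ into independent pieces at the $2(m-1)$ cone endpoints associated to $s$ and $t$ via the strong Markov property and iterating the single-cone two-point estimate of \cite{evans-cone}. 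A subsequential weak limit $\nu$ of $\nu_\delta$ is then supported on $\wt{\frk T}(m)$ and has finite $r$-energy for any $r < \tfrac12 - (m-2)(\kappa'/8 - \tfrac12)$, so \cite[Theorem~4.27]{peres-bm} gives $\dim_{\mcl H} \wt{\frk T}(m) \geq \tfrac12 - (m-2)(\kappa'/8 - \tfrac12)$. Combining with the upper bound from the previous step yields both equalities in~\eqref{eq-multiplept-19}.

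The main technical obstacle is the two-point estimate in the ``collision'' regime, where some endpoint of the cone vector associated to $s$ lies within distance $o(|s-t|)$ of one associated to $t$: in that regime the Markov decomposition into truly independent pieces of $Z$ breaks down, and one must separately estimate the extra conditional probability of a cone excursion fitting into the small remaining gap, in the spirit of the near-diagonal analysis of \cite[Lemma~3.3]{hmp-thick-pts} for GFF thick points. Once this estimate is established with the correct polynomial exponent, the Frostman energy bound follows by a standard integration over the relative positions of $s$ and $t$, and combining with the first-moment bound above closes the argument.
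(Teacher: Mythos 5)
Your plan follows the same skeleton as the paper's proof: a first-moment bound using the single $\pi/2$-cone exit exponent (the paper's Lemma~\ref{prop-multiplept-3}, giving exponent $\pi/(2\theta)=\kappa'/8$) yields the upper bound and the emptiness threshold, and a second-moment/Frostman energy argument via a two-point decorrelation estimate of the form you write gives the lower bound. Your exponents all check out: in particular your $\delta^{m - 3/2 + (m-2)(\kappa'/8-1/2)}$ agrees with the paper's $\ep_n^{1/2 + (m-2)(\pi/(2\theta)+1/2)}$ from Lemma~\ref{prop-multiplept-4}, and your emptiness threshold $m > (2\kappa'-4)/(\kappa'-4)$ matches. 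One organizational difference: you set up the Frostman measure directly on the one-dimensional time set $\wt{\frk T}(m)$ and transfer to $\wt{\mcl T}(m)$ by the (trivial) Lipschitz-projection inequality, whereas the paper constructs the energy measure on the $(m-1)$-dimensional vector set $\mcl T_{\mcl P}(\delta, m) \subset \BB R^{m-1}$ via \cite[Proposition~4.8]{mww-nesting} and then separately proves $\dim_{\mcl H}(\mcl T_{\mcl P}(\delta,m)) \leq \dim_{\mcl H}(\frk T_{\mcl P}(\delta,m))$ by a covering argument. Both orderings are viable.

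That said, there is a genuine gap where you hand-wave: the ``collision regime.'' You correctly flag that the Markov-decomposition two-point estimate fails when some coordinate of $\BB s$ sits inside a tiny window near some coordinate of $\BB t$, and you suggest attacking it ``in the spirit of the near-diagonal analysis of~\cite{hmp-thick-pts}.'' That analogy is not a substitute for a proof here, and the paper's resolution is qualitatively different. The paper does \emph{not} estimate the near-diagonal contribution directly; instead it replaces the bare approximate cone vectors with \emph{perfect} approximate cone vectors by imposing additional regularity events $F_j(\BB t)$, $B^*_{n,j}(\BB t)$, and $G(\BB t)$, which control the excursion infima at every dyadic scale, keep the cone excursion away from its boundary, and fix a macroscopic landmark near $t_0$. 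The payoff is the structural Lemma~\ref{prop-multiplept-8}: if both $\wt E_n(\BB t)$ and $\wt E_n(\BB s)$ occur, then either \emph{all} pairs $|s_i - t_j|$ are bounded below by a fixed constant, or \emph{all} the coordinate gaps $|s_j - t_j|$ are at the same dyadic scale and enjoy a sign-alternation pattern. This structural dichotomy is what makes the Markov-splitting two-point estimate (Proposition~\ref{prop-multiplept-5}) go through with the correct exponent. Without something playing this role, your plan to ``separately estimate the extra conditional probability of a cone excursion fitting into the small remaining gap'' would need to account for configurations where, e.g., $|s_1 - t_1|$ is microscopic while $|s_0 - t_0|$ is not, and there is no obvious way to get the needed $|s-t|^{-(1/2 + (m-2)(\kappa'/8-1/2))}$ bound without first ruling those configurations out. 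A secondary, smaller gap: your one-point estimate attributes the full exponent to ``the single-cone exit exponent'' at each endpoint, but the factor $\delta^{(m-2)/2}$ coming from the bridge events (the paper's $B_{n,j}$, matching the $L$- or $R$-coordinates across consecutive cone times, quantified via Lemma~\ref{prop-multiplept-9}) needs to be accounted for explicitly; the arithmetic is right but the derivation is elided.
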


Throughout this section, we let
\[ \theta = \frac{4\pi}{\kappa'} \]
be as in~\eqref{eq-multiplept-10}.

\begin{remark}
By a linear transformation it follows that~\eqref{eq-multiplept-19} also gives the dimension of the set of $(m-2)$-tuple $\theta$-cone times of standard planar Brownian motion.
\end{remark}

Theorem~\ref{prop-multiplept-1} and Theorem~\ref{thm-dim-relation} imply:
\begin{cor}
Let $\kappa'\in (4,8)$ and $m\in[3,(2\kappa'-4)/(\kappa'-4)] \cap\BB N$. The Hausdorff dimension of $m$-tuple points of space-filling SLE$_{\kappa'}$ is a.s.\ equal to
\eqbn
\frac{(4m - 4 - \kappa'(m-2))(12 +(\kappa'-4)m)}{ 8 \kappa'}.
\eqen
If $m>(2\kappa'-4)/(\kappa'-4)$, the set of $m$-tuple points is a.s.\ empty.
\label{prop-multiplept-2}
\end{cor}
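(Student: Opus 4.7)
The plan is to deduce this corollary by combining Theorem~\ref{thm-dim-relation} with Theorem~\ref{prop-multiplept-1}. First I would set $X$ to be the (random) set of $m$-tuple points of $\eta'$. Since $X$ is a Borel subset of $\BB C$ determined by $\eta'$ modulo monotone reparameterization and $\eta'$ is independent of $h$, the independence hypothesis of Theorem~\ref{thm-dim-relation} holds. To produce a time set $\wh X \subset \BB R$ with $\eta'(\wh X) = X$, I would take $\wh X = \wt{\frk T}(m)$ (the set of $(m-2)$-tuple cone times of $Z$), together possibly with the countable exceptional set of triple points corresponding to local minima of $L$ or $R$ noted at the start of the section; the latter has Hausdorff dimension $0$ and so is absorbed in the calculation. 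Using the bijection stated just before Theorem~\ref{prop-multiplept-1} between $m$-tuple points and $(m-2)$-tuple cone vectors $\BB t=(t_0,\dots,t_{m-2})$, each $m$-tuple point of $\eta'$ is hit at time $t_1 \in \wt{\frk T}(m)$, so $\eta'(\wt{\frk T}(m))$ equals $X$ up to a countable set.

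Having made these reductions, by Theorem~\ref{prop-multiplept-1} we have a.s.\
\[
d := \dim_{\mcl H}(\wh X) = \tfrac12 - (m-2)\bigl(\tfrac{\kappa'}{8}-\tfrac12\bigr) = \tfrac{4m-4-(m-2)\kappa'}{8}
\]
for $m \in [3,(2\kappa'-4)/(\kappa'-4)]\cap\BB N$. Substituting $\gamma^2/2 = 8/\kappa'$ (since $\gamma = 4/\sqrt{\kappa'}$) into the KPZ formula of Theorem~\ref{thm-dim-relation} gives
\[
\dim_{\mcl H}(X) = \Bigl(2+\tfrac{8}{\kappa'}\Bigr) d - \tfrac{8}{\kappa'} d^2 = \tfrac{d}{\kappa'}\bigl(2\kappa' + 8 - 8d\bigr).
\]
A direct algebraic simplification, using $8d = 4m-4-(m-2)\kappa'$ so that $2\kappa'+8-8d = 12 + m(\kappa'-4)$, yields exactly
\[
\frac{(4m-4-(m-2)\kappa')(12+m(\kappa'-4))}{8\kappa'},
\]
as claimed.

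For the second assertion, when $m > (2\kappa'-4)/(\kappa'-4)$, Theorem~\ref{prop-multiplept-1} gives $\wt{\frk T}(m) = \emptyset$ a.s., so by the bijection the only possible $m$-tuple points are among the countable exceptional triple points associated to local extrema of $L$ or $R$. For $m \geq 4$ these exceptions do not arise (they are specifically triple points), so $X$ is a.s.\ empty. For $m = 3$ one checks that $(2\kappa'-4)/(\kappa'-4) \geq 3$ for all $\kappa'\in(4,8)$, so the regime does not occur.

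The main obstacle, and the reason most of the preparatory work in the section focuses on Theorem~\ref{prop-multiplept-1}, is the Brownian-motion dimension computation itself rather than the SLE-side statement: once that is in hand, the corollary is essentially a substitution into the KPZ formula. The one subtle point I would need to verify carefully is that the map $\wh X \to X$ induced by $\eta'$ is, up to countable exceptions, finite-to-one (each $m$-tuple point is visited $m$ times, but all $m-1$ non-$t_0$ visit times lie in $\wt{\frk T}(m)$) and that the exceptional countable set of triple points truly has dimension $0$, so that neither $\dim_{\mcl H}(\wh X) \neq \dim_{\mcl H}(\eta')^{-1}(X)$ nor boundary-point issues corrupt the application of Theorem~\ref{thm-dim-relation}.
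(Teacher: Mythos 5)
Your overall plan matches the paper's: identify $X$ with the set of $m$-tuple points, identify $\wh X$ with $\wt{\frk T}(m)$ up to a countable modification, apply Theorem~\ref{thm-dim-relation} using the dimension from Theorem~\ref{prop-multiplept-1}, and simplify. Your algebra is correct: with $\gamma^2/2 = 8/\kappa'$ and $8d = 4m-4-(m-2)\kappa'$, the factorization $2\kappa'+8-8d = 12+m(\kappa'-4)$ gives exactly the claimed formula.

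The gap is that you invoke ``the bijection stated just before Theorem~\ref{prop-multiplept-1}'' as given, but that sentence is asserted there without proof, and the proof of this corollary in the paper is precisely where the bijection is established. The forward direction (every $(m-2)$-tuple cone vector yields an $m$-tuple point) is immediate from the peanosphere construction, but the converse needs an argument: one must show that any $m$-tuple point not encoded by an $(m-2)$-tuple cone vector corresponds to a vector $\BB t$ with some coordinate $t$ that is either (a) a local minimum of $L$ or $R$, which produces only countably many points (and only triple points at that, so it is empty for $m\geq 4$), or (b) a time that is simultaneously a running infimum of $L$ and a running infimum of the time-reversal of $R$ (or vice versa), which is ruled out by Shimura's theorem, \cite[Theorem~1]{shimura88}. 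You flag this as a ``subtle point to verify carefully'' but do not supply it, and it is the only substantive content of this proof beyond the substitution. Separately, your concern that the map $\wh X \to X$ should be finite-to-one or that $\wh X$ should equal $(\eta')^{-1}(X)$ is not needed: the hypothesis of Theorem~\ref{thm-dim-relation} only requires $\eta'(\wh X) = X$, and the theorem is stated to hold simultaneously for all such $\wh X$, which automatically forces them to share a Hausdorff dimension.
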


\begin{proof}
The corollary follows from Theorem~\ref{thm-dim-relation} if we can show that, excluding the countable number of triple points corresponding to local minima of $L$ or $R$, there is a bijection between $m$-tuple points and $(m-2)$-tuple cone vectors. It follows directly from the peanosphere construction of \cite{wedges} that any $(m-2)$-tuple cone time gives us an $m$-tuple point. 

Conversely, note that an $m$-tuple point of $\eta'$ not corresponding to an element of $\wt{\mcl T}(m)$, would correspond to either (a) a vector $\BB t\in\BB R^{m-1}$ for which at least one of the elements $t\in\BB R$ is a local minimum of $L$ or $R$, or (b) a vector $\BB t\in\BB R^{m-1}$ for which at least one of the elements $t\in\BB R$ is a running infimum of $L$ and a running infimum of the time-reversal of $R$, or vice-versa. The set of vectors satisfying (a) is empty (except in the case $m=3$, when it is countable), since the set of local minima is countable, and the set of vectors satisfying (b) is empty by \cite[Theorem~1]{shimura88}.
\end{proof}

For $\delta\in(0,m^{-1})$, let
\eqbn
\mathcal T(\delta,m)=\left\{\BB t\in \mathcal T(m)\,:\,|t_i-t_j|>\delta,\,t_j\in(0,1),\,\forall i,j\in \{0,\ldots,m-2\}\right\}.
\eqen
By the self-similarity of Brownian motion and the stability of Hausdorff dimension under countable unions, it is sufficient to calculate the dimension of $\mathcal T(\delta,m)$ for fixed $\delta$. For $n\in\BB N$, define $\ep_n=(n!)^{-6}$. Let $k\in\BB N$ be the smallest integer such that $\ep_{k}<\delta$. For $m\geq 3$, $\delta\in(0,m^{-1})$, and $n\in\BB N$, define $D=D_{\delta,m}$ and $D_n=D_{\delta,m,n}$ by
\eqbn
\begin{split}
D&= \left\{\BB t = (t_0 , \dots ,t_{m-2}) \in (0,1)^{m-1 } \,:\,t_{2j-2},t_{2j}<t_{2j-1},\,|t_i-t_j|>\delta,\,\forall i,j \in \{0,\dots,m-2\}
 \right\},\\
D_{n}&=D\cap (\ep_n\BB Z) .
\end{split}
\eqen
We will consider $\delta$, $k$, $m$, $\kappa'$ as fixed throughout the rest of the section, and all implicit constants and decay rates might depend on these constants.

\subsection{Upper bound}
\label{sec-multiple-pt1}

The upper bound is based on an estimate for the probability that an $(m-1)$-dimensional vector is a so-called approximate cone vector, where approximate cone vectors are defined such that appropriately defined neighborhoods around each vector cover all actual cone vectors. The probability for a vector to be an approximate cone vector is calculated by considering two (approximately independent) types of events: one type of event concerning the local behavior of $(L,R)$ near the approximate cone times, and one type of event concerning the behavior of $(L,R)$ in the time interval between the cone times.

Let $C>1$ and $0<r\ll 1$. For $j\in \{0,\dots,m-2\}$, $n\in\BB N$ and $\BB t \in D_n$, let
\[
 A_{n,j,C,r}(\BB t) =
  \begin{cases}
   \left\{L_{t_j+s}\geq L_{t_j}-C\ep_n^{\frac 12-r},\,R_{t_j+s}\geq R_{t_j}-C\ep_n^{\frac 12-r},\, \forall s\in(0, \ep_{k+m+1})\right\} & \text{if } j>0\text{ even,} \\
   \left\{L_{t_j-s}\geq L_{t_j}-C\ep_n^{\frac 12-r},\,R_{t_j-s}\geq R_{t_j}-C\ep_n^{\frac 12-r},\, \forall s\in(0, \ep_{k+m+1})\right\} & \text{if } j\text{ odd,}\\
   \left\{L_{t_j+s}\geq L_{t_j}-C\ep_n^{\frac 12-r},\, \forall s\in(0, \ep_{k+m+1})\right\} & \text{if } j=0.\\
  \end{cases}
\]
For $j\in\{1,2,\ldots,\lfloor(m-1)/2\rfloor\}$, $n\in\BB N$ and $\BB t\in D_n$, let $B_{n,j,C,r}(\BB t)$ be the event that  
\begin{itemize}
\item $|X_{t_{2j-1}}-X_{t^*}|\leq C\ep_n^{\frac 12-r}$ and $\inf_{s\in[t^*,t_{2j-1}]}X_s\geq X_{t_{2j-1}}-C\ep_n^{\frac 12-r}$ for each $(X,t^*) \in \{(L,t_{2j-2}),(R,t_{2j}) \}$ if $2j+1<m$; or
\item $|L_{t_{2j-1}}-L_{t_{2j-2}}|\leq C\ep_n^{\frac 12-r}$ and $\inf_{s\in[t_{2j-2},t_{ 2j-1}]}L_s\geq L_{t_{2j-1}}-C\ep_n^{\frac 12-r}$ if $2j+1=m$.
\end{itemize}
Recall that $t_{2j-2},t_{2j}<t_{2j-1}$ for all $j\in\{1,2,\ldots,\lfloor(m-1)/2\rfloor\}$, hence the intervals considered above are well-defined.

Define
\eqbn
E_n(\BB t) = \, \left(\bigcap_{j=0}^{m-2} A_{n,j,C,r}(\BB t)\right)\cap \left(\bigcap_{j=1}^{\lfloor (m-1)/2 \rfloor} B_{n,j,C,r}(\BB t)\right).
\eqen
For $\BB t\in D_{n}$ we say that $\BB t$ is an \emph{$n$-approximate $(m-2)$-tuple cone vector} if the event $E_n(\BB t)$ occurs.

The event $A_{n,j,C,r}(\BB t)$ occurs when there is an approximate cone time for $Z$ or the time-reversal of $Z$ at time $t_j$. The events $B_{n,j,C,r}(\BB t)$ ensure that the $L$ or $R$ coordinate of two pairs of approximate cone times $t_{2j-1},t_{2j-2}$ and $t_{2j-1},t_{2j}$ are approximately identical.

We will need the following two lemmas both for the proof of the upper bound and for the proof of the lower bound of Theorem~\ref{prop-multiplept-1}. The first lemma is \cite[equation (4.3)]{shimura-cone}.
\begin{lem}
Let $L,R$ be correlated Brownian motions satisfying~\eqref{eq-multiplept-10}. For any $t\in\BB R$ and $\ep>0$,
\eqbn
\BB P\big(X_{s}\geq X_{t}-\ep^{\frac 12},\,\forall s\in [t,t+1],X=L,R\big) \asymp \ep^{\frac{\pi}{2\theta}},
\eqen
with the implicit constant depending only on $\theta$.
\label{prop-multiplept-3}
\end{lem}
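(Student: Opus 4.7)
By stationarity of Brownian increments we may assume $t=0$. The plan is to reduce the event to a standard wedge-exit event for planar Brownian motion and then invoke the classical estimate for the probability that two-dimensional Brownian motion remains in a wedge.

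First I would perform a linear change of coordinates that turns $(L,R)$ into a standard planar Brownian motion. Writing $a=1$ (which costs only a $\theta$-dependent constant factor), set $B_1 = L$ and solve $R = -\cos\theta\,B_1 + \sin\theta\,B_2$ for $B_2$; using~\eqref{eq-multiplept-10} one checks that $(B_1,B_2)$ is a standard planar Brownian motion. Under this map the half-planes $\{L_s \ge L_0-\ep^{1/2}\}$ and $\{R_s \ge R_0-\ep^{1/2}\}$ become two half-planes whose intersection is a wedge $W$ with opening angle $\theta$: indeed, $\{L\ge 0,R\ge 0\}$ becomes $\{B_1\ge 0,\ B_2\ge \cot\theta\,B_1\}$, which is the sector between the rays at angles $\pi/2-\theta$ and $\pi/2$. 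The point $(B_1(0),B_2(0))$ is at distance of order $\ep^{1/2}$ from the apex of $W$, with the implicit constant depending only on $\theta$.

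Next I would invoke the standard cone estimate for planar Brownian motion: if $\tau_W$ is the first exit time from a wedge of opening $\theta$, starting from a point at distance $r$ from the apex, then
\[
\BB P(\tau_W > 1) \asymp r^{\pi/\theta},
\]
with constants depending only on $\theta$. This is obtained by passing to polar coordinates, where the angular component is a time-changed Brownian motion run on $[0,\theta]$ killed at the endpoints, and the modulus is a Bessel-type process; a separation-of-variables / eigenfunction-expansion argument on the sector (equivalently Spitzer's skew-product decomposition) yields the exponent $\pi/\theta$, and one can also just cite Shimura's direct computation in~\cite[(4.3)]{shimura-cone}. Applying this with $r \asymp \ep^{1/2}$ gives exactly $\ep^{\pi/(2\theta)}$.

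The main (and only) point requiring care is the matching of the opening angle of the wedge with the parameter $\theta=4\pi/\kappa'$ that appears in the covariance~\eqref{eq-multiplept-10}; once the linear change of variables is done correctly, everything else is a direct quotation of the classical cone estimate, so I expect no substantive obstacle.
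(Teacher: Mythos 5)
Your proposal is correct and takes essentially the same route as the paper: the paper simply cites Shimura's equation (4.3), which is exactly the cone-exit estimate you derive; you have just spelled out the reduction (the linear change of variables turning $(L,R)$ into standard planar Brownian motion and exhibiting the wedge of opening $\theta$) rather than quoting it as a black box. One small remark: it is worth noting explicitly, as you implicitly use, that after the change of variables the starting point lies at angle $\pi/2-\theta/2$, i.e.\ on the bisector of the wedge and hence uniformly bounded away from its two edges, which is what lets you suppress the $\sin(\pi\varphi_0/\theta)$ factor in the cone heat-kernel estimate and conclude $\BB P(\tau_W>1)\asymp r^{\pi/\theta}$ with constants depending only on $\theta$.
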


Our second lemma will be used to estimate the conditional probabilities of the events $B_{n,j,C,r}(\BB t)$ given the other events we are interested in. We condition on $(L,R)$ restricted to intervals near each time $t_i$, and the event $\wt H_n(M)$ is introduced to ensure that the path of $(L,R)$ restricted to these intervals is not too irregular, and that $(L,R)$ does not violate the conditions of the event $B_{n,j,C,r}(\BB t)$ in these intervals.

\begin{lem}
Let $s\leq \ep_{k+1}$. For $j\in \{1,2, \ldots,\lfloor (m-1)/2\rfloor\}$ let $\mcl F_j$ be the $\sigma$-algebra generated by
\begin{enumerate}
\item $(L_t,R_t)$ for $ t\in(0, t_{2j-1}-2s)$,
\item $(L_t,R_t)-(L_{t_i},R_{t_{i}})$ for $i\neq 2j-1$ and $t\in (t_i-\ep_{k+1},t_i+\ep_{k+1})$, and
\item $(L_t,R_t)-(L_{t_{2j-1}},R_{t_{2j-1}})$ for $t\in (t_{2j-1}-s,t_{2j-1}+\ep_{k+1})$.
\end{enumerate}
Let $\{H_n\}_{n\in\BB N}$ be a sequence of events measurable with respect to $\mcl F_j$. For $M>0$ and $n\in\BB N$ let $\wt H_n(M)$ be the event that the following is true.
\begin{itemize} 
\item $Ms^{\frac 12}>X_{t_{2j-1}-s}-X_{t_{2j-1}}>M^{-1}s^{\frac 12}$ and $Ms^{\frac 12}> X_{t_{2j-1}-2s}-X_{t^*}>M^{-1}s^{\frac 12}$ for each $(X,t^*)\in \{(L,t_{2j-2}),(R,t_{2j}) \}$; and
\item $\inf_{t'\in[t_{2j-1}-s,t_{2j-1}]} X_{t'} \geq X_{t_{2j-1}}-C\ep_n^{\frac 12-r}$ and $\inf_{t'\in[t^*,  t_{2j-1}-2s]} L_{t'} \geq L_{t^*}-C\ep_n^{\frac 12-r}$ for each $(X,t^*)\in \{(L,t_{2j-2}),(R,t_{2j}) \}$ .
\end{itemize}
Assume there is a constant $M>0$ independent of $n$ and $\BB t$ such that the conditional probability of $\wt H_n(M)$ given $H_n$ is at least $M^{-1}$ for all $n\in\BB N$ and $\BB t\in D_n$. Then 
\eqb
\BB P\!\left(B_{n,j,C,r}(\BB t)\,\big|\,H_n\right) \asymp\left\{
  \begin{array}{l}
    \ep_n^{1-2r} \qquad\text{ for } 2j+1<m,\\
    \ep_n^{\frac 12-r} \qquad\text{ for } 2j+1=m,
  \end{array}
  \right.
\label{eq-multiplept-23}
\eqe
with the implicit constants depending only on $\delta,C,m,\theta,s$ and $M$. 

If $s=\ep_l$ for $l\in \{k+1,\dots,n-1\}$, and $M$ can be chosen independently of $l,n$ and $\BB t$, then 
\eqb
\BB P\!\left(B_{n,j,C,r}(\BB t)\,\big|\,H_n\right) \asymp\left\{
  \begin{array}{l}
    \ep_n^{1-2r}\ep_l^{-1+o_n(1)} \,\,\,\,\,\text{ for } 2j+1<m,\\
    \ep_n^{\frac 12-r}\ep_l^{-\frac 12+o_n(1)} \,\,\,\,\,\text{ for } 2j+1=m,
  \end{array}
  \right.
  \label{eq-multiplept-24}
\eqe
with the implicit constant depending only on $\delta,C,m,\theta$ and $M$. 
\label{prop-multiplept-9}
\end{lem}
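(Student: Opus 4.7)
The plan is to condition on $\mcl F_j$ and exploit the fact that both $H_n$ and $\wt H_n(M)$ are $\mcl F_j$-measurable. Indeed, $H_n \in \mcl F_j$ by hypothesis, and each random variable in the definition of $\wt H_n(M)$ can be computed from item~1 of the definition of $\mcl F_j$ (since $|t_i - t_{2j-1}| > \delta \gg s$ forces $t^* < t_{2j-1}-2s$, so $(L,R)_{t^*}$ is known) together with item~3 (which gives the differences $X_{t_{2j-1}-s} - X_{t_{2j-1}}$). I then introduce the increments $D_L := L_{t_{2j-1}-s} - L_{t_{2j-1}-2s}$ and $D_R := R_{t_{2j-1}-s} - R_{t_{2j-1}-2s}$. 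Since $\mcl F_j$ provides no constraint on the path over the gap $(t_{2j-1}-2s, t_{2j-1}-s)$ (items~2 and~3 only record shape information over windows disjoint from this gap), the Markov property yields that conditional on $\mcl F_j$, the pair $(D_L,D_R)$ is independent of $\mcl F_j$ and distributed as $N(0, s\Sigma)$, where $\Sigma$ is the covariance matrix determined by~\eqref{eq-multiplept-10}. The algebraic identity
\[
L_{t_{2j-1}} - L_{t^*} = (L_{t_{2j-1}-2s} - L_{t^*}) - (L_{t_{2j-1}-s} - L_{t_{2j-1}}) + D_L
\]
(and its $R$-analogue with $t_{2j}$ in place of $t^*$) re-expresses the value-part of $B_{n,j,C,r}(\BB t)$ as the condition that $(D_L,D_R)$ lies in a rectangle (or interval, when $2j+1=m$) of side $2C\ep_n^{1/2-r}$ centered at an $\mcl F_j$-measurable point.

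For the upper bound, I use the pointwise bound that the $N(0,s\Sigma)$ density is at most a constant multiple of $s^{-1}$ (respectively $s^{-1/2}$ in the 1D case), giving $\BB P(B_{n,j,C,r}(\BB t)\mid \mcl F_j) \preceq \ep_n^{1-2r}/s$ (respectively $\ep_n^{1/2-r}/s^{1/2}$) uniformly in $\mcl F_j$, since the infimum condition can only decrease the probability. Taking conditional expectation against $H_n$ produces the upper bounds in both~\eqref{eq-multiplept-23} and~\eqref{eq-multiplept-24}.

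For the lower bound, I restrict to $\wt H_n(M) \cap H_n$, which by hypothesis has conditional probability at least $M^{-1}$ given $H_n$. On $\wt H_n(M)$, the center of the target rectangle for $(D_L,D_R)$ has magnitude $O_M(s^{1/2})$, so the Gaussian density there is bounded below by $c(M)/s$, giving the correct order for the value-part. To handle the infimum-part of $B_{n,j,C,r}$, I condition further on the precise value of $(D_L,D_R)$ in the target rectangle: the resulting law of $(L,R)$ on $(t_{2j-1}-2s, t_{2j-1}-s)$ is a correlated two-dimensional Brownian bridge between endpoints lying above $(L_{t_{2j-1}}, R_{t_{2j-1}})$ by at least $M^{-1}s^{1/2}$ in each coordinate; a standard bridge estimate shows that both coordinates remain above $L_{t_{2j-1}} - C\ep_n^{1/2-r}$ and $R_{t_{2j-1}} - C\ep_n^{1/2-r}$ respectively with probability bounded below by a constant depending only on $M$ and $\theta$. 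Multiplying these estimates and using $\BB P(\wt H_n(M)\mid H_n)\geq M^{-1}$ delivers the lower bound.

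The main obstacle is tracking the dependence of implicit constants on $s$ and $M$ so as to match the precise form of the lemma's conclusion. In the first regime~\eqref{eq-multiplept-23} ($s$ fixed), the constants may freely depend on $s$. In the second regime~\eqref{eq-multiplept-24} ($s = \ep_l$ varying with $l$), the requirement that $M$ be independent of $l$ and $n$ forces the Gaussian lower bound and the Brownian bridge estimate to be uniform in $l$; the $\ep_l^{o_n(1)}$ factor absorbs the resulting sub-polynomial corrections coming from these uniformity requirements.
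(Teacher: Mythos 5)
Your proposal is correct and follows essentially the same route as the paper: condition on $\mcl F_j$, isolate the independent Gaussian increment over $(t_{2j-1}-2s,t_{2j-1}-s)$ and rewrite the value-condition of $B_{n,j,C,r}$ as landing in a box of side $\asymp\ep_n^{1/2-r}$, estimate the $N(0,s\Sigma)$ density (uniformly from above, and from below on $\wt H_n(M)$), and handle the infimum-condition by a bridge argument using the macroscopic separation guaranteed by $\wt H_n(M)$, with $\BB P(\wt H_n(M)\mid H_n)\geq M^{-1}$ supplying the lower bound. The only cosmetic difference is that you observe the upper bound needs no appeal to $\wt H_n(M)$, which slightly cleans up the paper's phrasing.
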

\begin{proof}
First we will prove~\eqref{eq-multiplept-23}. Let $2j+1<m$. In order for $B_{n,j,C,r}(\BB t)$ to occur, we must have
\begin{equation}
\begin{split}
&|(X_{t_{2j-1}}-X_{t_{2j-1}-s})
+(X_{t_{2j-1}-s}-X_{t_{2j-1}-2s})
+(X_{t_{2j-1}-2s}-X_{t^*})|
\leq C\ep_n^{\frac 12-r}  \\
&\qquad \qquad \forall (X,t^*) \in \{(L,t_{2j-2}),(R,t_{2j}) \}
\end{split}
\label{eq-multiplept-17}
\end{equation}
and 
\eqb
\inf_{s\in[t^*,t_{2j-1}]} X_s\geq X_{t_{2j-1}}
-C\ep_n^{\frac 12-r},\qquad
\forall (X,t^*) \in \{(L,t_{2j-2}),(R,t_{2j}) \}. 
\label{eq-multiplept-21}
\eqe
The first and third terms in the left-hand side of~\eqref{eq-multiplept-17} is measurable with respect to $\mathcal F_j$, while the second term is independent of $\mathcal F_j$. The second term is a normally distributed random variable with variance of order~$s$. The probability of~\eqref{eq-multiplept-17} conditioned on $\mcl F_j$ is therefore equal to the probability that two jointly Gaussian random variables with variance of order $s$ take values in two given intervals of length $2C\ep_n^{\frac 12-r}$. If $2j+1=m$, the same result holds, only with one Gaussian random variable, instead of two Gaussian random variables. By the upper bounds in the first event defining $\wt H_n(M)$ the estimate~\eqref{eq-multiplept-23} follows with~\eqref{eq-multiplept-17} instead of $B_{n,j,C,r}(\BB t)$ on the left-hand side.

To complete the proof of~\eqref{eq-multiplept-23} we need to show that~\eqref{eq-multiplept-21} happens with uniformly positive probability conditioned on $H_n$ and~\eqref{eq-multiplept-17}. This follows by using that $L_{t_{2j-1}-s}-L_{t_{2j-1}}$, $L_{t_{2j-1}-2s}-L_{t_{2j-2}}$, and the corresponding quantities for $R$, have a macroscopic magnitude on the event $\wt H_n(M)$.

The estimate~\eqref{eq-multiplept-24} follows by small modifications of the argument above using Brownian scaling. We only consider the case $2j+1<m$, since the case $2j+1=m$ is similar. Again we need two jointly Gaussian random variables of variance $s$ to take values in bounded intervals of length $2C\ep_n^{1/2}$, and the upper bounds in the first event defining $\wt H_n(M)$ imply that this probability is of order $\ep_n^{1-2r}\ep_l^{-1}$. The event $\wt H_n(M)$ also ensures that~\eqref{eq-multiplept-21} occurs with uniformly positive probability conditioned on $H_n$ and~\eqref{eq-multiplept-17}. 
\end{proof}
The following lemma implies the upper bound of Theorem~\ref{prop-multiplept-1}:
\begin{lem}
For any $n\in\BB N$ and $\BB t\in D_{n}$,
\eqbn
\BB P\big( E_n(\BB t) \big)\asymp \ep^{\frac {1}{2}+(m-2)(\frac{\pi}{2\theta}+\frac 12)-cr}
\eqen
where $c>0$ is a constant depending only on $\delta,C,m$ and $\theta$.
\label{prop-multiplept-4}
\end{lem}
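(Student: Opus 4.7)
The plan is to estimate $\BB P(E_n(\BB t))$ by decomposing over the constituent events $A_{n,j,C,r}(\BB t)$ and $B_{n,j,C,r}(\BB t)$, using the Markov property of $(L,R)$ to exploit the disjointness of the short time-intervals on which the $A$-events depend, and Lemma~\ref{prop-multiplept-9} for the $B$-events. Since the coordinates $t_0,\ldots,t_{m-2}$ of $\BB t$ are pairwise separated by more than $\delta$, and each $A_{n,j,C,r}(\BB t)$ depends on $(L,R)$ only on an interval of length $\ep_{k+m+1}<\delta$ adjacent to $t_j$, the events $\{A_{n,j,C,r}(\BB t)\}_{j=0}^{m-2}$ are mutually independent. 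By the reflection principle and Brownian scaling, $\BB P(A_{n,0,C,r}(\BB t))\asymp \ep_n^{1/2-r}$, while Lemma~\ref{prop-multiplept-3} together with Brownian scaling (applied to $Z$ or to its time-reversal depending on the parity of $j$) yields $\BB P(A_{n,j,C,r}(\BB t))\asymp \ep_n^{(1-2r)\pi/(2\theta)}$ for $j\geq 1$. Multiplying,
\[
\BB P\Big(\bigcap_{j=0}^{m-2} A_{n,j,C,r}(\BB t)\Big) \asymp \ep_n^{\tfrac{1}{2} + (m-2)\tfrac{\pi}{2\theta} - O(r)}.
\]

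For the $B$-events I would iterate Lemma~\ref{prop-multiplept-9} with $s=\ep_{k+1}$, processing the indices in the order of increasing $t_{2j-1}$. Let $j_1,\ldots,j_{\lfloor(m-1)/2\rfloor}$ be the ordering with $t_{2j_1-1}<\cdots<t_{2j_{\lfloor(m-1)/2\rfloor}-1}$, and set $H_n^i := \big(\bigcap_{j=0}^{m-2} A_{n,j,C,r}(\BB t)\big)\cap\big(\bigcap_{i'<i} B_{n,j_{i'},C,r}(\BB t)\big)$. Because $t_{2j-2},t_{2j}<t_{2j-1}$ for each $j$ and the $t$'s are $\delta$-separated with $2s<\delta$, each previously processed $B$-event and each $A$-event for index $l\neq 2j_i-1$ is measurable with respect to parts (1) and (2) of the $\sigma$-algebra $\mcl F_{j_i}$ from Lemma~\ref{prop-multiplept-9}, while $A_{n,2j_i-1,C,r}(\BB t)$ depends on increments relative to $(L_{t_{2j_i-1}},R_{t_{2j_i-1}})$ on an interval contained in $(t_{2j_i-1}-s,t_{2j_i-1})$ and is therefore measurable with respect to part (3). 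To verify the hypothesis of Lemma~\ref{prop-multiplept-9}, one checks that $\wt H_n(M)$ has uniformly positive conditional probability given $H_n^i$: the infimum conditions are weaker than those already encoded in $H_n^i$, and the macroscopic increment conditions reduce by the Markov property to Gaussian density estimates on the window $(t_{2j_i-1}-2s,t_{2j_i-1}-s)$, using that $\ep_{k+m+1}\ll s$ so that the $A$-conditioning affects only a negligible initial segment of the relevant Brownian increment. Lemma~\ref{prop-multiplept-9} then gives $\BB P(B_{n,j_i,C,r}(\BB t)\mid H_n^i)\asymp \ep_n^{1-2r}$ when $2j_i+1<m$, and $\asymp \ep_n^{1/2-r}$ when $2j_i+1=m$.

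A short bookkeeping shows that the product of the $B$-factors is $\asymp \ep_n^{(m-2)/2 - O(r)}$ regardless of the parity of $m$: for even $m$ there are $(m-2)/2$ middle-type factors, while for odd $m$ there are $(m-3)/2$ middle-type factors and one terminal-type factor, and $(m-3)/2 + 1/2 = (m-2)/2$. Multiplying by the $A$-intersection estimate yields $\BB P(E_n(\BB t)) \asymp \ep_n^{1/2+(m-2)(\pi/(2\theta)+1/2)-cr}$ for an appropriate $c>0$ depending only on $\delta,C,m,\theta$, as required. The main obstacle is verifying the hypotheses of Lemma~\ref{prop-multiplept-9} at each iteration — establishing the $\mcl F_{j_i}$-measurability of $H_n^i$ (for which the time-ordered processing scheme is essential, since the $t_l$ are not in index order) and the uniform lower bound on the conditional probability of $\wt H_n(M)$; once the $\sigma$-algebra structure is in place, the remaining estimates reduce to standard Brownian motion Gaussian calculations on explicitly disjoint windows.
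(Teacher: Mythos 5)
Your proposal follows the same route as the paper: decompose $E_n(\BB t)$ into the events $A_{n,j,C,r}(\BB t)$ and $B_{n,j,C,r}(\BB t)$, use $\delta$-separation of the coordinates plus the Markov property to get mutual independence of the $A$-events, estimate those via Lemma~\ref{prop-multiplept-3}, and then peel off the $B$-events one at a time in order of increasing $t_{2j-1}$ by invoking Lemma~\ref{prop-multiplept-9}. That is exactly the paper's structure, and the bookkeeping at the end matches the paper's computation.

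Two points merit flagging. First, the paper runs Lemma~\ref{prop-multiplept-9} with $s=\ep_{k+m+1}$ rather than your $s=\ep_{k+1}$. This choice is not cosmetic: the events $A_{n,j,C,r}(\BB t)$ constrain the infimum of $(L,R)$ on windows of exact length $\ep_{k+m+1}$, so with $s=\ep_{k+m+1}$ the first infimum condition in $\wt H_n(M)$, $\inf_{t'\in[t_{2j-1}-s,t_{2j-1}]}X_{t'}\geq X_{t_{2j-1}}-C\ep_n^{1/2-r}$, is implied verbatim by $A_{n,2j-1,C,r}(\BB t)$. With $s=\ep_{k+1}>\ep_{k+m+1}$ the window is strictly larger than what the $A$-event controls, so this condition is no longer forced and must be argued separately (e.g.\ by showing the $A$-conditioned path typically exits its window at macroscopic height, giving constant-order escape probability over the uncontrolled piece). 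Your proposal does not do this. Second, your claim that ``the infimum conditions are weaker than those already encoded in $H_n^i$'' is not accurate as stated: the second infimum condition in $\wt H_n(M)$ runs over the interval $[t^*,t_{2j-1}-2s]$, which has macroscopic length (at least $\delta-2s$) and is certainly not implied by the $A$-events or by the previously processed $B$-events (which only constrain the curve on the inner, shorter sub-excursions). Verifying that $\wt H_n(M)$ has uniformly positive conditional probability is precisely the nontrivial part of using Lemma~\ref{prop-multiplept-9}, and it cannot be dismissed as trivially encoded in the conditioning. The paper, to be fair, treats this point briefly, but it does not make the incorrect ``weaker than'' claim, and its choice $s=\ep_{k+m+1}$ at least makes the first infimum condition come for free.
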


\begin{proof}
Since $|t_i-t_j|>\delta$ for any two $i,j\in \{0,\dots,m-2\}$, the Markov property of Brownian motion implies that the events $A_{n,j,C,r}(\BB t)$ for $j\in \{0,\dots,m-2\}$ are independent. By Lemma~\ref{prop-multiplept-3} we have $\BB P(A_{n,j,C,r}(\BB t))\asymp \ep_n^{\frac{\pi}{2\theta}(1-2r)}$ for $j=1, \ldots ,m-2$, and $\BB P(A_{n,0,C,r}(\BB t))\asymp \ep_n^{\frac 12-r}$. The events $B_{n,j,C,r}(\BB t)$ are not independent of each other and of the events $(A_{n,i,C,r}(\BB t))_{i=0,\ldots,m-2}$, but as we will see in the remainder of the proof we have sufficient independence to obtain a good estimate for conditional probabilities.

Let $j\in \{0,\dots,\lfloor (m-1)/2\rfloor \}$. Since $t_{2j-2},t_{2j}<t_{2j-1}$ by the definition of $D$, the event
\eqb
\left(\bigcap_{i=0}^{m-2}A_{n,i,C,r}(\BB t)\right) \cap \left(\bigcap_{i :t_{2i-1}<t_{2j-1}}B_{n,i,C,r}(\BB t)\right)
\label{eq-multiplept-11}
\eqe 
is measurable with respect to the $\sigma$-algebra $\mathcal F_j$ of Lemma~\ref{prop-multiplept-9} with $s=\ep_{k+m+1}$. There is a constant $M>0$ independent of $\BB t$ and $n$ such that the event $\wt H_n(M)$ of Lemma~\ref{prop-multiplept-9} is satisfied with probability at least $M^{-1}$ conditioned on the event~\eqref{eq-multiplept-11}. Therefore~\eqref{eq-multiplept-23} of Lemma~\ref{prop-multiplept-9} implies
\eqbn
\BB P\!\left(B_{n,j,C,r}(\BB t)\,\big|\,\left(\bigcap_{i=0}^{m-2} A_{n,i,C,r}(\BB t)\right)\cap \left(\bigcap_{i: t_{2i-1}<t_{2j-1}} B_{n,i,C,r}(\BB t)\right)\right) \asymp\left\{
  \begin{array}{l}
    \ep_n^{1-2r} \,\,\,\,\,\text{ for } 2j+1<m,\\
    \ep_n^{\frac 12-r} \,\,\,\,\,\text{ for } 2j+1=m.
  \end{array}
  \right.
\eqen
The estimate of the lemma follows by the above estimates and the observation that
\eqbn
\begin{split}
\BB P\big( E_n(\BB t) \big)&=\prod_{j=0}^{m-2}\BB P(A_{n,j,C,r}(\BB t))\times \prod_{j=1}^{\lfloor (m-1)/2 \rfloor} \BB P\!\left(B_{n,j,C,r}(\BB t)\,\big|\,\left(\bigcap_{i=0}^{m-2} A_{n,i,C,r}(\BB t)\right)\cap \left(\bigcap_{i: t_{2i-1}<t_{2j-1}} B_{n,i,C,r}(\BB t)\right)\right)\\
&\asymp \ep_n^{(m-2)\frac{\pi}{2\theta}}\times \ep_n^{\frac 12}\times \ep_n^{(m-2)\frac 12}\times \ep_n^{-cr}.
\end{split} 
\eqen
\end{proof}

\begin{proof}[Proof of upper bound in Theorem~\ref{prop-multiplept-1}]
For $n\in\BB N$ and $\BB t\in D_n$, let $S_n(\BB t)$ denote the $(m-1)$-dimensional cube of side length $2 \ep_n$ centered at $\BB t$, and note that $D\subset \cup_{\BB t\in D_{n}} S_n(\BB t)$. The set of all $\BB t\in D_{n}$ such that $E_n(\BB t)$ occurs is denoted by $D^*_n$. Let $\mcl A_{C,r}$ be the event that $(L,R)$ is $(\frac 12-r)$-H\"older continuous with H\"older norm at most $C/2$. Assume $\BB s\in \mcl T(\delta,m)$, and let $\BB t\in D_n$ be the element of $D_n$ that minimizes $\|\BB s-\BB t\|$, where $\|\cdot\|$ denotes e.g.\ the $L^\infty$ norm. If $\mcl A_{C,r}$ occurs, then $\BB t$ is an $n$-approximate $(m-2)$-tuple cone vector. It follows that
\eqb
\BB 1_{\mcl A_{C,r}}\mathcal T(\delta,m)\subset 
\bigcup_{\BB t\in D^*_{n}}S_n(\BB t).
\label{eq-multiplept-1}
\eqe

First assume $m\leq \frac{2\kappa'-4}{\kappa'-4}$, and let $d>\frac{1}{2}-(m-2)(\frac{\pi}{2\theta}-\frac 12)$. By Lemma~\ref{prop-multiplept-4},
for any $C>0$ and sufficiently small $r$,
\eqbn
\begin{split}
\BB E\bigg(\BB 1_{\mcl A_{C,r}}\sum_{\BB t\in D^*_{n}} \text{diam}(S_n(\BB t))^d\bigg) 
\preceq \sum_{t\in D_{n}} \ep_n^d\BB P(E_n(\BB t))
= \ep_n^{-(m-1)}\times \ep_n^{d} \times \ep_n^{\frac {1}{2}+(m-2)(\frac{\pi}{2\theta}+\frac 12)+cr}\rightarrow 0
\end{split}
\eqen 
as $n\rightarrow\infty$. Chebyshev's inequality and the Borel-Cantelli lemma together imply that a.s.
\eqb
\lim_{n\rta\infty}\BB 1_{\mcl A_{C,r}}\sum_{\BB t\in D^*_{n}} \text{diam}(S_n(\BB t))^d =0.
\label{eq-multiplept-9}
\eqe
 By~\eqref{eq-multiplept-1} the set $\{S_n(\BB t)\}_{\BB t\in D^*_{n,}}$ gives a cover for $\mcl T(\delta,m)$ on the event ${\mcl A_{C,r}}$, hence $\dim_{\mcl H} (\mcl T(\delta,m)) \leq d$ a.s.\ if ${\mcl A_{C,r}}$ occurs. Since $\BB P(\mcl A_{C,r})\rightarrow 1$ as $C \rightarrow \infty$, we a.s.\ have $\dim_{\mcl H} (\mcl T(\delta,m)) \leq d$.

By definition $\mathfrak T(m)=\text{Proj}_1(\mcl T(m))$, where Proj$_1:\BB R^{m-1}\rightarrow\BB R$ is the projection $\BB t\mapsto t_1$. Since coordinate projection is Lipschitz continuous, $\dim_{\mcl H}(\mathfrak T(m))\leq \dim_{\mcl H}(\mcl T(m))$. Since $\dim_{\mcl H}(\mcl T(m)) = \sup_{\delta\in\BB Q,\delta\in(0,m^{-1})} \dim_{\mcl H}(\mcl T(m,\delta))$, we obtain the desired upper bound by letting $d\rightarrow \frac 12 - (m-2)(\frac{\pi}{2\theta}-\frac 12)$, and using that $\dim_{\mcl H}(\wt{\mcl T}(m))=\dim_{\mcl H}({\mcl T}(m))$ and $\dim_{\mcl H}(\wt{\frk T}(m))=\dim_{\mcl H}({\frk T}(m))$.

Now we will consider the case $m>\frac{2\kappa'-4}{\kappa'-4}$. Note that~\eqref{eq-multiplept-9} still holds in this case, and that we can choose $d=0$. When $d=0$, the left hand side of~\eqref{eq-multiplept-9} counts the number of elements in $D^*_{n}$, and it follows that $D^*_{n}$ is empty for all sufficiently large $n$. By~\eqref{eq-multiplept-1} we can conclude that $\mcl T(\delta,m)$, hence $\wt{\mcl T}(m)$ and $\wt{\frk T}(m)$, are empty.
\end{proof}

\subsection{Lower bound}
\label{sec-multiple-pt2}
We will now prove the lower bound of Theorem~\ref{prop-multiplept-1}. The proof will be by standard methods, and relies on an estimate for the correlation of the two events that $\BB t$ and $\BB s$ are approximate cone vectors, see Proposition~\ref{prop-multiplept-5}. In order to obtain sufficient independence of these two events, we will work with a slightly modified definition of approximate cone vectors. Let $\BB t\in D_n$. For $j\in\{0,1,\ldots,m-2\}$ define the events
\eqbn
A_{n,j}(\BB t)=\,A_{n,j,1,0}(\BB t),\qquad B_{n,j}(\BB t)=B_{n,j,1,0}(\BB t),
\eqen
\[
F_{j}(\BB t) =
\begin{cases}
\Big\{
\underset{s\in[0,\ep_l]}{\inf} (X_{t_j}-X_{t_j-s})\in\ep_l^{\frac 12}(-3\log l,-l^{-3/2}),\,\forall l\geq k+1,\, 
\forall (X,t^*) \in \{(L,t_{2j-2}),(R,t_{2j}) \}\Big\},\,j>0\ \mathrm{even},\\
\Big\{
\underset{s\in[0,\ep_l]}{\inf}(X_{t_j+s}-X_{t_j})\in\ep_l^{\frac 12}(-3\log l,-l^{-3/2}),\,\forall l\geq k+1,\, 
\forall (X,t^*) \in \{(L,t_{2j-2}),(R,t_{2j}) \}\Big\},\ j\mathrm{\,odd},\\
\Big\{\underset{s\in[0,\ep_l]}{\inf}(L_{t_j}-L_{t_j-s})\in\ep_l^{\frac 12}(-3\log l,-l^{-3/2}),\,\forall l\geq k+1\Big\},\,j=0,
\end{cases}
\]
and for $j\in\{1,2,\ldots,\lfloor (m-1)/2\rfloor\}$, define the event
\eqbn
\begin{split}
B^*_{n,j}(\BB t)=&\,\left\{\inf_{s\in[t^*+\ep_n,t_{2j-1}-\ep_n]} X_s> X_{t_{2j-1}}+3\ep_n^{\frac 12},
\,\forall (X,t^*) \in \{(L,t_{2j-2}),(R,t_{2j}) \} \right\}.
\end{split}
\eqen
Also define the event $G(\BB t)$ by
\eqbn
G(\BB t) =\,\left\{R_{t_0+2\ep_{k+2}}<\inf_{t\in[t_0-\ep_{k+2},t_0+\ep_{k+2}]}R_t-\ep_{k+2}^{\frac 12}\right\},
\eqen 
and finally define the event $\wt E_n(\BB t)$ by
\eqbn
\wt E_n(\BB t) = \left(\bigcap_{j=0}^{m-2}A_{n,j}(\BB t)\right) \cap \left(\bigcap_{j=1}^{\lfloor(m-1)/2 \rfloor}B_{n,j}(\BB t)\right) \cap \left(\bigcap_{j=1}^{\lfloor(m-1)/2 \rfloor}B^*_{n,j}(\BB t)\right) \cap 
\left(\bigcap_{j=0}^{m-2} F_j(\BB t) \right)\cap G(\BB t).
\eqen
We say that $\BB t\in D_n$ is a \emph{perfect $n$-approximate $(m-2)$-tuple cone vector} if the event $\wt E_n(\BB t)$ occurs. Let $D^*_{n,\mcl P}$ denote the set of all $\BB t\in D_{n}$ such that $\wt E_n(\BB t)$ occurs.

The set $\mcl T_{\mcl P}(\delta,m)$ of perfect $(m-2)$-tuple cone vectors, is defined by 
\eqbn 
\mcl T_{\mcl P}(\delta,m):=
\bigcap_{k\geq 1} \overline{\left(\bigcup_{n\geq k}\bigcup_{\BB t\in D^*_{n,\mcl P}}S_n(\BB t) \right)}.
\eqen

As we will see below, the events $B^*_{n,j}(\BB t)$ imply that if $\BB s\neq\BB t$, both events $\wt E_n(\BB t)$ and $\wt E_n(\BB s)$ can only happen if we have $t_j<s_j$ for all even $j$ and $t_j>s_j$ for all odd $j$, or vice-versa. The events $F_j(\BB t)$ will imply that both events $\wt E_n(\BB t)$ and $\wt E_n(\BB s)$ can only happen if all the elements of the vector $\BB t-\BB s$ are of approximately the same magnitude, and the regularity condition $G(\BB t)$ will imply that both events $\wt E_n(\BB t)$ and $\wt E_n(\BB s)$ cannot happen if $|t_i-s_j|$ is very small for some $i\neq j$.

\begin{lem}
The set of perfect $(m-2)$-tuple cone vectors is contained in the set of $(m-2)$-tuple cone vectors, i.e., 
\eqbn
\mathcal T_{\mcl P}(\delta,m)\subset \mcl T(\delta,m).
\eqen
\label{prop-multiplept-7}
\end{lem}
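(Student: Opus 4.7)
The plan is to take a limit along a sequence of perfect approximate cone vectors and verify that the limit satisfies the full cone-vector structure of Definition~\ref{def-multiplept-1}. Since $\mcl T_{\mcl P}(\delta, m) = \bigcap_{k \geq 1} \ol{\bigcup_{n \geq k} \bigcup_{\BB t^* \in D^*_{n,\mcl P}} S_n(\BB t^*)}$, for any $\BB t \in \mcl T_{\mcl P}(\delta, m)$ and any $K \in \BB N$ I can find $n_K \geq K$ and a perfect $n_K$-approximate cone vector $\BB t^{(K)} \in D^*_{n_K, \mcl P}$ with $\|\BB t - \BB t^{(K)}\|_\infty \leq \ep_{n_K} + 1/K$; the existence of $n_K$ arbitrarily large is forced by the intersection over $k$, since otherwise $\BB t$ would lie in a finite union of closed cubes that is disjoint from $\bigcup_{n\geq k}\bigcup_{\BB t^*\in D^*_{n,\mcl P}} S_n(\BB t^*)$ for $k$ large enough. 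Consequently $\BB t^{(K)} \rta \BB t$ coordinate-wise, and the geometric constraints defining $D$ (interval, separation, and ordering) pass to the limit in their non-strict form, with the strict versions to be recovered from the events $B^*$, $F$, $G$.

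The second step is to translate each component event of $\wt E_{n_K}(\BB t^{(K)})$ into a property of $\BB t$. All tolerances are $O(\ep_{n_K}^{1/2}) \rta 0$, so uniform continuity of $L$ and $R$ yields non-strict limiting statements. From $A_{n_K, j}(\BB t^{(K)})$ I conclude that each $t_j$ with $j \in \{1,\ldots,m-2\}$ is a $\pi/2$-cone time of $Z$ when $j$ is even and of the time-reversal of $Z$ when $j$ is odd, while $t_0$ is a right-sided running infimum of $L$. From $B_{n_K, j}(\BB t^{(K)})$ I obtain $L_{t_{2j-1}} = L_{t_{2j-2}}$ together with (when $2j+1 < m$) $R_{t_{2j-1}} = R_{t_{2j}}$, and $L_s \geq L_{t_{2j-1}}$ on $[t_{2j-2}, t_{2j-1}]$ and $R_s \geq R_{t_{2j-1}}$ on $[t_{2j}, t_{2j-1}]$. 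These identify the intervals $[t_{2j-2}, t_{2j-1}]$ and $[t_{2j}, t_{2j-1}]$ as candidate backward cone-excursion intervals at the cone time $t_{2j-1}$ matching on the $L$- and $R$-coordinates respectively, as required in Definition~\ref{def-multiplept-1} apart from the precise assignment of $v^R$ versus $u^R$.

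The main obstacle is to promote these non-strict conclusions into the strict cone-vector structure, that is, to identify $\{t_{2j-2}, t_{2j}\}$ with $\{v^R(t_{2j-1}), u^R(t_{2j-1})\}$ in the correct order, rule out degenerate cone excursions, and pin $\BB t$ down as a type (I) vector. The event $B^*_{n_K, j}$ contributes the strict interior bound $X_s > X_{t_{2j-1}} + 3\ep_{n_K}^{1/2}$, which survives the limit as $L_s > L_{t_{2j-1}}$ and $R_s > R_{t_{2j-1}}$ strictly on the open interior of each cone-excursion interval, ruling out excursions collapsing to a point. The regularity events $F_j(\BB t^{(K)})$, whose bounds are dyadic-scale-explicit with window $(-3\log l, -l^{-3/2})$, pass to the limit and guarantee at every sufficiently small scale $\ep_l$ the existence of times just outside $t_j$ on the appropriate side where one of $L, R$ strictly exceeds $X_{t_j}$; this forces $t_{2j-2}$ and $t_{2j}$ to be the extremal endpoints $v^R(t_{2j-1})$ and $u^R(t_{2j-1})$ rather than lying interior to some larger cone excursion. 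Finally, $G(\BB t^{(K)})$ in the limit gives $R_{t_0 + 2\ep_{k+2}} \leq \inf_{t \in [t_0 - \ep_{k+2}, t_0 + \ep_{k+2}]} R_t - \ep_{k+2}^{1/2}$, which shows $t_0$ is a running infimum of $L$ but not of $R$, selecting the type (I) structure and thereby fixing the alternation of $u/v$ with $u^R/v^R$ throughout $\BB t$. The bulk of the technical work will be the careful bookkeeping of which strict versus non-strict inequalities survive each passage to the limit.
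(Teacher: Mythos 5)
Your proposal is logically the contrapositive of the paper's own proof: the paper assumes $\BB t \notin \mcl T(\delta,m)$, performs a three-way case analysis on which defining property of a type-(I) cone vector fails, and invokes continuity of $(L,R)$ to conclude that the perfect approximate event $\wt E_n(\BB s_n)$ fails for all large $n$, whereas you run the same continuity argument forward by extracting $\BB t^{(K)} \rta \BB t$ with $\wt E_{n_K}(\BB t^{(K)})$ holding and passing each defining condition to the limit. One caution on the bookkeeping you correctly flag as the bulk of the work: since the margin $3\ep_{n_K}^{1/2}$ in $B^*_{n_K,j}$ vanishes, it yields only a non-strict interior inequality in the limit, and $F_j$ gives a strict exceedance of $X_{t_j}$ just to the \emph{left} of $t_j$ when $j$ is even but a strict dip \emph{below} $X_{t_j}$ just to the \emph{right} when $j$ is odd, so the phrase ``strictly exceeds on the appropriate side'' has the sign reversed in the odd case, and getting these directions straight is exactly what lets one read off the identifications of $t_{j\pm 1}$ with $v(t_j),u(t_j)$ or $v^R(t_j),u^R(t_j)$.
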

\begin{proof}
Assume $\BB t\not\in\mcl T(\delta,m)$. Then at least one of the following conditions are satisfied: (i) there is a $j\in\{0,...,m-1 \}$ such that $t_j$ is not a cone time for $Z$ or for the time-reversal of $Z$, (ii) there is an even (resp.\ odd) $j\in\{1,\ldots,m-2\}$ such that $v(t_j)\neq t_{j\pm 1}$ or $u(t_j)\neq t_{j\pm 1}$ (resp.\ $v^R(t_j)\neq t_{j\pm 1}$ or $u^R(t_j)\neq t_{j\pm 1}$), or (iii) $t_0\not\in \{u(t_1),v(t_1)\}$. For each $n\in\BB N$ let $\BB s_n\in D_n$ be a vector such that $\|t-\BB s_n\|$ is minimized. In either case (i)-(iii) continuity of $L$ and $R$ imply that $\wt E_n(\BB s)$ cannot occur for sufficiently large $n$. Hence $\BB t\not\in \mcl T_{\mcl P}(\delta,m)$.  
\end{proof}

\begin{figure}[ht!]
\begin{center}
\includegraphics[scale=0.92]{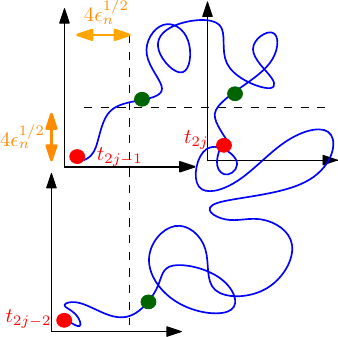}\,\,\,\,\,\,\,\,\,\,
\includegraphics[scale=0.92]{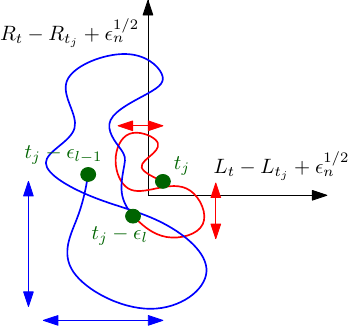}
\end{center}
\caption{The left figure illustrates regularity condition $B^*_{n,j}(\BB t)$. The three green dots correspond to times $t_{2j-2}+\ep_n$, $t_{2j}+\ep_n$ and $t_{2j-1}-\ep_n$, respectively. We want the curve to be bounded away from the boundary of the cone for most of the cone excursion, hence preventing approximate cone vectors $\BB s$ with cone excursions that are partially inside and partially outside the corresponding excursion of $\BB t$. For example, if $\wt E_n(\BB t) \cap \wt E_n(\BB s)$ occurs, $\BB t\neq \BB s$, and $t_{j}<s_{j}<s_{j+1}$, we want $s_{j+1}< t_{j+1}$. The right figure illustrates regularity condition $F_j(\BB t)$. The red curve is $Z|_{[t_j-\ep_l,t_j]}$, and the blue curve is $Z|_{[t_j-\ep_{l-1},t_j]}$. The vertical red arrow shows the absolute value of $\inf_{s\in [t_j-\ep_l,t_j]} (R_s-R_{t_j})$, and the vertical blue arrow shows the absolute value of $\inf_{s\in [t_j-\ep_{l-1},t_j]} (R_s-R_{t_j})$; the horizontal arrows show the same values for $L$. Regularity condition $F_j(\BB t)$ implies that the modulus of these infima decrease in a certain way as we increase $l$. Hence, if both events $\wt E_n(\BB t)$ and $\wt E_n(\BB s)$ occur we know the approximate value of $R_{t_j}-R_{s_j}$ and $L_{t_j}-L_{s_j}$ in terms of $t_j-s_j$. This will help us establish that all elements of the vector $\BB t-\BB s$ are of approximately the same order whenever both $\BB t$ and $\BB s$ are approximate perfect cone vectors.}
\label{fig-multiplept2}
\end{figure}

The following lemma combined with Proposition~\ref{prop-multiplept-5} will imply the lower bound of Theorem~\ref{prop-multiplept-1}.
\begin{lem}
For any $n\in\BB N$ and $\BB t\in D_{n}$,
\eqbn
\BB P(\wt E_n(\BB t)) \asymp \ep_n^{\frac 12 + (m-2)(\frac{\pi}{2\theta}+\frac 12)}
\eqen
with the implicit constants depending only on $\delta,m$ and $\theta$.
\label{prop-multiplept-6}
\end{lem}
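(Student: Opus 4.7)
My plan is to adapt the argument of Lemma~\ref{prop-multiplept-4} to the event $\wt E_n(\BB t)$ by showing that the additional regularity events $B^*_{n,j}(\BB t)$, $F_j(\BB t)$, and $G(\BB t)$ have conditional probabilities bounded below by positive constants given the cone events, hence contribute only to the implicit multiplicative constants in the asserted $\asymp$. Since $A_{n,j}(\BB t) = A_{n,j,1,0}(\BB t)$ and $B_{n,j}(\BB t) = B_{n,j,1,0}(\BB t)$, the proof of Lemma~\ref{prop-multiplept-4} applies in the limiting case $C = 1$, $r = 0$ (the Gaussian small-ball estimates used there remain valid and produce the target exponent without the $\ep_n^{-cr}$ correction), yielding $\BB P\bigl((\bigcap_j A_{n,j}(\BB t)) \cap (\bigcap_j B_{n,j}(\BB t))\bigr) \asymp \ep_n^{\frac{1}{2} + (m-2)(\frac{\pi}{2\theta}+\frac{1}{2})}$. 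The upper bound in the lemma is then immediate from the containment $\wt E_n(\BB t) \subset (\bigcap_j A_{n,j}(\BB t)) \cap (\bigcap_j B_{n,j}(\BB t))$.

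For the lower bound, the remaining task is to prove that, uniformly in $n$ and $\BB t \in D_n$, the conditional probability of $(\bigcap_j B^*_{n,j}(\BB t)) \cap (\bigcap_j F_j(\BB t)) \cap G(\BB t)$ given the cone-event intersection is bounded below by a positive constant. The key structural observation is that, for each $j$, the interval on which $F_j(\BB t)$ is defined lies on the opposite temporal side of $t_j$ from the intervals on which $A_{n,j}(\BB t)$ and the $B$-events touching $t_j$ are defined. Indeed, with the convention $t_{2j-2}, t_{2j} < t_{2j-1}$ built into $D$, for even $j$ the event $F_j$ looks backward from $t_j$ while both $A_{n,j}$ and the $B$-events incident to $t_j$ look forward; for odd $j$ the roles are reversed. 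The Markov property at each $t_j$ therefore identifies the conditional law of $Z$ on the interval relevant to $F_j$ with that of an unconditional two-dimensional correlated Brownian motion started at $Z_{t_j}$.

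To bound each of the remaining factors: (i) For $F_j(\BB t)$, the reflection principle gives $\BB P(\inf_{s\in[0,\ep_l]}(X_s - X_0) \in \ep_l^{1/2}(-3\log l, -l^{-3/2})) = 1 - O(l^{-3/2})$ for each $l \geq k+1$, with constants uniform in $l$ and valid also for the correlated coordinate $R$. Summability of $l^{-3/2}$ then yields a positive lower bound on the intersection over $l \geq k+1$, independent of $n$. (ii) For $B^*_{n,j}(\BB t)$, conditioning on $B_{n,j}(\BB t)$ confines the infimum of $X$ over the macroscopic bulk interval to lie within $\ep_n^{1/2}$ of $X_{t_{2j-1}}$; under this conditioning the process behaves approximately as a Brownian excursion of macroscopic length, whose infimum on the slightly shrunken interior $[t^* + \ep_n, t_{2j-1} - \ep_n]$ typically exceeds $X_{t_{2j-1}}$ by an amount of order $\sqrt{\delta}$, much larger than $3\ep_n^{1/2}$, with probability tending to one as $n\to\infty$. (iii) For $G(\BB t)$, a single event at the fixed scale $\ep_{k+2}$ depending on $R$ in a neighborhood of $t_0$ (while $A_{n,0}$ constrains only $L$), the conditional probability given the cone events is bounded below by a positive constant by direct Brownian scaling.

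The principal obstacle will be justifying the Markov-type independence claims with the required uniformity in $n$, given that we are conditioning on a low-probability event of order $\ep_n^{1/2 + (m-2)(\pi/(2\theta)+1/2)}$. This should nonetheless be tractable because the cone events act on small intervals of length at most $\ep_{k+m+1}$ around each $t_j$ while the regularity events $F_j$ (scales $\ep_l \leq \ep_{k+1}$), $B^*_{n,j}$ (macroscopic bulk scales), and $G$ (scale $\ep_{k+2}$) act on time intervals that are either temporally disjoint from or much coarser than those of the cone events, so the Markov property produces the desired decoupling with uniformly bounded Radon--Nikodym derivatives. The mild exception is the bulk overlap between $B^*_{n,j}$ and $B_{n,j}$, which is resolved by noting that $B_{n,j}$ constrains $X$ only at the endpoints and through a global infimum condition at scale $\ep_n^{1/2}$, leaving ample freedom in the interior at scales $\gg \ep_n^{1/2}$ for $B^*_{n,j}$ to hold with uniform positive probability.
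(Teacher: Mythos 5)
Your overall strategy — deduce the upper bound from (the $C=1$, $r=0$ version of) Lemma~\ref{prop-multiplept-4}, then establish the lower bound by showing the additional regularity events have uniformly positive conditional probability — matches the paper's high-level plan, and your treatment of $F_j(\BB t)$ (Borel--Cantelli via the $l^{-3/2}$ tail) and $G(\BB t)$ (fixed scale, Brownian scaling) is essentially how the paper handles those two pieces. However, there is a real gap in how you handle $B^*_{n,j}(\BB t)$, which you correctly flag as the principal difficulty but then pass over without the device that actually resolves it.

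The issue is that $B^*_{n,j}(\BB t)$ is not temporally disjoint from the cone events. Its most delicate portions live in the small intervals $[t^*+\ep_n, t^*+\ep_{k+m+1}]$ and $[t_{2j-1}-\ep_{k+m+1}, t_{2j-1}-\ep_n]$, which overlap the intervals on which $A_{n,2j-2}$, $A_{n,2j}$, and $A_{n,2j-1}$ already constrain $Z$ at scale $\ep_n^{1/2}$; on that overlap you are asking the process, already conditioned to stay above $-\ep_n^{1/2}$, to moreover stay above $+3\ep_n^{1/2}$. So the ``Markov-type decoupling with uniformly bounded Radon--Nikodym derivatives'' you invoke does not hold between $B^*_{n,j}$ and the $A$-events, and your resolution in terms of ``ample freedom in the interior at scales $\gg\ep_n^{1/2}$'' addresses only the bulk, not the vertices, which is where the constraint bites. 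The paper resolves this by introducing a localized version $B_{n,j}^{*,\op{loc}}(\BB t)$ whose conditions live entirely in the small $\ep_{k+m+1}$-intervals near the cone vertices; it shows this localized event has conditional probability $\succeq 1$ given the $A$-events by direct Brownian scaling, adds it to the intersection \emph{before} the $B_{n,j}$'s (so that Lemma~\ref{prop-multiplept-9} still applies cleanly), and only at the very end relaxes $B_{n,j}^{*,\op{loc}}$ to $B_{n,j}^*$ — at which point the localization has already forced the bulk bridge to have macroscopic endpoint heights $\succeq\ep_{k+m+1}^{1/2}$, and staying above $3\ep_n^{1/2}$ there becomes easy. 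Your ordering (add the $B_{n,j}$'s first, then condition on the full intersection) does not reproduce this; making it work would require redoing the same localization argument in disguise.

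Two smaller quantitative points. First, your claim that under the excursion conditioning the infimum of $X$ over $[t^*+\ep_n, t_{2j-1}-\ep_n]$ typically exceeds $X_{t_{2j-1}}$ by order $\sqrt{\delta}$ is not right: a Brownian excursion of macroscopic length $\asymp\delta$ is only of order $\ep_n^{1/2}$ above its terminal level at time distance $\ep_n$ from an endpoint, so the infimum over the slightly shrunken interval is of order $\ep_n^{1/2}$, not $\sqrt{\delta}$. Second, for the same reason, the conditional probability of $B^*_{n,j}$ does not tend to $1$ as $n\to\infty$: the bulk contribution tends to $1$, but the near-vertex contribution (the probability that a scaled Bessel-type process at unit time lies above a fixed threshold $3$) is a constant strictly between $0$ and $1$. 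The lemma only needs $\succeq 1$, so this does not defeat the argument, but the stated intuition is off in a way that would mislead if one tried to sharpen the estimate.
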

\begin{proof}
Let $N_1$ and $N_2$ be normal random variables with correlation $\cos(\theta)$ and variance $a$, with $a$ as in~\eqref{eq-multiplept-10}. For any $l\geq k+1$ and $j\in\{2,...,2\lfloor (m-2)/2\rfloor \}$ even,
\eqbn
\begin{split}
\BB P\Big(\inf_{s\in[0,\ep_l]}&(R_{t_j}-R_{t_j-s})\not\in\ep_l^{\frac 12}(-3\log l,-l^{-\frac 32})\,\text{ or } \inf_{s\in[0,\ep_l]}(L_{t_j}-L_{t_j-s})\not\in\ep_l^{\frac 12}(-3\log l,-l^{-\frac 32})\Big) \\
&= \BB P\Big(|N_1|\not\in (l^{-\frac 32},3\log l) \text{ or } |N_2|\not\in (l^{-\frac 32},3\log l)\Big)\\
&\asymp l^{-\frac 32}.
\end{split}
\eqen 
By using this estimate, a similar estimate for odd $j$, and independence of the events $F_j(\BB t)$, the Borel-Cantelli lemma implies that $\bigcap_{j=0}^{m-2} F_j(\BB t)$ happens with positive probability.

The events $A_{n,j}(\BB t)$ are independent of each other and of the events $F_j(\BB t)$, and $\BB P(A_{n,j}(\BB t))\asymp\ep_n^{\frac{\pi}{2\theta}}$ by Lemma~\ref{prop-multiplept-3}. The event $G(\BB t)$ is independent of $F_j(\BB t)$ and $A_{n,j}(\BB t)$ for $j>0$. Conditioned on $F_0(\BB t)\cap A_{n,0}(\BB t)$ the event $G(\BB t)$ has uniformly positive probability, since the value of $R_{t_0+2\ep_{k+2}}-R_{t_0+\ep_{k+2}}$ is independent of $F_0(\BB t)\cap A_{n,0}(\BB t)$.

For $j=1,2,\ldots,\lfloor(m-1)/2\rfloor$, let $B_{n,j}^{*,\op{loc}}(\BB t)$ be the event that the following is true.
\begin{itemize}
\item $\inf_{s\in [t_{2j-1}-\ep_{k+m+1},t_{2j-1}-\ep_n]}X_s> X_{t_{2j-1}}+3\ep_n^{\frac 12}$ and $X_{t_{2j-1}-\ep_{k+m+1}}-X_{t_{2j-1}}>6\ep_{k+m+1}^{\frac 12}$ for each $(X,t^*)\in \{(L,t_{2j-2}),(R,t_{2j})\}$. 
\item $\inf_{s\in[t^*+\ep_n,t^*+\ep_{k+m+1}]}X_s> X_{t^*}+3\ep_n^{\frac 12}$ and $X_{t^*+\ep_{k+m+1}}-X_{t^*}>6\ep_{k+m+1}^{\frac 12}$ for each $(X,t^*)\in \{(L,t_{2j-2}),(R,t_{2j})\}$.  
\end{itemize}
Note that the occurrence of $B^{*,\op{loc}}_{n,j}(\BB t)$ implies that the conditions defining $B^*_{n,j}(\BB t)$ are satisfied near $t_{2j-2}$, $t_{2j-1}$ and $t_{2j}$. The event $B^{*,\op{loc}}_{n,j}(\BB t)$ is independent of $F_i(\BB t)$ for all $i$, since the event $F_i(\BB t)$ depends on the behavior of $Z$ right before (resp.\ after) $t_i$ for $i$ even (resp.\ odd), while the event $B^{*,\op{loc}}_{n,j}(\BB t)$ depends on the behavior of $Z$ right before (resp.\ after) $t_{2j-1}$ (resp.\ $t_{2j-2}$ and $t_{2j}$). The event $B^{*,\op{loc}}_{n,j}(\BB t)$ is also independent of the events $A_{n,i}(\BB t)$ for $i\neq 2j-2,2j-1,2j$. Furthermore $B^{*,\op{loc}}_{n,j}(\BB t)$ is independent of $G(\BB t)$ for $j>1$, and the probability of $B^{*,\op{loc}}_{n,1}(\BB t)$ changes only by a constant order factor when conditioning on $G(\BB t)$. By Brownian scaling,
\eqbn
\begin{split}
&\BB P\!\left(
\inf_{s\in[t_{2j}+\ep_n,t_{2j}+\ep_{k+m+1}]}
R_s> R_{t_{2j}}+3\ep_n^{\frac 12},\,R_{t_{2j}+\ep_{k+m+1}}-R_{t_{2j}}>6\ep_{k+m+1}^{\frac 12}\,|\,A_{n,2j}(\BB t)\right)\\
&\qquad\qquad\succeq\BB P\!\left(
\inf_{\begin{subarray}{l}
        s\in[t_{2j}+\ep_n,\\
        t_{2j}+\ep_{k+m+1}]
      \end{subarray}}
R_s> R_{t_{2j}}+3\ep_n^{\frac 12},\,R_{t_{2j}+\ep_{k+m+1}}-R_{t_{2j}}>6\ep_{k+m+1}^{\frac 12}\,|\,
\inf_{\begin{subarray}{l}
        s\in[t_{2j}+\ep_n,\\
        t_{2j}+\ep_{k+m+1}]
      \end{subarray}}
R_s\geq R_{t_{2j}}-\ep_n^{\frac 12}\right)\\
&\qquad\qquad\succeq 1,
\end{split}
\eqen
and similar estimates hold for the other three conditions in the definition of $B^{*,\op{loc}}_{n,j}(\BB t)$. It follows that
\eqbn
\BB P\!\left(
\left(\bigcap_{j=0}^{m-2}   A_{n,j}(\BB t) \cap   F_j(\BB t)  \right)\cap G(\BB t)\cap
\left(\bigcap_{j=1}^{\lfloor (m-1)/2 \rfloor} B_{n,j}^{*,\op{loc}}(\BB t)\right)\right) \asymp \ep_n^{\frac 12+(m-2)\frac{\pi}{2\theta}}.
\eqen
By~\eqref{eq-multiplept-23} of Lemma~\ref{prop-multiplept-9} with $s=\ep_{k+m+1}$ we get further
\eqb
\BB P\!\left(
\left(\bigcap_{j=0}^{m-2}  A_{n,j}(\BB t) \cap   F_j(\BB t)   \right)\cap G(\BB t) \cap
\left(\bigcap_{j=1}^{\lfloor(m-1)/2\rfloor} B_{n,j}^{*,\op{loc}}(\BB t) \cap B_{n,j}(\BB t)\right) 
\right) \asymp \ep_n^{\frac 12+(m-2)(\frac{\pi}{2\theta}+\frac 12)}.
\label{eq-multiplept-16}
\eqe
It remains to show that the same estimate holds if we replace $B_{n,j}^{*,\op{loc}}(\BB t)$ by $B_{n,j}^*(\BB t)$. It is straightforward to obtain the estimate with $\preceq$ and $B_{n,j}^*(\BB t)$ instead of $B_{n,j}^{*,\op{loc}}(\BB t)$, since $\wt E(\BB t)\subset E(\BB t)$ for $C=1,r=0$, and we have the estimate of Lemma~\ref{prop-multiplept-4}. To obtain~\eqref{eq-multiplept-16} with $\succeq$, and with $B_{n,j}^*(\BB t)$ instead of $B_{n,j}^{*,\op{loc}}(\BB t)$, we need to show that the inequalities of $B_{n,j}^*(\BB t)$ hold with uniformly positive probability also at a macroscopic distance from the cone times $t_{2j-2},t_{2j-1},t_{2j}$, conditioned on the event of~\eqref{eq-multiplept-16}. Note that all the events we condition on, except for $B_{n,j}$, only concern the behavior of the curve near the approximate cone times, while $B_{n,j}$ says that the $L$ and $R$ coordinate of pairs of cone times are close, and that the curve is above this $L$ or $R$ coordinate in the time interval between the cone times. The event $B_{n,j}^{*,\op{loc}}(\BB t)$ ensures that $R_{t_{2j}+\ep_{k+m+1}},R_{t_{2j-1}-\ep_{k+m+1}},L_{t_{2j-2}+\ep_{k+m+1}}$ and $L_{t_{2j-1}-\ep_{k+m+1}}$ have a macroscopic distance from $R_{t_{2j}},R_{t_{2j-1}},L_{t_{2j-2}}$ and $L_{t_{2j-1}}$, respectively. The occurrence of $B^*_{n,j}$, conditioned on the event of~\eqref{eq-multiplept-16}, therefore corresponds to the event that two approximate Brownian bridges of duration of order $1$ between two given pairs of points at (possibly different) height of order $1$, are always larger than $3\ep_n$. This happens with probability $\succeq 1$ uniformly in $n$ and $\BB t$. Hence the conditional probability of $B^*_{n,j}$ given the event of~\eqref{eq-multiplept-16} is positive uniformly in $n$ and $\BB t$.
\end{proof}

\begin{lem}
Let $\BB t,\BB s\in D_n$, $\BB t\neq\BB s$, and assume the event $\wt E_n(\BB t)\cap \wt E_n(\BB s)$ occurs for some $n\geq m+k$. Then at least one of the two following conditions are satisfied:
\begin{enumerate}
\item[(I)] All $i,j\in \{0,\dots,m-2\}$ satisfy $|s_i-t_j|\geq \ep_{k+m}$.
\item[(II)] There is an $l\in(k,n-1)$ such that $|s_j-t_j|\in [\ep_{l+m+1},\ep_{l+2}]$ for all $j\in\{0,\ldots,m-2\}$. We furthermore have either $s_j<t_j$ for all odd $j$ and $t_j<s_j$ for all even $j$, or $s_j<t_j$ for all even $j$ and $t_j<s_j$ for all odd $j$.
\end{enumerate}
\label{prop-multiplept-8}
\end{lem}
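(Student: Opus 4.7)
The plan is to assume that condition (I) fails---so there exist $i, j \in \{0,\ldots,m-2\}$ with $|s_i - t_j| < \ep_{k+m}$---and then derive condition (II) in three steps: (1) show that the indices must match, i.e.\ $i = j$; (2) show that the sign of $s_j - t_j$ depends only on the parity of $j$; (3) show that the magnitudes $|s_j - t_j|$ all fall into a common dyadic range $[\ep_{l+m+1}, \ep_{l+2}]$ for some $l \in (k, n-1)$. Throughout, the separation $|t_a - t_b| > \delta$ (and the analogous bound for $\BB s$) enforced by $\BB t, \BB s \in D$ is the crucial quantitative input that rules out spurious near-coincidences.

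For step (1), I would do a case analysis on the parities of $i$ and $j$. If $i \neq j$ have the same parity, then $s_i$ landing within $\ep_{k+m}$ of $t_j$ would place an approximate cone apex of $\BB s$ inside the bulk of a cone excursion of $\BB t$. This is incompatible with the relevant event $B^*_{n,\cdot}(\BB t)$, which keeps $(L, R)$ uniformly above $(L_{t_{2j-1}}, R_{t_{2j-1}}) + (3\ep_n^{1/2}, 3\ep_n^{1/2})$ on the pertinent bulk interval, whereas $B_{n,\cdot}(\BB s)$ forces $(L_{s_i}, R_{s_i})$ within $\ep_n^{1/2}$ of those apex values (using H\"older-$1/2$ continuity of $Z$ at scale $\ep_{k+m} \ll \ep_n$ to transport apex values between $s_{2j-1}$ and $t_{2j-1}$). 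If $i \neq j$ have opposite parities, then the events $A_{n,i}(\BB s)$ and $A_{n,j}(\BB t)$ would have $Z$ exhibiting one-sided cone behavior in opposite time directions at two points $\ep_{k+m}$-close to each other; combined with the $\delta$-separation of the remaining coordinates and with the other $B$ and $B^*$ events, this yields a contradiction.

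For step (2), suppose $s_{2j-1} > t_{2j-1}$ for some odd index. The event $B^*_{n,j}(\BB t)$ keeps $L$ strictly above $L_{t_{2j-1}} + 3\ep_n^{1/2}$ on $[t_{2j-2} + \ep_n, t_{2j-1} - \ep_n]$, while $B_{n,j}(\BB s)$ forces $L_{s_{2j-2}}$ within $\ep_n^{1/2}$ of $L_{s_{2j-1}} \approx L_{t_{2j-1}}$. Hence $s_{2j-2}$ cannot lie in the bulk interval, and combined with matching (step (1)) this forces $s_{2j-2} < t_{2j-2}$; the analogous argument for the $R$-coordinate gives $s_{2j} < t_{2j}$. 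Applying the same reasoning at the next odd index $2j+1$ with the roles of $\BB t$ and $\BB s$ swapped propagates $s > t$ at odd indices and $s < t$ at even indices throughout the whole vector, giving the first case of (II); the second case arises from the opposite initial sign.

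For step (3), the regularity events $F_j(\BB t)$ and $F_j(\BB s)$ give matching two-sided logarithmic envelopes on the infimum excursions of $L$ and $R$ over $[t_j - \ep_{l'}, t_j]$ at every dyadic scale $l' \geq k+1$, while the events $B_{n,\cdot}$ match $L$- or $R$-coordinates across paired cone times of the structure. Choose $j_0$ and $l$ so that $|s_{j_0} - t_{j_0}| \in [\ep_{l+m+1}, \ep_{l+2}]$. Using the envelope at scale $\ep_l$ to turn $|s_{j_0} - t_{j_0}|$ into a known discrepancy between $L$- or $R$-increments, and then using $B_{n,\cdot}$ to transfer that discrepancy to a neighbor index, forces $|s_{j_0 \pm 1} - t_{j_0 \pm 1}|$ into the same dyadic range up to a bounded index shift absorbed in the $m$ of (II). Iterating through all $j$ gives the conclusion; the endpoint at $j = 0$, where $F_0$ only controls $L$, is handled by the event $G(\BB t)$ which supplies the missing macroscopic drop in $R$ just after $t_0$. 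The main obstacle is the parity-and-direction case analysis in step (1): there are several sub-cases distinguished by the signs of $s_i - t_j$ and by the parities, and in each one the events $A_{n,\cdot}$, $B_{n,\cdot}$, $B^*_{n,\cdot}$, $F_\cdot$, $G$ must be combined just so, using the sharp numerical separation $\delta \gg \ep_{k+m} \gg \ep_n$ to rule out degenerate configurations. Step (3) is then essentially a bookkeeping calculation once steps (1)--(2) are in place.
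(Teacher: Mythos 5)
Your three-step plan — rule out shifted indices, derive sign alternation, bunch the magnitudes — is structurally close to the paper's case split (the paper splits into (i) $0<\min_j|s_j-t_j|<\ep_{k+m}$, (ii) $s_j=t_j$ for some $j$, and (iii) $|s_i-t_j|<\ep_{k+m}$ for some $i\neq j$, then shows (ii), (iii) are impossible and (i) forces (II)). However, there are two substantive gaps in the way you propose to carry it out.

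The serious one is in step (1), your same-parity $i\neq j$ case. You claim that $s_i$ being $\ep_{k+m}$-close to $t_j$ would "place an approximate cone apex of $\BB s$ inside the bulk of a cone excursion of $\BB t$," where $B^*_{n,\cdot}(\BB t)$ keeps $Z$ away from the apex values. But $s_i\approx t_j$ puts $s_i$ near an \emph{apex} of $\BB t$, not in the interior of an interval covered by $B^*$, so that mechanism does not fire. The one-sided cone structures at $s_i$ (for $\BB s$) and at $t_j$ (for $\BB t$) are of the same type, so $A$, $B$, $B^*$ at those indices are all mutually compatible. The paper instead iterates the near-coincidence: if $|s_i - t_j|<\ep_{k+m}$ with $j>i$ same parity, then $|s_{i-l}-t_{j-l}|<\ep_{k+m-l}$ for $l\leq i$, in particular $|s_0 - t_{j-i}|<\ep_{k+2}$, and then invokes the event $G(\BB s)$: $G$ forces $R$ to make a macroscopic drop within time $2\ep_{k+2}$ after $s_0$, while $t_{j-i}$ is an (even-indexed) cone apex of $\BB t$ whose $B^*$-excursion keeps $R$ well above $R_{t_{j-i}}$ over a much longer window — contradiction. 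This is the entire reason the somewhat odd-looking event $G$ is included in the definition of $\wt E_n$: it breaks the symmetry between the distinguished base time $t_0/s_0$ and the genuine cone times. You instead deploy $G$ in step (3) to supply "the missing macroscopic drop in $R$ just after $t_0$" in the magnitude-bunching argument, which is not where the paper needs it (and the paper's case (i) argument, which corresponds to your steps 2--3, does not use $G$ at all). Without the iterate-and-use-$G$ mechanism, your step (1) does not rule out shifted index matchings.

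The second gap is that you never treat the exact-coincidence case $s_j=t_j$ for some $j$. Your step (2) opens with "suppose $s_{2j-1}>t_{2j-1}$," which already presupposes all coordinates differ; but since $\BB t\neq\BB s$ are lattice points, some coordinates may coincide while others don't, and both sides of the alternating-sign dichotomy in (II) require strict inequalities. The paper handles this separately (its case (ii)): pick $j$ with $t_j=s_j$ but a neighbor differing, and derive a contradiction from $B^*_{n,\cdot}$ together with $B_{n,\cdot}$. A finished proof needs this case.
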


\begin{proof}
If (I) is not satisfied, at least one of the following conditions must be satisfied: (i) $0<\min_j|s_j-t_j|<\ep_{k+m}$, (ii) $s_j=t_j$ for some $j$, or (iii) $|s_j-t_i|<\ep_{k+m}$ for some $i\neq j$. We will show that (i) implies (II), and that (ii) and (iii) cannot occur, hence complete the proof of the lemma.

Case (i): Assume (i) holds, but (I) does not hold. Furthermore, assume the first condition of (II) does not hold, i.e., there is no $l>k$ such that $|s_j-t_j|\in [\ep_{l+m+1},\ep_{l+2}]$ for all $j$. Then we can find $i,j\in\{0,\ldots,m-2\}$ and $l'\geq k+2$ such that $|s_j-t_j|<\ep_{l'+1}$, $|s_{i}-t_{i}|>\ep_{l'}$, and $|i-j|=1$. Assume w.l.o.g.\ that $j$ is even, $i=j+1$, and $t_{j}< s_{j}<s_{j+1}$; all other cases can be treated similarly. 

We claim that $s_{j+1}<t_{j+1}$. Assume the opposite, i.e., $t_{2j'+1}< s_{2j'+1}$ for $j=2j'$. Note that this implies $t_{2j'+1}\in(s_{2j'},s_{2j'+1})$, since $t_{2j'}<t_{2j'+1}$, $|s_{2j'}-t_{2j'}|<\ep_{l'+1}$ and $|t_{2j'+1}-t_{2j'}|>\delta$. Regularity condition $B^*_{n,j'+1}(\BB t)$ gives a contradiction to $B_{n,j'+1}(\BB s)$, since $s_{2j'}\in(t_{2j'},t_{2j'+1})$, and
\eqbn
L_{s_{2j'}}\leq L_{s_{2j'+1}}+\ep_n^{\frac 12}<L_{t_{2j'+1}}-2\ep_n^{\frac 12}.
\eqen
This implies the claim.

Since $0<\min_{j'}|s_{j'}-t_{j'}|<\ep_{l'+1}$, we have $n>l'+1$, which implies $\ep_n< \ep_{l'+1}$.
Observe that the event $B_{n,j}(\BB t)$, hence $\wt E_n(\BB t)$, cannot occur if $\wt E_n(\BB s)$ occurs, since
\eqbn
\inf_{s\in[t_j,t_{j+1}]} L_s
\leq \inf_{s\in[s_{j+1},s_{j+1}+\ep_{l'}]} L_s
< \inf_{s\in[s_j-\ep_{l'+1},s_j]} L_s - 2\ep_{l'+1}^{\frac 12} + (L_{s_{j+1}}-L_{s_j})
< \inf_{s\in[s_j-\ep_{l'+1},s_j]} L_s - \ep_{n}^{\frac 12}
\leq L_{t_j}-\ep_n^{\frac 12},
\eqen
where the second inequality follows from $F_j(\BB s)$ and $F_{j+1}(\BB s)$. We have obtained a contradiction, hence there is an $l>k$ such that $|s_j-t_j|\in [\ep_{l+m+1},\ep_{l+2}]$ for all $j$. Note that $l+2\leq n$, since $0<\min_{j'}|s_{j'}-t_{j'}|\leq\ep_{l+2}$. By the same argument as when deriving $s_{j+1}<t_{j+1}$ above, regularity conditions $B^*_{n,j'+1}(\BB t)$ and $B^*_{n,j'+1}(\BB s)$ for $j'\in\{1,\ldots,\lfloor(m-1)/2\rfloor\}$ imply that $s_j<t_j$ for all odd $j$, and $t_j<s_j$ for all even $j$, or vice versa.

Case (ii): Since $\BB t\neq\BB s$, we can find $j$ such that $t_j=s_j$ and either $s_{j-1}\neq t_{j-1}$ or $s_{j+1}\neq t_{j+1}$. Assume w.l.o.g.\ that $j=2j'$ is even, and that $s_{j+1}> t_{j+1}$. By definition of $D$ we have $t_{j+1}\in(s_j,s_{j+1})$. We get a contradiction from the regularity condition $B^*_{n,j'+1}(\BB s)$, since
\eqb
L_{t_{2j'}}=L_{s_{2j'}}\leq L_{s_{2j'+1}}+\ep_n^{\frac 12}<L_{t_{2j'+1}}-2\ep_n^{\frac 12}\leq L_{t_{2j'}}-\ep_n^{\frac 12}.
\eqe

Case (iii): Assume w.l.o.g.\ that $j>i$. By $F_j(\BB t)$, $F_j(\BB s)$, $F_i(\BB t)$ and $F_i(\BB s)$, $i$ and $j$ have the same parity. By the same argument as in case (ii), we have $s_{i}\neq t_{j}$. By the same argument as for Case (i), and by induction on $l$, we have $|s_{i-l}-t_{j-l}|<\ep_{k+m-l}$ for all $l\leq i$, hence $|s_0-t_{j-i}|<\ep_{k+2}$. By the regularity condition $G(\BB s)$, the event $A_{n,j-i}(\BB t)$, hence $\wt E_n(\BB t)$, cannot occur, conditioned on $\wt E_n(\BB s)$. This gives us the desired contradiction, hence case (iii) cannot occur.
\end{proof}

\begin{prop}
\label{prop-multiplept-5} 
For any $\BB s,\BB t\in D_{n}$, we have
\eqb
\label{eq-multiplept-7}
\BB P(\wt E_n(\BB t)\cap \wt E_n(\BB s)) \leq 
\|\BB t-\BB s\|^{-\frac 12-(m-2)(\frac{\pi}{2\theta}+\frac 12)+o_{\|\BB t-\BB s\|}(1)}
\BB P(\wt E_n(\BB t))\BB P(\wt E_n(\BB s)).
\eqe
\end{prop}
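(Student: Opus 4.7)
The plan is to apply the dichotomy of Lemma~\ref{prop-multiplept-8}. In case~(I), every time used in $\wt E_n(\BB t)$ is at distance at least $\ep_{k+m}$ from every time used in $\wt E_n(\BB s)$, so by the Markov property of $Z$ (together with the observation that the $B_{n,j}$ and $B^*_{n,j}$ events compare $Z$-values at widely separated times through essentially independent Brownian increments), the two events decouple and $\BB P(\wt E_n(\BB t)\cap \wt E_n(\BB s))\asymp \BB P(\wt E_n(\BB t))\BB P(\wt E_n(\BB s))$. Since $\|\BB t-\BB s\|$ is bounded below by a constant in case~(I), this immediately gives~\eqref{eq-multiplept-7}.

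The substantive case is~(II), in which $|t_j-s_j|\in[\ep_{l+m+1},\ep_{l+2}]$ for all $j$ with a common scale parameter $l\in(k,n-1)$, and the cone times for $\BB s$ interlace with those for $\BB t$ in the prescribed alternating fashion. The strategy is a scale decomposition at the separation scale $\ep_l$. Define a coarsened event $\wt E_{n,l}(\BB t)$ exactly like $\wt E_n(\BB t)$ but with the microscopic slack $\ep_n^{1/2}$ everywhere replaced by $\ep_l^{1/2}$ (and the range of scales in $F_j$ truncated to $l'\in\{k+1,\ldots,l\}$). Then $\wt E_n(\BB t)\subset \wt E_{n,l}(\BB t)$, and a verbatim rerun of the proof of Lemma~\ref{prop-multiplept-6} with $\ep_l$ in place of $\ep_n$ yields $\BB P(\wt E_{n,l}(\BB t))\asymp \ep_l^{p}$, where $p:=\frac 12+(m-2)(\frac{\pi}{2\theta}+\frac 12)$. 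Since typical fluctuations of $Z$ over time $\ep_l$ are $O(\ep_l^{1/2})$, the coarsened events for $\BB t$ and $\BB s$ essentially coincide: each implies the other after enlarging the slack by a bounded constant, and by Brownian scaling such enlargement changes the probability by at most a multiplicative constant. Consequently $\BB P(\wt E_{n,l}(\BB t)\cap \wt E_{n,l}(\BB s))\asymp \BB P(\wt E_{n,l}(\BB t))\asymp \ep_l^{p}$.

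To pass from $\wt E_{n,l}$ back to $\wt E_n$, condition on the $\sigma$-algebra $\mcl G_l$ generated by $Z$ restricted to the complement of the union, over $j=0,\ldots,m-2$, of the $\ep_l/2$-neighborhoods of the $t_j$ (equivalently the $s_j$, since $|t_j-s_j|\le \ep_{l+2}$). Given $\mcl G_l$, the restrictions of $Z$ to these small neighborhoods are independent Brownian bridges, and the microscopic refinements from $\wt E_{n,l}(\BB t)$ to $\wt E_n(\BB t)$ (respectively from $\wt E_{n,l}(\BB s)$ to $\wt E_n(\BB s)$) depend only on the bridge behaviour in disjoint sub-neighborhoods around each $t_j$ (respectively $s_j$); so these two refinements are conditionally independent given $\mcl G_l$. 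By the Markov property, Brownian scaling, and equation~\eqref{eq-multiplept-24} of Lemma~\ref{prop-multiplept-9} applied with $s=\ep_l$ for the $B_{n,j}$ factors, each refinement occurs with conditional probability $\asymp(\ep_n/\ep_l)^{p}$. Combining with the previous paragraph yields
\[
\BB P(\wt E_n(\BB t)\cap \wt E_n(\BB s))\asymp \ep_l^{p}\cdot(\ep_n/\ep_l)^{2p} = \ep_l^{-p}\,\BB P(\wt E_n(\BB t))\BB P(\wt E_n(\BB s)),
\]
where the last equality uses Lemma~\ref{prop-multiplept-6}. The identification $\ep_l=\|\BB t-\BB s\|^{1+o_{\|\BB t-\BB s\|}(1)}$, which follows from $\ep_l=(l!)^{-6}$ together with $\|\BB t-\BB s\|\in[\ep_{l+m+1},\sqrt{m-1}\,\ep_{l+2}]$, then absorbs the discrepancy into the $o(1)$ error and gives~\eqref{eq-multiplept-7}.

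The main obstacle is making the conditional independence precise: the $B_{n,j}$ and $B^*_{n,j}$ events compare $Z$ at the triple of times $t_{2j-2},t_{2j-1},t_{2j}$ (and the analogous triple for $\BB s$), so the argument must be organized through a filtration in the spirit of $\mcl F_j$ in Lemma~\ref{prop-multiplept-9}, iterated over $j$, while keeping the microscopic windows around $\BB t$ and $\BB s$ disjoint. The auxiliary regularity events $F_j$, $G$, and $B^*_{n,j}$, which exist solely to enforce Lemma~\ref{prop-multiplept-8} and contribute no polynomial factor in $\ep_n$ or $\ep_l$, are handled exactly as in the proof of Lemma~\ref{prop-multiplept-6}: each has uniformly positive conditional probability given the remaining events at every intermediate scale, so it may be inserted or dropped freely in the asymptotics.
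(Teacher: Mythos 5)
Your treatment of case~(I) is essentially the paper's: all the relevant times for $\BB t$ and $\BB s$ are pairwise separated by at least $\ep_{k+m}$, so the $A$-events decouple by the Markov property and the $B$-events are controlled by iterating Lemma~\ref{prop-multiplept-9} with $s = \ep_{k+m+1}$. This is correct, and near-optimal for that case.

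Your proposal for case~(II) — a coarse-fine decomposition at the separation scale $\ep_l$ — is a genuinely different route from the paper's, which instead computes conditional probabilities event by event (first the interlaced $A$-events, split by parity into $\bigcap_{i \,\mathrm{even}} A_{n,i}(\BB t)$ conditioned on all the $A_{n,i}(\BB s)$, then the $B$-events; odd-index $A$-events for $\BB t$ are smuggled in via the auxiliary $\wt A_{n,2j-1}(\BB t)$ and Lemma~\ref{prop-multiplept-9}). Your scheme reproduces the correct exponent, but the central claim driving it — conditional independence of the two microscopic refinements given $\mcl G_l$ — is false, and this is a genuine gap, not just a presentational one.

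Concretely: given $\mcl G_l$, the process inside the $\ep_l/2$-neighborhood of $t_j$ is a single Brownian bridge, and since $|t_j - s_j| \leq \ep_{l+2} \ll \ep_l/2$, both $t_j$ and $s_j$ lie inside that \emph{same} bridge. The refinement of $B_{n,j}$ from slack $\ep_l^{1/2}$ to slack $\ep_n^{1/2}$ constrains the \emph{value} of $L_{t_{2j-1}}-L_{t_{2j-2}}$ (and the analogous quantity for $\BB s$), and these two quantities are functions of the same bridges in the neighborhoods of $t_{2j-1}$ and $t_{2j-2}$. Conditionally on $\mcl G_l$, the pair $\bigl( L_{t_{2j-1}}-L_{t_{2j-2}},\ L_{s_{2j-1}}-L_{s_{2j-2}} \bigr)$ is jointly Gaussian with variances $\asymp \ep_l$ and with difference of variance $\asymp \ep_{l+2}$, i.e., correlation $\to 1$. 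So the two refinement events are far from conditionally independent; the joint probability that both land in windows of size $\ep_n^{1/2}$ scales like $(\ep_n^{1/2}/\ep_l^{1/2})\cdot(\ep_n^{1/2}/\ep_{l+2}^{1/2})$, not $(\ep_n^{1/2}/\ep_l^{1/2})^2$. That you still arrive at the right exponent is an artifact of the choice $\ep_n=(n!)^{-6}$, which gives $\ep_{l+2}=\ep_l^{1+o_l(1)}$ — so the error gets absorbed into the $o(1)$ — but it is not explained by independence, and you cannot invoke it as such without substantial rework. There is a second omission: your outline never engages with the interlacing of $t_j$ and $s_j$ (alternating $t_j<s_j$ for $j$ even, $s_j<t_j$ for $j$ odd, or vice versa), which is precisely what makes the nested-cone picture hold and forces the paper's parity-sensitive ordering of conditionings (equations~\eqref{eq-multiplept-14}--\eqref{eq-multiplept-13}). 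Your closing paragraph acknowledges that "the argument must be organized through a filtration in the spirit of $\mcl F_j$" — that understates it: doing so is the actual content of the proof in case (II), and without it the proposal does not yet establish the two-point estimate.
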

\begin{proof}
We will prove the assertion separately for the two cases of Lemma~\ref{prop-multiplept-8}.

Case (I): The $2(m-1)$ events $A_{n,j}(\BB t)$ and $A_{n,j}(\BB s)$ are all independent, and by Lemma~\ref{prop-multiplept-3}, 
\eqbn
\BB P(A_{n,j}(\BB t))=\BB P(A_{n,j}(\BB s))\asymp \ep_n^{\frac{\pi}{2\theta}} \text{ for }j>0,\qquad
\BB P(A_{n,0}(\BB t))=\BB P(A_{n,0}(\BB s))\asymp \ep_n^{\frac{1}{2}}.
\eqen
Let $(B_i)_{i=1,\ldots,2\lfloor (m-1)/2\rfloor }$ denote an ordering of the events $B_{n,j}(\BB t)$, $B_{n,j}(\BB s)$, where $B_{j'}(\BB t')$ comes before $B_{j''}(\BB t'')$, $\BB t',\BB t''\in\{\BB t,\BB s \}$, if $t'_{2j'-1}<t''_{2j''-1}$. By~\eqref{eq-multiplept-23} of Lemma~\ref{prop-multiplept-9} with $s=\ep_{k+m+1}$,
\eqbn
\begin{split}
\BB P\!\left(B_j\,|\,\left(\bigcap_{i=0}^{m-2}A_{n,i}(\BB t)\right)\cap\left(\bigcap_{i=0}^{m-2}A_{n,i}(\BB s)\right)\cap\left(\bigcap_{i=1}^{j-1} B_i\right)\right)
\asymp\left\{
  \begin{array}{l}
    \ep_n \text{ if $B_j=B_{n,i}(\BB t)$ or $B_j=B_{n,i}(\BB s),\,2i+1<m$},\\
    \ep_n^{\frac 12} \text{ if $B_j=B_{n,i}(\BB t)$ or $B_j=B_{n,i}(\BB s),\,2i+1=m$}.
  \end{array}
  \right.
\end{split}
\eqen
It follows that
\eqbn
\begin{split}
\BB P(\wt E_n(\BB s)\cap \wt E_n(\BB t) )
\leq& \, \prod_{j=0}^{m-2} \BB P(A_{n,j} (\BB t) )
\times \prod_{j=0}^{m-2} \BB P(A_{n,j} (\BB s))\\
&\times \prod_{j=1}^{2\lfloor (m-1)/2\rfloor} \BB P\!\left(B_j \,|\,\left(\bigcap_{i=0}^{m-2}A_{n,i}(\BB t)\right)\cap\left(\bigcap_{i=0}^{m-2}A_{n,i}(\BB s)\right)\cap\left(\bigcap_{i=1}^{j-1} B_i\right)\right) \\
\asymp&\, \ep_n^{1+2(m-2)(\frac{\pi}{2\theta}+\frac 12)}\\
\asymp&\, \BB P(\wt E_n(\BB s)) \BB P( \wt E_n(\BB t) ).
\end{split}
\eqen

Case (II): Assume w.l.o.g.\ that $s_1<t_1$, which implies $s_j<t_j$ for all odd $j$ and $t_j<s_j$ for all even $j$. Since $\ep_{l+m+1}=\ep_l^{o_l(1)}$, and by Lemma~\ref{prop-multiplept-3}, Brownian scaling and the Markov property of Brownian motion, we have 
\eqbn
\BB P(A_{n,j}(\BB t)\,|\,A_{n,j}(\BB s)) =
\left\{
  \begin{array}{ll}
    \ep_l^{-\frac{\pi}{2\theta}+o_l(1)}\ep_n^{\frac{\pi}{2\theta}} \,\,\,\,\,&\text{ for } j>0,\\
    \ep_l^{-\frac 12+o_l(1)}\ep_n^{\frac 12} \,\,\,&\text{ for } j=0.
  \end{array}
  \right.
\eqen
By using this estimate, $\BB P(A_{n,j}(\BB s))\asymp\ep_n^{\frac{\pi}{2\theta}}$ for $j>0$, 
$\BB P(A_{n,0}(\BB s))\asymp\ep_n^{\frac{1}{2}}$, and 
that $A_{n,j}(\BB s)$ and $A_{n,j}(\BB t)$ are independent of $A_{n,i}(\BB s)$ and $A_{n,i}(\BB t)$ for $i\neq j$, we get further
\eqb
\BB P\!\left(\left(\bigcap_{i\text{ even}}A_{n,i}(\BB t)\right)\cap\left(\bigcap_{i=0}^{m-2}A_{n,i}(\BB s)\right)\right) = \ep_l^{-\frac 12-\frac{\pi}{2\theta}\lfloor(m-2)/2\rfloor+o_l(1)}
\ep_n^{1+\frac{\pi}{2\theta}(m-2+\lfloor(m-2)/2\rfloor)}.
\label{eq-multiplept-14}
\eqe
By~\eqref{eq-multiplept-23} of Lemma~\ref{prop-multiplept-9} with $s=\ep_{k+m+1}$, for $j \in \{1,\dots,\lfloor (m-1)/2\rfloor\}$ we have
\eqb
\begin{split}
&\BB P\!\left(B_{n,j}(\BB s)\,|\,
\left(\bigcap_{i=0}^{m-2}A_{n,i}(\BB s)\right)\cap \left(
\bigcap_{\begin{subarray}{l}
        i\text{ even or}\\
        i>2j-1
      \end{subarray}}
A_{n,i}(\BB t)
\right)\cap
\left(\bigcap_{i\,:\,t_{2i-1}>t_{2j-1}}B_{n,i}(\BB t)\cap B_{n,i}(\BB s)\right)
\right) \\
&\qquad\qquad\qquad\qquad\qquad\qquad\qquad\qquad\qquad\qquad\qquad\
\asymp\left\{
  \begin{array}{ll}
    \ep_n \,\,&\text{ for } 2j+1<m,\\
    \ep_n^{\frac 12} &\text{ for } 2j+1=m.
  \end{array}
  \right.
  \label{eq-multiplept-15}
  \end{split}
\eqe
Next we will show that
\eqb
\begin{split}
&\BB P\!\left(B_{n,j}(\BB t)\cap A_{n,2j-1}(\BB t)\, |\,
\left(\bigcap_{i=0}^{m-2}A_{n,i}(\BB s)\right)\cap \left(
\bigcap_{\begin{subarray}{l}
        i\text{ even or}\\
       	i>2j-1
      \end{subarray}}
A_{n,i}(\BB t)
\right)\cap
\left(\bigcap_{i\,:\,t_{2i-1}>t_{2j-1}}B_{n,i}(\BB t)\cap B_{n,i}(\BB s)\right)
\cap B_{n,j}(\BB s)
\right)\\
&\qquad\qquad\qquad\qquad\qquad\qquad\qquad\qquad\qquad\qquad\qquad\
\preceq
\left\{
  \begin{array}{ll}
    \ep_n^{\frac{\pi}{2\theta}+1}\ep_l^{-(\frac{\pi}{2\theta}+1)+o_l(1)} \,\,\,\,\,&\text{ for } 2j+1<m,\\
    \ep_n^{\frac{\pi}{2\theta}+\frac 12}\ep_l^{-(\frac{\pi}{2\theta}+\frac 12)+o_l(1)} \,\,\,&\text{ for } 2j+1=m.
  \end{array}
  \right.  
  \end{split}
  \label{eq-multiplept-13}
\eqe
Let
\eqbn
\wt{A}_{n,2j-1}(\BB t)= \left\{X_{t_{2j-1}-s}\geq X_{t_{2j-1}}-\ep_n^{\frac 12},\, \forall s\in(0,\ep_{l+m+1}/2),\,\forall X=\{L,R\} \right\},
\eqen
and note that $A_{n,2j-1}(\BB t)\subset \wt{A}_{n,2j-1}(\BB t)$. By Lemma~\ref{prop-multiplept-3} 
\eqb 
\BB P(\wt{A}_{n,2j-1})= \ep_l^{-\frac{\pi}{2\theta}+o_l(1)}\ep_n^{\frac{\pi}{2\theta}}.
\label{eq-multiplept-18}
\eqe
Let $s=\ep_{l+m+1}/2$, and note that $t_{2j-1}-s_{2j-1}\geq 2s$. Since both $\wt{A}_{n,2j-1}(\BB t)$ and the events we condition on in~\eqref{eq-multiplept-13} are measurable with respect to the $\sigma$-algebra generated by
\begin{enumerate}
\item $(L_t,R_t)$ for $ t\leq t_{2j-1}-2s$ and
\item $(L_t,R_t)-(L_{t_{2j-1}},R_{t_{2j-1}})$ for $t\in (t_{2j-1}-s,t_{2j-1})$,
\end{enumerate}
the estimate~\eqref{eq-multiplept-24} from Lemma~\ref{prop-multiplept-9} implies that
\eqbn
\begin{split}
&\BB P\!\left(B_{n,j}(\BB t)\,|\,
\left(\bigcap_{i=0}^{m-2}A_{n,i}(\BB t)\right)\cap \left(
\bigcap_{\begin{subarray}{l}
        i\text{ even or}\\
       	i>2j-1
      \end{subarray}}
A_{n,i}(\BB s)
\right)\cap
\left(\bigcap_{i>j}B_{n,i}(\BB t)\cap B_{n,i}(\BB s)\right)
\cap B_{n,j}(\BB s)\cap
\wt{A}_{n,2j-1}(\BB t)\right)\\
&\qquad\qquad\qquad\qquad\qquad\qquad\qquad\qquad\qquad\qquad\qquad
= \left\{
  \begin{array}{ll}
    \ep_n    \ep_l^{-1  +o_l(1)} \,\,\,\,\,&\text{ for } 2j+1<m,\\
    \ep_n^{\frac 12}\ep_l^{-\frac 12+o_l(1)} \,\,\,    &\text{ for } 2j+1=m.
  \end{array}
  \right.  
\end{split}
\eqen
By using this estimate and~\eqref{eq-multiplept-18}, we obtain~\eqref{eq-multiplept-13}. By multiplying equations~\eqref{eq-multiplept-15} and~\eqref{eq-multiplept-13}, taking the product over $j\in\{1,\ldots,\lfloor(m-1)/2\rfloor\}$, and then multiplying by~\eqref{eq-multiplept-14}, we get
\eqbn
\begin{split}
\BB P(\wt E_n(\BB t)\cap \wt E_n(\BB s)) &\leq 
\, \ep_l^{-\frac 12-(m-2)(\frac{\pi}{2\theta}+\frac 12)+o_l(1)}\times \ep_n^{1+2(m-2)(\frac{\pi}{2\theta}+\frac 12)}\\
 &=\, 
 \|\BB t-\BB s\|^{-\frac 12-(m-2)(\frac{\pi}{2\theta}+\frac 12)+o_{\|\BB t-\BB s\|}(1)}
 \BB P(\wt E_n(\BB t))\BB P(\wt E_n(\BB s)).
\end{split}
\eqen
\end{proof}

{\em Proof of lower bound in Theorem~\ref{prop-multiplept-1}:} 
By Lemma~\ref{prop-multiplept-6}, Proposition~\ref{prop-multiplept-5} and \cite[Proposition~4.8]{mww-nesting} we have
\eqb
\BB P(\text{dim}_{\mcl H}(\mcl T_{\mcl P}(\delta,m))\geq \wt d)>0, 
\label{eq-multiplept-20}
\eqe
for any $\wt d<\frac 12-(m-2)(\frac{\pi}{2\theta}-\frac 12)$. \cite[Proposition~4.8]{mww-nesting} is stated for $\BB D\subset\BB C$ instead of $D\subset\BB R^{m-1}$, but the proof carries through in exactly the same manner if considering the domain $D$ instead, only with 2 replaced by $(m-1)$ in the statement of the proposition. Also, \cite[Proposition~4.8]{mww-nesting} is stated for events that are defined for all points in the domain, while our events $\wt E_n(\BB t)$ are only defined for $\BB t\in D_n$. We obtain new events $E'_n(\BB t)$ defined for all $\BB t\in D$ as follows. Given $\BB t\in D$ we let $\BB s\in D_n$ be the vector such that $\|\BB t-\BB s\|$ is minimized, with some (unspecified) rule to break ties. We define $E'_n(\BB t)$ to be the event such that $E'_n(\BB t)$ occurs if and only if $\wt E_n(\BB s)$ occurs. 

By the scale invariance and the Markov property of Brownian motion, the event of~\eqref{eq-multiplept-20} almost surely occurs. By letting $\wt d\rightarrow\frac 12-(m-2)(\frac{\pi}{2\theta}-\frac 12)$ it follows that, almost surely,
\eqb
\text{dim}_{\mcl H}(\mcl T_{\mcl P}(\delta,m))\geq \frac 12-(m-2)\left(\frac{\pi}{2\theta}-\frac 12\right). 
\label{eq-multiplept-4}
\eqe
By Lemma~\ref{prop-multiplept-7} and $\mcl T(\delta,m)\subset \wt{\mcl T}(m)$ we get a lower bound for one of the two considered sets of Theorem~\ref{prop-multiplept-1}:
\eqbn
\text{dim}_{\mcl H}(\wt{\mcl T}(m))\geq \frac 12-(m-2)\left(\frac{\pi}{2\theta}-\frac 12\right).
\eqen

Let Proj$_1:\BB R^{m-1}\to\BB R$ be projection $\BB t\mapsto t_1$, and define ${\frk{T}}_{\mcl P}(\delta,m):=\text{Proj}_1(\mcl T_{\mcl P}(\delta,m))$. We will argue that a.s. 
\eqb
\dim_{\mcl H} \!\left(\mcl T_{\mcl P}(\delta,m)\right)\leq \dim_{\mcl H} \!\left(\mathfrak T_{\mcl P}(\delta,m)\right).
\label{eq-multiplept-3}
\eqe
Assume $d,s>0$, $s\ll 1$, satisfy $d-s>\text{dim}_{\mcl H}({\frk{T}}_{\mcl P}(\delta,m))$. Given any $\ep>0$, we can find a sequence of intervals $(I_i)_{i\in\BB N}$, such that ${\frk{T}}_{\mcl P}(\delta,m)\subset\cup_{i\in\BB N}I_i$ and $\sum_{i\in\BB N} |I_i|^{d-s}<\ep$. Let $(l_i)_{i\in\BB N}$ be such that $|I_i|\in(\ep_{l_i+1},\ep_{l_i})$. 

Let $\BB s,\BB t\in \mcl T_{\mcl P}(\delta,m)$, $\BB t\neq\BB s$. One of the two cases (I) or (II) of Lemma~\ref{prop-multiplept-8} must hold. Therefore we can find a sequence $(\wt I_i)_{i\in\BB N}$ of rectangles $\wt I_i\subset\BB R^{m-1}$, such that Proj$_1(\wt I_i)=I_i$, diam$(\wt I_i)\preceq \ep_{l_i-m+2}=|I_i|^{1+o_{|I_i|}(1)}$, and $\mcl T_{\mcl P}(\delta,m)\subset\cup_{i\in\BB N} \wt I_i$. We have $\sum_{i\in\BB N}\text{diam}(\wt I_i)^d\leq \sum_{i\in\BB N}|I_i|^{d-s}<\ep$ when $\max_i|I_i|$ is sufficiently small, hence for all sufficiently small $\ep$. It follows that $\text{dim}_{\mcl H}(\mcl T_{\mcl P}(\delta,m))\leq d$, and by letting $s\rightarrow 0$ and $d\rightarrow \text{dim}_{\mcl H}({\frk{T}}_{\mcl P}(\delta,m))$, we see that~\eqref{eq-multiplept-3} holds almost surely. The lower bound for the second of the considered sets of Theorem~\ref{prop-multiplept-1}, follows by
\eqbn
     \text{dim}_{\mcl H}\big(\wt{\frk{T}}_{}      (       m)\big)
\geq \text{dim}_{\mcl H}\big(   {\frk{T}}_{\mcl P}(\delta,m)\big)
\geq \text{dim}_{\mcl H}\big(   {\mcl{T}}_{\mcl P}(\delta,m)\big)
\geq \frac 12-(m-2)\left(\frac{\pi}{2\theta}-\frac 12\right).
\eqen

\section{Open questions}
\label{sec-problems}

In this section we will list some open problems relating to the results of this paper. 

\begin{enumerate}
\item Let $Z$ be a correlated Brownian motion as in~\eqref{eq-multiplept-10}, run for positive time, and let $\wh X$ be the set of times $t\geq 0$ which are not contained in any left cone interval for $Z$. The Hausdorff dimension of $\wh X$ is computed in a very indirect manner in Example~\ref{prop-dimcalc-gasket}. Can one obtain this dimension directly? If so, one would obtain a new proof of the dimension of the gasket of $\op{CLE}_{\kappa'}$ for $\kappa' \in (4,8)$.  
 
\item Let $\kappa'\in (4,8)$ and let $\Gamma$ be a $\op{CLE}_{\kappa'}$ in a simply connected domain $D\subset \BB C$. We define the \emph{thin gasket} of $\Gamma$ is to be the set $\mcl{T}$ of points in $D$ which are not disconnected from $\partial D$ by any loop in $\Gamma$.  Equivalently, $\mcl T$ is the closure of the union of the outermost outer boundaries of the CLE loops, where the outer boundary of a loop $\ell$ is the boundary of the set of points disconnected from $\partial D$ by $\ell$. The thin gasket differs from the ordinary gasket of~\cite{msw-gasket,ssw09} in that the ordinary gasket includes points which are disconnected from $\partial D$ by some loop in $\Gamma$ but which are not actually surrounded by this loop. What is the a.s.\ Hausdorff dimension of $\mcl T$? 

One can make a reasonable guess as to what this dimension should be, as follows. Suppose we take as our ansatz that the quantum scaling exponent $\Delta$ of $\mcl T$ in the KPZ formula is a linear function of~$\kappa'$. This assumption seems reasonable, as it is satisfied with $\mcl T$ replaced by the ordinary gasket, the $\op{SLE}_{\kappa'}$ or $\op{SLE}_{16/\kappa'}$ curve, and the double and cut points of SLE. By SLE duality, as $\kappa' \rta 8$, the outer boundaries of CLE loops start to look like $\op{SLE}_2$ curves. Therefore we should expect $\dim_{\mcl H}\mcl T \rta 5/4$ as $\kappa' \rta 8$. On the other hand, as $\kappa' \rta 4$, the thin gasket starts to look like the ordinary gasket, so by the results of \cite{nacu-werner11,ssw09}, we should expect $\dim_{\mcl H}\mcl T \rta 15/8$ as $\kappa'\rta 4$.  These guesses lead to the prediction that $\Delta = \tfrac{\kappa'}{16}$ which, in turn, by the KPZ formula this yields the prediction $\frac{5}{32}(16-\kappa')$ for the dimension.

In the peanosphere setting, consider the restriction of the whole-plane $\op{CLE}_{\kappa'}$ associated with the curve $\eta'$ to some bubble $U$ disconnected from $\infty$ by $\eta'$. This restriction has the law of a $\op{CLE}_{\kappa'}$ in $U$ and its thin gasket can be described by an explicit (but rather complicated) functional of the Brownian motion $Z = (L, R)$. Such a description is implicit in~\cite{gwynne-miller-cle} since this paper describes the set of points disconnected from $\partial U$ by each $\op{CLE}_{\kappa'}$ loop in $U$. One could obtain $\dim_{\mcl H} \mcl T$ by computing the Hausdorff dimension of this Brownian motion set and applying Theorem~\ref{thm-dim-relation}. Alternatively, one could attempt to make rigorous an argument of the sort given in~\cite[Appendix~B]{wedges}. As a third possibility, one could take a direct approach, possibly using imaginary geometry~\cite{ig1,ig2,ig3,ig4} to regularize the events in the two-point estimate as in~\cite{miller-wu-dim,gms-mf-spec}. 

\item Can one describe various multifractal quantities associated with the $\op{SLE}_\kappa$ curve in terms of the Brownian motion $Z = (L,R)$ in the peanosphere construction of~\cite{wedges}, such as the multifractal spectrum~\cite{gms-mf-spec,dup-mf-spec-bulk}, the winding spectrum~\cite{binder-dup-winding1,binder-dup-winding2}, various notions of higher multifractal spectra~\cite{dup-higher-mf}, the multifractal spectrum at the tip~\cite{lawler-viklund-tip}, the optimal H\"older exponent~\cite{lind-holder,lawler-viklund-holder}, or the integral means spectrum~\cite{bel-smirnov-hm-sle,gms-mf-spec}? If so, can these quantities be computed using Theorem~\ref{thm-dim-relation} or some variant thereof? 
\end{enumerate}

\bibliography{cibibshort,cibib,cibib1} 
\bibliographystyle{hmralphaabbrv}

\end{document}